\DeclarePairedDelimiter{\floor}{\lfloor}{\rfloor}
\DeclarePairedDelimiter{\ceil}{\lceil}{\rceil}
\titleformat{\chapter}[display]
  {\normalfont\LARGE\bfseries}
  {\titleline{}\vspace{5pt}\titleline{}\vspace{1pt}%
  \MakeUppercase{\chaptertitlename} \thechapter}
  {1pc}
  {\titleline{}\vspace{0.5pc}} 
\DeclarePairedDelimiter\abs{\lvert}{\rvert}
\renewcommand\section{\@startsection {section}{1}{\z@}%
                               {-3.5ex \@plus -1ex \@minus -.2ex}%
                               {2.3ex \@plus.2ex}%
                               {\normalfont\large\bfseries}}
\renewcommand\subsection{\@startsection{subsection}{2}{\z@}%
                                 {-3.25ex\@plus -1ex \@minus -.2ex}%
                                 {1.5ex \@plus .2ex}%
                                 {\normalfont\bfseries}}
\numberwithin{equation}{section}       % number equations within sections
\numberwithin{figure}{section}         % number figures within sections
\numberwithin{table}{section}          % number tables within sections
\newtheorem{theorem}{Theorem}[section]
\newtheorem{prop}[theorem]{Proposition}
\newtheorem{corollary}[theorem]{Corollary}
\newtheorem{lemma}[theorem]{Lemma}
\theoremstyle{remark}
\newtheorem{remark}{Remark}[section]
\theoremstyle{definition}
\newtheorem{assumption}{Assumption}
\theoremstyle{definition}
\newcommand{\po}{\left(}
\newcommand{\pf}{\right)}
\newcommand{\R}{\mathbb R}
\newcommand{\N}{\mathbb N} 
\newcommand{\na}{\nabla}
\newcommand{\bG}{\mathbf{G}} 
\newcommand{\bW}{\mathbf{W}}
\title{
\normalfont \normalsize   
\huge Reflection coupling for unadjusted generalized Hamiltonian Monte Carlo in the nonconvex stochastic gradient case %(or : Wasserstein contraction  for unadjusted generalized Hamiltonian Monte Carlo in the non-convex stochastic gradient case ?)
%\sout{Coupling and contraction from Hamiltonian Monte Carlo to kinetic Langevin splitting schemes}      
% thick bottom horizontal rule
}
\author{Martin Chak \and Pierre Monmarch\'e}
\date{\normalsize\today}
\begin{document}
\maketitle

\begin{abstract}
Contraction in Wasserstein 1-distance with explicit rates is established for generalized Hamiltonian Monte
Carlo with stochastic gradients under possibly nonconvex conditions. The algorithms considered include splitting schemes of kinetic Langevin diffusion commonly used in molecular dynamics simulations. To accommodate the degenerate noise structure corresponding to inertia existing in the chain, a characteristically discrete-in-time coupling and contraction proof is devised. 
%The proof is based on coupling techniques. 
As consequence, quantitative Gaussian concentration bounds are provided for empirical averages.
Convergence in Wasserstein 2-distance and total variation are also given, together with numerical bias estimates.
\end{abstract}

\section{Introduction}\label{sec:intro}
In this paper, we study a class of algorithms that spans on one end from Hamiltonian Monte Carlo (HMC)~\cite{MR4133372} to splitting schemes of kinetic Langevin dynamics~\cite{MR3040887} on the other end, as previously considered in~\cite{camrud2023second,gouraud2023hmc,monmarche2023entropic} by the second author and collaborators. This family of Markov chains and variants include widely used methods in Markov Chain Monte Carlo (MCMC) methods and are the topic of an extensive literature due to their interest in machine learning and molecular dynamics. In particular, kinetic Langevin dynamics is an important physical model for particles interacting with a large population of lighter particles~\cite[Section~6.3.3]{MR3362507}. Its numerical approximations by splitting scheme, which we study below as particular cases, are widely used to calculate both expectations with respect to canonical measures and dynamical properties of the particle trajectories. Notably, the latter quantities~(see e.g.~\cite[Section~21-8]{mcquarrie76a},~\cite{water},~\cite{MR3463433},~\cite{nature}) are lost if algorithms designed solely to sample from target measures are used~\cite[Remark~2.5]{MR2681239}. The goal of this work is to provide quantitative nonasymptotic analysis given deterministic initial values on the ergodic properties for this class of algorithms, where the dynamical properties are physically relevant. Note that the dynamical properties themselves are not the focus of this article. The analysis is based on the nonconvex setting, which is ubiquitous and prominent in molecular dynamics simulations. Works on the aforementioned splitting schemes in the convex setting have also appeared recently in~\cite{lc23,MR4309974,gouraud2023hmc}. A comprehensive comparison with previous work in the nonconvex case is given below.

Although our convergence results are motivated by molecular dynamics applications, the algorithms studied are also closely related to those in computational statistics~\cite{cheng2020sharp,pmlr-v75-cheng18a,MR4278799}, where stochastic gradients~\cite{10.5555/3104482.3104568,pmlr-v32-cheni14,leimkuhler2023contraction} can alleviate the computational cost of the sampling procedure. Given these considerations, our analysis below allows for the possibility of stochastic gradients. 
To be more precise, let~$\eta\in[0,1)$,~$h>0$,~$K\in\mathbb{N}\setminus\{0\}$,~$\Theta$ be a measurable space and let~$b:\mathbb{R}^d\times\Theta\rightarrow \mathbb{R}^d$ be measurable. Given an initial position-velocity state~$(x,v)\in\mathbb{R}^{2d}$, a single iteration of the algorithm considered is given by the steps:
\begin{enumerate}
\item Draw a (partial) velocity refreshment~$G\sim \mathcal{N}(0,I_d)$ (the standard $d$-dimensional Gaussian distribution).
\item Update velocity with the refreshment by~$v\leftarrow \eta v + \sqrt{1-\eta^2}G$.
\item \label{s3} Choose~$\theta,\theta'\in\Theta$, then run a stochastic Hamiltonian step by the velocity Verlet integrator
\begin{subequations}\label{vvi}
\begin{align}
v&\leftarrow v-(h/2)b(x,\theta),\\
x&\leftarrow x + hv,\\
v&\leftarrow v-(h/2)b(x,\theta').
\end{align}
\end{subequations}
\item Repeat step~\ref{s3}.~$K-1$ more times.
\end{enumerate}
Iterating such transitions produces a trajectory~$(x_n,v_n)_{n\in\N}$, 
whose empirical distribution is then used to approximate a suitable target distribution, as usual in MCMC methods. The resulting algorithm is called the stochastic gradient (unadjusted) generalized Hamiltonian Monte Carlo (SGgHMC) chain with parameters~$(K,h,\eta)$. We call~$h$ the stepsize,~$\eta$ the damping parameter and~$T:=Kh$ the integration time. 
In the Hamiltonian step above, the way~$\theta,\theta'$ are chosen is not specified. We keep in mind the simple case where they are independent random variables drawn according to some probability distribution over~$\Theta$, but in practice this may not be exactly the case. Unless explicitly mentioned, we do not assume the way~$\theta,\theta'$ are chosen (they can be considered as deterministic, i.e. we mostly work conditionally to their values), however we assume that the Gaussian variables used in the velocity refreshment steps are independent from these variables. In any case, as indicated by the term \emph{stochastic gradient}, 
this setting is motivated by cases 
%~$\theta,\theta'$ are random and
where~$b(x,\theta),b(x,\theta')$ are random estimators of~$-\nabla \log \mu(x)$ for the density~$\mu$ of a target probability measure. In addition, \emph{unadjusted} refers to the fact there is no Metropolis-Hastings accept/reject procedure (see e.g.~\cite{bourabee2023mixing}) in the algorithm to enforce the exact invariance for a suitable target distribution. When~$\eta=0$ (i.e. the velocity is fully refreshed after each Hamiltonian trajectory), SGgHMC aligns with classical HMC, so that \emph{generalized} HMC refers to the fact~$\eta$ can take any value in~$[0,1)$. In particular, when~$K=1$ with~$\eta=e^{-\bar{\gamma}h}$ for fixed~$\bar{\gamma}>0$, SGgHMC corresponds to a splitting scheme for the kinetic Langevin diffusion process (as studied in \cite{MR3040887,lc23,MR4309974}). 
%This case is motivated by the wide spread use of the algorithm in molecular dynamics applications, where dynamical properties are of direct interest as well as statistical averages. 
%For concreteness and simplicity, 
SGgHMC has been defined above with the velocity Verlet integrator, which is well-suited to the motivations given in the first paragraph, but similar results for a randomized midpoint version~\cite{bourabee2022unadjusted} will be proved below as well. The latter is of special interest due to its advantage in terms of dimension dependence in asymptotic bias estimates. It is included in our analysis to demonstrate the flexibility of our approach.

Our main contribution (Theorem~\ref{intma}) is a quantitative Wasserstein contraction for the law of SGgHMC under Assumption~\ref{A1} on~$b$, which allow for nonlogconcave target densities. The proof is based on a coupling construction that turns out to interpolate precisely between those of~\cite{MR4133372} and~\cite{MR3980913} for HMC and continuous time Langevin dynamics respectively. From this, we provide non-asymptotic Gaussian concentration~\cite{MR1849347} for empirical averages of the output from SGgHMC (Corollary~\ref{gaucor}) and explicit bounds for the numerical bias (with respect to target measure) induced by the numerical integration (Corollary~\ref{invd}) and the stochastic gradient approximation (Proposition~\ref{invd2}). Together, these results yield non-asymptotic confidence intervals for the estimator of the expectation of Lipschitz functions using SGgHMC. We also state a Wasserstein-to-entropy regularization (Theorem~\ref{prop:TV/W1}). When combined with the Wasserstein contraction, this provides a convergence rate in terms of relative entropy and, as we discuss below, is a crucial point in the perspective of the analysis of the adjusted version of the SGgHMC algorithm.

Let us discuss these contributions in view of two series of work, namely~\cite{camrud2023second,gouraud2023hmc,monmarche2023entropic} on the one hand, which are concerned with the same family of generalized HMC chains, and~\cite{MR4133372,cheng2020sharp,MR3980913} on the other hand which, similar to the present work, use reflection coupling arguments for kinetic processes.  
\begin{itemize}
    \item Our study considering the whole family of gHMC algorithms, from HMC to Langevin, stems from~\cite{gouraud2023hmc}, where the setting is convex (namely the target distribution is logconcave, which would correspond here to~$R=0$ in Assumption~\ref{A1}). In that work, the observation is made among others that, in the Gaussian case for instance, the optimal choice of parameters is neither~$\eta=0$ nor~$K=1$ but rather~$\eta=e^{-\bar{\gamma}T}$ for a fixed~$\bar{\gamma}$ and a fixed~$T=Kh$ (with small~$h$). This corresponds to gHMC with inertia, which is ``in the middle" of the gHMC family. The technique of~\cite{gouraud2023hmc} is based on synchronous coupling and only works in the convex case with a sufficiently high friction parameter~$\bar{\gamma}$ (see also e.g.~\cite{pmlr-v75-cheng18a,leimkuhler2023contraction,MR4278799,MR4309974} for the convex case with synchronous coupling). The nonconvex case is considered in~\cite{camrud2023second,monmarche2023entropic} (without stochastic gradient approximation) using functional inequality entropy methods (see also~\cite{MR4278799} for similar methods for stochastic Euler schemes of the kinetic Langevin diffusion). The pros and cons of such methods by comparison with direct coupling methods are discussed in~\cite{monmarche2023entropic} and in the following. The interest of entropy methods is that in certain cases, they provide sharper convergence rates in relative entropy than direct coupling ones. However, they rely on some explicit computations related to the target density and can thus be limited. Moreover, they are shown %under the restrictive condition 
    for deterministic gradients only, that is, when there exists~$U$ with~$b(\cdot,\theta)=\nabla U$ for all~$\theta$. %, not to mention that~\cite{monmarche2023entropic} only deals with an idealized process.
    %In terms of convergence rate, those of~\cite{camrud2023second} are possibly sharper than those of the present article in some cases, 
    On the other hand, coupling methods are more flexible. They apply without difficulty to numerical or stochastic approximations or in non-equilibrium cases (i.e. when~$b(\cdot,\theta)$ is not the estimator of a gradient, as e.g. in \cite{Iacobucci,Ramil}, though we do assume that~$\nabla b(\cdot,\theta)$ is symmetric in order to improve our assumptions on~$T$, see the beginning of Section~\ref{convex}). In addition, as stated in Corollary~\ref{gaucor}, our approach yields non-asymptotic concentration inequalities for empirical averages over trajectories with deterministic starting points, which is less clear with the functional inequality approach. 
    Furthermore, as we discuss below, the present coupling approach can then be used in the analysis of the adjusted version of the algorithm, which is again less clear with the functional inequality approach. Note that the adjusted version is also relevant for molecular dynamics applications~\cite{MR2583309}. 
    \item The seminal work~\cite{MR3980913}, which deals with continuous-time kinetic Langevin dynamics (namely the limit of SGgHMC as~$h$ vanishes in the case~$K=1$,~$\eta= e^{-\bar{\gamma}h}$ for fixed~$\bar{\gamma}>0$) has inspired many variations. Among them are~\cite{MR4133372,10.1214/23-EJP970} for the classical HMC~($\eta=0$) and~\cite{cheng2020sharp} for a stochastic Euler scheme of the kinetic Langevin diffusion. The latter together with~\cite{camrud2023second,MR4278799} appear to be the only work in the literature beside ours to show quantitative convergence for discretizations of ergodic stochastic differential equations (SDEs) with degenerate noise and without logconcavity assumptions. 
     The works~\cite{camrud2023second,MR4278799} are, as mentioned, based on functional analytical techniques and although the work~\cite{cheng2020sharp} deals with an implementable scheme, it essentially discretizes only the gradient term. Consequently, the coupling construction in~\cite{cheng2020sharp} is characteristically continuous in time and similar in nature to that presented in~\cite{MR3980913}. 
    Our work furnishes a coupling 
    %can then be interpreted as an interpolation 
    in-between~\cite{MR3980913} and~\cite{MR4133372}, which 
    by contrast to \cite{cheng2020sharp}, is essentially discrete in time. The noise degeneracy arising from the kinetic process is such that previous discrete-time coupling constructions cannot lead to contraction in general; the couplings in for example~\cite{debortoli2020convergence,MR4133372,MR3933205,MR4132634,durmus2018highdimensional} all deal with noise in a way that is reminiscent of nondegenerate noise terms in SDEs. Moreover, in the works~\cite{MR4133372,10.1214/23-EJP970} on HMC, the restriction on the integration time~$T$ forces HMC to behave similarly to ULA. 
Thus our coupling and contraction proof fills a mathematical gap in the literature by providing detailed analysis for chains where inertia persists under restrictions on~$T$. 
    In particular, our approach yields concentration inequalities (Corollary~\ref{gaucor}) that do not follow as corollary to the work of~\cite{cheng2020sharp}, since a complexity bound for Wasserstein error is given in that work rather than contraction as in~\eqref{intma1} (though our coupling method can be expected to work for stochastic Euler schemes). 
    In addition, SGgHMC is a splitting scheme for the kinetic Langevin diffusion in the case~$K=1$ with~$\eta= e^{-\bar{\gamma}h}$ for fixed~$\bar{\gamma}>0$, which aligns with its use for particle simulations as described at the beginning of this section. 
    %in contrast to the scheme in~\cite{cheng2020sharp}. 
    %The aforementioned splitting 
    %This scheme corresponds to the wide spread use of the algorithm in molecular dynamics applications which, 
    In the non-stochastic case where~$b(\cdot ,\theta)=\nabla U$ for all~$\theta$ and under sufficient regularity conditions on~$U$, these algorithms can be shown to have a numerical bias of order $h^2$. This is to be compared to the order~$h$ for stochastic Euler schemes as in~\cite{cheng2020sharp}. 
    \item Lastly, let us emphasize that we devise a natural contraction proof, which yields favorable constants in the exponent of the contraction rates when compared to previous rates given for the particular case~$\eta=0$ (HMC and ULA). 
More specifically, let us compare our Wasserstein 1-distance contraction rate for~$\eta=0$ with those in~\cite{MR3933205,MR4132634,MR4133372,10.1214/23-EJP970} in the nonconvex case. %We note that Wasserstein 2-distance convergence as established below appears to be new in particular for~$\eta=0$,~$K\gg 1$ and under nonconvexity. 
In~\cite{MR3933205,MR4132634}, applications are given for the unadjusted Langevin algorithm (ULA), which is appears here as~$\eta=0$,~$K=1$,~$\Theta=\{\theta\}$. Their convergence rates scale like~$e^{-c\mathcal{R}^2}$ for a \textit{nonconvexity radius}~$\mathcal{R}$ and some constants~$c>0$. The parameter~$\mathcal{R}$ will also appear in our context below in Remark~\ref{Rcom} and our contraction rate scales similarly as~$e^{-c'\mathcal{R}^2}$ for a constant~$c'>0$. On the other hand, the authors of~\cite{MR4133372,10.1214/23-EJP970} focus on HMC where~$K\gg1$. Their results include ULA, but their contraction rates scale like~$e^{-c''\mathcal{R}/T}$ for some constants~$c''>0$. The inclusion of~$T^{-1}$ in the rate means that it becomes exponentially poor for small stepsize in the case of ULA, which is necessary to obtain the correct bias estimates to the true target distribution. Thus their results are best suited for HMC, where good bias estimates may be obtained by taking~$h$ small enough, but~$T$ relatively large. However, the latter exponents~$c''$ are (very) noticeably better than~$c$ even after taking into account the restrictions on~$T$. 
    Our result for~$\eta=0$ both removes the~$1/T$ dependence in the exponent within the contraction rate and allows~$c'$ to be similar to~$c''$, see Remark~\ref{rrcom}. This arises from a more direct approach to estimate the perturbations in the usual concave function values. %after iterations of the algorithm, 
    The ideas are encapsulated by~\eqref{uexp} and Lemma~\ref{lemexes}. In addition, inspired by~\cite{lc23}, we remove in the upper bound on~$T$ the dependence on the convexity constant at infinity that is present in all these previous works. In particular, we show that the technique in~\cite{lc23}, which relied heavily on strong convexity, can be used to analyze a nonconvex setting. 
\end{itemize}
%Moreover, apart from~\cite{MR4133372,MR3980913} and beside our coupling construction, let us mention that our proof of the Wasserstein contraction also draws on ideas from~\cite{https://doi.org/10.48550/arxiv.2104.06771,MR3939573,MR3933205,lc23}, each of which are concerned with coupling arguments under various settings. 

From a methodological standpoint, there is a vast literature on strategies for sampling from distributions with nonlogconcave densities. Two examples of prominent approaches are parallel tempering and adaptive biasing methods, see~\cite{henin23,MR4152629} for reviews. The methods presented in these reviews are expected to perform significantly better than those studied presently, but many of the approaches make direct use of, or are otherwise linked to, SGgHMC or its continuous time counterparts. For example in~\cite{ge2020simulated}, the local exploration kernel in parallel tempering is naturally chosen to be that of ULA, which as mentioned above is SGgHMC with~$\eta=0$,~$K=1$ and~$\Theta=\{\theta\}$. Correspondingly, quantitative convergence results of those methods comparable to those presented here in Section~\ref{mr} are far from commonly available. 
In addition, methodologies that make direct use of reflection couplings are of relevance. For example in the context of~\cite{MR3949304}, the coupling provided in Section~\ref{secsg} below reveals a deliberate way to simulate pairs of chains based on underdamped Langevin diffusions for unbiased estimation. This use of underdamped Langevin dynamics has in fact already been the topic of investigation in~\cite{MR4673688}.

%\sout{We provide quantitative Gaussian concentration~\cite{MR1849347} and bias (to a target measure) bounds on empirical averages of the output from SGgHMC under the following Assumption~\ref{A1} on~$b$. These are obtained by a Wasserstein contraction result based on coupling techniques. On the one hand, the coupling agrees with that of~\cite{MR4133372} in the HMC case~$\eta=0$. On the other hand, for a Langevin case~$\eta=1-e^{-\bar{\gamma}T}$ with fixed~$\bar{\gamma}>0$, it aligns with the coupling of~\cite{MR3980913} as~$T\rightarrow 0$. }%In order to obtain Wasserstein contraction from the coupling, we use 
%The proof of the contraction result then draws on ideas from~\cite{MR4133372,https://doi.org/10.48550/arxiv.2104.06771,MR3980913,MR3939573,MR3933205,lc23}. 

\bigskip

The rest of the paper is organized as follows. Section~\ref{mr} states the main results of the paper. Sections~\ref{ncsec},~\ref{convex} and~\ref{global} are devoted to proving the main Wasserstein contraction Theorem~\ref{intma}. More specifically, in Section~\ref{ncsec}, the reflection coupling to be used within a bounded region of state space is introduced, then it is used to prove contraction of a suitable concave function. In Section~\ref{convex}, the behaviour of a certain modified Euclidean norm is studied under synchronous and reflection couplings. The results from these two sections are gathered in Section~\ref{global} to prove contraction in a suitable semimetric after one iteration of SGgHMC. In Section~\ref{conseqs}, Theorem~\ref{intma} is proven using the results in the previous sections, then the consequences in terms of empirical averages are presented.

\paragraph*{Notation.}
For~$x\in\R^d$ and~$\bar{R}>0$,~$|x|$ stands for the Euclidean norm and~$B_{\bar{R}} = \{y\in\R^d,|y|\leq \bar{R}\}$. The transpose of a matrix $A$ is denoted by $A^{\!\top}$, and vectors of $\R^d$ are seen as column matrices. The identity matrix is denoted~$I_d$ or~$I$ in cases where the dimension is obvious from the context. The functions~$\varphi_{0,1},\Phi$ denote the probability density and cumulative distribution function respectively for the standard normal distribution. The notation~$\mathcal{U}(0,1)$ denotes the uniform distribution on~$(0,1)$ and~$\mathcal{N}(0,I_d)$ denotes the standard normal on~$\mathbb{R}^d$ as above. 
%Let~$\mathcal{W}_1,\mathcal{W}$ denote the Wasserstein distances with respect to the 
For a function~$f:\mathbb{R}^d\rightarrow\mathbb{R}$ and metric~$\tilde{d}$ on~$\mathbb{R}^{2d}$, the notation
\begin{equation*}
\|f\|_{\textrm{Lip}(\tilde{d})}:=\sup_{((x,v),(y,w))\in\mathbb{R}^{2d}\times\mathbb{R}^{2d}}\abs{f(x)-f(y)}/\tilde{d}((x,v),(y,w))
\end{equation*}
is used for the Lipschitz norm of~$f$ w.r.t.~$\tilde{d}$; similarly for a metric on~$\mathbb{R}^d$. Let~$\|\cdot\|_F$ denote the Frobenius norm,~$\|\cdot\|_{\textrm{TV}}$ denote the total variation norm and for distributions~$\nu,\mu$ on~$\mathbb{R}^{2d}$, let~$\mathrm{Ent}(\nu|\mu)=\int_{\mathbb{R}^{2d}}\log(d\nu/d\mu)d\nu$ if~$\nu\ll\mu$ and~$\mathrm{Ent}(\nu|\mu)=\infty$ otherwise denote the relative entropy of $\nu$ with respect to $\mu$.
%For any distribution~$\nu$ of an~$\mathbb{R}^{2d}$-valued r.v., 
%~($\mathbb{R}^d$-valued when~$\gamma=\infty$), let~$\pi_n(\nu)$ denote the distribution after~$n$ iterations of SGgHMC.

\ 

\section{Main results}\label{mr}

The results stated in this section apply for both the velocity Verlet integrator~\eqref{vvi} and a randomized midpoint version as studied in~\cite{bourabee2022unadjusted}. The latter will also be defined at the beginning of Section~\ref{ncsec}.
\paragraph*{Main assumption and twisted metric.} To state our main result, let~$m,L,R>0$ and consider the following assumption.
\begin{assumption}\label{A1}
For every~$\theta\in\Theta$,~$b(\cdot,\theta)\in C^1(\mathbb{R}^d,\mathbb{R}^d)$ and for every~$x\in\mathbb{R}^d$,~$\nabla b(x,\theta)$ is symmetric. 
Moreover, %,~$b(0,\theta) = 0$ holds and 
%there exists~$m,R>0$ such that~$b$ satisfies
it holds that
\begin{equation}\label{A10}
u^{\!\top}\nabla\!_x b(x,\theta) u \geq m\abs{u}^2 \qquad \forall u\in\mathbb{R}^d,x\in\mathbb{R}^d\setminus B_R,\theta\in\Theta
\end{equation}
and
\begin{equation}\label{A1a}
%\exists L>0\quad\textrm{s.t.}\quad
\abs{\nabla\!_x b(x,\theta) u}\leq L \abs{u} \qquad \forall u,x\in\mathbb{R}^d, \theta\in\Theta.
\end{equation}
\end{assumption}
As noted in~\cite{monmarche2023entropic,camrud2023second}, %when there exists~$U:\mathbb{R}^d\rightarrow \mathbb{R}$ such that~$b(\cdot,\theta)=\nabla U$ for all~$\theta\in\Theta$, 
we may assume~$L=1$ with no loss of generality by a rescaling. However, we keep track of~$L$ throughout for ease of comparison with previous works, unless explicitly stated. For example, the restriction on the value of the friction imposed in Section~\ref{convex} aligns with that stated in~\cite[Proposition~4]{monmarche2021sure}, which is in terms of~$L$. 

\begin{remark}\label{Rcom}
For any~$x,y\in\mathbb{R}^d$, denoting~$x_t = x + t(y-x)$ for all~$t\in[0,1]$ and~$t_R = \{ t\in [0,1]:x+t(y-x)\in B_R \}$, Assumption~\ref{A1} implies that if~$x$ and~$y$ satisfy~$\abs{x-y}\geq R':= 4R(1+L/m)$, then it holds for any~$u\in\mathbb{R}^d$ that
\begin{align}
u^{\!\top}\int_0^1\nabla\!_xb(x_t,\theta)dt u &= u^{\!\top}\bigg(\int_{t_R}\nabla\!_xb(x_t,\theta)dt + \int_{[0,1]\setminus t_R}\nabla\!_xb(x_t,\theta)dt\bigg) u \nonumber\\
&\geq - \frac{2R}{R'}L\abs{u}^2 + \bigg(1-\frac{2R}{R'}\bigg)m\abs{u}^2 \nonumber\\
&= \frac{m}{2}\abs{u}^2.\label{A1b}
\end{align}
Assumption~\ref{A1} is thus slightly stronger than~\cite[(A3)]{MR4133372},~\cite[(1.5)]{MR4132634},~\cite[Assumption~2.3]{10.1214/23-EJP970} and~\cite[(C2)]{MR3933205} in the nonconvexity aspect. Note that~$R'$ is the appropriate value to compare with the nonconvexity radii that appear in these articles, rather than~$R$. It should be emphasized that the coupling in Section~\ref{ncsec} also yields contraction results in these slightly weaker settings and that this fact has already been utilized in~\cite{schuh2024c} after the first preprint of the present manuscript was made public. However, condition~\eqref{A1a} allows the dependence on~$m$ in the contraction rate to improve via the arguments in Section~\ref{convex}. On the other hand, in the case where there exists~$U$ such that~$b(\cdot,\theta)=\nabla U$ for all~$\theta$, condition~\eqref{A10} is also studied as a main analytical assumption in~\cite{MR4025861},~\cite[Assumption~3]{MR4704569} and~\cite[H2 (supplemental)]{NEURIPS2022_21c86d5b} with discussions given in all of these references. The smoothness condition~\eqref{A1a} is even more common in the literature.
\end{remark}

%Similar to~\cite{https://doi.org/10.48550/arxiv.2104.06771}, we analyze an auxiliary chain that forms an upper bound on the difference between the pair of coupled chains.

Next, we introduce the twisted Euclidean metric~$\hat{d}$ on which our Wasserstein contraction Theorem~\ref{intma} is based. Let~$\hat{d}:\mathbb{R}^{2d}\times\mathbb{R}^{2d}\rightarrow [0,\infty)$ be given by
\begin{equation}\label{tmet}
%\begin{cases}
\hat{d}((x,v),(y,w))= \hat{\alpha}\abs{x-y}+\abs{x-y + \hat{\gamma}^{-1}(v-w)}, %&\textrm{if }\gamma<\infty\\
%\mathbb{R}^{2d}\times\mathbb{R}^{2d}\ni((x,v),(y,w))\mapsto \hat{\alpha}\abs{x-y}+\abs{x-y + \hat{\gamma}^{-1}(v-w)}, %&\textrm{if }\gamma<\infty\\
%\mathbb{R}^d\times\mathbb{R}^d\ni(x,y)\mapsto (1+\alpha)\abs{x-y} &\textrm{if }\gamma=\infty
%\end{cases}
\end{equation}
for all~$x,y,v,w\in\mathbb{R}^d$,
where~$\hat{\gamma}=(1-\eta)/(\eta T)\in(0,\infty]$,~$\hat{\alpha}=1.09LT^2/(1-\eta)^2$ 
and if~$\hat{\gamma}=\infty$, then~$\hat{d}$ is interpreted as a metric on~$\mathbb{R}^d$. Moreover, let~$\abs{\cdot}_{\hat{d}}$ denote the corresponding norm. Such a twisted metric aligns with~\cite{MR3980913}, but has some consequences in terms of Gaussian concentration and bias, 
see Remark~\ref{ewr}. The constant~$1.09$ appearing in the definition of~$\hat{\alpha}$ is chosen in order to optimize the contraction rate obtained in the results below.

\paragraph*{Wasserstein convergence.} 
We present our first main result here. 
In the following Theorem~\ref{intma}, 
let the sequence of~$\theta,\theta'$ to be used for each run of the Hamiltonian step in SGgHMC be predetermined (the cases where they are random being treated easily by taking the expectation with respect to their law in the next result). For the distribution~$\nu$ of any~$\mathbb{R}^{2d}$-valued random variable, let~$\pi_n(\nu)$ denote the distribution after~$n$ iterations of SGgHMC. Moreover, 
let~$\mathcal{W}_1,\mathcal{W}_2$ denote the~$L^1$ and~$L^2$ Wasserstein distances with respect to the twisted Euclidean metric~$\hat{d}$ given by~\eqref{tmet} 
(see e.g.~\cite[equation~(1.2)]{MR4132634}). 
Finally, for~$E\in\{\mathbb{R}^d,\mathbb{R}^{2d}\}$ and any semimetric~$\rho:E\times E\rightarrow [0,\infty)$, let~$\mathcal{W}_{\rho}$ denote Kantorovich semimetric w.r.t.~$\rho$ (see e.g.~\cite[equation~(5)]{MR3568041}). 

\begin{theorem}[High friction Wasserstein bounds]\label{intma}
Under Assumption~\ref{A1}, if inequalities~$4LT^2\leq (1-\eta)^2$ and
\begin{equation}\label{carot}
\frac{L(T+h)^2(1+\eta)}{1-\eta}\leq \frac{1}{16^2}\min \bigg(\frac{1}{L\hat{R}^2},1\bigg)
\end{equation}
hold for~$\hat{R}$ given by~\eqref{rhdef}, 
%~$\hat{R}=32(1+LT^2/(1-\eta)^2)R(1+L/m)$, 
then there exists a semimetric~$\rho^*$ and constants~$c,M_1,M_2>0$ such that %there exists~$\alpha>0$ such that 
for all initial distributions~$\nu_1,\nu_2$ and~$n\in\mathbb{N}$, it holds that
\begin{align}
\mathcal{W}_1(\pi_n(\nu_1),\pi_n(\nu_2)) &\leq M_1e^{-cn}\mathcal{W}_1(\nu_1,\nu_2),\label{intma1}\\
\mathcal{W}_2(\pi_n(\nu_1),\pi_n(\nu_2)) &\leq M_2e^{-cn}\sqrt{\mathcal{W}_{\rho^*}(\nu_1,\nu_2)}.\label{intma2}
\end{align}
\end{theorem}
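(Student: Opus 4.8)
The plan is to establish a one-step contraction in a carefully chosen semimetric $\rho^*$ and then iterate. The key point, as announced in the paper's outline, is to treat separately two regimes in state space: a ``far field'' where the drift $b$ is effectively strongly monotone thanks to~\eqref{A1b}, and a ``near field'' (a bounded region governed by $\hat R$) where reflection coupling must be used. Concretely, I would proceed as follows. First, fix two initial points $(x,v)$ and $(y,w)$ and construct a coupling of the two SGgHMC transitions: use the \emph{same} sequence of Hamiltonian parameters $\theta,\theta'$, and couple the Gaussian refreshment variables $G,\tilde G$ either synchronously ($\tilde G = G$) or by reflection ($\tilde G = (I - 2ee^\top)G$ with $e$ the unit vector along the appropriately twisted position difference), switching between the two according to whether the current twisted distance exceeds a threshold of order $\hat R$. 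This is the coupling that interpolates between~\cite{MR3980913} and~\cite{MR4133372}; the results of Sections~\ref{ncsec} and~\ref{convex} are exactly what control the two pieces.

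Second, for the synchronous-coupling regime I would invoke the analysis of the modified Euclidean norm $|\cdot|_{\hat d}$ from Section~\ref{convex}: under the smallness conditions $4LT^2 \le (1-\eta)^2$ and~\eqref{carot}, one full iteration of the velocity Verlet (or randomized midpoint) step, composed with the damping $v \mapsto \eta v$, contracts $|\cdot|_{\hat d}$ by a factor $1 - c_0$ for an explicit $c_0>0$ whenever the two chains are outside $B_R$ along the whole interpolating segment — this is where~\eqref{A1b} and the symmetry of $\nabla b$ enter, the latter being used (as the paper notes, following~\cite{lc23}) to drop the dependence of the admissible $T$ on the convexity-at-infinity constant. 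Third, for the reflection regime I would use the contraction of a concave function $f$ of the twisted distance established in Section~\ref{ncsec}: the reflection coupling gives, on the event that the distance is of order $\le \hat R$, a strict one-step decrease of $\mathbb{E}[f(\hat d(\cdot))]$, by the standard mechanism that reflected Gaussian increments have a chance to cross and hence contract the concave functional despite the degenerate (inertial) noise. The new ingredient here, encapsulated by~\eqref{uexp} and Lemma~\ref{lemexes}, is a sharper, more direct estimate of how the Hamiltonian step perturbs the value of the concave function, which is what removes the $1/T$ factor from the exponent and brings the constant in line with~\cite{MR4133372,10.1214/23-EJP970}.

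Fourth, I would glue the two estimates: define $\rho^*$ as $f(\hat d(\cdot))$ plus a suitable multiple of $\mathbf{1}\{\hat d > \hat R_0\}\,\hat d$ (a Lyapunov-type term handling excursions away from the bounded region), with the multiplier chosen — exactly as in~\cite{MR3980913,MR4133372} — so that the far-field geometric contraction of $|\cdot|_{\hat d}$ dominates any increase of the concave part when the chains re-enter the near field, while in the near field the reflection contraction of $f(\hat d)$ dominates. This yields $\mathbb{E}[\rho^*(\text{step}(x,v),\text{step}(y,w))] \le e^{-c}\rho^*((x,v),(y,w))$ for an explicit $c>0$. Iterating $n$ times and taking the infimum over couplings of $\nu_1,\nu_2$ gives $\mathcal W_{\rho^*}(\pi_n(\nu_1),\pi_n(\nu_2)) \le e^{-cn}\mathcal W_{\rho^*}(\nu_1,\nu_2)$. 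Finally, since $f$ is concave with $f(0)=0$, bounded derivative, and bilipschitz-comparable to $\hat d$ on bounded sets, $\rho^*$ is equivalent to $\hat d$ up to multiplicative constants $M_1$ (this gives~\eqref{intma1} directly) and, via the elementary inequality relating $\mathcal W_1$ to $\sqrt{\mathcal W_{\rho^*}}$ when $\rho^* \lesssim 1 \wedge \hat d$ (concavity again, cf.~\cite{MR3568041}), also~\eqref{intma2} with the square root and constant $M_2$.

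\textbf{Main obstacle.} The crux — and the genuinely novel part relative to continuous-time arguments — is the reflection step in the degenerate, discrete-time setting: a single velocity refreshment injects noise only into the velocity coordinate, so the position difference that drives the concave functional changes only through the subsequent deterministic Verlet flow, and one must show the reflected increment nonetheless produces enough ``crossing probability'' on the twisted position variable $x-y+\hat\gamma^{-1}(v-w)$ to contract $\mathbb{E}[f(\hat d)]$, uniformly over the bounded region, with a constant strong enough to survive the restrictions on $T$. Controlling the Hamiltonian-step perturbation of $f$ sharply enough for this — i.e.\ proving Lemma~\ref{lemexes} and making~\eqref{uexp} quantitative — is where the real work lies; balancing the glued semimetric's constants so that neither regime spoils the other is comparatively routine bookkeeping once the two one-regime estimates are in hand.
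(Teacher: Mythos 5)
Your high-level architecture --- a state-dependent synchronous/reflection coupling, far-field contraction of a modified Euclidean norm via the eigenvalue arguments of~\cite{lc23}, near-field contraction of a concave function of the twisted distance via a sharp estimate of the reflected Gaussian's crossing probability, and gluing the two estimates into an additive semimetric --- matches the paper's proof (Sections~\ref{ncsec}--\ref{global}, Corollary~\ref{maincore}). You also correctly identify Lemma~\ref{lemexes} and~\eqref{uexp} as the novel technical step that makes the contraction exponent independent of $1/T$.

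The gap is in the gluing and the conversion to $\mathcal W_1$ and $\mathcal W_2$. You propose $\rho^* = f(\hat d) + c\,\mathds 1\{\hat d > \hat R_0\}\,\hat d$, a concave head plus a \emph{linear} truncated Lyapunov tail, and claim that because this $\rho^*$ is bilipschitz to $\hat d$, both~\eqref{intma1} and~\eqref{intma2} follow directly. The paper's $\rho^*$ in~\eqref{rsdef} instead has the \emph{quadratic} Lyapunov part $\epsilon^*\|(x-y,v-w)\|_{\bar M}^2$, and the quadratic growth is load-bearing in two ways. First, the far-field contraction (Theorem~\ref{convthm}) is intrinsically a squared-norm statement (positive definiteness of a quadratic form), and the near-field Lyapunov estimate (Corollary~\ref{refmod}) returns a bound with an \emph{additive} error term $\hat c$ of order $T^2/(1-\eta)$, which the paper absorbs against the strict (not merely multiplicative) decrease of $f_0$ produced by Theorem~\ref{main2}. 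Replacing the quadratic norm by your truncated-linear indicator introduces a discontinuous jump of order $\hat R_0$ in $\rho^*$ when the chain crosses the threshold in one step, which the $O(T^2/(1-\eta))$ decrease of $f_0$ cannot offset, so the one-step $\rho^*$-contraction would not follow as you describe. Second and more structurally, your $\rho^*$ grows only linearly at infinity, so $\mathcal W_{\rho^*}$ is comparable to $\mathcal W_1$ and cannot control $\mathcal W_2$; the estimate~\eqref{intma2} needs $\rho^*\gtrsim (\hat d)^2$ at infinity so that $\mathcal W_2^2\lesssim\mathcal W_{\rho^*}$, as in~\eqref{ruq}, and your stated justification ``when $\rho^*\lesssim 1\wedge\hat d$'' is internally inconsistent with your own construction, which is unbounded. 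Conversely, because the paper's quadratic $\rho^*$ is \emph{not} bilipschitz to $\hat d$, the deduction of~\eqref{intma1} from one-step $\rho^*$-contraction is not automatic either: Corollary~\ref{maincore} uses an additional subdivision argument --- the transport plan between $\nu_1$ and $\nu_2$ is split into $N$ interpolating pieces, so that each piece lives at small $\hat d$ where $\rho^*\approx f_0(\hat d)\approx\hat d$, and then $N\to\infty$ --- which your proposal omits and which is necessary precisely because of the quadratic tail.
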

The proof of this theorem, which is the main part of this work, is the content of Sections~\ref{ncsec}, \ref{convex} and \ref{global}, leading  eventually to the final statement of Corollary~\ref{maincore}, where explicit expressions for~$c$, $M_1$, $M_2$ and~$\rho^*$ are given, and from which Theorem~\ref{intma} follows. For the sake of clarity, let us sketch the general structure of the proof:
\begin{itemize}
    \item The proof relies on the definition of a suitable Markovian coupling of two chains starting with different initial conditions.  We distinguish two cases: if the distance (as in~$\hat{d}$ given by~\eqref{tmet}) between the two chains is above some threshold, a parallel coupling is performed, namely the same Gaussian variable is used in the velocity refreshment steps of both chains (the deterministic drift will be sufficient to bring closer the two trajectories). By contrast, below this threshold,  the Gaussian variables in the velocity refreshment steps are given by a reflection (or mirror) coupling along a particular direction. 
    This coupling will induce some variance in the evolution of the distances of the two chains along this direction, whilst forcing a contraction in another. The variance will classically be leveraged by using a suitable concave modification of the distance to obtain a contraction overall.
    \item The reflection coupling is the topic of Section~\ref{ncsec}, whose main result is Theorem~\ref{main2}, stating a contraction result on average for a suitable distance between the two chains, when their initial distance is below the aforementioned threshold.
    \item Section~\ref{convex} deals with the case where the distance between the two chains is large. In view of the contraction effect of the drift in this region, the study then shares some similarities to the convex cases considered in \cite{lc23,gouraud2023hmc}. We distinguish again two sub-cases, depending on whether a dominating part of the distance is due to the positions or the velocities, respectively studied in Sections~\ref{lpd} and~\ref{rlvd}. The two cases are then gathered in Theorem~\ref{convthm}, where a modified Euclidean distance is shown to be contracted along the parallel coupling, provided the initial distance is large. The remaining of Section~\ref{caoffi} is then devoted to the proof of Corollary~\ref{refmod}, which shows that this modified Euclidean distance is not expanded too much when 
    reflection  coupling is used instead of the parallel one.
    \item The previous results are then combined in Section~\ref{global}, leading to Theorem~\ref{coth}, which is a more detailed version of Theorem~\ref{intma}. In that section is also stated and proven Theorem~\ref{coth2}, which is a variant where the condition~$4LT^2\leq (1-\eta)^2$ is removed and replaced by a Lyapunov condition (Assumption~\ref{A2}) in the spirit of~\cite[equation~(33)]{MR4133372}.
\end{itemize}

Some remarks on Theorem~\ref{intma} are as follows.
\begin{itemize}
    \item 
The assumption~$4LT^2\leq (1-\eta)^2$ can be thought of as a lower bound on a friction-like parameter (consider~$\eta=e^{-\bar{\gamma}T}$ with fixed~$\bar{\gamma}>0$ and small~$T$). This assumption allows synchronous coupling to be useful in certain regions of state space and enables contraction as in~\eqref{intma1} (with the~$\mathcal W_1$ distance on both sides of the inequality). 
%(that is, a bound on~$\mathcal{W}_1(\pi_n(\nu_1),\pi_n(\nu_2))$ in terms of~$\mathcal{W}_1(\nu_1,\nu_2)$ as in~\eqref{intma1}). 
Otherwise if~$4LT^2>(1-\eta)^2$, namely in the low friction regime, a~$\mathcal{W}_1$ convergence result is still possible (as consequence to Theorem~\ref{coth2} below), with terms other than~$\mathcal{W}_1(\nu_1,\nu_2)$ on the right-hand side of the bound~\eqref{intma1}, similar to~\eqref{intma2}. In the rest of this section we mostly discuss consequences of \eqref{intma1}. Note that the high friction regime can always be enforced by the user via a suitable choice of parameters. Besides, the condition~\eqref{carot} should rather be interpreted as a bound on the integration time~$T$ to avoid periodic orbits, as discussed e.g. in \cite{gouraud2023hmc}, and a condition of this form is known to be necessary to ensure ergodicity. 
%which is why we focus on that the high friction regime for now (which can always be enforced by the user by a suitable choice of parameters). 

\item The constants~$M_1,M_2,c$ in Theorem~\ref{intma} depends only of~$m,L,R,\eta,K,h$ and have no additional dependency on the dimension~$d$.  %\sout{have no explicit dependence on the dimension~$d$}. 
However, an implicit dependence possibly exists through the constant~$R$ satisfying Assumption~\ref{A1}. Moreover, an explicit dependence on~$d$ appears in the Gaussian concentration (Corollary~\ref{gaucor}) and estimator bias (Corollary~\ref{bicor}) results that are derived as corollaries to Theorem~\ref{intma}. The dependence of~$M_1,M_2,c$ on~$R$ is exponential, but the order of~$R$ within the exponent is no worse than in~\cite{MR4133372}. As explained in~\cite[Remark 6]{monmarche2023entropic}, this cannot be avoided since the results apply to all cases satisfying Assumption~\ref{A1}, which contains multi-modal targets with energy barriers of order $R$, for which the exact convergence rate is expected to scale exponentially in $R$. 
\end{itemize} 

\paragraph*{Wasserstein-to-TV regularization.} To complement Theorem~\ref{intma} (and Theorem~\ref{coth2}), TV/Wasserstein regularization in the spirit of~\cite[Proposition~3]{monmarche2023entropic} or \cite[Lemma~17]{MR4497240} is available.

\begin{theorem}[Wasserstein-to-TV regularization]\label{prop:TV/W1}
Under Assumption~\ref{A1}, assume moreover that there exists~$L_H>0$ such that for any~$\theta\in\Theta$ and $x,y\in\mathbb{R}^d$, we have~$\|\nabla_x b(x,\theta) - \nabla_x b(y,\theta)\|_F\leq L_H\abs{x-y}$. There exists $\beta,T_0'>0$ which depends only on~$L$ and $L_H$ such that, assuming $T\leq T_0'$ and $\eta>0$, for all initial distributions $\nu_1,\nu_2$ over $\R^{2d}$ and all $n\geq 2$ with~$s:=nT \leq 1$, denoting~$\gamma^*=(1-\eta)/T$, it holds that
\begin{equation}\label{eq:propdensite1}
\|\pi_n(\nu_1) - \pi_n(\nu_2)\|_{\textrm{TV}} \leq  \beta \sqrt{\frac{ s}{\gamma^*}} \po \frac{1}{\eta s^2} + \frac{\gamma^*}{\eta s} + 1\pf \mathcal W_1(\nu_1,\nu_2) \,.
\end{equation}
\iffalse % intermediate equation
\begin{equation}\label{eq:propdensite1}
\|\delta_z \mathcal Q -\delta_{z'} \mathcal Q \|_{TV}^2 \leq \hat c_1(t) \po  |x-x'|^2 + \eta^2 t^2 |v-v'|^2\pf \quad\text{with}\quad \hat c_1(t) = \frac{13}{2t^2(1-\eta^2)} +5 L_H^2 t^4\,.
\end{equation}
If in addition $\eta>0$ and $T\leq 1/8$, then for all $n\geq 2$, 
\begin{equation}\label{eq:propdensite2}
\|\nu\mathcal{D}_{\eta}\mathcal Q^n - \mu\|_{TV}^2 \leq 2\mathrm{Ent}(\nu\mathcal{D}_{\eta}\mathcal Q^n|\mu)\leq 2\beta c_{n}(\eta,T) \mathcal{W}_2^2(\nu,\mu).
\end{equation}
\fi
\iffalse % intermediate equation for n iterations
\begin{equation}\label{eq:propdensite2}
\|\delta_z \mathcal Q^n -\delta_{z'} \mathcal Q^n\|_{TV}^2 \leq \hat c_{n}(t)    \po |x-x'|^2 + (\eta n't)^2 |v-v'|^2\pf
\end{equation}
%\fi
% \quad\text{with}\quad  \hat c_n(t) =  \hat c_*(nt)\]
where, writing $s  =  t \min(n , \lfloor 1/(4t)\rfloor )$, 
\[\hat c_{n}(t) = \hat c_*(s):=\frac{ s}{\gamma  } \po       \frac{12}{ \eta s^2}  + \frac{6\gamma}{\eta s } +              4\pf^2  + 132   (L_H s)^2\,.\]
\fi
\end{theorem}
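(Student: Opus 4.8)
The plan is to combine the $\mathcal{W}_1$ contraction already guaranteed for one step of the chain with a \emph{local regularization} estimate: a single transition $\mathcal{Q}$ of SGgHMC, or rather a few transitions, has a smoothing effect that converts a Wasserstein distance between the inputs into a total variation distance between the outputs. Concretely, I would first establish a one-step (or $O(1)$-step) bound of the form
\begin{equation*}
\|\delta_z \mathcal{Q}^2 - \delta_{z'}\mathcal{Q}^2\|_{\textrm{TV}}^2 \;\leq\; \hat{c}(T)\,\bigl(|x-x'|^2 + \eta^2 T^2\,|v-v'|^2\bigr),
\end{equation*}
with $\hat{c}(T)$ of order $1/(T^2(1-\eta^2)) + (\text{lower order in }T)$, exactly as in the commented-out display in the excerpt. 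The key point is that the velocity-refreshment Gaussian $G\sim\mathcal{N}(0,I_d)$, pushed through the (smooth, invertible in the appropriate coordinates) Verlet flow, gives the one-step kernel a density with respect to which a Gaussian-type change-of-measure / Pinsker argument applies: translating the mean of the refreshment by a controlled amount costs a Gaussian relative entropy, and one tracks how the drift steps $v\mapsto v - (h/2)b(x,\theta)$, $x\mapsto x+hv$, $v\mapsto v-(h/2)b(x,\theta')$ propagate and amplify an initial discrepancy $(x-x', v-v')$. The $L_H$-Lipschitz Hessian assumption enters here to control the nonlinearity of $b$ over the relevant scale, producing the $L_H^2 T^4$-type correction term. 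Two iterations (the $n\geq 2$ hypothesis) are used so that both the position and velocity discrepancies have been mixed into the refreshment noise.

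Second, I would \emph{iterate and telescope}. Write $\pi_n(\nu_1) - \pi_n(\nu_2)$ as a telescoping sum over a synchronous/maximal coupling at an intermediate time, or more directly: use the local TV-regularization on the last two steps applied to the measures $\pi_{n-2}(\nu_1), \pi_{n-2}(\nu_2)$, and then bound $\mathcal{W}_{\tilde{d}}(\pi_{n-2}(\nu_1), \pi_{n-2}(\nu_2))$ — where $\tilde d$ is the Euclidean-type metric weighting velocities by $\eta T$ appearing on the right-hand side of the local bound — by the contraction of Theorem~\ref{intma}/Corollary~\ref{maincore} back to $\mathcal{W}_1(\nu_1,\nu_2)$. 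One must be careful to compare $\tilde d$ with the twisted metric $\hat d$ of~\eqref{tmet}: since $\hat d$ controls $|x-y|$ and $|x-y+\hat\gamma^{-1}(v-w)|$ with $\hat\gamma^{-1} = \eta T/(1-\eta)$, one gets $|x-y| + \eta T|v-w| \lesssim (1 + \tfrac{1}{1-\eta})\,\hat d$, which after summing the geometric factors and keeping track of where $T$, $\eta$, and $\gamma^* = (1-\eta)/T$ appear yields precisely the prefactor $\beta\sqrt{s/\gamma^*}\,(1/(\eta s^2) + \gamma^*/(\eta s) + 1)$ once one sets $s = nT$ and uses $s\leq 1$ to absorb constants. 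The constraint $T\leq T_0'$ is what makes $\hat c(T)$ behave like its leading term and what keeps the Verlet flow a controlled perturbation of the exact Hamiltonian flow.

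The main obstacle I expect is the local regularization estimate itself — specifically, obtaining a density for (two steps of) the kernel and bounding its TV perturbation sharply enough that the leading constant is $O(1/(T^2(1-\eta^2)))$ rather than something worse as $\eta\to 1$ or $T\to 0$. The degeneracy of the noise (only $d$ of the $2d$ coordinates are directly hit by $G$) means a single step does not have a density in $(x,v)$-space in a useful quantitative sense; one genuinely needs the composition of two steps, and the change of variables tracking how the first refreshment's randomness reaches the second step's position must be done carefully, keeping the Jacobian and the effective covariance under control. This is where the Lipschitz-Hessian hypothesis and the smallness of $T$ are essential, and where the bulk of the computation lies; the contraction-plus-telescoping part is then comparatively routine bookkeeping of constants.
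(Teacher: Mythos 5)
Your proposal correctly identifies the two ingredients that matter (the Gaussian refreshment noise giving smoothing, and a change-of-measure/entropy argument via Pinsker), but the way you combine them will not give the stated bound, for two separate reasons.

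First, the constant. You propose to prove a TV-regularization estimate for a \emph{fixed} number of steps (two), and then pull back the intermediate Wasserstein gap to $\mathcal W_1(\nu_1,\nu_2)$ using the contraction of Theorem~\ref{intma}/Corollary~\ref{maincore}. But that contraction rate $c$ (and the prefactor) depends on $m$, $R$, $\eta$, $K$, $h$; Theorem~\ref{prop:TV/W1} asserts that $\beta$ depends \emph{only} on $L$ and $L_H$. Any route through the ergodic contraction result imports dependence on the nonconvexity radius and the convexity constant at infinity, which the statement explicitly excludes. The regularization estimate has to be proved by a purely local, Lipschitz-in-time argument.

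Second, and more fundamentally, the scaling in $s=nT$. A two-step entropy bound of the kind you describe gives a prefactor of order $T^{-3/2}$, and the contraction factor $(1-c)^{n-2}$ does not repair this: $c$ is of order $mT^2/(1-\eta)$ up to exponentially small factors, so $(1-c)^{n-2}$ is essentially $1$ on the time-scales of interest, and your bound blows up as $T\to 0$ with $s$ fixed. The theorem's right-hand side is $\beta\sqrt{s/\gamma^*}\big(1/(\eta s^2)+\gamma^*/(\eta s)+1\big)$, which for fixed $\gamma^*$ and small $s$ scales like $s^{-3/2}$, i.e.\ polynomially in the \emph{physical} time $s=nT$, and stays bounded as $T\to 0$. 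To get this you must spread the regularization over all $n$ steps. The paper's proof constructs an $n$-step deterministic ``bridge'' $(y_k)_{k=0}^n$ with $y_0=z'-z$ and $y_n=0$, designs $(u_k,w_k)=y_k$ so that position and velocity gaps interpolate smoothly across all steps (the displacement per step is of order $1/n$), and then measures the entropy cost of steering the second chain's Gaussians so that its trajectory shadows the first one offset by $y_k$ at step $k$. Because each step's tilt is small, the total entropy is of order $s^{-3}$ rather than $T^{-3}$. This $n$-step Girsanov/bridge construction, not a two-step smoothing plus contraction pullback, is the content of the result. Finally, note the paper also has to pass from the ``double-refreshment'' chain $\hat\pi_n$ (for which both position and velocity get regularized after a single transition) to the actual SGgHMC kernel $\pi_n$; your outline does not address this, and it is part of why $n\geq 2$ and $\eta>0$ appear in the hypotheses.
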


The proof, given in Appendix~\ref{sec:WtoTV}, is similar to \cite[Proposition~3]{monmarche2023entropic}, which states a similar result but for an idealized gHMC chain. 

\begin{remark}
The cases $n=1$ or $\eta=0$ are excluded in Theorem~\ref{prop:TV/W1} to get a simpler statement, but they can be treated, as seen in Appendix~\ref{sec:WtoTV}. However, when $n=1$, it is necessary to add a second velocity refreshment step after the Hamiltonian integration step. The case where $\eta=0$ and $n=1$ is covered  for the Verlet integrator by \cite[Lemma~17]{MR4497240}. The way Theorem~\ref{prop:TV/W1} is stated is mainly adapted to the case where $\gamma^*$ is a fixed positive constant. Then the prefactor in \eqref{eq:propdensite1} scales like $s^{-3/2}$ for small $s$, as expected ($s=nT$ being the physical time of the simulation after $n$ iterations).
\end{remark}

Applying Theorem~\ref{prop:TV/W1} with $\nu_i=\pi_m(\nu_i')$ for some $m$, we can then use \eqref{intma2} to bound the right hand side of~\eqref{eq:propdensite1}. This gives a convergence rate in relative entropy and total variation distance. Moreover, applying this with $\nu_i' =\delta_{z_i}$ for some $z_1,z_2\in\R^{2d}$, we can find, for any $R_0>0$, a number of steps $n\in\N$ such that 
\begin{equation}
    \label{eq:localcoupling}
\|\pi_n(\delta_{z_1})-\pi_n(\delta_{z_2})\|_{TV} \leq 1 \qquad \forall z_1,z_2\in B_{R_0}\,.
\end{equation}
This local coupling bound is a crucial ingredient in the quantitative analysis of the adjusted versions of gHMC, so that getting explicit estimates of $n$ here (in terms of $R_0$ and of the parameters of the problem) has direct consequences of the explicit estimates provided by these methods. Indeed, taking a step size sufficiently small so that a trajectory starting from any $z\in B_{R_0}$ has a probability at least 7/8 to be accepted for $n$ steps (which, in cases where $T$ is of order $h$ and $n$ is of order $1/h$, corresponds to what is called in \cite{bourabee2023mixing} the high acceptance regime), we get  a result similar to \eqref{eq:localcoupling} but now for the adjusted chain (except that $1$ is replaced by $3/4$). Such bounds are used in various methods to analyse adjusted chains:
\begin{itemize}
    \item In the classical Meyn-Tweedie (or Foster/Lyapunov) approach, it is used in conjunction with a Lyapunov function, as in \cite{Livingstone,DurmusMoulinesSaksman} for qualitative results on classical HMC (see e.g.~\cite[Theorem S7]{DurmusGuillinMonmarche} for  a quantitative statement to combine a local coupling estimate with a Lyapunov drift condition).
    \item For conductance methods, it is the main step to get local conductance lower bounds, see   e.g. in \cite[Lemma 4]{10.5555/3455716.3455808}.
    \item In the localized mixing time approach of \cite{bourabee2023mixing}, similarly, it is one of the key ingredient.   
\end{itemize}
In other words, given a compact set $\Omega$ that contains most of the mass of the target measure,  these three methods have distinct ways to deal with $\Omega^c$ but the local analysis inside $\Omega$ is always based on a local coupling condition~\eqref{eq:localcoupling}. 

The detailed analysis of adjusted gHMC in nonconvex cases based on our results is postponed to future work.

\paragraph*{Gaussian concentration for empirical averages.}
In the rest of the section, the consequences of the~$L^1$ Wasserstein contraction~\eqref{intma} are given. In Corollary~\ref{gaucor}, Gaussian concentration bounds on ergodic averages are presented.

Recall~$\hat{d}$ to be the twisted metric given as in~\eqref{tmet}, interpreted as a metric on~$\mathbb{R}^d$ if~$\hat{\gamma}=\infty$. Again, assume that~$\theta,\theta'$ used in SGgHMC are predetermined. In the following Corollary~\ref{gaucor}, %suppose an~$\mathbb{R}^{2d}$-valued r.v.
suppose that some~$(x,v)\in\mathbb{R}^{2n}$ is given as the initial state of SGgHMC and recall for each~$n\in\mathbb{N}$ that~$(x_n,v_n)$ denotes the position-velocity state of the SGgHMC chain after~$n$ iterations.

\begin{corollary}\label{gaucor}
Let Assumption~\ref{A1} hold 
and~$f:\mathbb{R}^d\rightarrow\mathbb{R}$ be Lipschitz. 
%satisfy~$\|f\|_{\textrm{Lip}(\hat{d})} <\infty$. 
Assume~$4LT^2\leq(1-\eta)^2$ and~\eqref{carot}.
There exists~$C>0$ depending only on~$\hat{\alpha},m,L,R$ (in particular it is independent of~$d$) 
such that for any~$N_0\in\mathbb{N}\setminus\{0\},N\in\mathbb{N}$,~$r>0$, it holds that
%with~$N>N_{\min}$, 
\begin{equation}
\mathbb{P}\bigg(\frac{1}{N}\sum_{i=N_0+1}^{N_0+N} (f(x_i) - \mathbb{E}[ f(x_i)]) > r\bigg) \leq \exp\bigg(-\frac{ (1-\eta)Nr^2}{CT^2\|f\|_{\textrm{Lip}(\hat{d})}^2}\bigg).\label{gaucoreq}
\end{equation}
\end{corollary}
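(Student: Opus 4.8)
The plan is to derive Gaussian concentration from the $\mathcal W_1$-contraction~\eqref{intma1} via the standard route of controlling the martingale increments of the Lipschitz functional along the chain, i.e.\ the approach of~\cite{MR1849347} (and as used in~\cite{MR4133372,monmarche2023entropic}). First I would fix the Lipschitz function $f$ and, for $N_0$ and $N$ given, consider the random variable $S = \sum_{i=N_0+1}^{N_0+N} f(x_i)$ together with its Doob martingale decomposition relative to the filtration $(\mathcal F_n)$ generated by the chain: writing $\Delta_k = \mathbb E[S\mid \mathcal F_k] - \mathbb E[S\mid \mathcal F_{k-1}]$, one has $S - \mathbb E[S] = \sum_k \Delta_k$. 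To bound $\|\Delta_k\|_\infty$ I would use the Markov property: $\mathbb E[S\mid\mathcal F_k]$ depends only on the state $(x_k,v_k)$, and for two starting states $z,z'$ the difference of the conditional expectations of $\sum_{i>k} f(x_i)$ is controlled, term by term, by $\|f\|_{\mathrm{Lip}(\hat d)}\,\mathcal W_{\hat d}(\pi_{i-k}(\delta_z),\pi_{i-k}(\delta_{z'}))$, which by~\eqref{intma1} (in the sharper per-step form with the semimetric $\rho^*$, cf.\ Corollary~\ref{maincore}) is $\lesssim \|f\|_{\mathrm{Lip}(\hat d)}\, e^{-c(i-k)} \hat d(z,z')$. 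Summing the geometric series gives a Lipschitz bound of order $\|f\|_{\mathrm{Lip}(\hat d)}/(1-e^{-c})$ on the map $(x_k,v_k)\mapsto \mathbb E[S\mid\mathcal F_k]$. The subtlety is that $\Delta_k$ involves comparing the actual transition at step $k$ against its average, so I would bound $|\Delta_k|$ by (twice) the oscillation of this Lipschitz map over the support of the one-step kernel, which reduces to controlling $\mathbb E[\hat d((x_k,v_k),(x_k',v_k'))]$ where $(x_k',v_k')$ is an independent copy of the $k$-th transition from $(x_{k-1},v_{k-1})$; since $x,v$ differ only through the Gaussian refreshment $G$ and the (deterministic-given-$\theta$) Verlet map is Lipschitz, this oscillation is bounded by a constant (depending on $\hat\alpha,\hat\gamma,L,T,\eta$, hence on $\hat\alpha,m,L,R$) times $\sqrt{1-\eta^2}$ times the sub-Gaussian tail of $G$, giving a uniform bound $\|\Delta_k\|_\infty \le D := C_0 \, T\|f\|_{\mathrm{Lip}(\hat d)}/(1-\eta)^{?}$ — here one has to be careful to extract exactly the claimed scaling.

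With the uniform bound $|\Delta_k|\le D$ a.s.\ in hand, I would apply the Azuma–Hoeffding inequality to the martingale $\sum_{k} \Delta_k$ (there are finitely many nonzero increments, but the bound is uniform in the horizon), obtaining
\begin{equation*}
\mathbb P\big(S - \mathbb E[S] > Nr\big) \;\le\; \exp\!\bigg(-\frac{(Nr)^2}{2\,(\text{number of increments})\,D^2}\bigg)\,.
\end{equation*}
The number of relevant increments is $O(N)$ (the increments $\Delta_k$ for $k\le N_0$ are exponentially small in $N_0-k$ and contribute a bounded geometric sum, while those for $k>N_0+N$ vanish), so $(\text{number})\cdot D^2 \asymp N D^2$, and substituting $D \asymp T\|f\|_{\mathrm{Lip}(\hat d)}/\sqrt{1-\eta}$ yields the exponent $-(1-\eta)Nr^2/(CT^2\|f\|_{\mathrm{Lip}(\hat d)}^2)$ as claimed. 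The dimension-independence is inherited directly from the dimension-independence of $c,M_1$ in Theorem~\ref{intma} and from the fact that $\mathbb E|G|$ does not enter — only sub-Gaussian norms of one-dimensional projections of $G$ do, through the reflection/synchronous structure of the increment.

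The main obstacle I expect is getting the increment bound $\|\Delta_k\|_\infty$ with the correct dependence on the parameters, in particular the precise power of $(1-\eta)$ and the factor $T$: this requires carefully tracking how a unit perturbation of the refreshment Gaussian propagates through one Verlet-$K$-step under the twisted metric $\hat d$ (the $\hat\gamma^{-1} = \eta T/(1-\eta)$ weighting on velocities and the $\hat\alpha = 1.09LT^2/(1-\eta)^2$ weighting on positions both contribute), combined with summing the contraction factor $\sum_{j\ge1} e^{-cj}$ which itself hides a $1/(1-e^{-c})$ that must be absorbed into $C$. A secondary technical point is justifying the Doob decomposition and the geometric summation rigorously when the starting point is a Dirac mass and the $\mathcal W_1$ bound~\eqref{intma1} a priori compares general distributions — this is routine but must be stated, using that $\hat d$ dominates a genuine metric and that $f$ is $\hat d$-Lipschitz on the position variable alone (so $\mathbb E[f(x_i)]$ is well-defined and finite along the chain, e.g.\ via a one-step moment bound from Assumption~\ref{A1}).
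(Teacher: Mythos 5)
Your overall strategy — a Doob-type decomposition combined with the geometric Wasserstein decay to control how far-off terms depend on an early state — is the right spirit, and your scaling analysis of the hypothetical increment bound $D\asymp T\|f\|_{\mathrm{Lip}(\hat d)}/\sqrt{1-\eta}$ does land on the claimed exponent. However, there is a genuine gap at the crucial step: the martingale differences $\Delta_k$ are \emph{not} almost surely bounded, so Azuma--Hoeffding does not apply. The one-step transition kernel involves a full-support Gaussian refreshment, so the ``oscillation of the Lipschitz map over the support of the one-step kernel'' is infinite; the quantity $\mathbb E[\hat d((x_k,v_k),(x_k',v_k'))]$ is a mean and a ``sub-Gaussian tail'' of $G$ is a decay rate, and neither is an $L^\infty$ bound. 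Writing $\|\Delta_k\|_\infty\le D$ is simply false.

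What is true, and what the paper actually uses, is the strictly weaker property that the one-step (in fact two-step) conditional kernel is sub-Gaussian for $\hat d$-Lipschitz observables. This is encoded as a transportation cost inequality $T_1(\bar C)$, established in Lemmas~\ref{bbcon}--\ref{conlem} by observing that the conditional law of $(X^{(i+2)},V^{(i+2)})$ given $(X^{(i)},V^{(i)})$ is the pushforward of a Gaussian by a map whose Lipschitz norm is controlled uniformly by Lemma~\ref{basic} (absolute continuity of this pushforward, needed for the log-Sobolev/Otto--Villani route, is checked in Lemma~\ref{abcon} and is the reason for requiring $N_0\ge1$ when $\eta>0$). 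The Gaussian-concentration corollary of $T_1$ (Theorem~1.1 of~\cite{MR2078555}) replaces the bounded-differences step of Azuma--Hoeffding with a conditional MGF bound $\mathbb E[e^{\lambda F}|\mathcal F_k]\le \exp(\lambda \mathbb E[F|\mathcal F_k]+\lambda^2\bar C\|F\|_{\mathrm{Lip}}^2/2)$ for each $\hat d$-Lipschitz $F$; the paper then runs essentially your induction over steps (and additionally over the burn-in $N_0$) to build up the full MGF bound, and finishes with Markov. In short: swap Azuma--Hoeffding for the $T_1\Rightarrow$ sub-Gaussian-MGF mechanism, and your outline becomes the paper's proof; without that swap the argument fails because the increments are unbounded.
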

Corollary~\ref{gaucor} uses the results of~\cite{MR2078555} and follows immediately from Corollary~\ref{gaucor_old}, which gives an explicit value for~$C$. 
\begin{remark}\label{ewr}
The Lipschitz norm~$\|f\|_{\textrm{Lip}(\hat{d})}$ %, or another equivalent manifestation, 
suffers from a dependence on the (square inverse) friction-like value~$\hat{\alpha}=LT^2/(1-\eta)^2\in[LT^2,1/4]$. 
More concretely, observe that if~$\eta=0$ or~$\eta=1-2\sqrt{L}T$, then~$\|f\|_{\textrm{Lip}(\hat{d})} \leq 2\|f\|_{\textrm{Lip}(d
_e)}$ holds for the standard Euclidean metric~$d_e$. For all other values of~$\hat{\alpha}$, it holds that~$\|f\|_{\textrm{Lip}(\hat{d})} = (1.09\hat{\alpha})^{-1}\|f\|_{\textrm{Lip}(d_e)}$. 
This type of dependence on~$\hat{\alpha}$ appears also within the constant~$C$ in Corollary~\ref{gaucor} and it is an implication of using the twisted metric~$\hat{d}$ in contrast to the standard Euclidean metric. 
Although the use of a twisted metric is consistent with~\cite{MR3980913}, given the well-behaved large friction limits of the algorithm as detailed in~\cite[Section~7]{lc23}, it leaves open whether Gaussian concentration bounds independent of~$\hat{\alpha}$ or of~$\hat{\gamma}$ are possible in the nonconvex setting here. A similar question remains for the bound on the bias later in Corollary~\ref{bicor}, see also the dependence in~\cite[Theorem~1]{camrud2023second}. 
%This discontinuous dependence on~$\alpha$ also appears in the~$\mathcal{W}_1$ contraction rate, manifesting in the definition~\eqref{epidef} of~$\epsilon^*$. 
On the other hand, note that given one or multiple~$f$, it is possible to find algorthmically an optimal friction with respect to the asymptotic variance~\cite{refId0}.
\end{remark}

\paragraph*{Numerical and stochastic approximations biases.} The concentration inequality \eqref{gaucoreq} only controls the distance between empirical averages and their expectation, and thus it remains to bound the bias of these estimators, namely the distance between their expectation and the target quantity, which is the average of~$f$ with respect to~$\mu$ the invariant measure of the idealized gHMC chain where~\eqref{vvi} is replaced by the exact Hamiltonian flow. There are (possibly) three contributions to the bias: first, the fact that the chain is not at stationarity; second, the discretization error of the Verlet scheme; third, the stochastic gradient approximation. 
%The first one is bounded thanks to Theorem~\ref{intma}. We now address the two other terms, starting with the discretization part.
These contributions may be combined to yield a quantitative bound on the bias of empirical averages to~$\mu$.

%The results in this section deal with the case where~$b$ is a stochastic approximation of~$\nabla U$. 
%for some~$U:\mathbb{R}^d\rightarrow\mathbb{R}$. %Following Propositions~16,~18 in~\cite{gouraud2023hmc} and their proofs, 
%Corollary~\ref{invd} and Proposition~\ref{invd2} below give the~$L^1$ Wasserstein distance w.r.t. the twisted metric~\eqref{tmet} between the target measure~$\mu$ and the invariant measure~$\tilde{\mu}$ for the unadjusted, non-stochastic gHMC chain, then the distance between the latter and~$\pi_n(\tilde{\mu})$ given suitable assumptions on the stochastic gradient. %Note that the integrator for Hamiltonian dynamics is velocity Verlet here instead of position Verlet in~\cite{gouraud2023hmc}.
%Note that~$\tilde{\mu}$ (more precisely, it denotes the invariant measure for the transition kernel of SGgHMC with~$b(\cdot,\theta)=\nabla U$) 
%with~$\Theta=\{\theta\}$) 
%and~$A=\Omega$
%exists by Corollary~\ref{maincore} and (e.g.)~\cite[Proposition~2.10]{MR2078555}. The proofs of Corollary~\ref{invd} and Proposition~\ref{invd2} are given in Appendix~\ref{spw}. They are similar in spirit to those of Propositions~16 and~18 in~\cite{gouraud2023hmc} (in the former case, see also the triangle inequality trick as in~\cite{durmus2023asymptotic} for example), but note that the setting here is not a Markov chain (due to the interaction with~$\Theta$), that the contraction is in~$\mathcal{W}_1$ and that the velocity Verlet integrator is used. The main Corollary~\ref{bicor} then uses these results to give a bound on the bias to~$\mu$ of empirical averages.

Let~$U\in C^2(\mathbb{R}^d)$ satisfy~$\nabla U(0) = 0$ (w.l.o.g.) and also~\eqref{A10},~\eqref{A1a} both with~$b(\cdot,\theta)$ replaced by~$\nabla U$. Moreover, let~$\mu$ denote the probability measure with density proportional to~$\mathbb{R}^{2d}\ni(x,v)\mapsto e^{-U(x)-\abs{v}^2/2}$. Recall that~$\hat{d}$ is given by~\eqref{tmet} and~$\abs{\cdot}_{\hat{d}}$ by the associated norm, both interpreted in the space~$\mathbb{R}^d$ if~$\hat{\gamma}=\infty$. 
In Corollary~\ref{bicor}, suppose again that some~$(x,v)\in\mathbb{R}^{2d}$ is given as the initial state of SGgHMC and recall for each~$n$ that~$(x_n,v_n)$ denotes the state after~$n$ iterations. 
Moreover, in Corollary~\ref{bicor}, we assume that~$b$ and~$\theta$ used in SGgHMC are nice, in the sense that~$b$ is a stochastic approximation of~$\nabla U$ that satisfies some unbiasedness and finite variance conditions. We defer the precise assumptions on the stochasticity of the gradient to Section~\ref{bisec} and the statement of Corollary~\ref{bicor_old}. 
\begin{corollary}\label{bicor}
%Let~$(x,v)$ be an~$\mathbb{R}^{2d}$-valued r.v.
Let Assumption~\ref{A1} hold and 
let~$f:\mathbb{R}^d\rightarrow\mathbb{R}$ be Lipschitz. Assume~$4LT^2\leq(1-\eta)^2$ and~\eqref{carot}. Assume that $b$ is an unbiased estimator of $\na U$ with uniformly bounded variance as detailed in Section~\ref{bisec}. There exists~$C,\lambda>0$,~$C'\geq 0$ all depending on~$R,m,L$ but independent of~$\eta,K,h,d$ such that for any~$N_0,N\in\mathbb{N}$, it holds that, for the SGgHMC chain with initial condition $(x,v)\in\R^{2d}$, 
\begin{align*}
&\frac{1}{\|f\|_{\textrm{Lip}(\hat{d})}}\bigg|\mathbb{E}\bigg[\frac{1}{N}\sum_{i=N_0+1}^{N_0+N}\bigg(f(x_i) - \int fd\mu\bigg)\bigg]\bigg|\nonumber\\
&\quad\leq \frac{C(1-\lambda T^2/(1-\eta))^{N_0}}{N\lambda T^2/(1-\eta)}\Big(\abs{(x,v)}_{\hat{d}} + \sqrt{d^*}+ B\Big) \nonumber\\
&\qquad+ C' L^{\frac{1}{4}} \sqrt{h} \cdot\frac{\exp(\frac{3}{2}\sqrt{L}T(N_0+N))}{\frac{3}{2}N\sqrt{L}T} + B,%\label{bicoreq}
\end{align*}
where~$d^* = m^{-1}\max(16LR^2(1+L/m)^2,d)$ and, in the case of the Verlet integrator~\eqref{vvi},~$B = 
C^{1+2\sqrt{L}(1-\eta)/(\lambda T)}h\sqrt{d^*}$, while, if instead the randomized midpoint method as in~\cite{bourabee2022unadjusted} is used,~$B = C^{1+2\sqrt{L}(1-\eta)/(\lambda T)}T^{-1}h^{3/2}\sqrt{d^*}$. 
In case the velocity Verlet integrator~\eqref{vvi} is used, if in addition there exists~$L_2>0$ such that~$\abs{\nabla^2 U(x) - \nabla^2 U(x)}\leq L_2\abs{x-x'}$ for all~$x,x'\in\mathbb{R}^d$, then the same assertion as above %inequality~\eqref{bicoreq} 
holds but with~$C$ depending also on~$L_2$ and the definition of~$B$ replaced by~$B = C^{1+2\sqrt{L}(1-\eta)/(\lambda T)}h^2d^*$. 
Furthermore, in all cases, if~$b$ is a deterministic gradient, namely if~$b(\cdot,\theta)=\nabla U$ for all~$\theta\in\Theta$, then~$C'=0$ holds.
\end{corollary}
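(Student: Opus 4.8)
\textbf{Proof plan for Corollary~\ref{bicor}.}
The strategy is to split the bias $\big|\mathbb E[\tfrac1N\sum_{i=N_0+1}^{N_0+N}(f(x_i)-\int f\,d\mu)]\big|$ into the three contributions listed in the text: (i) the distance from stationarity of the SGgHMC chain to its own invariant measure $\mu_{\mathrm{SGgHMC}}$, (ii) the discrepancy between $\mu_{\mathrm{SGgHMC}}$ and the invariant measure $\mu_{\mathrm{gHMC}}$ of the \emph{deterministic-gradient} unadjusted chain (the stochastic-gradient bias), and (iii) the discrepancy between $\mu_{\mathrm{gHMC}}$ and the target $\mu$ (the Verlet/midpoint discretization bias). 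For term (i), I would use the $\mathcal W_1$ contraction~\eqref{intma1} of Theorem~\ref{intma}: since $\pi_n$ has a fixed point, $\mathcal W_1(\pi_{N_0}(\delta_{(x,v)}),\mu_{\mathrm{SGgHMC}})\leq M_1 e^{-cN_0}\mathcal W_1(\delta_{(x,v)},\mu_{\mathrm{SGgHMC}})$, and I bound the latter $\mathcal W_1$ distance by $|(x,v)|_{\hat d}$ plus the $\hat d$-radius of $\mu_{\mathrm{SGgHMC}}$, which is controlled by $\sqrt{d^*}+B$ via a moment/Lyapunov estimate for the chain (this is where $d^*=m^{-1}\max(16LR^2(1+L/m)^2,d)$ enters, reflecting the nonconvexity radius and the dimension). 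Averaging $e^{-cN_0}$-type bounds over $i=N_0+1,\dots,N_0+N$ and identifying $c$ with $\lambda T^2/(1-\eta)$ (i.e.\ reading off the explicit rate from Corollary~\ref{maincore}) produces the first displayed term with the $\tfrac{1}{N\lambda T^2/(1-\eta)}$ prefactor.

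For term (iii), the discretization bias, I would compare one step of the idealized gHMC chain (exact Hamiltonian flow) with one step of the Verlet/midpoint chain using a standard local weak-error estimate: under the stated smoothness of $U$, the Verlet integrator has one-step error $O(h^3)$ in a suitable norm, hence $O(h^2)$ bias accumulated over $O(1/h)$ steps per unit physical time; the randomized midpoint version instead gives $O(h^{5/2})$ local error, hence the $T^{-1}h^{3/2}$ scaling in $B$. To turn a one-step perturbation into a bound on $|\int f\,d\mu_{\mathrm{gHMC}} - \int f\,d\mu|$, I would run the now-familiar argument: couple the two chains, use $\mathcal W_1$-contraction of the \emph{exact} gHMC chain (which follows from Theorem~\ref{intma} in the $h\to 0$ limit, or is quoted from the same machinery), and sum the geometric series of propagated one-step errors; this is the classical ``weak error $\times$ mixing time'' telescoping (as in Talay--Tubaro-type arguments adapted to the Wasserstein/coupling setting). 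The exponential factor $\exp(\tfrac32\sqrt L T(N_0+N))$ and the term $C'L^{1/4}\sqrt h\,\tfrac{\exp(\cdots)}{\tfrac32 N\sqrt L T}$ come from the non-stationary transient: before the chain equilibrates, I only have the crude synchronous-coupling growth bound $e^{\sqrt L T n}$ (the constant $\tfrac32\sqrt L$ being the modified-norm expansion rate from Section~\ref{convex}), and $\sqrt h$ is the per-step error constant, which vanishes ($C'=0$) precisely when $b(\cdot,\theta)=\nabla U$ and no Verlet error is being measured against the stochastic part.

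For term (ii), the stochastic-gradient bias $B$ itself: using the unbiasedness $\mathbb E[b(x,\theta)]=\nabla U(x)$ and the uniform variance bound from Section~\ref{bisec}, a single Hamiltonian step of SGgHMC differs from that of the deterministic-gradient unadjusted chain by a mean-zero, bounded-variance perturbation of size $O(h)$ entering the velocity; propagating this through the contraction of the deterministic unadjusted chain and summing gives a bias of order $h\sqrt{d^*}$ times a geometric factor, which is the source of the $C^{1+2\sqrt L(1-\eta)/(\lambda T)}$ prefactor (the exponent $2\sqrt L(1-\eta)/(\lambda T)$ is the ratio of the crude expansion rate $\sqrt L$ to the contraction rate $\lambda T/(1-\eta)$ per iteration, i.e.\ the number of ``bad'' steps one must absorb). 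The refined $h^2 d^*$ and $T^{-1}h^{3/2}\sqrt{d^*}$ forms of $B$ under the extra Hessian-Lipschitz hypothesis on $U$ follow from a second-order Taylor expansion of the Verlet/midpoint one-step map, exploiting the symmetry of the Verlet scheme so that the $h^2$ terms cancel in the weak sense and only the $h^3$ (hence $h^2$ accumulated) remainder survives.

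The main obstacle I anticipate is \emph{assembling the constants consistently across the three telescoping arguments}: each of (i)--(iii) requires a Wasserstein-contraction statement for a slightly different chain (stochastic-gradient Verlet, deterministic-gradient Verlet, exact-flow gHMC), and one must check that Theorem~\ref{intma}/Corollary~\ref{maincore} apply with uniform constants $c,\lambda,M_1$ to all three under a single parameter regime $4LT^2\leq(1-\eta)^2$ and~\eqref{carot}, so that the final constants $C,C',\lambda$ depend only on $R,m,L$ and not on $\eta,K,h,d$. Secondarily, the transient ($N_0$ small) regime forces careful bookkeeping: one cannot use contraction before mixing, so the $e^{\sqrt L T n}$ growth and the geometric-sum-with-ratio-$<1$ must be patched together at the crossover, and getting the stated clean form (a contraction term plus a $\sqrt h\cdot e^{(3/2)\sqrt L T(N_0+N)}/N$ term plus $B$) requires estimating that crossover crudely but correctly. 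The routine one-step weak-error computations for Verlet and randomized midpoint I would simply cite or relegate to the relevant subsections (Section~\ref{bisec} and the midpoint discussion at the start of Section~\ref{ncsec}).
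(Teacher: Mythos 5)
Your high-level decomposition into (i) non-stationarity, (ii) stochastic-gradient bias, and (iii) discretization bias matches the paper's philosophy, but the intermediate measure you introduce, $\mu_{\mathrm{SGgHMC}}$, is not available: in the stated generality the $\theta_i$'s are only required to be unbiased with bounded variance, not i.i.d., so the stochastic chain need not be time-homogeneous and may have no invariant probability measure. The paper's proof (Corollary~\ref{bicor_old}) anchors instead on $\tilde\mu$, the invariant measure of the \emph{deterministic}-gradient unadjusted gHMC chain (whose existence follows from Corollary~\ref{maincore} together with the $T_1$ inequality of Lemma~\ref{conlem}), and splits via Kantorovich--Rubinstein into three $\mathcal W_1$ terms: $\mathcal W_1\big(\pi_{N_0+i}(\delta_{(x,v)}),\pi_{N_0+i}(\tilde\mu)\big)$, $\mathcal W_1\big(\pi_i(\tilde\mu),\tilde\mu\big)$ and $\mathcal W_1(\tilde\mu,\mu)$. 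The middle term is the crucial reformulation: since $\tilde\mu$ is \emph{not} invariant under the stochastic chain, Proposition~\ref{invd2} bounds its forward $\mathcal W_2$-drift by $\big(C_U\sqrt{L}\,h\,(1+3\sqrt{L}h)^{nK}/p\big)^{1/2}$, which is where both the $\sqrt h$ scaling (not the $h\sqrt{d^*}$ you predict for this contribution) and the factor $\exp\big(\tfrac32\sqrt{L}\,T(N_0+N)\big)$ originate.

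Your attributions of the final terms are also interchanged: the $C'\sqrt{h}\,e^{\cdots}$ term is the stochastic-gradient contribution (it vanishes precisely when $C'=0$, i.e.\ $b=\nabla U$), while $B$ is the \emph{discretization} bias $\mathcal W_1(\tilde\mu,\mu)$ from Corollary~\ref{invd}, which persists in the deterministic case. Likewise, the base Verlet scaling is $B\sim h\sqrt{d^*}$, not the $h^2$ bias you state for that case: absent the Hessian-Lipschitz hypothesis, the one-step weak error is only $O(h^2)$, hence the accumulated bias is $O(h)$; the $h^2 d^*$ form requires $\nabla^2U$ Lipschitz, as you correctly note later. Your reading of the exponent $2\sqrt{L}(1-\eta)/(\lambda T)$ as a ratio of expansion to contraction rates is right in spirit — it matches $(2c^*)^{2\sqrt{L}T/c}$ in the proof of Corollary~\ref{invd} with $c\sim\lambda T^2/(1-\eta)$.
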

\begin{remark}
The value~$B$ is exactly an upper bound on the bias between the invariant law of the nonstochastic gHMC chain and~$\mu$ (given by Corollary~\ref{invd}). The scaling in the exponent on~$C$ in either definitions of~$B$ is such 
%scales like~$\exp(C(1-\eta)/T)$ as~$(1-\eta)/T\rightarrow\infty$. Therefore,
that~$B$ will not converge to zero as~$h\rightarrow 0$ if for example~$K=1$ and~$\eta$ is a constant independent of~$T$. In particular, when~$\eta=0$ and~$K=1$, which is the case of ULA~\cite{durmus2018highdimensional}, the value of~$B$ tends to infinity as~$h\rightarrow 0$. On the other hand, this particular case is well studied and in particular Theorem~5 in~\cite{durmus2023asymptotic} may be used in place of Corollary~\ref{invd} in the proof of Corollary~\ref{bicor_old}. Moreover, in the Langevin case~$\eta=1-2\sqrt{L}T$ with small~$T$, the aforementioned exponent is constant, so that taking~$h\rightarrow 0$ minimizes the bias as desired.
\end{remark}

\section{Reflection coupling in a bounded region}\label{ncsec}

In this section, a reflection coupling (see~\cite{MR0841588,MR3568041} for the seminal works on this) will be devised in order to prove a local contractivity of SGgHMC iterations based on a suitably chosen concave function. The proof is a novel and explicit estimation of the expected difference before and after a SGgHMC iteration. 
%The contractivity for the two associated chains is restricted to initial states that have differences belonging to a bounded region. 
The concave function 
%, evaluated at points of the trajectory of SGgHMC, 
will then form part of a semimetric used to give a global contraction in Section~\ref{global}, which is subsequently used to prove Wasserstein contraction in Corollary~\ref{maincore} and hence the announced Theorem~\ref{intma}. Below, Section~\ref{secsg} introduces the coupling and gives the central Lemma~\ref{base}, which yields a priori bounds on the associated difference chain based on similar a priori estimates from~\cite{MR4133372}. 
%Section~\ref{aux} draws from the idea in~\cite{https://doi.org/10.48550/arxiv.2104.06771} to give an auxiliary process based on the upper bounds in Lemma~\ref{base}. In Section~\ref{refcon}, certain upper and lower bounds on the expected displacement of auxiliary processes from the SGgHMC chain are given, which are then used to show contraction for a particular concave function in Theorem~\ref{fcont}; the arguments there draw from ideas in~\cite{MR3933205}. The aforementioned lower bounds rely on having introduced the auxiliary process. 
Note that the results in this section are proven for all friction values, that is, all parameter values~$K,h,\eta$ satisfying conditions analogous to~\eqref{carot}.

Let~$\gamma\in(0,\infty]$ %~$T>0$,~$\eta\in[0,1)$ 
satisfy~$\eta(1+\gamma T)\leq1$. For now,~$\gamma$ is left arbitrary and Lemma~\ref{base} is proved for this general constant. Its value is fixed relative to~$K,h,\eta$ from the beginning of Section~\ref{contractionsec} and onwards for the rest of the paper. We build on the notation in~\cite[equation~(7)]{MR4133372} that~$(t,x,v)\mapsto q_t(x,v)$ and~$(t,x,v)\mapsto p_t(x,v)$ denote the trajectory of Hamiltonian dynamics approximated by the velocity Verlet integrator at time~$t$ with initial position and velocity~$x$ and~$v$. In particular, this notation is generalized to allow a randomized midpoint version with stochastic gradients as defined below. 
%Similar to the stochastic gradients, since the midpoints will be chosen to be independent from all other random terms, they are taken to be deterministic for now.
Let~$\hat{\theta} := (\theta_{ih})_{i\in\mathbb{N}\cap[0,T/h)},\hat{\theta}' :=(\theta_{ih}')_{i\in\mathbb{N}\cap[0,T/h)}\in\Theta^{T/h}$. Let~$u\in\{0,1\}$ and let~$\hat{u}:=(u_{hk})_{k\in \mathbb{N}}$ be a sequence of independent r.v.'s such that~$u_{hk}\sim\mathcal{U}(0,1)$ for all~$k\in \mathbb{N}$. Denote~$\bar{\theta} = (\hat{\theta},\hat{\theta}',u,\hat{u})$.
%satisfy~$u_k\sim\mathcal{U}(k,k+1)$ for all~$k\in h\mathbb{N}$. 
Moreover, for any~$s\in[0,T]$, let~$\floor{s} = \max\{t\in h\mathbb{N}:t\leq s\}$ and~$\ceil{s} = \min\{t\in h\mathbb{N}:t\geq s\}$.
%For any~$x,v\in\mathbb{R}^d$, let~$[0,T]\ni t\mapsto(\bar{q}_t,\bar{p}_t) = (\bar{q}_t(x,v,\bar{\theta}),\bar{p}_t(x,v,\bar{\theta}))$ be a solution to any of the Verlet integrator
For any~$x,v\in\mathbb{R}^d$, let~$[0,T]\ni t\mapsto(\bar{q}_t,\bar{p}_t) = (\bar{q}_t(x,v,\bar{\theta}),\bar{p}_t(x,v,\bar{\theta}))$ be the solution to
\begin{subequations}\label{hameqs}
\begin{align}
\frac{d}{dt}\bar{q}_t &= \bar{p}_{\floor{t}} - \frac{h}{2}b(\bar{q}_{\floor{t}} + uu_{\floor{t}} h\bar{p}_{\floor{t}},\theta_{\floor{t}}),\\
\frac{d}{dt}\bar{p}_t &= -\frac{1}{2}(1+u) b(\bar{q}_{\floor{t}} + uu_{\floor{t}}h\bar{p}_{\floor{t}},\theta_{\floor{t}}) - \frac{1}{2}(1-u) b(\bar{q}_{\ceil{t}},\theta_{\floor{t}}') 
\end{align}
\end{subequations}
with~$(\bar{q}_0,\bar{p}_0)= (x,v)$. If~$u=1$, then~$(\bar{q}_{\cdot},\bar{p}_{\cdot})$ coincides with the randomized midpoint integrator studied in~\cite[Algorithm~1]{bourabee2022unadjusted}. If instead~$u=0$, then~$(\bar{q}_{\cdot},\bar{p}_{\cdot})$ is the velocity Verlet integrator presented in~\eqref{vvi}. %In the randomized midpoint version, we do not require~$u_{ih}$ to be independent of~$u_{jh}$ for~$i\neq j$. In particular, we may choose~$u_{ih}=u_{jh}$ for all~$i,j$. In that case, essentially only the first and last midpoints out of the~$K$ midpoints are randomized and the middle iterations are equivalent to~\eqref{vvi}.

%There is a hidden dependence on~$\theta$ in each step of the trajectory. 
%In Section~\ref{secsg}, we have in mind the case~$\theta_{ih}' = \theta_{(i+1)h}$ for all~$i\in\mathbb{N}\cap[0,T/h-1)$ in order to save on computational effort, but this is not assumed. In Section~\ref{secdg}, we assume that~$\theta_{ih} = \theta_{ih}' = \theta_0$ is constant in~$i$ for the duration of any single trajectory. 
We state next a priori estimates for the integrator~\eqref{hameqs}, which will be useful for our analysis. The proof of Lemma~\ref{basic} is similar to and builds on that of Lemma~3.2 in~\cite{MR4133372}.% therefore we omit its proof.
\begin{lemma}\label{basic}
Assume~\eqref{A1a}. Let~$c\geq1$ and assume~$LT(T+h)\leq c^{-1}$. For any~$x,y,v,w\in\mathbb{R}^d$, it holds a.s. that
\begin{align}
&\max_{s\in[0,T]}\abs{\bar{q}_s(x,v,\bar{\theta}) - \bar{q}_s(y,w,\bar{\theta}) - (x-y) - s(v-w)}\nonumber\\
&\quad\leq (3/(2c-1))\max(\abs{x-y},\abs{x-y+(v-w)T}),\label{lem64a}\\
%&\max_{\bar{s}\in[0,T]}\abs{\bar{q}_s(x,v,\bar{\theta}) - \bar{q}_s(y,w,\bar{\theta})} \leq (64/63)\max(\abs{x-y},\abs{x-y+(v-w)T}),\label{lem64b}\\
&\max_{s\in[0,T]}\abs{\bar{p}_{\bar{s}}(x,v,\bar{\theta}) - \bar{p}_s(y,w,\bar{\theta}) - (v-w)} \nonumber\\
&\quad\leq (6c/(2c-1))LT\max(\abs{x-y},\abs{x-y+(v-w)T}).\label{lem64}
\end{align}
\end{lemma}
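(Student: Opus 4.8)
The statement to prove is Lemma~\ref{basic}, giving a priori bounds on how the approximate Hamiltonian flow~\eqref{hameqs} distorts the affine motion $t\mapsto (x-y) + t(v-w)$ in position, and the near-constancy of the velocity difference. The strategy is a Grönwall/fixed-point argument on the difference chain, closely mirroring Lemma~3.2 of~\cite{MR4133372} but tracking the extra randomized-midpoint term (the $uu_{\floor{t}}h\bar p_{\floor t}$ shift and the $\frac12(1\pm u)$ weights).

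First I would set up notation for the two trajectories started from $(x,v)$ and $(y,w)$: write $\Delta q_t = \bar q_t(x,v,\bar\theta) - \bar q_t(y,w,\bar\theta)$, $\Delta p_t = \bar p_t(x,v,\bar\theta) - \bar p_t(y,w,\bar\theta)$, and the ``error'' $e_t = \Delta q_t - (x-y) - t(v-w)$. Define $M := \max(\abs{x-y},\abs{x-y+(v-w)T})$, which controls the affine part on $[0,T]$ since $|(x-y)+t(v-w)|$ is a convex function of $t$ and hence bounded by $M$ on $[0,T]$. From~\eqref{hameqs}, $\frac{d}{dt}e_t = (\Delta p_{\floor t} - (v-w)) - \frac h2\big(b(\cdot_x) - b(\cdot_y)\big)$ where the $b$-differences are evaluated at the shifted midpoints; using~\eqref{A1a} (Lipschitz-ness of $b$ in the direction of the difference) and the fact that the midpoint shift changes the argument difference by at most $uu_{\floor t}h\,|\Delta p_{\floor t}| \le h|\Delta p_{\floor t}|$, bound $|\frac{d}{dt}e_t| \le |\Delta p_{\floor t} - (v-w)| + \frac{h}{2}L(|\Delta q_{\floor t}| + h|\Delta p_{\floor t}|)$. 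Similarly from the $\bar p$-equation, $\frac{d}{dt}\Delta p_t$ is a convex combination of $b$-differences at (shifted) points, so $|\frac{d}{dt}\Delta p_t| \le L(|\Delta q_{\floor t}| + h|\Delta p_{\floor t}|)$, giving $|\Delta p_{\bar s} - (v-w)| \le LT\cdot \sup_{[0,T]}(|\Delta q_{\floor\cdot}| + h|\Delta p_{\floor\cdot}|)$ once the right side is controlled.

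The core is to close the loop: let $A := \sup_{s\in[0,T]} |e_s|$ and $B := \sup_{s\in[0,T]}|\Delta p_s - (v-w)|$. Then $|\Delta q_s| \le M + A$ and $|\Delta p_s| \le |v-w| + B$. Note $|v-w|T = |(x-y+(v-w)T) - (x-y)| \le 2M$, so $h|\Delta p_s| \le \frac hT|v-w|T + hB \le 2M + hB$ (using $h\le T$). Feeding these into the two ODE bounds and integrating over $[0,T]$ yields, schematically, $A \le TB + \frac{LT}{2}(T+h)(M+A) + (\text{lower order in }h)$ and $B \le LT(T+h)(M+A)$ roughly; substituting and using $LT(T+h)\le c^{-1}$, one solves the resulting linear inequalities for $A$ and $B$ in terms of $M$. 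The constants $3/(2c-1)$ and $6c/(2c-1)$ emerge from this elimination — essentially $A(1 - \text{something}/(2c)) \le (3/2)M/(2c-1)\cdot(\dots)$ type manipulations, matching exactly the shape in~\cite[Lemma~3.2]{MR4133372} after accounting for the harmless extra $h$-terms (which are why the bound is in terms of $T(T+h)$ rather than $T^2$).

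The main obstacle I expect is the bookkeeping around the randomized-midpoint shift: the argument of $b$ in~\eqref{hameqs} is $\bar q_{\floor t} + uu_{\floor t} h \bar p_{\floor t}$, so the relevant difference is $\Delta q_{\floor t} + uu_{\floor t}h\,\Delta p_{\floor t}$, and one must verify that using $|\Delta p_{\floor t}| \le |v-w| + B$ together with $h|v-w|\le 2M/(T/h)\le 2M$ and $h\le T$ keeps every extra contribution within the same order as the Verlet case, so that the $c$-dependent constants are genuinely unchanged. A secondary subtlety is that $\bar p$ is piecewise-affine and the bound~\eqref{lem64} is stated at the ``staircase'' time $\bar s$ (i.e. $\floor s$ or a midpoint) rather than at $s$; one should check the equation for $\frac{d}{dt}\bar p_t$ only involves $\bar q$ and $\bar p$ at grid points $\floor t,\ceil t$, so that $\Delta p$ between grid points interpolates linearly and the sup is controlled by grid values — and since $\ceil t \le \floor t + h$, the $\ceil{}$-term contributes a further $h|\Delta p|$-type correction already absorbed above. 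Once these are pinned down, the rest is the routine Grönwall elimination.
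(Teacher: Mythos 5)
Your plan takes essentially the same route as the paper: set up $e_s=\Delta q_s-(x-y)-s(v-w)$ and $\Delta p_s$, observe the affine part is dominated by $M=\max(\abs{x-y},\abs{x-y+T(v-w)})$ on $[0,T]$, derive from~\eqref{hameqs} a pair of coupled bounds on $\sup\abs{e_s}$ and $\sup\abs{\Delta p_s}$, and close the loop using $LT(T+h)\le c^{-1}$ and $h\abs{v-w}\le 2M$. The paper simply cites the proof of Lemma~3.2 in~\cite{MR4133372} for the first inequality
$\abs{\bar q_s-\bar q_s'-(x-y+s(v-w))}\le \tfrac L2(T^2+Th)\max_{s\le T}(\abs{\Delta q_s}+h\abs{\Delta p_s})$,
which you rederive from the integrator; the content is the same. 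Your discussion of the midpoint shift contributing only an extra $h\abs{\Delta p_{\floor t}}$-type term and of the piecewise-affine grid structure is correct, and the $\bar s$ in the statement is just a typo for $s$, so no extra subtlety there.

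One point to tighten if you carry this out: the schematic pair $A\le TB + \tfrac{LT}{2}(T+h)(M+A)$ with $B:=\sup\abs{\Delta p_t-(v-w)}$ double-counts the velocity error. The contribution of $\Delta p_{\floor t}-(v-w)$ to $e_s$ should be estimated through its linear-in-$t$ growth, $\abs{\Delta p_t-(v-w)}\le Lt\max(\abs{\Delta q}+h\abs{\Delta p})$, which integrated gives the $\tfrac{L}{2}T^2$ piece already present in $\tfrac{L}{2}(T^2+Th)(M+A)$; keeping a separate $TB$ term and then feeding $B\le LT\max(\cdots)$ back in replaces $\tfrac12 LT^2$ by $LT^2$, i.e.\ loses a factor of two, and the stated constants $3/(2c-1)$ and $6c/(2c-1)$ will not come out of the elimination. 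The paper avoids this by stating the single closed inequality $\abs{e_s}\le \tfrac L2(T^2+Th)\max_{s\le T}(\abs{\Delta q_s}+h\abs{\Delta p_s})$ and running the bookkeeping entirely on that, together with its analogue $\max_{s\le T}\abs{\Delta p_s-(v-w)}\le LT\max_{s\le T}(\abs{\Delta q_s}+h\abs{\Delta p_s})$.
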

\begin{proof}
For any~$s\in[0,T]$, denote~$\bar{q}_s:=\bar{q}_s(x,v,\bar{\theta})$,~$\bar{q}_s':=\bar{q}_s(y,w,\bar{\theta})$ and~$\bar{p}_s:=\bar{p}_s(x,v,\bar{\theta})$,~$\bar{p}_s':=\bar{p}_s(y,w,\bar{\theta})$. In the same way as in the proof of Lemma~3.2 in~\cite{MR4133372}, it holds that
\begin{equation*}
\abs{\bar{q}_s - \bar{q}_s' - (x-y + s(v-w))} \leq \frac{L}{2}(T^2+Th)\max_{s\leq T}(\abs{\bar{q}_s-\bar{q}_s'} + h\abs{\bar{p}_s-\bar{p}_s'})
\end{equation*}
which implies
\begin{align*}
&\abs{\bar{q}_s - \bar{q}_s' - (x-y + s(v-w))} \\
&\quad\leq \frac{L}{2}(T^2+Th)\max_{s\leq T}(\abs{\bar{q}_s-\bar{q}_s'- (x-y+s(v-w))} \\
&\qquad+ \max(\abs{x-y},\abs{x-y+(v-w)T} + h\abs{\bar{p}_s-\bar{p}_s'}).
\end{align*}
Therefore, by~$LT(T+h)\leq c^{-1}$, it holds that
\begin{equation}\label{haf1}
\abs{\bar{q}_s - \bar{q}_s' - (x-y + s(v-w))} \leq \frac{1}{2c-1}( \max(\abs{x-y},\abs{x-y+(v-w)T} + h\max_{s\leq T}\abs{\bar{p}_s-\bar{p}_s'}),
\end{equation}
which implies
\begin{equation}
\max_{s\in[0,T]}\abs{\bar{q}_s - \bar{q}_s'}
\leq \frac{2c}{2c-1} \max(\abs{x-y},\abs{x-y+(v-w)T}) + \frac{h}{2c-1}\max_{s\leq T}\abs{\bar{p}_s-\bar{p}_s'}.\label{gaq}
\end{equation}
On the other hand and similar to the proof of Lemma~3.2 in~\cite{MR4133372}, it holds by~\eqref{gaq} that
\begin{equation*}
\max_{s\in[0,T]}\abs{\bar{p}_s-\bar{p}_s' - (y-w)}  \leq LT\max_{s\leq T}(\abs{\bar{q}_s-\bar{q}_s'}+h\abs{\bar{p}_s-\bar{p}_s'}),
\end{equation*}
which implies by~\eqref{gaq} that
\begin{align}
&\max_{s\in[0,T]}\abs{\bar{p}_s-\bar{p}_s' - (y-w)} \nonumber\\
&\quad\leq LT\bigg(\frac{2c}{2c-1}\max(\abs{x-y},\abs{x-y + (v-w)T}) +\frac{2ch}{2c-1}\max_{s\leq T}\abs{\bar{p}_s-\bar{p}_s'}\bigg).\label{haf2}
\end{align}
Therefore it holds by~$ \abs{\bar{p}_s-\bar{p}_s'} \leq \abs{y-w} + \abs{\bar{p}_s-\bar{p}_s' - (y-w)}$ and~$LTh\leq LT(T+h)/2\leq (2c)^{-1}$ that
\begin{equation}
\max_{s\in[0,T]}\abs{\bar{p}_s-\bar{p}_s'} \leq \frac{2c-1}{2c-2}\bigg(\abs{y-w} + \frac{2cLT}{2c-1}\max(\abs{x-y},\abs{x-y + (v-w)T})\bigg).
\end{equation}
Subsequently, by~$\abs{h(v-w)}\leq \abs{(-(x-y) + (x-y) + T(v-w)}\leq 2\max(\abs{x-y},\abs{x-y + T(v-w)})$ and again~$LTh\leq LT(T+h)/2\leq (2c)^{-1}$, it holds that
\begin{equation}\label{haf}
h\max_{s\in[0,T]}\abs{\bar{p}_s-\bar{p}_s'} \leq \frac{4c-1}{2c-2}\max(\abs{x-y},\abs{x-y + (v-w)T}).
\end{equation}
Substituting~\eqref{haf} into~\eqref{haf1} and~\eqref{haf2} concludes the proof.
\end{proof}
%Note that the only random terms are~$(u_{hk})_{k\in\mathbb{N}}$ in the above setup. Therefore the expectations on the left-hand sides of~\eqref{lem64a} and~\eqref{lem64} are conditional to~$x,y,v,w$ and~$\bar{\theta}$ even if they were random, which will be relevant later. 

\subsection{Coupling}\label{secsg}
Here the relationship between~$G$ and~$\hat{G}$ at each iteration of SGgHMC is introduced. 
For any~$x,y,v,w\in\mathbb{R}^d$ and~$G,\hat{G}\sim\mathcal{N}(0,I_d)$, 
%~$\hat{\theta},\hat{\theta}'\in \Theta^{T/h}$, 
an iteration of the coupled dynamics is given by
\begin{subequations}\label{ous}
\begin{align}
v' &= \eta v + \sqrt{1-\eta^2}G,& w' &= \eta w + \sqrt{1-\eta^2} \hat{G},\label{ou1}\\
X' &= \bar{q}_T(x,v',\bar{\theta}),& Y' &= \bar{q}_T(y,w',\bar{\theta}),\\
V' &= \bar{p}_T(x,v',\bar{\theta}), & W' &= \bar{p}_T(y,w',\bar{\theta}).
\end{align}
\end{subequations}
%where~$A$ is some acceptance set in the probability space and~$\bar{\theta} = (\hat{\theta},\hat{\theta}')$. 
For~$x,y,v,w\in\mathbb{R}^d$ together with~\eqref{ous}, the following notation will be used:
\begin{subequations}\label{zdefs}
\begin{align}
z &:= x-y,\\
q &:= x-y + \gamma^{-1}(v-w),\\
%r &:= \alpha\abs{z} + \abs{q}\\
Z' &:= X'-Y',\\
Q' &:= X'-Y' + \gamma^{-1}(V' - W'),\\
%R' &:= \alpha\abs{Z'} + \abs{Q'}.
%\rho' &:= f(R') G',\\
%G' &:= 1+\epsilon \mathcal{V}(X',V') + \epsilon \mathcal{V}(Y',W').
%\hat{q} &:= \frac{x-y+\eta(T+\gamma^{-1})(v-w)}{(T+\gamma^{-1})\sqrt{1-\eta^2}}.\label{zdef5}
\hat{q} &:= r^*(T+\gamma^{-1})\sqrt{1-\eta^2}\big(x-y+\eta(T+\gamma^{-1})(v-w)\big),\label{zdef5}
\end{align}
\end{subequations}
where~$r^*\in(0,1/((T+\gamma^{-1})^2(1-\eta^2))]$ and~$\gamma$ is a variable as stated at the beginning of the section, but should be interpreted to be fixed as in~\eqref{gdef} below from the beginning of Section~\ref{contractionsec} and onwards for the rest of the paper. 
The expression for~$\hat{q}$ is of interest here because the random variable~$\hat{G}$ is coupled with~$G$ by
\begin{equation}\label{cou1}
\hat{G} = \begin{cases}
G +\hat{q} &\textrm{if } \mathcal{U}\leq \frac{\varphi_{0,1}(e\cdot G + \abs{\hat{q}})}{\varphi_{0,1}(e\cdot G)} \\
%G &\textrm{if } \abs{e^TG} \geq \frac{(1-\eta)\abs{q}}{2T\sqrt{1-\eta^2}},\\
(I - 2ee^T)G &\textrm{otherwise},%\textrm{if }\mathcal{U}> \frac{\varphi_{0,1}(e\cdot G + \hat{q})}{\varphi_{0,1}(e\cdot G)}
\end{cases}
\end{equation} 
where
\begin{equation}\label{edef}
e = \begin{cases}
\hat{q}\abs{\hat{q}}^{-1} &\textrm{if }\hat{q}\neq 0 \\
(1,0,\dots,0)&\textrm{otherwise,}
\end{cases}
\end{equation}
and~$\mathcal{U}\sim \mathcal{U}(0,1)$ is independent of~$G$ and~$(u_{hk})_{k\in\mathbb{N}}$. The value~$r^*$ is to be determined. In~\cite{MR3949304}, a similar value is considered as a tuning parameter in a coupling-based HMC algorithm. For now, we consider any possible value in the stated range, but we will fix~$r^*$ to be the maximum possible value later and justify this choice with our analysis.

In addition, it will be convenient to set the following notation in order to represent the coupling in the direction of~$\hat{q}$. For any~$\bar{v}\in\mathbb{R}^d$, let~$K_{\bar{v}}$ be given by %together with~$\bar{G}_{\bar{v}},\hat{v}$ by%~$K_k$ is given by~\eqref{tildG0},~\eqref{Kdef} with~$K_k,\abs{q_k},\bar{G}_k,G_k,\hat{G}_k,e_k,\mathcal{U}_k$ replacing the same letters without their subscripts.
%\iffalse
\begin{equation}\label{kdef}
K_{\bar{v}}= \bigg|\frac{\bar{v}}{r^*(T+\gamma^{-1})\sqrt{1-\eta^2}} + (T+\gamma^{-1})\sqrt{1-\eta^2} \bar{G}\bigg|,
\end{equation}
where~$\bar{G} = G-\hat{G}$. In the case where~$G$ is coupled with~$\hat{G}$ as in~\eqref{cou1} and~\eqref{edef},~$\bar{G}$ is given by
\begin{equation}\label{Gdef2}
\bar{G} = G-\hat{G} = \begin{cases}
-\hat{q} & \textrm{if }\mathcal{U} \leq \frac{\varphi_{0,1}(e\cdot G + \abs{\hat{q}})}{\varphi_{0,1}(e\cdot G)}\\
2ee^{\top} G & \textrm{otherwise},
\end{cases}
\end{equation}
where~$\hat{q}$ and~$e$ are given by~\eqref{zdefs},~\eqref{cou1} and~\eqref{edef}. 

Next, in Lemma~\ref{base}, we establish estimates on~$Z',Q'$ based on the a priori estimates for the Verlet integrator established in~\cite{MR4133372}. Note that we do not assume here that~$G$ and~$\hat{G}$ satisfy~\eqref{cou1},~\eqref{edef}. In particular, Lemma~\ref{base} applies in case of both synchronous and reflection coupling.

\begin{lemma}\label{base}
%Let~$\gamma=(1-\eta)/(\eta T)$. 
Let~$r^*\in(0,(1-\eta)^2/(T^2(1-\eta^2))]$, let~$x,y,v,w$ be~$\mathbb{R}^d$-valued r.v.'s and let~$G\sim \mathcal{N}(0,I_d)$ be independent of~$x,y,v,w,(u_{kh})_{k\in\mathbb{N}}$. Moreover, let~$z,\hat{q},Z',Q'$ be given by~\eqref{zdefs}, let~$\hat{G}\sim\mathcal{N}(0,I_d)$ and let~$K_{\cdot}$ be given by~\eqref{kdef} with~$\bar{G} = G-\hat{G}$. %~\eqref{tildG0} and~\eqref{Kdef}. For~$Z',Q'$ given by~\eqref{zdefs} and~\eqref{ous},
%with~$A=\Omega$, 
Suppose~\eqref{A1a} holds 
%for all~$u,x\in\mathbb{R}^d$,~$\theta\in\Theta$ 
and that~$T\in h\mathbb{N}$ satisfies~$0<LT(T+h)\leq 1/16^2$. It holds a.s. that
\begin{align}
\abs{Z'} &\leq \frac{\gamma^{-1}\abs{z} + TK_{\hat{q}}}{T+\gamma^{-1}} + cLT(T+h)\bigg(\abs{z}+\frac{TK_{\hat{q}}}{2(T+\gamma^{-1})}\bigg),\label{base1}\\
\abs{Q'} &\leq K_{\hat{q}} + 2cL\bigg(\gamma^{-1}T + \frac{1}{2}T(T+h)\bigg)\bigg(\abs{z} + \frac{TK_{\hat{q}}}{2(T+\gamma^{-1})}\bigg).\label{base2}
\end{align}
where~$c=32/31$. The same inequalities hold with~$c$ replaced by~$c/2$ if the left-hand sides are replaced by~$\mathbb{E}[\abs{Z'}|x,y,v,w,G,\hat{G}]$ and~$\mathbb{E}[\abs{Q'}|x,y,v,w,G,\hat{G}] $ respectively. 
%where~$Z',Q',K_{\hat{q}}$ are given by~\eqref{zdefs},~\eqref{ous},~\eqref{tildG0} and~\eqref{Kdef}.
\end{lemma}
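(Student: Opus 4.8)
The plan is to track the two "difference observables" $Z' = X'-Y'$ and $Q' = X'-Y'+\gamma^{-1}(V'-W')$ through one coupled iteration \eqref{ous}, and to express everything in terms of the single scalar quantity $K_{\hat q}$ that encodes the post-refreshment separation along the reflection direction. First I would apply Lemma~\ref{basic} with the choice $c = 16^2/16 = 16$ is too crude; rather, since the hypothesis is $LT(T+h)\le 1/16^2$, I would invoke Lemma~\ref{basic} with $c$ equal to (a lower bound for) $1/(LT(T+h))\ge 16^2$, so that the prefactors $3/(2c-1)$ and $6c/(2c-1)$ become $\le 3/(2c-1)\le cLT(T+h)\cdot\frac{3}{2c-1}\cdot\frac{1}{LT(T+h)}$, i.e. of the order $cLT(T+h)$ with $c$ close to $1$; a short computation shows $3/(2c-1)\le (32/31)LT(T+h)$ and $6c/(2c-1)\le (32/31)$ once $c\ge 16^2$, which is where the constant $32/31$ comes from. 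So the real first step is just bookkeeping to convert the $c$-dependent bounds of Lemma~\ref{basic} into clean bounds with the error terms proportional to $LT(T+h)$ and leading constant $32/31$.

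Next I would relate the ``initial data'' $(x,v'),(y,w')$ fed into the Verlet integrator to $z=x-y$, $q$ and $\hat q$. By \eqref{ou1}, $v'-w' = \eta(v-w) + \sqrt{1-\eta^2}(G-\hat G) = \eta(v-w) + \sqrt{1-\eta^2}\,\bar G$. The key algebraic identity is that $x-y + (v'-w')T$, which is exactly the ``tilted'' combination appearing on the right of Lemma~\ref{basic}, can be rewritten in terms of $\hat q$ (defined in \eqref{zdef5}) and $\bar G$: indeed $x-y+\eta(T+\gamma^{-1})(v-w) = \hat q/(r^*(T+\gamma^{-1})\sqrt{1-\eta^2})$, and adding the $\sqrt{1-\eta^2}(T+\gamma^{-1})\bar G$ contribution produces precisely $(T+\gamma^{-1})\sqrt{1-\eta^2}\,K_{\hat q}$ up to the $\gamma^{-1}$ versus $T$ discrepancy in the velocity coefficient. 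I would therefore carefully expand $x-y+T(v'-w')$ and $x-y+\gamma^{-1}(v'-w')$, bound $|x-y+T(v'-w')|\le$ (something)$\cdot K_{\hat q}$ plus a controlled multiple of $|z|$, and similarly for the $\gamma^{-1}$-version; the condition $\eta(1+\gamma T)\le 1$ from the start of the section is what keeps the coefficient $\eta(T+\gamma^{-1})$ under control relative to $T$.

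Then I would plug these into Lemma~\ref{basic}. For $Z' = \bar q_T(x,v') - \bar q_T(y,w')$ I write $Z' = z + T(v'-w') + \big(\bar q_T - \bar q_T' - z - T(v'-w')\big)$; the first two terms combine (after the identity above) into $\frac{\gamma^{-1}|z| + T K_{\hat q}}{T+\gamma^{-1}}$-type quantities — more precisely $z+T(v'-w')$ needs to be split as $\frac{\gamma^{-1}}{T+\gamma^{-1}}z + \frac{T}{T+\gamma^{-1}}(z + (T+\gamma^{-1})(v'-w'))$ and the second factor bounded by $K_{\hat q}$ — and the last term is controlled by \eqref{lem64a}, contributing the $cLT(T+h)\big(|z| + \frac{TK_{\hat q}}{2(T+\gamma^{-1})}\big)$ error. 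For $Q' = Z' + \gamma^{-1}(V'-W')$, I use $V'-W' = \bar p_T - \bar p_T'$ and the bound \eqref{lem64} on $\bar p_T - \bar p_T' - (v'-w')$, so $Q' = z + \gamma^{-1}(v'-w') + (\text{two error terms})$; the leading part $z + \gamma^{-1}(v'-w')$ is again recognized, via the identity, as bounded by $K_{\hat q}$, while the errors from \eqref{lem64a} and \eqref{lem64} assemble into the $2cL(\gamma^{-1}T + \frac12 T(T+h))(\cdots)$ term. The conditional-expectation refinement with $c/2$ instead of $c$ follows because the a priori estimates of Lemma~\ref{basic}, when averaged, lose a factor of $2$ exactly as in \cite[Lemma~3.2]{MR4133372} — one keeps the expectation inside the Gronwall-type iteration rather than taking the supremum pathwise.

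The main obstacle I anticipate is the second step: getting the algebra of $x-y+T(v'-w')$ to collapse cleanly onto $K_{\hat q}$ despite the mismatch between the coefficient $T$ appearing naturally in Lemma~\ref{basic} and the coefficient $\gamma^{-1}$ (and $T+\gamma^{-1}$, and $\eta(T+\gamma^{-1})$) appearing in the definitions \eqref{zdefs}–\eqref{kdef}. Keeping every constant sharp enough to land on the stated $32/31$ and the precise combinations $\gamma^{-1}T + \frac12 T(T+h)$ will require care with the triangle inequality and with where the factor $\frac{1}{2}$ in $\frac{TK_{\hat q}}{2(T+\gamma^{-1})}$ is spent; the constraint $r^* \le (1-\eta)^2/(T^2(1-\eta^2))$ is presumably what guarantees the $K_{\hat q}$-coefficient of the leading term is exactly $1$ (rather than something larger) in \eqref{base2}, so I would check that bound is used at the point where $\hat q$ is converted to $K_{\hat q}$.
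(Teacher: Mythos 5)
Your proposal identifies the right ingredients (Lemma~\ref{basic}, the algebraic identity relating $v'-w'$ to $K_{\hat q}$ via~\eqref{fe}), but the central idea of the paper's proof is missing, and your arithmetic contains a serious error that would prevent the approach from landing on the stated constants.

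First, the arithmetic: you claim that with $c\ge 16^2$ in Lemma~\ref{basic} one has $3/(2c-1)\le (32/31)LT(T+h)$ and $6c/(2c-1)\le 32/31$. Neither holds. Taking $c=1/(LT(T+h))$ gives
$\frac{3}{2c-1}=\frac{3\,LT(T+h)}{2-LT(T+h)}\ge \frac{3}{2}LT(T+h)$,
which is already about $50\%$ larger than $(32/31)LT(T+h)$; and $6c/(2c-1)\to 3$ as $c\to\infty$, so it is nowhere near $32/31$. Applying Lemma~\ref{basic} directly for the final estimate — which is what your step~3 proposes — therefore cannot produce either the prefactor $32/31$ or the inner factor $\frac{1}{2}$ in $\frac{TK_{\hat q}}{2(T+\gamma^{-1})}$.

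Second, and relatedly, the missing idea is that the paper never uses Lemma~\ref{basic}'s a priori bounds for the final estimate on $Q'$. It uses Lemma~\ref{basic} only to establish the intermediate, $t$-uniform bound~\eqref{base0} on $|\bar q_t-\bar q_t'|$, which keeps the $t$-dependent factor $\frac{tK_{\hat q}}{T+\gamma^{-1}}$ intact. It then expands $Q_0'=\bar q_T-\bar q_T'+\gamma_1^{-1}(\bar p_T-\bar p_T')$ directly from the Verlet recursion into a double integral plus boundary terms~\eqref{theq}, and bounds the integrals by plugging~\eqref{base0} \emph{inside the integrals}. The factor $\frac12$ in $\frac{TK_{\hat q}}{2(T+\gamma^{-1})}$ arises precisely because $\int_0^T t\,dt = T^2/2$: the linear $t$-dependence is preserved until the integration, rather than being absorbed into a sup. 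Your plan to write $Z'=z+T(v'-w')+(\text{remainder})$ and control the remainder by~\eqref{lem64a} loses this gain, because~\eqref{lem64a} already takes a sup over $[0,T]$.

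Third, the refinement with $c/2$: you attribute it to "keeping the expectation inside a Gronwall-type iteration," but the actual mechanism is averaging the $u_{jh}\sim\mathcal U(0,1)$ variables in the randomized midpoint case. For $u=0$ (Verlet) the pathwise derivation already yields $c/2=16/31$; the constant $c=32/31$ in the pathwise statement is needed only because the $u=1$ case uses $u_{jh}\le 1$ and $T+h\le 2T$ to bound the random term, doubling the right-hand side. Taking the conditional expectation replaces $u_{jh}$ by $1/2$, restoring the $c/2$ constant. Finally, a minor misconception: the constraint on $r^*$ plays no role in this lemma — the $r^*$ in $\hat q$ and in $K_{\bar v}$ cancel exactly when you set $\bar v=\hat q$, so $K_{\hat q}$ is independent of $r^*$. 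That constraint matters only in later statements such as Lemma~\ref{lemexes}.
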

\begin{proof}
Recall~$v',w'$ as defined in~\eqref{ous}. 
%Throughout the proof, we omit in the notation any dependence in~$\bar{q}_{\cdot},\bar{p}_{\cdot}$ on~$\bar{\theta}$ 
Throughout the proof, we denote~$\bar{q}_{\cdot}=\bar{q}_{\cdot}(x,v',\bar{\theta})$,~$\bar{p}_{\cdot}=\bar{p}_{\cdot}(x,v',\bar{\theta})$,~$\bar{q}_{\cdot}'=\bar{q}_{\cdot}(y,w',\bar{\theta})$,~$\bar{p}_{\cdot}'=\bar{p}_{\cdot}(y,w',\bar{\theta})$. 
Let~$t\in[0,T]\cap h\mathbb{N}$. It holds that
\begin{align}
x-y + t(v'-w') &= x-y + \frac{t}{T+\gamma^{-1}}(T+\gamma^{-1})(\eta(v-w) + \sqrt{1-\eta^2}(G-\hat{G}))\nonumber\\
&= \bigg(1- \frac{t}{T+\gamma^{-1}}\bigg)z + \frac{t}{T+\gamma^{-1}} \Big(z + \eta(T+\gamma^{-1})(v-w)\nonumber\\
&\quad + (T+\gamma^{-1})\sqrt{1-\eta^2}\hat{G}_{\bar{q}}\Big). \label{fe}
\end{align}
Equation~\eqref{fe} and Lemma~\ref{basic} with~$c=16^2$ implies a.s. that
\begin{align}
\abs{\bar{q}_t - \bar{q}_t'} &\leq \max_{s\leq t}\abs{\bar{q}_s-\bar{q}_s' - (x-y)-s(v'-w')} + \abs{x-y + t(v'-w')}\nonumber\\
&\leq \frac{3}{2\cdot16^2-1}\max(\abs{z},\abs{z + t(v'-w')}) + \frac{(T-t+\gamma^{-1})\abs{z}}{T+\gamma^{-1}} + \frac{tK_{\hat{q}}}{T+\gamma^{-1}}\nonumber\\
&\leq \frac{3}{511}\bigg(\abs{z} + \frac{tK_{\hat{q}}}{T+\gamma^{-1}}\bigg) + \frac{(T-t+\gamma^{-1})\abs{z}}{T+\gamma^{-1}} + \frac{tK_{\hat{q}}}{T+\gamma^{-1}}\nonumber\\
&\leq \frac{514}{511}\bigg(\abs{z} + \frac{tK_{\hat{q}}}{T+\gamma^{-1}}\bigg). \label{base0}
\end{align}
Consider first the case~$u=0$, that is, the velocity Verlet integrator. Let~$\gamma_1\in (0,\infty]$ and recall the notation~$\hat{\theta} = (\theta_{ih})_{i\in\mathbb{N}\cap[0,T/h)},\hat{\theta}' =(\theta_{ih}')_{i\in\mathbb{N}\cap[0,T/h)}$. The velocity Verlet integrator gives
\begin{align}
Q_0'&:=\bar{q}_T - \bar{q}_T' + \gamma_1^{-1}(\bar{p}_T - \bar{p}_T')\nonumber\\
&=  x-y + \int_0^T (\bar{p}_{\floor{s}} - \bar{p}_{\floor{s}}') ds - \frac{h}{2}\int_0^T (b(\bar{q}_{\floor{s}},\theta_{\floor{s}}) - b (\bar{q}_{\floor{s}}',\theta_{\floor{s}}) ) ds \nonumber\\
&\quad + \gamma_1^{-1}\bigg(v' - w' - \frac{1}{2}\int_0^T \Big(b (\bar{q}_{\floor{s}},\theta_{\floor{s}}) - b (\bar{q}_{\floor{s}}',\theta_{\floor{s}})\nonumber\\
&\quad + b (\bar{q}_{\ceil{s}},\theta_{\floor{s}}') - b(\bar{q}_{\ceil{s}}',\theta_{\floor{s}}') \Big) ds \bigg),\label{q0}
\end{align}
so that expanding the right-hand side again %then using~\eqref{gamratio} 
yields
\begin{align}
Q_0'&= z + (T+\gamma_1^{-1}) (\eta (v-w) + \sqrt{1-\eta^2}\bar{G}) \nonumber\\
&\quad - \frac{1}{2}\int_0^T\int_0^{\floor{s}} 
\Big(b (\bar{q}_{\floor{r}},\theta_{\floor{r}}) - b (\bar{q}_{\floor{r}}',\theta_{\floor{r}}) \nonumber\\
&\quad + b (\bar{q}_{\ceil{r}},\theta_{\floor{r}}') - b (\bar{q}_{\ceil{r}}',\theta_{\floor{r}}')\Big) drds \nonumber\\
&\quad - \frac{h+\gamma_1^{-1}}{2} \int_0^T  \Big(b(\bar{q}_{\floor{s}},\theta_{\floor{s}}) - b (\bar{q}_{\floor{s}}',\theta_{\floor{s}}) \Big) ds \nonumber\\
&\quad - \frac{\gamma^{-1}}{2}\int_0^T \Big(b (\bar{q}_{\ceil{s}},\theta_{\floor{s}}') - b(\bar{q}_{\ceil{s}}',\theta_{\floor{s}}') \Big) ds.\label{theq}
\end{align}
For~$T\in h\mathbb{N}$, 
%using~$u_{kh}\in(0,h)$ for all~$k$, 
the double integral on the right-hand side of~\eqref{theq} may be bounded using the expressions
\begin{align*}
&\int_0^T\int_0^{\floor{s}} (\abs{\bar{q}_{\floor{r}} - \bar{q}_{\floor{r}}'} + 
\abs{\bar{q}_{\ceil{r}} - \bar{q}_{\ceil{r}}'} ) drds\\
&\quad= \int_h^T h\sum_{j = 1}^{\floor{s}/h} \Big(\abs{\bar{q}_{(j-1)h} - \bar{q}_{(j-1)h}'} + \abs{\bar{q}_{jh} - \bar{q}_{jh}'} \Big)ds\\
&\quad= h^2\sum_{i = 1}^{T/h -1}\sum_{j=1}^i  (\abs{\bar{q}_{(j-1)h} - \bar{q}_{(j-1)h}'} + \abs{\bar{q}_{jh} - \bar{q}_{jh}'}).%\\
%&\quad= h^2 \sum_{i = 0}^{T/h -2} \sum_{j=0}^i  \abs{\bar{q}_{jh} - \bar{q}_{jh}'} + h^2\sum_{i = 1}^{T/h -1} \sum_{j=1}^i \abs{\bar{q}_{jh} - \bar{q}_{jh}'}\\
%&\quad= h^2\sum_{i=0}^{T/h-2} \sum_{j=0}^i  \abs{\bar{q}_{jh} - \bar{q}_{jh}'} + h^2 \sum_{j=1}^{T/h-1}\abs{\bar{q}_{jh} - \bar{q}_{jh}'},
\end{align*}
%the expression in~\eqref{theq} can be bounded, using~\eqref{base0}, as %using the assumption on~$T$,~\eqref{tildG} and Lemma~3.2 in~\cite{MR4133372} by
Therefore the right-hand side of~\eqref{theq} can be bounded as %using the assumption on~$T$,~\eqref{tildG} and Lemma~3.2 in~\cite{MR4133372} by
\begin{align}
&\abs{Q_0'} - \abs{z + (T+\gamma_1^{-1}) (\eta (v-w) + \sqrt{1-\eta^2}\bar{G})} \nonumber\\
&\quad\leq \frac{Lh^2}{2}\sum_{i =1}^{T/h - 1} \sum_{j = 1}^i \abs{\bar{q}_{(j-1)h} - \bar{q}_{(j-1)h}'}  +  \frac{Lh^2}{2}\sum_{i =1}^{T/h - 1} \sum_{j = 1}^i \abs{\bar{q}_{jh} - \bar{q}_{jh}'}\nonumber\\
&\qquad + \bigg( \frac{h+\gamma_1^{-1}}{2}\bigg) Lh\sum_{i = 0}^{T/h - 1}\abs{ \bar{q}_{ih} - \bar{q}_{ih}'} + \frac{\gamma_1^{-1}}{2}Lh\sum_{i = 1}^{T/h }\abs{ \bar{q}_{ih} - \bar{q}_{ih}'}.\label{gug}
\end{align}
%where~$c'=2$. If~$Q_0'$ is replaced by~$\mathbb{E}[Q_0'|x,y,v,w,G,\hat{G}]$ on the left-hand side of~\eqref{gug}, then the same inequality holds with~$c'=1$ by~$\mathbb{E}[u_{jh}]=h/2$.
By~\eqref{base0}, inequality~\eqref{gug} implies a.s. that
\begin{align}
&\abs{Q_0'} - \abs{z + (T+\gamma_1^{-1}) (\eta (v-w) + \sqrt{1-\eta^2}\bar{G})} \nonumber\\
&\quad\leq  \frac{32}{31}Lh\bigg(\sum_{i = 1}^{T/h-1} h\sum_{j = 1}^i  \bigg(\abs{z} + \frac{jhK_{\hat{q}}}{T+\gamma^{-1}}\bigg)\nonumber\\
%+ \bigg(\frac{h}{2}+\gamma_1^{-1}\bigg) \sum_{i=1}^{T/h}  \bigg(\abs{z} +\frac{ihK_{\hat{q}}}{T+\gamma^{-1}} \bigg) \bigg).\label{hfw}%\nonumber
&\qquad + \bigg(\frac{h+\gamma_1^{-1}}{2}\bigg) \sum_{j=0}^{T/h-1}  \bigg[\abs{z} +\frac{jhK_{\hat{q}}}{T+\gamma^{-1}} \bigg] + \frac{\gamma_1^{-1}}{2}\sum_{j=0}^{T/h-1} \bigg[\abs{z} +\frac{(j+1)hK_{\hat{q}}}{T+\gamma^{-1}} \bigg]\bigg).\label{hfw}%\nonumber
\end{align}
Consequently, it holds a.s. that
\begin{align}
&L^{-1}(\abs{Q_0'} - \abs{z + (T+\gamma_1^{-1}) (\eta (v-w) + \sqrt{1-\eta^2}\bar{G})})\nonumber\\
&\quad\leq \frac{32}{31}\bigg(\frac{1}{2}T(T-h) + (h+\gamma_1^{-1})T\bigg)\abs{z}\nonumber\\
&\qquad + \frac{32}{31}\bigg(\frac{(T-h)(T+h)}{6} + \frac{hT+2\gamma_1^{-1}T}{4}\bigg)\frac{TK_{\hat{q}}}{T+\gamma^{-1}}. \label{le}%\\%\nonumber\\
%&\quad= \bigg(\frac{1}{2}(L^{-1}+T(T+h + 2\gamma^{-1}))LT(T+h) + \gamma^{-1}T\bigg)\bigg(\abs{z}+\frac{T K_{\hat{q}}}{T+\gamma^{-1}}\bigg)%\label{lemep}
\end{align}
By substituting~$\gamma_1^{-1} = 0$ into~\eqref{le}, using~\eqref{fe} with~$t=T$ and using~$(T-h)(T+h)/6+Th/4 \leq T(T+h)/4$, 
\iffalse
\begin{equation}\label{se}
\frac{0\vee (T-2h)}{6} + \frac{h}{2} = \frac{h}{2}\vee \bigg(\frac{T}{6} + \frac{h}{6}\bigg) \leq \frac{T+h}{4},
\end{equation}
\fi
we obtain~\eqref{base1}. By instead substituting~$\gamma_1^{-1} = \gamma^{-1}$ into~\eqref{le}, %and using again~\eqref{se}, 
we obtain~\eqref{base2}. 
%The second assertion holds by taking the inequality corresponding to~\eqref{gug} with~$c'=1$.
Now for the other case~$u=1$ (the randomized midpoint algorithm), we may replace~\eqref{base0} by the fact that for~$t\in[0,T-h]\cap h\mathbb{N}$, it holds that
\begin{align*}
&\abs{\bar{q}_t-\bar{q}_t' + u_th(\bar{p}_t-\bar{p}_t')}\\
&\quad\leq \abs{\bar{q}_t-\bar{q}_t' + u_th(\bar{p}_t-\bar{p}_t' - (h/2)(b(\bar{q}_t+u_th\bar{p}_t,\theta_t) \\
&\qquad-b(\bar{q}_t'+u_th\bar{p}_t',\theta_t))) } + (Lh^2/2)\abs{\bar{q}_t -\bar{q}_t' + u_th\bar{p}_t - u_th\bar{p}_t'}\\
&\quad = \abs{\bar{q}_{t+u_{\floor{t}}h}-\bar{q}_{t+u_{\floor{t}}h}'} + (1/64)\abs{\bar{q}_t -\bar{q}_t' + u_th\bar{p}_t - u_th\bar{p}_t'},
%&\quad\leq\max_{s\in[0,T]\cap h\mathbb{N}}\abs{\bar{q}_s - \bar{q}_s'} + (1/64)\abs{\bar{q}_t-\bar{q}_t' + u_th(\bar{p}_t-\bar{p}_t')},
%&\quad\leq\max(\abs{\bar{q}_t - \bar{q}_t'}, \abs{\bar{q}_{t+h} - \bar{q}_{t+h}'}) + (1/64)\abs{\bar{q}_t-\bar{q}_t' + u_th(\bar{p}_t-\bar{p}_t')},
\end{align*}
which, by~\eqref{base0}, yields
\begin{align}
\abs{\bar{q}_t-\bar{q}_t' + u_th(\bar{p}_t-\bar{p}_t')} &\leq (64/63)(514/511)(\abs{z} + (t+u_{\floor{t}}h)K_{\hat{q}}/(T+\gamma^{-1}))\nonumber\\
&\leq (32/31)(\abs{z} + (t+u_{\floor{t}}h)K_{\hat{q}}/(T+\gamma^{-1})).\label{ghai}
\end{align}
The proof then follows in the same way for the randomized midpoint integrator as for the velocity Verlet integrator. %by the obvious modifications and using~\eqref{ghai} in place of~\eqref{base0} to obtain~\eqref{hfw}.
Namely, we obtain~\eqref{hfw} with the two square brackets replaced by~$\abs{z} + (j+u_{jh})hK_{\hat{q}}/(T+\gamma^{-1})$. This implies~\eqref{le} if~$\abs{Q_0'}$ is replaced by~$\mathbb{E}[\abs{Q_0'}|x,y,v,w,G,\hat{G}]$. Otherwise, using~$u_{jh}\leq 1$ for all~$j\in\mathbb{N}$ and~$T+h\leq 2T$, the same bound~\eqref{le} holds with double the right-hand side.
\iffalse
Substituting~$\eta = e^{-\gamma T}$ in~\eqref{lemeq} and~\eqref{lemep} gives
\begin{align*}
\abs{Z'} &\leq (1-\gamma Te^{-\gamma T} + LT(T+h))\abs{z} + (1+LT(T+h))\gamma Te^{-\gamma T} K_{\hat{q}}\\
\abs{Q'} &\leq e^{-\gamma T}(1+\gamma T)K_{\hat{q}} + L\gamma^{-1}T\abs{z} + (1-e^{-\gamma T}-e^{-\gamma T}\gamma T)\abs{z} + LT^2e^{-\gamma T} K_{\hat{q}}\\
&\quad + \frac{1}{2}LT(T+h)(1+LT(T+h + 2\gamma^{-1}))(\abs{z} + \gamma Te^{-\gamma T}K_{\hat{q}}).
\end{align*}
\fi
\iffalse
which for~$T^2\leq \delta\min(L^{-1},1)$ and constant~$\delta <1$ implies
\begin{align*}
\abs{Z'} &\leq (1-((1-\delta)\gamma - 2\delta) T + (\gamma T)^2 + LT(T+h))\abs{z} + (1+LT(T+h))\gamma T(1-\gamma T) K_{\hat{q}}\\
\abs{Q'} &\leq K_{\hat{q}} + L\gamma^{-1}T\abs{z} + (\gamma T)^2(\abs{z} - K_{\hat{q}}) + LT^2(1-\gamma T) K_{\hat{q}}\\
&\quad + \frac{1}{2}LT(T+h)(1+LT(T+h + 2\gamma^{-1}))(\abs{z} + \gamma T(1-\gamma T)K_{\hat{q}}),
\end{align*}
\fi
\end{proof}

\subsection{Contraction}\label{contractionsec}

In the following, we consider
\begin{equation}\label{gdef}
\gamma = \frac{1-\eta}{\eta T},
\end{equation}
which implies~$\hat{q} \propto q$ in~\eqref{zdefs}. In the case where~$\eta = e^{-\bar{\gamma}T}$ for some~$\bar{\gamma}>0$, this choice of~$\gamma$ is to enforce~$1+\gamma T = e^{\bar{\gamma}T}$. Here, in the continuous time limit~$T\rightarrow 0$,~\eqref{cou1} corresponds to the reflection coupling in~\cite{MR3980913}. In the opposing case where~$\eta = 0$, this reduces in~\eqref{zdefs} to~$q=T\hat{q} = z$ and the coupling~\eqref{cou1} agrees substantially to that considered in~\cite[equation~(21)]{MR4133372}. If instead~$\gamma T = 1-\eta$, we still have~$2T\hat{q} = z$ when~$\eta=0$ but the range of~\eqref{Gdef2} is reduced. 

For~$\gamma\in(0,\infty]$ given by~\eqref{gdef}, the assertions of Lemma~\ref{base} imply
\begin{subequations}\label{wvdef}
\begin{align}
\mathbb{E}[\abs{Z'}|x,y,v,w,G,\hat{G}] &\leq  \eta \abs{z} +(1-\eta)K_{\hat{q}}+ (16/31)LT(T+h)\zeta,\\
%W_{k+1} &=  \min(\eta \abs{z_k} +(1-\eta)K_{\abs{q_k},k},(1-\eta)(K_{\abs{q_k},k} - \abs{q_k}) + (1 + 6\sqrt{L}T )\abs{z_k})\nonumber\\
%&\quad + LT^2(1+2LT^2)\zeta_k,\\
\mathbb{E}[\abs{Q'}|x,y,v,w,G,\hat{G}] &\leq K_{\hat{q}} + (32/31)L(\gamma^{-1}T + T(T+h)/2)\zeta,%\label{wvdef}
\end{align}
\end{subequations}
where~$z,\hat{q}$ are given by~\eqref{zdefs}, 
\begin{equation}\label{dhat}
%\zeta_k = W_k + \frac{T-h}{2T}(1-\eta)K_{V_k,k}
\zeta = \abs{z} + (1-\eta)K_{\hat{q}}/2.
\end{equation}

\noindent
For any~$\bar{\alpha}>0$, inequalities~\eqref{wvdef} imply
\begin{equation}\label{omme}
\mathbb{E}[\abs{Q'}+\bar{\alpha}\abs{Z'}|x,y,v,w,G,\hat{G}] \leq C_qK_{\hat{q}} + C_z\bar{\alpha}\abs{z},
\end{equation}
where~$C_q,C_z$ are given by
\begin{align}
C_q &= 1+\bar{\alpha}(1-\eta) + (16/31)(1-\eta)L(\gamma^{-1}T+(1+\bar{\alpha})T(T+h)/2),\label{gcdef}\\
C_z &= \eta + (32/31)\bar{\alpha}^{-1}L(\gamma^{-1}T + T^2 + \bar{\alpha}T(T+h)/2 ).\label{erg}
\end{align}
If~$C_q,C_z <1$ hold, then even with synchronous coupling,~\eqref{omme} implies contraction. However, the former constant~\eqref{gcdef} always satisfies~$C_q>1$. We will choose~$\bar{\alpha}$ later so that~$C_z\leq1-cLT^2/(1-\eta)$ and~$C_q\leq1+c^{-1}LT^2/(1-\eta)$ for some constant~$c>0$. This expression for~$C_z$ ensures contraction in the~$z$ direction. On the other hand, we take advantage of the reflection coupling in~$K_{\hat{q}}$ defined in~\eqref{kdef} in order to deal with the fact that~$C_q>1$. As usual, a suitable concave function is required to prove contraction for reflection coupling, which we introduce next.
%The constant~$\alpha$ here is considered to be a variable depending on~$L,T,\eta$ and~$1.09$ is some constant to be optimized independently of the algorithm parameters. 

Let~$g,\hat{R}>0$ and let~$f_0:[0,\infty)\rightarrow[0,\infty)$ be given by
\begin{equation}\label{f0def}
f_0(x)=\int_0^{\min(x,\hat{R})}e^{-gs}ds.
\end{equation}
For~$x,v,y,w\in\mathbb{R}^d$, if~$\abs{q}+\bar{\alpha}\abs{z}\leq\hat{R}$ holds, then inequality~\eqref{omme} implies by the tower property and Jensen's inequality that
\begin{align}
&\mathbb{E}[(f_0(\abs{Q'}+\bar{\alpha}\abs{Z'}) - f_0(\abs{q}+\bar{\alpha}\abs{z}))]\nonumber\\
&\quad\leq g^{-1}\mathbb{E}[(e^{-g(\abs{q}+\bar{\alpha}\abs{z})}-e^{-g(\abs{Q'}+\bar{\alpha}\abs{Z'})})]\nonumber\\
&\quad=g^{-1}e^{-g(\abs{q}+\bar{\alpha}\abs{z})}\mathbb{E}[(1-e^{-g(\abs{Q'}+\bar{\alpha}\abs{Z'}-\abs{q}-\bar{\alpha}\abs{z})})]\nonumber\\
&\quad\leq g^{-1}e^{-g(\abs{q}+\bar{\alpha}\abs{z})} (1-e^{- g(C_z-1)\bar{\alpha}\abs{z} - g(C_q-1)\abs
{q}}\mathbb{E}[e^{-gC_q(K_{\hat{q}}-\abs{q})}]).\label{uexp}
\end{align}
The main idea of the proof is to write down the explicit integral form of the unresolved expectation appearing on the right-hand side of~\eqref{uexp} and to estimate it. This is the content of the following Lemma~\ref{lemexes}. The constants involved in the calculations of Lemma~\ref{lemexes} are left quite general and their specific choices leading to the announced results are justified afterwards.
\begin{lemma}\label{lemexes}
Let~$\hat{T}=T\sqrt{1-\eta^2}/(1-\eta)$,~$r^*\in(0,\hat{T}^{-2}]$,~$G\sim\mathcal{N}(0,I_d)$ and let~$K_{\cdot}$ be given by~\eqref{kdef} and~\eqref{Gdef2} with some arbitrary~$\hat{q}\in\mathbb{R}^d$ and its unit vector~$e$. 
Let~$r=r(\hat{q}) = \abs{\hat{q}}/(r^*(T+\gamma^{-1})\sqrt{1-\eta^2})$. For any $\hat{g}>0$,~$C>1$,~$0<c_1<1$,~$c_2>0$, it holds that
\begin{align*}
&\mathbb{E}[e^{-\hat{g}C(K_{\hat{q}}-r(\hat{q}))}]\\
&\quad\geq \begin{cases}
1+ 4c_1\hat{g}\hat{T}\int_{-c_2+\log(c_1)/(2\hat{g}\hat{T})}^{-c_2}\Phi(u)du (e^{\hat{g}r^*\hat{T}^2r} -1 )& \textrm{if } 0\leq r < 2c_2/(r^*\hat{T})\\
%1+ 4c_1\hat{g}\hat{T}\int_{-c_2+\log(c_1)/(2\hat{g}\hat{T})}^{-c_2}\Phi(u)du (e^{2c_2\hat{g}C\hat{T}} -1 ) + 4c_1\hat{g}\hat{T}\int_{-c_2'+\log(c_1)/(2\hat{g}\hat{T})}^{-c_2'}\Phi(u)du (e^{\hat{g}Cr^*\hat{T}^2r} -1 )& \textrm{if } 2c_2/(r^*\hat{T})<r\leq 2c_2'/(r^*\hat{T})\\
1+ 4c_1\hat{g}\hat{T}\int_{-c_2+
\log(c_1)/(2\hat{g}\hat{T})}^{-c_2}\Phi(u)du (e^{2c_2\hat{g}C\hat{T}} -1 ) %+ 4c_1\hat{g}\hat{T}\int_{-c_2+ \log(c_1)/(2\hat{g}\hat{T})}^{-c_2}\Phi(u)du (e^{2c_2\hat{g}C\hat{T}} -1 ) 
&\textrm{otherwise}.
\end{cases}
\end{align*}
\end{lemma}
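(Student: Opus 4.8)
The proof reduces the claimed bound to a one–dimensional Gaussian computation.

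\emph{Step 1: reduction to one dimension.} In both branches of the coupling \eqref{Gdef2} the vector $\bar G=G-\hat G$ is parallel to $e$ (it equals $-\hat q=-|\hat q|e$ on the acceptance branch and $2(e\cdot G)e$ on the reflection branch). Writing $W:=e\cdot G\sim\mathcal N(0,1)$ and recalling that, for $\gamma$ as in \eqref{gdef}, one has $T+\gamma^{-1}=T/(1-\eta)$ and hence $(T+\gamma^{-1})\sqrt{1-\eta^2}=\hat T$, the definition \eqref{kdef} collapses to the scalar quantity $K_{\hat q}=|r+\hat T(\bar G\cdot e)|$ with $r=r(\hat q)$ and $|\hat q|=r^*\hat Tr$. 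Therefore $K_{\hat q}-r=-r^*\hat T^2r\le 0$ on the acceptance event (using $r^*\hat T^2\le 1$), whereas $K_{\hat q}=|r+2\hat TW|$ on the reflection event.

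\emph{Step 2: acceptance probability and sign facts.} Conditionally on $W=w$, the acceptance probability in \eqref{cou1} equals $1\wedge\varphi_{0,1}(w+|\hat q|)/\varphi_{0,1}(w)=1\wedge e^{-|\hat q|w-|\hat q|^2/2}$; in particular acceptance is forced as soon as $w\le-|\hat q|/2$. Combined with $r^*\hat T^2\le 1$ this gives, on the reflection event, $r+2\hat TW\ge r(1-r^*\hat T^2)\ge 0$, so there $K_{\hat q}-r=2\hat TW$. A short Gaussian integration then yields the exact identities $\mathbb P(\mathrm{acc})=2\Phi(-|\hat q|/2)$, $\mathbb E[W\mathbf 1_{\mathrm{acc}}]=-|\hat q|\Phi(-|\hat q|/2)$, and consequently $\mathbb E[K_{\hat q}-r]=0$.

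\emph{Step 3: the explicit lower bound.} Using Step 2,
\[
\mathbb E\bigl[e^{-\hat g C(K_{\hat q}-r)}\bigr]=e^{\hat g C r^*\hat T^2 r}\,\mathbb P(\mathrm{acc})+\mathbb E\bigl[e^{-2\hat g C\hat T W}\mathbf 1_{\mathrm{rej}}\bigr].
\]
I would expand the right–hand side as a sum of one–dimensional Gaussian integrals (completing the square in each), so that only elementary functions of $\Phi$ and $\varphi_{0,1}$ remain, and then bound this from below by keeping $\mathbb P(\mathrm{acc})+\mathbb P(\mathrm{rej})=1$ from the bulk and isolating one favorable window of $W$ carrying the gain. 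When $|\hat q|/2<c_2$ (i.e.\ $r<2c_2/(r^*\hat T)$) the region $\{W\le-c_2\}$ lies in the forced–acceptance set, where the integrand is the constant $e^{\hat g C r^*\hat T^2 r}\ge e^{\hat g r^*\hat T^2 r}$ (as $C\ge 1$): integrating the residual Gaussian weight over the sub–window of width $\log(1/c_1)/(2\hat g\hat T)$ to the left of $-c_2$, on which the complementary reflection factor $e^{-2\hat g C\hat T W}$ stays $\ge c_1$, and rewriting that weight via Fubini as $\int_{-c_2+\log(c_1)/(2\hat g\hat T)}^{-c_2}\Phi(u)\,du$, produces the factor $4c_1\hat g\hat T\int_{-c_2+\log(c_1)/(2\hat g\hat T)}^{-c_2}\Phi(u)\,du$ times the gain $e^{\hat g r^*\hat T^2 r}-1$. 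When $|\hat q|/2\ge c_2$ the same window $\{-|\hat q|/2\le W\le-c_2\}$ now meets the reflection branch, where $K_{\hat q}-r=2\hat T W$ and $e^{-2\hat g C\hat T W}\ge e^{2c_2\hat g C\hat T}$, which yields the second case with gain $e^{2c_2\hat g C\hat T}-1$. The free constants $c_1\in(0,1)$, $c_2>0$ are exactly the thresholds defining this window, and are left general so that the downstream optimization in \eqref{uexp} can tune them.

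\emph{Main obstacle.} The delicacy is entirely in Step 3. Because $\mathbb E[K_{\hat q}-r]=0$, the quantity $\mathbb E[e^{-\hat g C(K_{\hat q}-r)}]-1$ vanishes to second order in $\hat g$, so any crude first–order estimate such as $e^{-t}\ge 1-t$ destroys the whole effect; one must retain the exact cancellation between the acceptance gain $e^{\hat g C r^*\hat T^2 r}\mathbb P(\mathrm{acc})$ and the negative contribution of $\{W>0\}$ inside $\mathbb E[e^{-2\hat g C\hat T W}\mathbf 1_{\mathrm{rej}}]$, and then organize the remaining Gaussian integrals so that the surviving lower bound holds uniformly in $\hat g,C,c_1,c_2$ and collapses to precisely the stated two–case form.
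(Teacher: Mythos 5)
Your Steps 1 and 2 are correct and establish genuinely useful facts: $K_{\hat q}$ is a scalar with $K_{\hat q}=r(1-r^*\hat T^2)$ on the acceptance branch and $K_{\hat q}=r+2\hat TW$ on the rejection branch (with $W=e\cdot G$), the acceptance probability is $2\Phi(-|\hat q|/2)$, and indeed $\mathbb E[K_{\hat q}-r]=0$. You also correctly diagnose the danger of a first-order Taylor bound. These observations reproduce what the paper implicitly exploits (e.g.\ that $g_0(0)=1$ and that the bound must vanish to second order).

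The gap is that Step~3 is not actually a proof: it is a description of a plan ("I would expand\ldots and then bound this from below\ldots"), and the plan as stated does not lead to the claimed inequality. The lower bound one gets from your suggested tangent-line argument, namely
\begin{equation*}
\mathbb E\bigl[e^{-\hat g C(K_{\hat q}-r)}\bigr]-1 \;\geq\; \mathbb E\bigl[(e^{-\hat g C(K_{\hat q}-r)}-1+\hat g C(K_{\hat q}-r))\mathbf 1_A\bigr]
\end{equation*}
(valid for any event $A$, using convexity and $\mathbb E[K_{\hat q}-r]=0$), when specialized to your window $A=\{-c_2+\tfrac{\log c_1}{2\hat g\hat T}\leq W\leq -c_2\}$ in the forced-acceptance set, produces
$(e^{\hat g C r^*\hat T^2 r}-1-\hat g C r^*\hat T^2 r)\,\mathbb P(A)$, which scales like $r^2$ as $r\to 0$, whereas the stated bound scales like $r$ (its gain factor is $e^{\hat g r^*\hat T^2 r}-1$, not the tail of the exponential after subtracting the linear part). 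The appearance of $\int \Phi(u)\,du$ over a window (rather than $\mathbb P(A)=\int\varphi(u)\,du$) is also a strong structural hint that a windowing of the expectation will not directly reproduce the lemma. You identify the obstruction precisely but do not resolve it.

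What the paper actually does is different: it regards the target quantity as a function of the scalar $r$, $g_0(r)=\mathbb E[e^{-\hat g C(K_{\hat q}-r)}]$, and differentiates it in $r$. After the elementary cancellations of the non-$\Phi$ terms, one gets the compact representation
\begin{equation*}
\partial_r g_0(r)=\frac{2\hat g C r^*\hat T^2\,e^{\hat g C r^*\hat T^2 r}}{\sqrt{2\pi}}\int_{-\infty}^{-r^*\hat T r/2} e^{-u^2/2}\bigl(1-e^{\hat g C\hat T(2u+r^*\hat T r)}\bigr)\,du\;\geq 0,
\end{equation*}
which is manifestly nonnegative and sidesteps the delicate cancellation entirely (the $\mathbb E[K_{\hat q}-r]=0$ identity becomes irrelevant). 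One then lower-bounds the integrand via $-e^{Cu}\geq -e^{u}$ for $u\leq 0$, integrates by parts to produce the $\int\Phi(u)\,du$ factor, restricts to the window where the remaining exponential exceeds $c_1$, and finally integrates $\partial_r g_0$ from $0$ to $r$ (using $g_0(0)=1$). Your proposal misses this differentiation-in-$r$ step, which is the crux of the argument; without it, the windowing you describe does not yield the stated two-case bound.
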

\begin{proof}
%Observe that the unresolved expectation on the right-hand side of~\eqref{uexp} has the form
Let~$g_0:\mathbb{R}^d\rightarrow[0,\infty)$ be given by~$g_0(\hat{q}) = \mathbb{E}[e^{-\hat{g}C(K_{\hat{q}}-r(\hat{q}))}]$ for all~$\hat{q}$. % and let~$\hat{T}:=T\sqrt{1-\eta^2}/(1-\eta)$. 
By definition of~$K_{\cdot}$, it holds that
\begin{align}
%\mathbb{E}[e^{-gC(K_r-r)}]
g_0(r)
&= \mathbb{E}[e^{-\hat{g}C(K_{\hat{q}}-r)}(\mathds{1}_{K_{\hat{q}}=(1-r^*\hat{T}^2)r} + \mathds{1}_{K_{\hat{q}}\neq(1-r^*\hat{T}^2)r})] \nonumber\\
&= 2e^{\hat{g}Cr^*\hat{T}^2r}\Phi\bigg(-\frac{r^*\hat{T}r}{2}\bigg) + \frac{1}{\sqrt{2\pi}}\int_{-\frac{r^*\hat{T}r}{2}}^{\infty} e^{-2\hat{g}C\hat{T}u}\Big[e^{-\frac{u^2}{2}} - e^{-\frac{(u+r^*\hat{T}r)^2}{2}}\Big]du.\label{uexpo}
\end{align}
\iffalse
the right-hand side of~\eqref{uexpo} as a function of~$r$, namely, by
\begin{equation}\label{gzdef}
g_0(r) = 2e^{gC_qr}\Phi\bigg(-\frac{r}{2}\bigg) + \frac{1}{\sqrt{2\pi}}\int_{-\frac{r}{2}}^{\infty}e^{-2gC_qu}\Big[e^{-\frac{u^2}{2}}-e^{-\frac{(u+r)^2}{2}}\Big]du
%&= 2e^{gC_qr}\Phi\bigg(-\frac{r}{2}\bigg) + \int_{-\frac{r}{2}}^{\infty}e^{-2gC_qu-\frac{u^2}{2}}du-\int_{-\frac{r}{2}}^{\infty}e^{-\frac{(u+r)^2}{2}}du
\end{equation}
for all~$r\geq0$. 
\fi
The integral associated to the last term in the square brackets on the right-hand side of~\eqref{uexpo} has the form
\begin{align*}
&\frac{1}{\sqrt{2\pi}}\int_{-\frac{r^*\hat{T}r}{2}}^{\infty}e^{-2\hat{g}C\hat{T}u-\frac{(u+r^*\hat{T}r)^2}{2}} du\\
&\quad= \frac{1}{\sqrt{2\pi}}\int_{-\frac{r^*\hat{T}r}{2}}^{\infty}e^{-\frac{1}{2}(u+2\hat{g}C\hat{T}+r^*\hat{T}r)^2 - \frac{1}{2}(r^*\hat{T}r)^2 + \frac{1}{2}(2\hat{g}C\hat{T} + r^*\hat{T}r)^2} du \\
&\quad= e^{2(\hat{g}C\hat{T})^2 + 2\hat{g}Cr^*\hat{T}^2r}  \Phi(-2\hat{g}C\hat{T} - r^*\hat{T}r/2)),
\end{align*}
so that taking the derivative of~$g_0$ yields
\begin{align*}
\partial_{r}g_0(r) &= 2\hat{g}Cr^*\hat{T}^2e^{\hat{g}Cr^*\hat{T}^2r}\Phi\bigg(-\frac{r^*\hat{T}r}{2}\bigg) - \frac{r^*\hat{T}e^{\hat{g}Cr^*\hat{T}^2r}}{\sqrt{2\pi}}e^{-\frac{(r^*\hat{T}r)^2}{8}} \nonumber\\
&\quad + \frac{r^*\hat{T}}{2\sqrt{2\pi}}e^{\hat{g}Cr^*\hat{T}^2r-\frac{(r^*\hat{T}r)^2}{8}} - e^{2(\hat{g}C\hat{T})^2 + 2\hat{g}Cr^*\hat{T}^2r} \nonumber\\
&\quad \cdot\bigg(2\hat{g}Cr^*\hat{T}^2 \Phi\bigg(-2\hat{g}C\hat{T} -\frac{r^*\hat{T}r}{2}\bigg) - \frac{r^*\hat{T}}{2\sqrt{2\pi}}e^{-\frac{1}{2}(2\hat{g}C\hat{T}+\frac{r^*\hat{T}r}{2})^2}\bigg),
%&\quad- \frac{1}{2\sqrt{2\pi}} e^{2\hat{g}C_qr-\frac{r^2}{8}} + \int_{-\frac{r}{2}}^{\infty}(u+r)e^{-2\hat{g}C_qu-\frac{(u+r)^2}{2}}du
\end{align*}
where the terms without~$\Phi$ factors cancel each other to give
\begin{align}
\partial_r g_0(r)&= 2\hat{g}Cr^*\hat{T}^2e^{\hat{g}Cr^*\hat{T}^2r}\Phi\bigg(-\frac{r^*\hat{T}r}{2}\bigg)\nonumber\\
&\quad- 2\hat{g}Cr^*\hat{T}^2 e^{2(\hat{g}C\hat{T})^2 + 2\hat{g}Cr^*\hat{T}^2r} \Phi\bigg(-2\hat{g}C\hat{T} - \frac{r^*\hat{T}r}{2}\bigg)\label{uexp2}
\end{align}
%\textcolor{red}{The arguments below still need correcting, in particular,~$\hat{T}$ still needs to be introduced.}
for all~$r$. For the last term on the right-hand side of~\eqref{uexp2}, it holds that
\begin{align*}
&e^{2(\hat{g}C\hat{T})^2+2\hat{g}Cr^*\hat{T}^2r}\Phi\bigg(-2\hat{g}C\hat{T}-\frac{r^*\hat{T}r}{2}\bigg) \nonumber\\
&\quad= \frac{e^{\hat{g}Cr^*\hat{T}^2r}}{\sqrt{2\pi}} \int_{-\infty}^{-\frac{r^*\hat{T}r}{2}}e^{-\frac{u^2}{2} + 2u\hat{g}C\hat{T} + \hat{g}Cr^*\hat{T}^2r}du,
\end{align*}
which, substituting back into~\eqref{uexp2}, implies
\begin{equation}\label{exgra}
\partial_r g_0(r) = \frac{2\hat{g}Cr^*\hat{T}^2e^{\hat{g}Cr^*\hat{T}^2r}}{\sqrt{2\pi}}\int_{-\infty}^{-\frac{r^*\hat{T}r}{2}}e^{-\frac{u^2}{2}}(1-e^{\hat{g}C\hat{T}(2u+r^*\hat{T}r)})du. 
\end{equation}
%In case~$\hat{g}C\hat{T}\leq 2$, 
\iffalse
On the one hand, it holds that
\begin{equation*}
1-e^{\hat{g}C\hat{T}(2u+r^*\hat{T}r)} \geq 1/2
\end{equation*}
for any~$u\leq -r^*\hat{T}r/2-\log(2)/(2\hat{g}C\hat{T})$, so that the expression on the right-hand side of~\eqref{exgra} may be bounded as
\begin{align}
%\partial_r g_0(r) \geq \frac{\hat{g}Cr^*\hat{T}^2e^{\hat{g}Cr^*\hat{T}^2r}}{\sqrt{2\pi}}\int_{-\infty}^{-\frac{r^*\hat{T}r}{2} - \frac{\log2}{2\hat{g}C\hat{T}}} e^{-\frac{u^2}{2}} du. 
\partial_r g_0(r) &\geq \hat{g}Cr^*\hat{T}^2e^{\hat{g}Cr^*\hat{T}^2r} \Phi(-r^*\hat{T}r/2 - \log(2)/(2\hat{g}C\hat{T}))\nonumber\\
&\geq \begin{cases}
\hat{g}Cr^*\hat{T}^2e^{\hat{g}Cr^*\hat{T}^2r} \Phi(-1 - \log(2)/(2\hat{g}C\hat{T})) &\textrm{if } 0\leq r <  2/(r^*\hat{T})\\
0 &\textrm{if } 2/(r^*\hat{T}) \leq r\leq R.
\end{cases}\label{ejjq}
\end{align}
On the other hand, 
\fi
Using~$-e^{Cu}\geq -e^{u}$ for~$u\leq 0$, then integrating-by-parts on the right-hand side of~\eqref{exgra}, yields for any~$0<c_1<1$ that
\begin{align}
\partial_r g_0(r) &\geq 2\hat{g}r^*\hat{T}^2 e^{\hat{g}r^*\hat{T}^2r}\int_{-\infty}^{-\frac{r^*\hat{T}r}{2}} 2\hat{g}\hat{T}\Phi(u) e^{\hat{g}\hat{T}(2u+r^*\hat{T}r)}du\nonumber\\
&\geq 4c_1\hat{g}^2r^*\hat{T}^3 e^{\hat{g}r^*\hat{T}^2r} \int_{-\frac{r^*\hat{T}r}{2} + \frac{\log(c_1)}{2\hat{g}\hat{T}}}^{-\frac{r^*\hat{T}r}{2}} \Phi(u) du.\label{hapw}%\nonumber\\
%&\quad+ c_1'\int_{-\frac{r^*\hat{T}r}{2} + \frac{\log(c_1')}{2\hat{g}\hat{T}}}^{-\frac{r^*\hat{T}r}{2} + \frac{\log(c_1)}{2\hat{g}\hat{T}}} \Phi(u) du\bigg).\label{hapw}
\end{align}
Estimating the integral on the right-hand side of~\eqref{hapw} gives for any~$c_2>0$ that
\begin{equation*}
\partial_r g_0(r) \geq 
\begin{cases}
4c_1\hat{g}^2r^*\hat{T}^3 e^{\hat{g}r^*\hat{T}^2r} \int_{-c_2+\log(c_1)/(2\hat{g}\hat{T})}^{-c_2}\Phi(u)du &\textrm{if } 0\leq r < 2c_2/(r^*\hat{T})\\
%4c_1\hat{g}^2Cr^*\hat{T}^3 e^{\hat{g}Cr^*\hat{T}^2r} \int_{-c_2'+\log(c_1)/(2\hat{g}\hat{T})}^{-c_2'}\Phi(u)du &\textrm{if } r\leq 2c_2'/(r^*\hat{T})\\
0 &\textrm{otherwise}, %2/(r^*\hat{T})< r\leq R,
\end{cases}
\end{equation*}
which concludes by integrating w.r.t.~$r$.

\end{proof}
Concerning the choice of~$r^*$ with respect to small~$\hat{T}=T\sqrt{1-\eta^2}/(1-\eta)$, it makes sense to set~$r^*\sim O(1/\hat{T})$ or~$r^*\sim O(1/\hat{T}^2)$. However,~$r^*\sim O(1)$ is a bad choice because the bound in Lemma~\ref{lemexes} gives in that case~$\mathbb{E}[e^{-\hat{g}C(K_{\hat{q}}-r)}]-1\sim O(\hat{T}^3)$, unless~$\hat{g}$ is chosen to depend on~$\hat{T}$, which would lead to an exponentially poor dependence on~$\hat{T}$ in the~$\mathcal{W}_1$ contraction prefactor. In fact, the right-hand bound in Lemma~\ref{lemexes} is an increasing function in~$r^*$ at every fixed~$r$, so we take the maximum possible value~$r^*=1/\hat{T}^2$. %However, we don't take the maximum possible value~$r^*=1/\hat{T}^2$ in order to simplify the presentation of our results. More specifically, this is because the semimetric that we will use for our main results will involve more terms than the concave function~$f_0$. If we choose~$r^*=1/\hat{T}^2$, then the one-iteration upper bounds for these extra terms will involve order~$\hat{T}$ terms, so that we would have retain such terms in the following Theorem~\ref{main2} in order to counteract them.

\begin{theorem}\label{main2}
Let~$\hat{R}>0$, let~$g$ be given by
\begin{equation}\label{gdef??}
g=(2/5)\max(16L\hat{R},2\sqrt{L}),
\end{equation}
let~$x,y,v,w,G,\hat{G},\hat{q},z,Q',Z'$ be as in Lemma~\ref{base} %with~$r^*=\sqrt{1-\eta}/(T\sqrt{1+\eta})$. 
and let~$q$ be given by~\eqref{zdefs}. Let~$r^*=(1-\eta)/(T^2(1+\eta))$, let~$\mathcal{U}\sim\mathcal{U}(0,1)$ be independent of~$x,y,v,w,G,\hat{G},(u_{kh})_{k\in\mathbb{N}}$ and assume~$\hat{G}$ satisfies~\eqref{cou1} with~\eqref{edef}. 
Moreover, let~$f_0$ be given by~\eqref{f0def} and~$\alpha = LT^2/(1-\eta)^2$. %Let Assumption~\ref{A1} hold 
Suppose~\eqref{A1a} holds 
and assume~\eqref{carot}. 
%In addition, if~$4\alpha> 1$ holds, then assume~$LT^2(1+\eta)/(1-\eta)\leq 1/16^2$. 
%If~$\abs{q}\leq \hat{R}$ and
If~$\abs{q}+1.09\alpha\abs{z}\leq \hat{R}$ holds, then it holds a.s. that
\begin{align*}
&\mathbb{E}[f_0(\abs{Q'}+1.09\alpha\abs{Z'})|x,y,v,w]\\
&\quad\leq \bigg(1- \frac{c_0LT^2}{(1-\eta)}\bigg) f_0(\abs{q}+1.09\alpha\abs{z}) - \frac{ge^{-g\hat{R}}T^2(1+\eta)}{32(1-\eta)}\cdot\min\bigg(\abs{\hat{q}},\frac{67}{50}\bigg),
\end{align*}
%where~$\hat{v}:(0,\infty)\rightarrow(0,\infty)$ is given by~\eqref{vhat} and
where~$c_0$ is given by
\begin{equation}\label{cor}
c_0 = \frac{g\hat{R}e^{-g\hat{R}}}{5(1-e^{-g\hat{R}})}\cdot\min\bigg(\frac{1}{4\alpha},1\bigg).
\end{equation}
\end{theorem}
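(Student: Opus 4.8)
The plan is to start from inequality~\eqref{uexp}, applied with $\bar\alpha=1.09\alpha$ (so that $C_q,C_z$ are as in~\eqref{gcdef},~\eqref{erg}), and to reduce the statement to a single scalar estimate. Conditioning (tower property) on everything except the refreshment variables $G,\hat G$ and the auxiliary uniform $\mathcal U$, and writing $\rho_0=\abs{q}+1.09\alpha\abs{z}\leq\hat{R}$, \eqref{uexp} bounds $\mathbb E[f_0(\abs{Q'}+1.09\alpha\abs{Z'})\mid x,y,v,w]$ by $f_0(\rho_0)+g^{-1}e^{-g\rho_0}\big(1-e^{-\Delta}\mathbb E[e^{-gC_q(K_{\hat q}-\abs{q})}]\big)$ with $\Delta=g(C_z-1)\,1.09\alpha\abs{z}+g(C_q-1)\abs{q}$. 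First I would instantiate the geometry fixed in~\eqref{gdef}: with $\gamma=(1-\eta)/(\eta T)$ and $r^*=(1-\eta)/(T^2(1+\eta))$ one has $T+\gamma^{-1}=T/(1-\eta)$ and, writing $\hat{T}=T\sqrt{1-\eta^2}/(1-\eta)$, the identities $\hat{T}^2=T^2(1+\eta)/(1-\eta)$, $r^*\hat{T}^2=1$, $r^*(T+\gamma^{-1})\sqrt{1-\eta^2}=\hat{T}^{-1}$; hence $\hat{q}=\hat{T}^{-1}q$, $r(\hat{q})=\abs{q}$, $\abs{\hat{q}}=\hat{T}^{-1}\abs{q}$, and the second summand asserted in the theorem is exactly $G_0:=\tfrac1{32}g e^{-g\hat{R}}\hat{T}^2\min(\abs{\hat{q}},67/50)$. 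I would also record the algebraic collapse $e^{-g\rho_0-\Delta}=e^{-gC_z(1.09\alpha)\abs{z}-gC_q\abs{q}}$, so that, using $\mathbb E[e^{-gC_qK_{\hat q}}]=e^{-gC_q\abs{q}}\mathbb E[e^{-gC_q(K_{\hat q}-\abs{q})}]$, the quantity to be controlled becomes $g^{-1}\big(e^{-g\rho_0}-e^{-gC_z(1.09\alpha)\abs{z}}\mathbb E[e^{-gC_qK_{\hat q}}]\big)$.

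Next I would extract smallness from the two hypotheses. From $4LT^2\leq(1-\eta)^2$: $\alpha=LT^2/(1-\eta)^2\leq1/4$, so $\min(1/(4\alpha),1)=1$ and $c_0=g\hat{R}e^{-g\hat{R}}/(5(1-e^{-g\hat{R}}))$, the value in~\eqref{cor}. From~\eqref{carot} (together with $T\leq T+h$): $L\hat{T}^2\leq 16^{-2}$, $L\hat{R}^2\cdot L\hat{T}^2\leq 16^{-2}$ and $LT(T+h)\leq(1-\eta)16^{-2}$; combined with $g=\tfrac25\max(16L\hat{R},2\sqrt L)$ these give $g\hat{T}\leq\tfrac25$ and, crucially, a bound of the form $LT^2/(1-\eta)\leq c_\star\,g\hat{T}$ with a small $c_\star$ (computed regime-by-regime from the two branches of the minimum in~\eqref{carot} and of the maximum in $g$). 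Then I would bound the two constants from~\eqref{gcdef},~\eqref{erg}: using $\gamma^{-1}T=\eta T^2/(1-\eta)$, $\gamma^{-1}T+T^2=T^2/(1-\eta)$, $T(T+h)/2\leq T^2$, $1+\bar\alpha\leq1+\tfrac{1.09}{4}$, one gets $C_z\leq\eta+\tfrac{32}{31\cdot1.09}(1-\eta)+\tfrac{16}{31}LT(T+h)\leq1-c_z(1-\eta)$ with $c_z:=1-\tfrac{32}{31\cdot1.09}-\tfrac1{496}>0$ — this is precisely where the constant $1.09$ is used, namely to pull $\tfrac{32}{31\cdot1.09}$ below $1$ — and $C_q-1\leq c_Q\,LT^2/(1-\eta)$ for an explicit $c_Q$ (dominant contribution $1.09$).

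Then I would invoke Lemma~\ref{lemexes} with $\hat g=g$, $C=C_q$, $c_2=67/100$ and a fixed $c_1\in(0,1)$ (chosen, e.g.\ around $1/2$ or slightly above, to maximize the resulting constant given $g\hat{T}\leq\tfrac25$), evaluated at $r=\abs{q}$: since $r^*\hat{T}^2=1$ this yields $\mathbb E[e^{-gC_q(K_{\hat q}-\abs{q})}]\geq1+\delta$ with $\delta\geq K_1 g\hat{T}(e^{g\abs{q}}-1)$ when $\abs{\hat{q}}<67/50$ and $\delta\geq K_1 g\hat{T}(e^{(67/50)gC_q\hat{T}}-1)$ otherwise, where $K_1=4c_1\int_{-c_2+\log(c_1)/(2g\hat T)}^{-c_2}\Phi(u)\,du$; using $g\hat{T}\leq\tfrac25$ and $\int_{-\infty}^{-c_2}\Phi=\varphi(c_2)-c_2\Phi(-c_2)$ one bounds $K_1$ below by an explicit positive constant. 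Finally, substituting $\mathbb E[e^{-gC_qK_{\hat q}}]\geq e^{-gC_q\abs{q}}(1+\delta)$ into the expression from the first step and writing $e^{-gC_z(1.09\alpha)\abs{z}-gC_q\abs{q}}=e^{-g\rho_0}e^{g(1-C_z)(1.09\alpha)\abs{z}}e^{-g(C_q-1)\abs{q}}$, I would expand via $e^x\geq1+x$ on the $(1-C_z)$-factor and $e^{-x}\geq1-x$ on the $(C_q-1)$-factor, together with the elementary inequalities $\tfrac{x}{e^x-1}\leq1$ and $\tfrac{e^x-1}{e^y-1}\leq\tfrac xy$ for $0<x\leq y$. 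The positive term $g(1-C_z)(1.09\alpha)\abs{z}$ then absorbs the $\bar\alpha\abs{z}$-portion of $c_0\tfrac{LT^2}{1-\eta}f_0(\rho_0)$ (here $\alpha\leq1/4$ is exactly what is needed), while what remains of the reflection gain $\delta$ after paying for the expansion $g(C_q-1)\abs{q}$ covers both the $\abs{q}$-portion of $c_0\tfrac{LT^2}{1-\eta}f_0(\rho_0)$ and the explicit gain $G_0$ — in the regime $\abs{\hat{q}}<67/50$ through $e^{g\abs{q}}-1\geq g\abs{q}$, $\min(\abs{\hat{q}},67/50)=\abs{\hat q}=\hat T^{-1}\abs q$ and the estimate $LT^2/(1-\eta)\leq c_\star g\hat{T}$, and in the regime $\abs{\hat{q}}\geq67/50$ through $\min(\abs{\hat{q}},67/50)=67/50$ and boundedness of $g(C_q-1)\hat R$.

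The hard part will be the bookkeeping in this last step. The expansion constant $C_q$ is unavoidably larger than $1$, so the single reflection gain $\delta$ must simultaneously cancel the ``drift expansion'' $g(C_q-1)\abs{q}$ \emph{and} still leave a quantity $\gtrsim c_0\tfrac{LT^2}{1-\eta}f_0(\rho_0)$ \emph{and} a quantity $\gtrsim G_0$, uniformly over the two regimes $\abs{\hat{q}}\lessgtr67/50$; moreover the ratio $LT^2/((1-\eta)g\hat T)$ controlling this comparison is regime-dependent (small when $g\hat T$ is close to $\tfrac25$, larger — but then $K_1$ is larger — when $g\hat T$ is small), so the closure requires tracking that correlation rather than a single crude constant. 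This delicate balance is exactly what pins down the specific numerical choices ($\bar\alpha=1.09\alpha$, $c_2=67/100$, $g=\tfrac25\max(16L\hat{R},2\sqrt L)$, $r^*=1/\hat T^2$, and the factor $16^{-2}$ in~\eqref{carot}), and checking their mutual compatibility by hand is where the work lies.
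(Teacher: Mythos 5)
Your overall route matches the paper's: start from~\eqref{uexp}, fix the geometry ($r^*\hat{T}^2=1$, $\hat q=\hat T^{-1}q$, $r(\hat q)=\abs{q}$), bound $C_z<1$ and $C_q-1\lesssim L\hat T^2$, invoke Lemma~\ref{lemexes} at $r=\abs{q}$, expand multiplicatively via $e^{\pm x}\geq1\pm x$, split into the two regimes of Lemma~\ref{lemexes}, and balance the constants numerically. Your algebraic reductions (the collapse $e^{-g\rho_0-\Delta}=e^{-gC_z\bar\alpha\abs{z}-gC_q\abs{q}}$, the identification of the explicit gain $G_0$, the bound $g\hat T\leq2/5$) are all correct and are exactly the identities the paper uses.

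However, there is a genuine gap at the step where you write ``From $4LT^2\leq(1-\eta)^2$: $\alpha\leq1/4$, so $\min(1/(4\alpha),1)=1$.'' That inequality is \emph{not} a hypothesis of Theorem~\ref{main2}; the hypotheses are only~\eqref{A1a} and~\eqref{carot}. The theorem is deliberately stated without a friction lower bound (the preamble to Section~\ref{ncsec} says the results there hold for \emph{all} friction values), and this matters: Theorem~\ref{coth2} applies Theorem~\ref{main2} precisely in the regime $4LT^2>(1-\eta)^2$. Condition~\eqref{carot} only gives $L\hat T^2\leq 16^{-2}$, i.e.\ $\alpha\leq 1/(16^2(1-\eta^2))$, which can be arbitrarily large as $\eta\to1$. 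When $\alpha>1/4$ the $z$-contraction degrades: the bound on $1-C_z$ obtained from~\eqref{erg} carries exactly the factor $(1-\eta)^2/(4LT^2)=1/(4\alpha)$ (this is kept explicit in the paper at~\eqref{yda}--\eqref{nopt}), so the positive contribution from the $(1-C_z)$-factor that is supposed to absorb the $\bar\alpha\abs{z}$-portion of $c_0\,LT^2/(1-\eta)\,f_0(\rho_0)$ is smaller by that same factor. Your closing paragraph's ``balance'' would therefore fail to close when $\alpha$ is large; you must track $\min(1/(4\alpha),1)$ through the argument, which is precisely why it appears in~\eqref{cor}.
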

\iffalse
\begin{remark}
The assumption~$4LT^2\leq (1-\eta)^2$, akin to a lower bound on the friction, will be made later when obtaining contraction outside the nonconvex region using synchronous coupling. Therefore, we distinguish the cases where~$4LT^2/(1-\eta)^2$ is above and below one. This will be useful when ascertaining convergence rates in the proof of Theorem~\ref{main2}.
\end{remark}
\fi
\begin{proof}
Let~$\hat{T}=T\sqrt{1+\eta}/\sqrt{1-\eta}$ and~$\bar{\alpha}=c_{\alpha}LT^2/(1-\eta)^2$ for some~$c_{\alpha}>0$. 
\iffalse
satisfying
\begin{equation}\label{cade}
c_{\alpha} \geq 4/3.
\end{equation}
\fi
Moreover, let~$c_0=1/16$ and let~$C_q,C_z$ be given by~\eqref{gcdef} and~\eqref{erg}. It holds that 
\begin{equation}\label{cq1i0}
0< C_q-1 \leq \frac{c_{\alpha}LT^2}{(1-\eta)} + \frac{16L}{31}(1-\eta)\bigg(\frac{\eta T^2}{1-\eta}+ T^2 + \frac{c_{\alpha}LT^3(T+h)}{2(1-\eta)^2}\bigg).
\end{equation}
By~\eqref{carot}, the last term on the right-hand side of~\eqref{cq1i0} satisfies
\begin{align*}
&\frac{16L}{31}(1-\eta)\bigg(\frac{\eta T^2}{1-\eta}+ T^2 + \frac{c_{\alpha}LT^3(T+h)}{2(1-\eta)^2}\bigg) \leq \frac{16L}{31}\bigg(T^2+ \frac{c_0^2c_{\alpha}T^2}{2(1-\eta)}\bigg),
\end{align*}
so that substituting back into~\eqref{cq1i0} gives
\begin{equation}\label{cq1i}
0\leq C_q-1 \leq C^*L\hat{T}^2,
\end{equation}
where~$C^*$ is given by
\begin{equation}\label{cstard}
C^* = c_{\alpha} + (16/31)(1+c_0^2c_{\alpha}/2).
\end{equation}
Moreover, let~$c_3=2/5$, then by definition~\eqref{gdef??} 
%It turns out to be possible, in the present arguments, to take~$g$ to be about one third of the right-hand side of~\eqref{gdef??} and still obtain a contraction by optimizing the constants in the calculations. The 
of~$g$,~\eqref{carot} and the assumptions on~$T,\eta$, it holds that
\begin{equation}\label{gtbma}
g\hat{T}=c_3\max(16\hat{R}L\hat{T},2\sqrt{L}\hat{T}) \leq c_3.
\end{equation}
Let~$c_1\in(0,1)$ and~$c_2\in(0,\infty)$. Given~\eqref{gtbma}, the prefactor appearing in the statement of Lemma~\ref{lemexes} may be approximated as
\begin{equation}\label{nuea}
4c_1\int_{-c_2+\frac{\log(c_1)}{2g\hat{T}}}^{-c_2} \Phi(u) du \geq 4c_1\int_{-c_2+\frac{\log(c_1)}{2c_3}}^{-c_2} \Phi(u) du =: \Phi^*.
\end{equation}
Note that we may integrate-by-parts the expression for~$\Phi^*$ in order to obtain a more explicit form. Lemma~\ref{lemexes} with~\eqref{nuea}, the assumption~$r^*\hat{T}^2=1$ and~$e^x\geq1+x$ \begin{align}
&e^{-g(C_q-1)\abs{q}}\mathbb{E}[e^{-gC_q(K_{\hat{q}} - \abs{q})}|x,y,v,w]\nonumber\\
&\quad\geq \begin{cases}
e^{-g(C_q-1)\abs{q}}(1+ \Phi^* g^2\hat{T}r) & \textrm{if }\abs{q}\leq 2c_2\hat{T}\\
e^{-g(C_q-1)\abs{q}}(1+ 2c_2\Phi^* g^2\hat{T}^2) & \textrm{if } 2c_2\hat{T} < r\leq \hat{R}.
\end{cases}\label{caz}
\end{align}
By the bound~\eqref{cq1i} on~$C_q-1$, inequality~\eqref{caz} implies for any~$\abs{q}\leq \hat{R}$ that
\begin{align}
&e^{-g(C_q-1)\abs{q}}\mathbb{E}[e^{-gC_q(K_{\hat{q}} - \abs{q})}|x,y,v,w]\nonumber\\
&\quad\geq \begin{cases}
(1-C^*L\hat{T}^2g\abs{q})(1+ \Phi^*g^2\hat{T}\abs{q}) & \textrm{if }\abs{q}\leq 2c_2\hat{T}\\
(1-C^*L\hat{T}^2g\abs{q})(1+ 2c_2\Phi^*g^2\hat{T}^2) & \textrm{if } 2c_2\hat{T} < \abs{q}\leq \hat{R}.
\end{cases}\label{topt3}
\end{align}
For the case~$\abs{q}\leq 2c_2\hat{T}$, %it holds by definition~\eqref{gdef??} of~$g$ and by~\eqref{gtbma} that
the right-hand side of~\eqref{topt3} has the form
\begin{equation}\label{diwi1}
(1-C^*L\hat{T}^2g\abs{q})(1+ \Phi^*g^2\hat{T}\abs{q}) = 1 - C^*L\hat{T}^2g\abs{q} + \Phi^*g^2\hat{T}\abs{q} - C^*\Phi^*Lg^3\hat{T}^3\abs{q}^2.
\end{equation}
%For any~$0\leq\lambda_1\leq 1-49/2^{10}$, 
The positive term on the right-hand side of~\eqref{diwi1} may be rewritten as
\begin{align*}
g^2\hat{T}\abs{q} &= (1 + 1/8 - 1/8 -c_3C^*/16 + c_3C^*/16 )g^2\hat{T}\abs{q}\\ %-\lambda_1 + \lambda_1)g^2\hat{T}\abs{q}\\
&\geq g^2\hat{T}\abs{q}/8 + 2c_3(7/8 -c_3C^*/16)\sqrt{L}g\hat{T}\abs{q} + c_3^2C^*L\hat{R}g\hat{T}\abs{q}, %-\lambda_1)\sqrt{L}g\hat{T}r + (49/64)L\hat{R}g\hat{T}r + 2\lambda_1 \sqrt{L}g\hat{T}r,
\end{align*}
which may be substituted back into~\eqref{diwi1}, then used with~\eqref{gtbma} and our assumptions on~$T,\eta$, to produce
\begin{align}
&(1-C^*L\hat{T}^2g\abs{q})(1+ \Phi^*g^2\hat{T}\abs{q}) \nonumber\\
&\quad\geq 1 - C^*L\hat{T}^2g\abs{q} + \Phi^*g^2\hat{T}\abs{q}/8 + c_3(7/4-c_3C^*/8)\Phi^*\sqrt{L}g\hat{T}\abs{q} \nonumber\\ %-\lambda_1)\Phi^*\sqrt{L}g\hat{T}r + 2\lambda_1\Phi^*\sqrt{L}g\hat{T}r\nonumber\\
&\qquad + c_3^2C^*\Phi^*L\hat{R}g\hat{T}\abs{q} - c_3^2C^*\Phi^*Lg\hat{T}\abs{q}^2\nonumber\\
&\quad\geq 1 - (C^*-(c_3/c_0)(7/4-c_3C^*/8)\Phi^*)L\hat{T}^2gr + \Phi^*g^2\hat{T}\abs{q}/8. \label{topt1} %+ 4\lambda_1\Phi^*)L\hat{T}^2gr + 4\lambda_1\Phi^*L g\hat{T}^2r.
\end{align}
In the other case~$2c_2\hat{T} \leq \abs{q} \leq \hat{R}$, the right-hand side of~\eqref{topt3} satisfies
\begin{align}
&(1-C^*L\hat{T}^2g\abs{q})(1+ 2c_2\Phi^*g^2\hat{T}^2)\nonumber\\
&\quad= 1 - C^*L\hat{T}^2g\abs{q} + 2c_2\Phi^*g^2\hat{T}^2 - 2C^*c_2\Phi^*Lg^3\hat{T}^4\abs{q}.\label{jdj2}
\end{align}
\iffalse % bound positive term first instead
Similarly, the positive term on the right-hand side of~\eqref{jdj2} may be rewritten as
\begin{equation*}
g^2\hat{T}^2 = (1-3/64)g^2\hat{T}^2 + (3/64)g^2\hat{T}^2
\end{equation*}
\fi
By~\eqref{gtbma}, the last term on the right-hand side of~\eqref{jdj2} has the bound~$
%Lg^3\hat{T}^4r\leq c_0^2c_3Lg^2\hat{T}^2r/(16L\hat{R}) \leq c_0^2c_3g^2\hat{T}^2/16.
Lg^3\hat{T}^4\abs{q}\leq c_3^2L g\hat{T}^2\abs{q}$. 
Therefore~\eqref{jdj2} together with~\eqref{gdef??} implies
\begin{align}
&(1-C^*L\hat{T}^2g\abs{q})(1+ 2c_2\Phi^*g^2\hat{T}^2)\nonumber\\ 
&\quad\geq 1-C^*(1+2c_2c_3^2\Phi^*)L\hat{T}^2g\abs{q} + 2c_2\Phi^*g^2\hat{T}^2\nonumber\\
&\quad\geq 1-(C^*(1+2c_2c_3^2\Phi^*)\abs{q} - 28c_2c_3\Phi^*\hat{R})L\hat{T}^2g + c_2\Phi^*g^2\hat{T}^2/4\nonumber\\
&\quad\geq 1-(C^*(1+2c_2c_3^2\Phi^*) - 28c_2c_3\Phi^*)L\hat{T}^2g\abs{q} + c_2\Phi^*g^2\hat{T}^2/4.\label{topt2}
\end{align}
%Given the definition~\eqref{nuea} of~$\Phi^*$, numerically optimizing the rates in~\eqref{topt1} and~\eqref{topt2} w.r.t. to~$c_1$ and~$c_2$ suggests the choices
Gathering~\eqref{topt1},~\eqref{topt2} and~\eqref{topt3} gives for all~$0\leq \abs{q}\leq \hat{R}$ that
\begin{align}
&e^{-g(C_q-1)\abs{q}}\mathbb{E}[e^{-gC_q(K_{\hat{q}} - \abs{q})}|x,y,v,w]\nonumber\\
&\quad\geq 1+\min\bigg( \frac{c_3}{c_0}\bigg(\frac{7}{4}-\frac{C^*}{8}\bigg)\Phi^*-C^*, 28c_2c_3\Phi^* - C^*(1+2c_2c_3^2\Phi^*) \bigg) 
L\hat{T}^2g\abs{q}\nonumber\\
&\qquad+ \Phi^*g^2\hat{T}^2\min(\abs{\hat{q}},2c_2)/8.\label{twk1}
\end{align}
On the other hand, for the contraction rate associated to~$\abs{z}$ arising from~\eqref{uexp}, it holds by definition~\eqref{erg} of~$C_z$ and~$\bar{\alpha}=c_{\alpha}LT^2/(1-\eta)^2$ %and~\eqref{carot} %and the assumption~\eqref{cade} 
that 
\begin{equation}\label{yda0}
C_z-1 = -(1-\eta) + (32/31) ( \eta (1-\eta)/c_{\alpha} + (1-\eta)^2/c_{\alpha} + LT(T+h)/2).%\nonumber\\
\end{equation}
Inequality~\eqref{yda0} implies for all~$z\in\mathbb{R}^d$ that
\begin{align}
C_z-1&\leq -(1-\eta) + (32/31) (1-\eta)/c_{\alpha} + (16/31)c_0^2(1-\eta)\nonumber\\
&= \bigg(\frac{4\cdot32}{31c_{\alpha}}-4\bigg(1-\frac{16c_0^2}{31}\bigg)\bigg)\cdot\frac{(1-\eta)^2}{4LT^2}\cdot\frac{LT^2}{1-\eta},\label{yda}
\end{align}
which implies
\begin{equation*}
e^{-g(C_z-1)\bar{\alpha}\abs{z}} \geq 1 - \bigg(\frac{128}{31c_{\alpha}}-4\bigg(1-\frac{16c_0^2}{31}\bigg)\bigg)\cdot\frac{1}{4\alpha}\cdot\frac{LT^2}{1-\eta}g\bar{\alpha}\abs{z},
\end{equation*}
and, together with~\eqref{twk1}, that
\begin{align}
&1 - e^{-g(C_z-1)\bar{\alpha}\abs{z}-g(C_q-1)\abs{q}}\mathbb{E}[e^{-gC_q(K_{\hat{q}} - \abs{q})}|x,y,v,w]\nonumber\\
&\quad\leq -\bigg[\bigg(4-\frac{64c_0^2}{31} - \frac{128}{31c_{\alpha}}\bigg) \frac{\bar{\alpha}\abs{z}}{4\alpha} + \min\bigg( \frac{c_3}{c_0}\bigg(\frac{7}{4}-\frac{C^*}{8} \bigg)\Phi^* - C^*, 28c_2c_3\Phi^*  \nonumber\\
&\qquad - C^*(1+2c_2c_3^2\Phi^*) \bigg)(1+\eta)\abs{q} \bigg]\cdot\frac{gLT^2}{1-\eta} - \frac{\Phi^*g^2\hat{T}^2}{8}\min(\abs{\hat{q}},2c_2).\label{nopt}
\end{align}
Recall the definitions~$c_0=1/16$,~$c_3=2/5$,~\eqref{cstard} for~$C^*$ and~\eqref{nuea} for~$\Phi^*$. Numerically optimizing the expression in the square brackets on the right-hand side of~\eqref{nopt} with respect to~$c_1\in(0,1)$,~$c_2,c_{\alpha}\in(0,\infty)$ and with~$\eta=0$,~$4\alpha=1$ suggests the values
\begin{equation}\label{tvals}
c_1 = 0.58,\qquad c_2 = 0.67 ,\qquad c_{\alpha} = 1.09.
\end{equation}
For the choice~\eqref{tvals}, inequality~\eqref{nopt} implies
\begin{align}\label{tvc}
&1 - e^{-g(C_z-1)\bar{\alpha}\abs{z}-g(C_q-1)\abs{q}}\mathbb{E}[e^{-gC_q(K_{\hat{q}} - \abs{q})}|x,y,v,w] \nonumber\\
&\quad\leq -\frac{(\abs{q}+\bar{\alpha}\abs{z})gLT^2}{5(1-\eta)}\min\bigg(\frac{1}{4\alpha},1\bigg) - \frac{1}{32}g^2\hat{T}^2\min\bigg(\frac{r}{\hat{T}},2c_2\bigg).
\end{align}
Substituting~\eqref{tvc},~\eqref{tvals} %with~$r=\abs{q}$ and~$r' = \bar{\alpha}\abs{z}=c_{\alpha}LT^2\abs{z}/(1-\eta)^2$ 
into the right-hand side of~\eqref{uexp} yields
\begin{align*}
&\mathbb{E}[f_0(\abs{Q'}+\bar{\alpha}\abs{Z'}) - f_0(\abs{q}+\bar{\alpha}\abs{z})|x,y,v,w]\\
&\quad\leq -e^{-g(\abs{q}+\bar{\alpha}\abs{z})}[L(\abs{q}+\bar{\alpha}\abs{z})\min(1/(4\alpha),1)/5 \\
&\qquad+ g(1+\eta)\min(\abs{\hat{q}},67/50)/32]T^2/(1-\eta)\\
&\quad= -\frac{g(\abs{q}+\bar{\alpha}\abs{z}) e^{-g(\abs{q}+\bar{\alpha}\abs{z})}}{1-e^{-g(\abs{q}+\bar{\alpha}\abs{z})}}f_0(\abs{q}+\bar{\alpha}\abs{z})\cdot \frac{LT^2}{10(1-\eta)}\cdot\min\bigg(\frac{1}{4\alpha},1\bigg)\\
&\qquad- ge^{-g(\abs{q}+\bar{\alpha}\abs{z})}\cdot\frac{T^2(1+\eta)}{32(1-\eta)}\cdot\min\bigg(\abs{\hat{q}},\frac{67}{50}\bigg),
\end{align*}
which implies the assertion.
\end{proof}

\section{Convex region and large velocity differences}\label{convex}
Here, a Lyapunov function is used to obtain contractivity outside the region considered previously. The core argument is inspired by those of~\cite{lc23}, where synchronous coupling is used to obtain contractivity in modified Euclidean norms for convex potentials. In particular, we make use of the eigenvalue argument there that, in essence, allows one to
\begin{itemize}
\item consider any~$b(x,\theta)\cdot v$ and~$x\cdot v$ terms in the same regard when obtaining a Lyapunov inequality,
\item remove dependence of the stepsize on the strength of convexity at infinity ($m$ in Assumption~\ref{A1}) and on the friction.%~$\gamma$. 
\end{itemize}
The price for using this approach is that firstly~$\nabla b(x,\theta)$ is assumed to be symmetric as in Assumption~\ref{A1} and secondly that a lower bound on the value of a friction-like parameter 
%~$\gamma$ 
is imposed. It was shown in~\cite[Proposition~4]{monmarche2021sure} that a lower bound of this kind is necessary in non-Gaussian convex cases when obtaining Wasserstein contraction corresponding to a modified Euclidean distance via synchronous couplings. As a consequence, we do not expect in general a convergence rate at an order better than~$O(m)$. % in terms of a strong convexity constant~$m$.
To our knowledge, the improved rate of~$O(\sqrt{m})$ has only been shown to hold in Gaussian cases~\cite{gouraud2023hmc} or in continuous time, convex cases~\cite{Cao_2023}. In both instances, it is obtained by optimizing friction to values past the restrictions placed in this section. See also the recent work~\cite{pmlr-v195-zhang23a} on this matter. 
%On the other hand, we mention again that it is possible to optimize friction with respect to the asymptotic variance~\cite{}.\\

More concretely, in this section we consider %denote by~$X',V',Y',W'$ 
the same dynamics as in~\eqref{ous}, but setting in~\eqref{ou1} almost surely~$\hat{G}=G$. 
\iffalse % explicit G=\hat{G}
\begin{equation*}
w' = \eta w + \sqrt{1-\eta^2}G.
\end{equation*}
\fi
In addition, for~$z\in\mathbb{R}^{2d}$ and any positive definite matrix~$M\in\mathbb{R}^{2d\times 2d}$, let~$\|z\|_M=\sqrt{z^{\top}Mz}$. To describe the approach in more detail, we use the notation that~$\mathcal{O}$ denotes the autoregressive velocity step~$v\leftarrow \eta v + \sqrt{1-\eta^2}G$, 
where~$G$ is a standard~$d$-dimensional Gaussian variable,~$\mathcal{A}$ denotes a half Hamiltonian step in position given by~$x\leftarrow x + \frac{h}{2}v$ 
and~$\mathcal{B}$ denotes a half Hamiltonian step in velocity given by~$v\leftarrow v - \frac{h}{2} b(x,\theta)$. The strategy in this section is to study the individual changes in modified norm after the steps~$\mathcal{AB}$ and~$\mathcal{BA}$ respectively, at first without the~$\mathcal{O}$ step involved. It will be shown in Proposition~\ref{lab} that the procedure~$\mathcal{AB}$, where~$\mathcal{A}$ is applied first, does not contract or expand the modified norm, up to a factor in solely the velocity variable. This is consistent with~\cite[Theorem~4.9]{lc23}, where there is a restriction on the stepsize with respect to the friction parameter, in contrast to the lack thereof here. In Section~7.2 of the aforementioned work, the large friction limit of~$\mathcal{OAB}$ is explained to be a random walk. As a consequence, since there is no analogous upper bound restriction on the friction here, nor an upper bound on the stepsize with respect to the friction parameter, we would not expect a contraction or expansion in the modified norm up to factors in the velocity variable, just as for synchronous couplings of the random walk. On the other hand, it will be shown in Proposition~\ref{lba} that~$\mathcal{BA}$, where~$\mathcal{B}$ is applied first, does give a contraction in modified norm up to factors in the velocity variable. This contraction will be of the same order (in every parameter in the algorithm) as those obtained for~`$\mathcal{OBABO}$' and~`$\mathcal{BAOAB}$' in~\cite{lc23}, with qualitatively the same restrictions on the stepsize. The factor in the velocity variable that results from obtaining these contraction results is then eliminated at the application of the~$\mathcal{O}$ step. Putting together these results from Section~\ref{lpd} gives a contraction for~$\mathcal{O}(\mathcal{BAAB})^K$ as required.

In the nonconvex case, given any two initial positions arbitrarily far apart, the points at which~$\nabla\!_xb(\cdot,\theta)$ is relevant in the algorithm will appear inside the ball of radius~$R$ anyway for certain (large) initial velocity differences. In these instances, any assumption that initial positions are far apart 
%(and inequality~\eqref{A1b}) 
is not useful. Instead, we rely on the difference between the initial velocities being relatively large and thus that the impact of the change in factor for the velocity variable is large. This is the content of Section~\ref{rlvd}.

\iffalse % theorem statement
\begin{theorem}
Let Assumption~\ref{A1} hold along with~$h\leq \frac{1-\eta}{2\sqrt{L}}$ and
\begin{equation*}
M = \begin{pmatrix}I_d & \frac{h}{1-\eta}I_d\\ \frac{h}{1-\eta}I_d & \frac{1}{L}I_d\end{pmatrix}.
\end{equation*}
It holds that
\begin{equation*}
\| (Z',\gamma(Q'-Z'))\|_M \leq !!!
\end{equation*}
\end{theorem}
\fi

\subsection{Large position differences}\label{lpd}
As mentioned, Propositions~\ref{lba} and~\ref{lab} show contraction up to velocity factors in modified norm for~$\mathcal{BA}$ and~$\mathcal{AB}$ respectively. They make use of the assumption that position differences are large through inequality~\eqref{A1b}. %Although the strategy described is new and allows us to extend the technique of~\cite{lc23} to the nonconvex setting, the calculations are tedious and similar in nature to those in the aforementioned reference. Therefore, we 
%provide a proof only for the first proposition in this section and 
%refer the reader to the arXiv version of the present article for complete proofs of the other results in this section. 
All of the proofs in this section may be found in 
Appendix~\ref{conapp}. 
%Section~1 of the supplementary material.}
\begin{prop}\label{lba}
Let Assumption~\ref{A1} hold, let~$\eta_0,\eta_1\in[0,1]$ satisfy~$0<\eta_0-\eta_1\leq\frac{1}{2}$ and~$c\in\mathbb{R}$,~$M\in\mathbb{R}^{2d\times 2d}$ be given by
\begin{equation}\label{mdef}
c=\frac{m\eta_0h^2}{16(\eta_0 - \eta_1)},\qquad M = \begin{pmatrix} 1 & \frac{h}{2(\eta_0-\eta_1)}\\ \frac{h}{2(\eta_0-\eta_1)} & \frac{h^2}{2(\eta_0-\eta_1)^2}\end{pmatrix}\otimes I_d.
\end{equation}
Assume~$%\max(4(\eta_0-\eta_1),\eta_0^2\eta_1^2)
Lh^2\leq (\eta_0-\eta_1)^2$. It holds that
\begin{align}
&\bigg\|\begin{pmatrix} 
I_d & 0\\
0 & \eta_1 I_d
\end{pmatrix}
\begin{pmatrix}
\bar{q}-\bar{q}' + \frac{h}{2}(\bar{p}-\bar{p}') - \frac{h^2}{4}(b(\bar{q},\theta) - b(\bar{q}',\theta))\\
\bar{p} - \bar{p}' - \frac{h}{2}(b(\bar{q},\theta) - b(\bar{q}',\theta))
\end{pmatrix} \bigg\|_M^2 \nonumber \\
&\quad\leq (1-c)\bigg\| \begin{pmatrix}
I_d & 0\\
0 & \eta_0 I_d
\end{pmatrix}
\begin{pmatrix}
\bar{q} - \bar{q}'\\
\bar{p} - \bar{p}'
\end{pmatrix}\bigg\|_M^2 %+hl\cdot b(0,\theta) + \kappa h^2\abs{b(0,\theta)}^2.
\label{lbaeq}
\end{align}
for all~$\theta\in\Theta$,~$\bar{q},\bar{q}'\in\mathbb{R}^d$ satisfying~$\abs{\bar{q}-\bar{q}'}\geq 4R(1+\frac{L}{m})$ and~$\bar{p},\bar{p}'\in\mathbb{R}^d$.
\end{prop}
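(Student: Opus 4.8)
The plan is to linearize the Verlet half--steps, diagonalize, and reduce to a two--by--two operator--norm bound. First I would set $B=\int_0^1\nabla_x b(\bar q'+t(\bar q-\bar q'),\theta)\,dt$, so that $b(\bar q,\theta)-b(\bar q',\theta)=B(\bar q-\bar q')$. By Assumption~\ref{A1}, $B$ is symmetric and $\norm{Bu}\le L\abs u$ for all $u$; and since $\abs{\bar q-\bar q'}\ge 4R(1+L/m)$, the computation~\eqref{A1b} gives $u^{\!\top}Bu\ge\frac m2\abs u^2$, so $\mathrm{spec}(B)\subseteq[m/2,L]$. Writing $Q=\bar q-\bar q'$, $P=\bar p-\bar p'$, the step $\mathcal{BA}$ acts on the difference as the linear map $(Q,P)\mapsto\big((I-\tfrac{h^2}4B)Q+\tfrac h2P,\;P-\tfrac h2BQ\big)$, so both sides of~\eqref{lbaeq} are quadratic forms in $(Q,P)$. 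The matrix $M$ and the scalings $\mathrm{diag}(I_d,\eta_0I_d)$, $\mathrm{diag}(I_d,\eta_1I_d)$ are all of the form (a $2\times2$ matrix)$\,\otimes I_d$; conjugating the inequality by $O\oplus O$ for an orthogonal $O$ diagonalizing $B$ leaves $M$ and the scalings fixed and replaces $B$ by $\mathrm{diag}(\lambda_1,\dots,\lambda_d)$. Hence it suffices to prove, for each eigenvalue $\lambda\in[m/2,L]$, the corresponding scalar $2\times2$ inequality.

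For fixed $\lambda$ put $a=\tfrac h{2(\eta_0-\eta_1)}$, $\kappa=\tfrac{h^2\lambda\eta_0}{4(\eta_0-\eta_1)}$ and $\rho=\eta_1/\eta_0\in[0,1)$ (note $\eta_0\ge\eta_0-\eta_1>0$). The map $\psi=\left(\begin{smallmatrix}1&a\\0&a\end{smallmatrix}\right)$ satisfies $\psi^{\!\top}\psi=M_0:=\left(\begin{smallmatrix}1&a\\a&2a^2\end{smallmatrix}\right)$, i.e. $\psi$ is an isometry from $(\R^2,\norm{\cdot}_{M_0})$ onto the Euclidean plane. Substituting $(w,s)=\psi\,\mathrm{diag}(1,\eta_0)(q,p)$ in the scalar inequality and using this isometry, the claim becomes $\abs{\mathcal M(w,s)}^2\le(1-c)\abs{(w,s)}^2$ for all $(w,s)$, where, with $S_\lambda=\left(\begin{smallmatrix}1-h^2\lambda/4&h/2\\-h\lambda/2&1\end{smallmatrix}\right)$, a direct computation gives $\mathcal M=\psi\,\mathrm{diag}(1,\eta_1)\,S_\lambda\,\mathrm{diag}(1,\eta_0)^{-1}\psi^{-1}=\left(\begin{smallmatrix}1-\kappa&\kappa\\-\rho\kappa&\rho(1+\kappa)\end{smallmatrix}\right)$. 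So everything reduces to the operator--norm bound $\mathcal M^{\!\top}\mathcal M\preceq(1-c)I_2$.

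The remaining work is an elementary $2\times2$ semidefiniteness check, and it is the place where the hypotheses must be used carefully. First I would record the consequences of $Lh^2\le(\eta_0-\eta_1)^2$, $\eta_0-\eta_1\le\frac12$ and $\lambda\in[m/2,L]$: namely $\kappa\le\tfrac14(\eta_0-\eta_1)\eta_0\le\tfrac18$; $c=\tfrac{m}{4\lambda}\kappa\le\tfrac12\kappa$; and $1-\rho^2=\tfrac{(\eta_0-\eta_1)(\eta_0+\eta_1)}{\eta_0^2}\ge\tfrac{\eta_0-\eta_1}{\eta_0}$, which together with the bound on $\kappa$ yields $\kappa\le\tfrac14(1-\rho^2)$. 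Then, with $\delta:=1-\rho^2$, I would expand $(1-c)I_2-\mathcal M^{\!\top}\mathcal M=\left(\begin{smallmatrix}A&-E\\-E&D\end{smallmatrix}\right)$ with $A=2\kappa-c-\kappa^2(1+\rho^2)$, $D=\delta-c-2\rho^2\kappa-\kappa^2(1+\rho^2)$, $E=\kappa\big(\delta-\kappa(1+\rho^2)\big)$, and verify positive semidefiniteness by checking $A>0$ and $D>0$ (immediate from $c\le\kappa/2$ and $\kappa\le\delta/4$, which also gives $E>0$), and $AD\ge E^2$, for which it is enough to note $A-E=\kappa(1+\rho^2)-c>0$ and $D-E=\delta(1-\kappa)-c-2\rho^2\kappa>0$ and then use $A,D,E>0$. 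Unwinding the substitutions recovers~\eqref{lbaeq}. I expect the only real obstacle to be bookkeeping: the determinant step forces one to avoid the lossy bound $\rho^2\le1-2c$ and instead use $1-\rho^2\ge\eta_0-\eta_1$ directly, so that the constant comes out as the stated $c=\tfrac{m\eta_0h^2}{16(\eta_0-\eta_1)}$ rather than something smaller.
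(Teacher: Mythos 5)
Your proposal is correct, and it follows the same overall skeleton as the paper's proof --- mean-value theorem to produce the Hessian-type matrix $H=B$ with spectrum in $[m/2,L]$, block conjugation by an orthogonal diagonalizer of $B$ to reduce to a scalar $2\times 2$ inequality for each eigenvalue $\lambda$, and then a $2\times 2$ positive-semidefiniteness check --- but the execution of that last step is genuinely different and considerably cleaner. Where you diverge: the paper works directly in the $M$-norm, writes out the eigenvalue-by-eigenvalue quantities $A_\lambda$, $B_\lambda$, $C_\lambda$ of the blocks of the Schur-type matrix $\mathcal{H}$, and then verifies $A_\lambda>0$ and $A_\lambda C_\lambda - B_\lambda^2 > 0$ by an extensive order-by-order-in-$h$ expansion and term-grouping (equations \eqref{al}, \eqref{cl}, \eqref{bl}, \eqref{m2l}--\eqref{ul} in the paper). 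You instead factor $M_0 = \psi^\top\psi$ via $\psi=\left(\begin{smallmatrix}1&a\\0&a\end{smallmatrix}\right)$, which converts the twisted quadratic form to the standard Euclidean one; in the new coordinates the one-step map collapses to the very structured
$\mathcal{M}=\left(\begin{smallmatrix}1-\kappa&\kappa\\-\rho\kappa&\rho(1+\kappa)\end{smallmatrix}\right)=\mathrm{diag}(1,\rho)+\kappa\left(\begin{smallmatrix}1\\\rho\end{smallmatrix}\right)(-1\ 1)$, a diagonal matrix plus a rank-one perturbation controlled by a single small parameter $\kappa$. The determinant condition $AD\geq E^2$ is then dispatched by the elementary observation $A>E>0$, $D>E>0\Rightarrow AD>E^2$, avoiding the paper's laborious cancellations. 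This buys transparency and a shorter verification, at the cost of requiring the explicit isometry $\psi$; the paper's approach avoids introducing $\psi$ but pays for it with the $h$-order bookkeeping. Both proofs use the hypotheses in the same essential places (the lower bound $\lambda\geq m/2$ feeds through $c\leq\kappa/2$; the step-size bound $Lh^2\leq(\eta_0-\eta_1)^2$ feeds through $\kappa\leq\delta/4$ and $\kappa\leq 1/8$). One small presentational point: the chain ``check $A>E$, $D>E$ \emph{and then use} $A,D,E>0$'' should be stated as the single implication $A>E>0,\ D>E>0\Rightarrow AD>E^2$ for clarity, but the logic as you describe it is sound.
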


\begin{prop}\label{lab}
Let Assumption~\ref{A1} hold,~$\eta_0,\eta_1\in[0,1]$ satisfy~$0<\eta_0-\eta_1\leq\frac{1}{2}$ and let~$M\in\mathbb{R}^{2d}$ be given by~\eqref{mdef}. Assume~$0<Lh^2\leq (\eta_0-\eta_1)^2$. It holds that
\begin{align}
&\bigg\|\begin{pmatrix}
I_d & 0\\
0 & \eta_1I_d
\end{pmatrix}
\begin{pmatrix}
\bar{q} - \bar{q}' + \frac{h}{2}(\bar{p} - \bar{p}')\\
\bar{p} - \bar{p}' - \frac{h}{2}(b(\bar{q}+\frac{h}{2}\bar{p},\theta) - b(\bar{q}' + \frac{h}{2}\bar{p}',\theta))
\end{pmatrix}
\bigg\|_M\nonumber\\
&\quad\leq \bigg\|\begin{pmatrix}
I_d & 0\\
0 & \eta_0I_d
\end{pmatrix}
\begin{pmatrix}
\bar{q} - \bar{q}'\\
\bar{p} - \bar{p}'
\end{pmatrix}
\bigg\|_M \label{labeq}%\nonumber\\
%&\quad+ h\bar{l}\cdot b(0,\theta) + \frac{h^2\eta_1^{-2}}{8L}\abs{b(0,\theta)}^2.
\end{align}
for all~$\theta\in\Theta$,~$\bar{q},\bar{q}',\bar{p},\bar{p}'\in\mathbb{R}^d$ satisfying~$\abs{\bar{q} - \bar{q}' + \frac{h}{2}(\bar{p} - \bar{p}')}\geq 4R(1+\frac{L}{m})$.%, there exists~$\bar{l}\in\mathbb{R}^d$ such that
\end{prop}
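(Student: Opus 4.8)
The plan is to turn~\eqref{labeq} into a one-dimensional Lyapunov inequality and then verify the latter by an elementary $2\times 2$ positive-semidefiniteness check. Write $\delta q=\bar q-\bar q'$, $\delta p=\bar p-\bar p'$, and put $c_1:=\delta q+\tfrac h2\delta p$, which is the position difference produced by the step $\mathcal A$; by the fundamental theorem of calculus the velocity difference produced by the subsequent step $\mathcal B$ equals $\delta p-\tfrac h2 Bc_1$ with $B:=\int_0^1\nabla_x b\big(\bar q'+\tfrac h2\bar p'+tc_1,\theta\big)\,dt$. The hypothesis $\abs{\bar q-\bar q'+\tfrac h2(\bar p-\bar p')}\ge 4R(1+L/m)$ is exactly $\abs{c_1}\ge R'$, so Remark~\ref{Rcom} (inequality~\eqref{A1b}), together with the smoothness bound~\eqref{A1a} and the symmetry of $\nabla_x b$ imposed in Assumption~\ref{A1}, shows that $B$ is symmetric with all eigenvalues in $[m/2,L]\subseteq(0,L]$. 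Diagonalising $B=O\Lambda O^{\top}$ with $O$ orthogonal and $\Lambda=\mathrm{diag}(\lambda_1,\dots,\lambda_d)$, and changing position and velocity variables by $O^{\top}$, the Verlet half-step map becomes block-diagonal over the eigendirections of $B$, while $M=M_2\otimes I_d$ (with $M_2=\big(\begin{smallmatrix}1&\beta\\\beta&2\beta^2\end{smallmatrix}\big)$ and $\beta=\tfrac{h}{2(\eta_0-\eta_1)}$) and the matrices $\mathrm{diag}(I_d,\eta_0 I_d)$, $\mathrm{diag}(I_d,\eta_1 I_d)$ are all invariant under conjugation by $\mathrm{diag}(O,O)$. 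Since $\|(u,v)\|_M^2=\sum_i\|(u_i,v_i)\|_{M_2}^2$, it suffices to prove~\eqref{labeq} for $d=1$ with $b$ replaced by a linear map of slope $\lambda\in(0,L]$; in particular $c$ from~\eqref{mdef} plays no role here.

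In the scalar case I would use the factorisation $\|(q,p)\|_{M_2}^2=(q+\beta p)^2+(\beta p)^2$ and the identity $\beta\eta_0=\beta\eta_1+\tfrac h2$ (immediate from $\beta(\eta_0-\eta_1)=\tfrac h2$). Writing $a=\delta q$, $b=\delta p$, $c_1=a+\tfrac h2 b$ and $\mu:=\tfrac{\beta\eta_1 h\lambda}{2}\ge 0$, one first notes $a+\beta\eta_0 b=c_1+\beta\eta_1 b$, and then a short bookkeeping shows that the difference of the two sides of (the square of)~\eqref{labeq} collapses to
\begin{equation*}
\big\|(a,\eta_0 b)\big\|_{M_2}^2-\big\|\big(c_1,\ \eta_1(b-\tfrac h2\lambda c_1)\big)\big\|_{M_2}^2
=2\mu(1-\mu)c_1^2+2\beta^2\eta_1^2 h\lambda\, c_1 b+\Big(h\beta\eta_1+\tfrac{h^2}{4}\Big)b^2 .
\end{equation*}
The device making this clean is the change of variables $(a,b)\mapsto(c_1,b)$, which is what lets the a priori sign-indefinite contributions $(a+\beta\eta_0 b)^2$ and the cross terms cancel.

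It then remains to check that the quadratic form in $(c_1,b)$ on the right is positive semidefinite. Its $b^2$-coefficient is clearly positive; its $c_1^2$-coefficient is nonnegative because $\mu=\tfrac{\eta_1 h^2\lambda}{4(\eta_0-\eta_1)}\le\tfrac{Lh^2}{4(\eta_0-\eta_1)}\le\tfrac{\eta_0-\eta_1}{4}\le\tfrac18$, using $\lambda\le L$, $\eta_1\le1$, $Lh^2\le(\eta_0-\eta_1)^2$ and $\eta_0-\eta_1\le\tfrac12$; and the determinant condition reduces, after dividing by the positive quantity $\beta\eta_1 h^2\lambda$ (the case $\mu=0$ being immediate) and discarding the harmless $h/4$, to $1-\mu\ge\beta^2\eta_1^2\lambda$, which holds since $\beta^2\eta_1^2\lambda\le\tfrac{h^2 L}{4(\eta_0-\eta_1)^2}\le\tfrac14\le\tfrac78\le1-\mu$. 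Hence the displayed right-hand side is $\ge0$, which gives~\eqref{labeq}. The one genuinely delicate step is the reduction at the start: one must recognise that the symmetry hypothesis on $\nabla_x b$ in Assumption~\ref{A1} is precisely what makes $B$ symmetric, hence simultaneously diagonalisable with the ``block-scalar'' objects $M$ and $\mathrm{diag}(I_d,\eta_j I_d)$, so that the whole estimate is governed by a single scalar Lyapunov computation; once this is in place the remaining algebra is routine.
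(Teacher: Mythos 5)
Your proof is correct, and it reaches the conclusion by a genuinely different route than the paper's. Both arguments open the same way: apply the mean value theorem to turn the step $\mathcal{AB}$ into the linear map $\left(\begin{smallmatrix}I_d & \tfrac h2 I_d\\ -\tfrac h2\bar H & I_d-\tfrac{h^2}{4}\bar H\end{smallmatrix}\right)$, invoke the symmetry of $\nabla_x b$ to make $\bar H$ simultaneously diagonalisable with the block-scalar objects $M$ and $\mathrm{diag}(I_d,\eta_j I_d)$, and use the hypothesis $\lvert c_1\rvert\geq R'$ together with~\eqref{A1a} and~\eqref{A1b} to pin the eigenvalues of $\bar H$ into $[m/2,L]$. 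From that point the two proofs diverge. The paper forms the $2d\times 2d$ difference quadratic form $\bar{\mathcal H}$, applies the block-PSD criterion ``$\bar A\succ 0$ and $\bar A\bar C-\bar B^2\succ 0$'', and then verifies positivity eigenvalue-by-eigenvalue via a rather verbose term-by-term expansion organized by powers of~$h$. You instead apply the affine change of variables $(\delta q,\delta p)\mapsto(c_1,\delta p)$ with $c_1=\delta q+\tfrac h2\delta p$, and exploit the identity $\beta\eta_0=\beta\eta_1+\tfrac h2$ so that the leading square $(a+\beta\eta_0 b)^2$ rewrites as $(c_1+\beta\eta_1 b)^2$; this makes the dominant, sign-indeterminate terms cancel and collapses the difference of the two sides to the explicit scalar quadratic form $2\mu(1-\mu)c_1^2+2\beta^2\eta_1^2 h\lambda\,c_1 b+(h\beta\eta_1+\tfrac{h^2}{4})b^2$, whose positive semi-definiteness is then a three-line check. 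I verified your algebraic identity and your bounds $\mu\leq\tfrac18$ and $\beta^2\eta_1^2\lambda\leq\tfrac14\leq 1-\mu$; all steps go through, including the degenerate case $\mu=0$ which you correctly handle separately. What the paper's route buys is uniformity: the same $\bar A\bar C-\bar B^2$ machinery and the same $O(h^k)$ bookkeeping are reused verbatim across Propositions~\ref{lba}, \ref{lab}, \ref{lba2}, \ref{lab2}, \ref{expo}, \ref{expo2} and the randomized-midpoint variants, at the cost of opacity in each individual case. What your route buys is transparency: the change of variables to $(c_1,b)$ exposes exactly where the drop $\eta_0\to\eta_1$ in the velocity factor is spent, and the resulting quadratic form is small enough to check by inspection. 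It would be a worthwhile simplification of this particular proposition, though adapting it to the ones where a strict contraction constant $c>0$ must be tracked (e.g.\ Proposition~\ref{lba}) would require a little more care.
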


In the randomized midpoint case, only the~$\mathcal{BA}$ step needs to be dealt with, but the proof must be adapted accordingly. This is given in the following.
\begin{prop}\label{lbaR}
Let Assumption~\ref{A1} hold, let~$\bar{u}\in(0,h)$,~$\eta_0,\eta_1\in[0,1]$ satisfy~$0<\eta_0-\eta_1\leq1$ and~$c\in\mathbb{R}$,~$M\in\mathbb{R}^{2d\times 2d}$ be given by~\eqref{mdef}. 
Assume~$%\max(4(\eta_0-\eta_1),\eta_0^2\eta_1^2)
Lh^2\leq (\eta_0-\eta_1)^2$. It holds that
\begin{align}
&\bigg\|\begin{pmatrix} 
I_d & 0\\
0 & \eta_1 I_d
\end{pmatrix}
\begin{pmatrix}
\bar{q}-\bar{q}' + \frac{h}{2}(\bar{p}-\bar{p}') - \frac{h^2}{4}(b(\bar{q}+\bar{u}\bar{p},\theta) - b(\bar{q}'+\bar{u}\bar{p}',\theta))\\
\bar{p} - \bar{p}' - \frac{h}{2}(b(\bar{q}+\bar{u}\bar{p},\theta) - b(\bar{q}'+\bar{u}\bar{p}',\theta))
\end{pmatrix} \bigg\|_M^2 \nonumber \\
&\quad\leq (1-c)\bigg\| \begin{pmatrix}
I_d & 0\\
0 & \eta_0 I_d
\end{pmatrix}
\begin{pmatrix}
\bar{q} - \bar{q}'\\
\bar{p} - \bar{p}'
\end{pmatrix}\bigg\|_M^2 %+hl\cdot b(0,\theta) + \kappa h^2\abs{b(0,\theta)}^2.
\label{lbaeqR}
\end{align}
for all~$\theta\in\Theta$,~$\bar{q},\bar{q}',\bar{p},\bar{p}'\in\mathbb{R}^d$ satisfying~$\abs{\bar{q}+\bar{u}\bar{p}-\bar{q}'-\bar{u}\bar{p}'}\geq 4R(1+\frac{L}{m})$ and~$\bar{p},\bar{p}'\in\mathbb{R}^d$.
\end{prop}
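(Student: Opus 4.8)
The plan is to mirror the proof of Proposition~\ref{lba}, the only genuinely new feature being that the force is evaluated at the randomized midpoint $\bar q+\bar u\bar p$ rather than at $\bar q$. Write $\delta q=\bar q-\bar q'$, $\delta p=\bar p-\bar p'$ and, by the fundamental theorem of calculus along the segment from $\bar q'+\bar u\bar p'$ to $\bar q+\bar u\bar p$,
\[
b(\bar q+\bar u\bar p,\theta)-b(\bar q'+\bar u\bar p',\theta)=H\,(\delta q+\bar u\,\delta p),\qquad H=\int_0^1\nabla_x b\big(\bar q'+\bar u\bar p'+t(\delta q+\bar u\,\delta p),\theta\big)\,dt .
\]
By Assumption~\ref{A1} the matrix $H$ is symmetric, and since the hypothesis gives $\abs{\delta q+\bar u\,\delta p}\geq 4R(1+L/m)$, the computation in Remark~\ref{Rcom} (inequality~\eqref{A1b}) together with~\eqref{A1a} yields $(m/2)I_d\preceq H\preceq L\,I_d$. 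The block $2d\times2d$ one-step map, the weight matrix $M$ of~\eqref{mdef}, and the two damping matrices $\operatorname{diag}(I_d,\eta_0 I_d)$ and $\operatorname{diag}(I_d,\eta_1 I_d)$ all commute with the block action of any orthogonal matrix diagonalizing $H$, so, exactly as in Proposition~\ref{lba} (via the eigenvalue reduction of~\cite{lc23}), it suffices to prove the inequality coordinatewise, for each eigenvalue $\lambda\in[m/2,L]$ of $H$.

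In the scalar case the $\mathcal{BA}$ midpoint step is the $2\times2$ matrix
\[
\tilde S_{\lambda,\bar u}=\begin{pmatrix}1-\tfrac{h^2\lambda}{4} & \tfrac h2-\tfrac{\bar u h^2\lambda}{4}\\ -\tfrac{h\lambda}{2} & 1-\tfrac{\bar u h\lambda}{2}\end{pmatrix}=S_\lambda+\Delta,\qquad \Delta=-\tfrac{\bar u h\lambda}{2}\begin{pmatrix}0 & h/2\\ 0 & 1\end{pmatrix},
\]
where $S_\lambda$ is the velocity Verlet $\mathcal{BA}$ matrix from Proposition~\ref{lba}, and the claim reduces to the $2\times2$ semidefinite bound
\[
\tilde S_{\lambda,\bar u}^{\!\top}D_1\bar M D_1\tilde S_{\lambda,\bar u}\ \preceq\ (1-c)\,D_0\bar M D_0,\qquad D_i=\operatorname{diag}(1,\eta_i),
\]
where $\bar M$ is the $2\times2$ factor of $M$ (that is, $M=\bar M\otimes I_d$) and $c$ is as in~\eqref{mdef}, to be checked for all $\lambda\in[m/2,L]$ and $\bar u\in(0,h)$ under $Lh^2\leq(\eta_0-\eta_1)^2$. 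I would expand this quadratic form exactly as in the proof of Proposition~\ref{lba}. The structural point is that $\Delta$ has vanishing first column, so the $(1,1)$ entry of $\tilde S_{\lambda,\bar u}^{\!\top}D_1\bar M D_1\tilde S_{\lambda,\bar u}$ coincides with the Verlet one; hence the first leading principal minor of $(1-c)D_0\bar M D_0-\tilde S_{\lambda,\bar u}^{\!\top}D_1\bar M D_1\tilde S_{\lambda,\bar u}$ is exactly the quantity already shown nonnegative in Proposition~\ref{lba}. Only the determinant needs to be re-examined; every correction to it carries a factor of $\Delta$, hence is of size $O(\bar u h\lambda)=O(h^2L)\leq O((\eta_0-\eta_1)^2)$, and by the stepsize condition these are absorbed into the margin deliberately kept in the determinant estimate of Proposition~\ref{lba} (reflected in the factor $1/16$ inside $c$), the case split being the same one on $h\sqrt L$ versus $\eta_0-\eta_1$; the milder hypothesis $\eta_0-\eta_1\leq1$ here, versus $\eta_0-\eta_1\leq\tfrac12$ in Proposition~\ref{lba}, only changes constants in that split.

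The step I expect to be the main obstacle is precisely this last determinant check. It cannot be bypassed by quoting the conclusion of Proposition~\ref{lba}: the factorization $\tilde S_{\lambda,\bar u}=S_\lambda\operatorname{diag}(1,1-\bar u h\lambda/2)$ would require conjugating $D_0\bar M D_0$ by $\operatorname{diag}(1,(1-\bar u h\lambda/2)^{-1})$, and the resulting perturbation is indefinite — its off-diagonal entry is produced out of a vanishing diagonal entry of $D_0\bar M D_0$ — so a black-box reduction fails and one must re-use the quantitative positive-definiteness slack established inside the proof of Proposition~\ref{lba}. Heuristically this slack is ample: the contraction constant $c=m\eta_0 h^2/(16(\eta_0-\eta_1))$ is governed by the smallest eigenvalue $\lambda=m/2$, for which the midpoint correction $\bar u h\lambda/2\leq h^2m/4$ is negligible, whereas for larger $\lambda$ the slack itself grows proportionally to $h^2\lambda$ and dominates the bounded correction. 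Once both leading principal minors of $(1-c)D_0\bar M D_0-\tilde S_{\lambda,\bar u}^{\!\top}D_1\bar M D_1\tilde S_{\lambda,\bar u}$ are verified nonnegative for every $\lambda\in[m/2,L]$ and $\bar u\in(0,h)$, undoing the diagonalization yields~\eqref{lbaeqR}.
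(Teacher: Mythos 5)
Your setup is exactly the paper's: the same mean-value-theorem factorization with the Hessian $H$ averaged along the segment between the two midpoint arguments, the same lower bound $(m/2)I_d\preceq H\preceq L I_d$ via Remark~\ref{Rcom}, the same eigenvalue reduction of~\cite{lc23} to a $2\times2$ Schur criterion ($A\succ 0$ and $AC-B^2\succ 0$), and the same key observation that the randomized-midpoint perturbation $\Delta$ has vanishing first column, so that $A$ is literally the Verlet $A$ of~\eqref{Adef} and the first principal minor is already settled by the argument of Proposition~\ref{lba}.

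The gap is precisely the step you yourself flag as ``the main obstacle'': you do not actually verify $A_\lambda C_\lambda - B_\lambda^2>0$ for the midpoint matrices, and the heuristic you offer — that the $O(\bar u h\lambda)$ corrections are ``absorbed into the margin deliberately kept in the determinant estimate of Proposition~\ref{lba}'' — does not describe a workable argument, nor is it what the paper does. First, there is no ``case split on $h\sqrt L$ versus $\eta_0-\eta_1$'' in the proof of Proposition~\ref{lba}; the factor $1/16$ in $c$ is not a reserved safety margin against a perturbation of this kind, and the slack $\mu>0$ obtained there (of order $\lambda b^3 h\,\eta_0(\eta_0^2-\eta_1^2)$) is of comparable, not larger, magnitude than the $\bar u$-induced corrections, so the absorption cannot be done blindly. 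Second, the new $B$ and $C$ have qualitatively different structure from the Verlet ones: the block~$C$ now involves $(I_d-\tfrac{h\bar u}{2}H)^2$, and $B$ involves $(I_d-\tfrac{h\bar u}{2}H)$ multiplying a bracket whose sign matters. The paper handles this by a fresh computation rather than perturbation theory: it rewrites $B$ as in~\eqref{hqo}, shows $B_\lambda>0$, uses that the bracket in~\eqref{BdefR} is negative so that taking $\bar u\uparrow h$ gives an upper bound on $B_\lambda$, obtains $B_\lambda^2<c_1^2 b^6$ with $c_1\leq\lambda(3\eta_0+\eta_1)(\eta_0-\eta_1)$, bounds $C_\lambda\geq\tfrac78 b^2(\eta_0^2-\eta_1^2)$, and compares with $A_\lambda$ from~\eqref{alp0}. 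None of these specific estimates are in your proposal, and they are not deducible from Proposition~\ref{lba} by a small-perturbation argument. Until you carry out these bounds (or some equivalent ones), the proof is incomplete.
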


\subsection{Relatively large velocity differences}\label{rlvd}

In Propositions~\ref{lba2} and~\ref{lab2}, we follow largely the same approach as for the proofs in Section~\ref{lpd}. However, %inequalities~\eqref{lbaeq} and~\eqref{labeq} are shown to hold 
a different region of~$(\bar{q}-\bar{q}',\bar{p}-\bar{p}')$ space is considered. Consequently, the matrices~$H$ and~$\bar{H}$ 
%both given in the supplementary material 
%given by~(192) 
given by~\eqref{mvth} and~\eqref{barH} 
%and~(207) 
are not necessarily positive definite. In order to obtain inequalities~\eqref{lbaeq} and~\eqref{labeq}, some positive~$\abs{\bar{p}-\bar{p}'}^2$ term is used to compensate for the lack of such~$\abs{\bar{q}-\bar{q}'}^2$ terms through restriction in the region of~$(\bar{q}-\bar{q}',\bar{p}-\bar{p}')$ space under consideration.
In this section, the additional assumption~$Lh^2\leq1/16^2$ will be made. This is not necessary to obtain contraction, but it is assumed for other parts of the paper and the constants here improve as a result. 
%Again, only the proof of the first proposition in this section is given; 
%the other proofs in this section may be found in the arXiv version of the present manuscript. 
All of the proofs in this section may be found in 
%Section~1 
Appendix~\ref{conapp}. 
%of the supplementary material.

\begin{prop}\label{lba2}
Let Assumption~\ref{A1} hold,~$\eta_0,\eta_1\in[0,1]$ satisfy~$0<\eta_0-\eta_1\leq \min(\eta_1,1/2)$ and let~$c\in\mathbb{R}$,~$M\in\mathbb{R}^{2d\times 2d}$ be given by~\eqref{mdef}.
%\begin{equation*}
%\hat{\kappa} = !!!
%\end{equation*}
Assume~$0<Lh^2\leq \min((\eta_0-\eta_1)^2,1/16^2)$. Inequality~\eqref{lbaeq} holds for all~$\bar{q},\bar{q}',\bar{p},\bar{p}'\in\mathbb{R}^d$ satisfying~$\abs{\bar{p} - \bar{p}'} \geq \sqrt{(17/4)L}\abs{\bar{q} - \bar{q}'}$. %with~$\hat{c}$ in place~$c$.
\iffalse % Joint ba ab step
\begin{align}
&\bigg\|\begin{pmatrix} 
I_d & 0\\
0 & \eta_1^{-1} I_d
\end{pmatrix}
\begin{pmatrix}
\hat{q} \\%+ h(p-p') - \frac{h^2}{4}(b(q,\theta) -b(q',\theta))\\
p - \frac{h}{2}(b(q,\theta) + b(\hat{q},\theta) )
\end{pmatrix} \bigg\|_M\nonumber\\
&\quad\leq (1-\hat{c})\bigg\| \begin{pmatrix}
I_d & 0\\
0 & \eta_0^{-1} I_d
\end{pmatrix}
\begin{pmatrix}
q\\
p
\end{pmatrix}\bigg\|_M\label{lbaeq2}
\end{align}
\fi
%for all~$\theta\in\Theta$,~$q,p\in\mathbb{R}^d$ satisfying~$q\cdot p\leq -\abs{q}^2$, where~$\hat{q} = q+ hp - \frac{h^2}{2}b(q,\theta)$.
\end{prop}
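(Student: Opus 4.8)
The strategy is to run the proof of Proposition~\ref{lba} essentially unchanged, replacing its one use of the large-position hypothesis by the new hypothesis $\abs{\bar p - \bar p'}\geq\sqrt{(17/4)L}\,\abs{\bar q-\bar q'}$. Write $z_q = \bar q-\bar q'$, $z_p = \bar p-\bar p'$ and $J=\int_0^1\nabla_xb(\bar q'+tz_q,\theta)\,dt$, so that $b(\bar q,\theta)-b(\bar q',\theta)=Jz_q$ and, by Assumption~\ref{A1}, $J$ is symmetric with $\|Ju\|\le L\abs u$ for all $u$, i.e.\ $\mathrm{Sp}(J)\subseteq[-L,L]$. (The only difference is that Proposition~\ref{lba} also invokes \eqref{A1b}, which forces $\mathrm{Sp}(J)\subseteq[m/2,L]$, and this is exactly what $\abs{\bar q-\bar q'}\ge 4R(1+L/m)$ buys there.) Diagonalising $J$ and using that the matrix $M$ of \eqref{mdef} and the matrices $\begin{pmatrix}I_d&0\\0&\eta_jI_d\end{pmatrix}$, $j\in\{0,1\}$, commute with its spectral projections, inequality \eqref{lbaeq} decouples over the eigenvalues $\lambda$ of $J$: writing $a=h/(2(\eta_0-\eta_1))$, $\|(a_1,a_2)\|_M^2=(a_1+aa_2)^2+a^2a_2^2$, $q_+=(1-\lambda h^2/4)q+(h/2)p$, $p_+=p-(\lambda h/2)q$, one must show $\Delta_\lambda(q,p):=(1-c)\|(q,\eta_0p)\|_M^2-\|(q_+,\eta_1p_+)\|_M^2\ge 0$ for each $\lambda$, and then sum the scalar inequalities.

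Using the identity $q_++a\eta_1 p_+=(1-\mu)q+a\eta_0 p$ with $\mu=\lambda h^2\eta_0/(4(\eta_0-\eta_1))$ and completing the square in the $q+a\eta_0p$ variable, a direct computation gives
\begin{equation*}
\Delta_\lambda(q,p)=-c\,\big(sq+a\eta_0p\big)^2+\begin{pmatrix}q\\p\end{pmatrix}^{\!\top}S_\lambda\begin{pmatrix}q\\p\end{pmatrix},\qquad s=1-\tfrac\mu c=1-\tfrac{4\lambda}m,
\end{equation*}
where $S_\lambda$ is symmetric with entries $(S_\lambda)_{11}=\tfrac{(1-c)\mu^2}c-a^2\eta_1^2\tfrac{\lambda^2h^2}4$, $(S_\lambda)_{12}=\tfrac{a^2\eta_1^2\lambda h}2$ and $\sigma:=(S_\lambda)_{22}=a^2\big((1-c)\eta_0^2-\eta_1^2\big)$, which does not depend on $\lambda$. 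The point is that $\sigma>0$: from $c\le Lh^2\eta_0/(16(\eta_0-\eta_1))$ together with $Lh^2\le(\eta_0-\eta_1)^2$ and $\eta_0-\eta_1\le\min(\eta_1,1/2)$ one gets $\eta_1\ge\eta_0/2$ and hence $(1-c)\eta_0^2-\eta_1^2=\eta_0^2-\eta_1^2-c\eta_0^2\ge\tfrac{11}8(\eta_0-\eta_1)\eta_0$, so $\sigma\ge\tfrac{11h^2\eta_0}{32(\eta_0-\eta_1)}$. This positive $p^2$-term is the spare positivity in the velocity direction, present because $\eta_1<\eta_0$, which is merely absorbed into slack in the proof of Proposition~\ref{lba} for $\lambda\in[m/2,L]$; here it is what must compensate for the missing $q^2$-positivity. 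Indeed $\Delta_\lambda=\binom qp^{\!\top}\bigl(S_\lambda-c\,v_\lambda v_\lambda^{\!\top}\bigr)\binom qp$ with $v_\lambda=(s,a\eta_0)^{\!\top}$, and $S_\lambda-cv_\lambda v_\lambda^{\!\top}$ need not be positive semidefinite (it has one negative direction, of $q^2$-type); crucially, its $q^2$-entry and $qp$-entry are $O\bigl(Lh^2\eta_0/(\eta_0-\eta_1)\bigr)$ and $O\bigl(\sqrt L\,h^2\eta_0/(\eta_0-\eta_1)\bigr)$ respectively, uniformly in $|\lambda|\le L$, under the stepsize restrictions.

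Summing over the eigenvalues of $J$ and applying Cauchy--Schwarz to the off-diagonal contributions then gives $\sum_\lambda\Delta_\lambda\ge \sigma\abs{z_p}^2-A_0L\abs{z_q}^2-2B_0\sqrt L\,\abs{z_q}\abs{z_p}$ for constants $A_0,B_0>0$ bounding the $q^2$- and $qp$-entries of $S_\lambda-cv_\lambda v_\lambda^{\!\top}$ (divided by $L$ and $\sqrt L$) uniformly over $|\lambda|\le L$, and the hypothesis $\abs{z_p}\ge\sqrt{(17/4)L}\,\abs{z_q}$, i.e.\ $L\abs{z_q}^2\le\tfrac4{17}\abs{z_p}^2$ and $\sqrt L\abs{z_q}\abs{z_p}\le\tfrac2{\sqrt{17}}\abs{z_p}^2$, makes the right-hand side $\ge\bigl(\sigma-\tfrac4{17}A_0-\tfrac4{\sqrt{17}}B_0\bigr)\abs{z_p}^2$. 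The technical crux is to bound $A_0$ and $B_0$ sharply enough that this last quantity is nonnegative: this is where the precise constant $17/4$ enters, and where the auxiliary assumption $Lh^2\le1/16^2$ is used (it keeps $B_0$, which is $\Theta(\sqrt L\,h^3/(\eta_0-\eta_1)^2)$, under control for all admissible $\eta_0,\eta_1$), as well as the sharp estimates $\eta_1\ge\eta_0/2$ and $\eta_0(\eta_0-\eta_1)\le 1/2$. This establishes \eqref{lbaeq} in the stated region; the randomized-midpoint variant (Proposition~\ref{lbaR}) is obtained identically, replacing $b(\bar q,\theta)$ by $b(\bar q+\bar u\bar p,\theta)$ throughout.
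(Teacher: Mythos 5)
The paper's proof of Proposition~\ref{lba2} proceeds by diagonalizing $H=\int_0^1\nabla_x b\,dt$, then \emph{before} invoking any eigenvalue-by-eigenvalue criterion it uses the hypothesis $\abs{\bar p-\bar p'}^2\geq(17/4)L\abs{\bar q-\bar q'}^2$ at the matrix level to trade a fraction (one third) of the $C$-block's positive $\abs{\bar p-\bar p'}^2$ contribution for a boost $(17/12)Lbh(\eta_0+\eta_1)I_d$ in the $A$-block, producing modified matrices $A',C'$, and then checks $A'_\lambda>0$ and $A'_\lambda C'_\lambda-B_\lambda^2>0$ for each $\lambda\in[-L,L]$ via the block-positivity criterion of \cite[Prop.~4.2]{lc23}, with the necessary estimates carried out explicitly by order in $h$ (the quantities $\mu_2',\mu_3',\mu_4'$ in \eqref{m3ll}, \eqref{m3pl}, \eqref{m4pl}). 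Your proposal is structurally different: you diagonalize first, complete the square in the direction $sq+a\eta_0 p$ to isolate a positive $p^2$-coefficient $\sigma$, accept that the resulting form $S_\lambda-cv_\lambda v_\lambda^{\!\top}$ is indefinite, and only apply the velocity-dominance hypothesis at the end after summing over eigenvalues and using Cauchy--Schwarz on the cross terms. This is a genuinely different route, not a rephrasing.

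However, there is a real gap. The entire content of the proposition is the quantitative fact that $\sigma-\tfrac4{17}A_0-\tfrac4{\sqrt{17}}B_0\geq 0$ under the stated hypotheses, and this is exactly what you do not verify. You write it down and correctly flag it as the ``technical crux,'' state the expected orders of $A_0$ and $B_0$, and assert that the constants $17/4$ and $1/16^2$ make it work — but you never compute $A_0$ and $B_0$ explicitly. Without this computation the proof is not a proof. Moreover, a rough calculation suggests the Cauchy--Schwarz step is lossier than the paper's per-eigenvalue determinant check (which exploits that $A'_\lambda$ and $C'_\lambda$ are \emph{both} positive, so that $A'_\lambda C'_\lambda$ scales favourably for each $\lambda$, rather than pitting worst-case entries against the single $\sigma\abs{z_p}^2$ term): with $A_0\approx\frac{11h^2\eta_0}{16(\eta_0-\eta_1)}$ and $B_0\sqrt L\approx\frac{9Lh^3\eta_0^2}{32(\eta_0-\eta_1)^2}$ one needs $\frac{\sqrt L\,h\,\eta_0}{\eta_0-\eta_1}\lesssim 0.66$, but the hypotheses only give $\frac{\sqrt L\,h\,\eta_0}{\eta_0-\eta_1}\leq\eta_0\leq 1$, so the bound is not evidently closed for $\eta_0$ near $1$. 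You should either supply the explicit bounds on $A_0,B_0$ and check nonnegativity carefully across the full range of $\lambda\in[-L,L]$, $\eta_0,\eta_1$, or switch to the matrix-level transfer $C\to C'$, $A\to A'$ used in the paper, where the per-eigenvalue determinant criterion avoids the loss.
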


%In the following Lemma~\ref{lab2}, we assume~$Lh^2\leq (\eta_0-\eta_1)/64$, which is not necessary 
\begin{prop}\label{lab2}
Let Assumption~\ref{A1} hold,~$\eta_0,\eta_1\in[0,1]$ satisfy~$0<\eta_0-\eta_1\leq \frac{1}{2}$ and let~$M\in\mathbb{R}^{2d\times2d}$ be given by~\eqref{mdef}. Assume~$0<Lh^2\leq \min((\eta_0-\eta_1)^2,1/16^2)$. Inequality~\eqref{labeq} holds for all~$\bar{q},\bar{q}',\bar{p},\bar{p}'\in\mathbb{R}^d$ satisfying~$\abs{\bar{p} - \bar{p}'} \geq \sqrt{(17/4)L}\abs{\bar{q} - \bar{q}'}$.
\end{prop}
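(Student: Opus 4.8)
The plan is to rewrite~\eqref{labeq} as a quadratic‑form inequality in the state difference, diagonalise the relevant Jacobian, and reduce to a one‑parameter family of scalar quadratic forms that can be controlled uniformly. Set $\delta_q:=\bar q-\bar q'$, $\delta_p:=\bar p-\bar p'$, and let
\[
J:=\int_0^1\nabla_x b\Big(\bar q'+\tfrac h2\bar p'+t\big(\delta_q+\tfrac h2\delta_p\big),\,\theta\Big)\,dt ,
\]
so that $b(\bar q+\tfrac h2\bar p,\theta)-b(\bar q'+\tfrac h2\bar p',\theta)=J(\delta_q+\tfrac h2\delta_p)$; by Assumption~\ref{A1}, $J=J^{\!\top}$ and its spectrum lies in $[-L,L]$. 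Writing $M=M_0\otimes I_d$ with $M_0\in\mathbb R^{2\times2}$ read off from~\eqref{mdef}, $D_i:=\mathrm{diag}(I_d,\eta_i I_d)$, and $\Psi_J:=\begin{pmatrix}I_d&\frac h2 I_d\\-\frac h2 J& I_d-\frac{h^2}4 J\end{pmatrix}$ (the linearisation of $\mathcal{AB}$), inequality~\eqref{labeq} is equivalent to $z^{\!\top}H_J z\ge0$ for $z=(\delta_q,\delta_p)$, where $H_J:=D_0MD_0-\Psi_J^{\!\top}D_1MD_1\Psi_J$ is the matrix $H$ of Section~\ref{rlvd}. In the setting of Proposition~\ref{lab}, \eqref{A1b} forces the spectrum of $J$ into $[m/2,L]$ and $H_J\succeq0$ follows; the only difference here is that the spectrum may reach down to $-L$, so $H_J$ need not be positive semidefinite, and the hypothesis $|\delta_p|\ge\sqrt{(17/4)L}\,|\delta_q|$ has to be used.

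Since every block of $H_J$ is a polynomial in $J$, I would diagonalise $J=O^{\!\top}\Lambda O$ ($O$ orthogonal) and set $a:=O\delta_q$, $c:=O\delta_p$; then $|a|=|\delta_q|$, $|c|=|\delta_p|$, and $z^{\!\top}H_J z$ splits as $\sum_k Q_{\lambda_k}(a_k,c_k)$ over the eigenvalues $\lambda_k\in[-L,L]$ of $J$, where
\[
Q_\lambda(s,t):=\big\|(s,\eta_0t)\big\|_{M_0}^2-\big\|\mathrm{diag}(1,\eta_1)\,\Psi_\lambda(s,t)^{\!\top}\big\|_{M_0}^2 ,\qquad \Psi_\lambda:=\begin{pmatrix}1&h/2\\-h\lambda/2&1-h^2\lambda/4\end{pmatrix}.
\]
The crux is a $\lambda$‑uniform lower bound $Q_\lambda(s,t)\ge\mu h^2t^2-\nu Lh^2s^2$ valid for all $s,t\in\mathbb R$ and $\lambda\in[-L,L]$, with constants $\mu,\nu>0$ satisfying $\nu\le\tfrac{17}4\mu$. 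Granting it, summing over $k$ gives $z^{\!\top}H_J z\ge\mu h^2|\delta_p|^2-\nu Lh^2|\delta_q|^2$, and the region hypothesis $|\delta_q|^2\le\tfrac4{17L}|\delta_p|^2$ then yields $z^{\!\top}H_J z\ge(\mu-\tfrac4{17}\nu)h^2|\delta_p|^2\ge0$, which is~\eqref{labeq}. (The $\eta_1$‑versus‑$\eta_0$ mismatch between $D_1$ and $D_0$ — a gratuitous factor in the velocity variable — is then removed at the following $\mathcal O$‑step exactly as in Section~\ref{lpd}.)

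To prove the uniform bound I would expand $Q_\lambda=A(\lambda)s^2+B(\lambda)st+C(\lambda)t^2$; using $2(\eta_0-\eta_1)\sigma=h$ with $\sigma=\tfrac h{2(\eta_0-\eta_1)}$, the $O(1)$ part of the $s^2$‑coefficient and the $O(h)$ part of the $st$‑coefficient cancel, leaving $A(\lambda)=\sigma\eta_1h\lambda-\tfrac12\sigma^2\eta_1^2h^2\lambda^2$, which is $\ge0$ exactly when $\lambda\ge0$ — this is the positivity Proposition~\ref{lab} reads off from~\eqref{A1b}. For $\lambda<0$ one instead gets $-A(\lambda)\le\tfrac{17\eta_1}{32(\eta_0-\eta_1)}Lh^2$ from $Lh^2\le(\eta_0-\eta_1)^2$, $|\lambda|\le L$ and $\eta_1(\eta_0-\eta_1)\le\tfrac14$; likewise $C(\lambda)\ge\tfrac{(\eta_0+\eta_1)h^2}{4(\eta_0-\eta_1)}$ up to smaller corrections, while $B(\lambda)$ is of lower order, the additional hypothesis $Lh^2\le1/16^2$ making the cross term negligible after a Young inequality $B(\lambda)st\ge-\tfrac12C(\lambda)t^2-\tfrac{B(\lambda)^2}{2C(\lambda)}s^2$. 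One then takes $\mu h^2=\tfrac12C(\lambda)$ and $\nu Lh^2=\tfrac{B(\lambda)^2}{2C(\lambda)}-A(\lambda)$ and checks $\nu\le\tfrac{17}4\mu$: the value $17/4$ is calibrated so that $\tfrac4{17}$ times the bound $\tfrac{17\eta_1}{32(\eta_0-\eta_1)}Lh^2$ on $\nu Lh^2$ is at most $\tfrac{\eta_1}{8(\eta_0-\eta_1)}h^2\le\tfrac{\eta_0+\eta_1}{8(\eta_0-\eta_1)}h^2\le\mu h^2$. I expect this constant‑tracking to be the only real difficulty: one must confirm that the $s^2$‑coefficient of $Q_\lambda$, genuinely negative for $\lambda<0$ (in contrast to Proposition~\ref{lab}), is nevertheless of size $O(Lh^2)$ and hence defeated by the $|\delta_p|^2$‑slack through the region constraint, uniformly down to $\lambda=-L$, and that the $st$‑term contributes only at lower order — which is precisely what the two hypotheses $Lh^2\le(\eta_0-\eta_1)^2$ and $Lh^2\le1/16^2$ secure.
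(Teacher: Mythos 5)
Your proposal mirrors the paper's proof: linearize by the mean value theorem, split over eigenvalues of the integrated Hessian $\bar H$ (so $z^{\top}\bar{\mathcal H}z$ reduces to scalar forms $Q_\lambda$ with $\lambda\in[-L,L]$), and use the cone constraint $|\bar p-\bar p'|^2\ge\frac{17L}{4}|\bar q-\bar q'|^2$ to transfer slack from the $t^2$-coefficient into the $s^2$-coefficient that can turn negative when $\lambda<0$. The paper packages the transfer at the matrix level (adding $\frac{17}{16}bhL(\eta_0+\eta_1)I_d$ to $\bar A$ and replacing the leading block of $\bar C$ by $\frac34\cdot 2b^2(\eta_0^2-\eta_1^2)I_d$, then checking $\bar A'_{\bar\lambda}\bar C'_{\bar\lambda}\ge\bar B_{\bar\lambda}^2$) whereas you absorb the cross term via Young's inequality with weight $C(\lambda)$ before invoking the cone; these are equivalent rearrangements of the same estimate.

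Two of your numerical claims would not survive being written out. First, $B(\lambda)^2/(2C(\lambda))$ is not of lower order in $h$ and cannot be silently dropped from $\nu Lh^2$: for $\lambda=-L$ the term $\lambda b^2h\eta_1^2$ in $B(\lambda)$ gives $B(\lambda)^2/(2C(\lambda))$ the same $Lh^2$-scaling as $-A(\lambda)$ (it is subdominant only by a bounded power of $\eta_1$). Second, the chain $\tfrac{\eta_1}{8(\eta_0-\eta_1)}h^2\le\tfrac{\eta_0+\eta_1}{8(\eta_0-\eta_1)}h^2\le\mu h^2$ has dropped the factor $L$ (it should read $\tfrac{\eta_1Lh^2}{8(\eta_0-\eta_1)}$) and uses the nominal leading value for $C(\lambda)$, whereas the $\lambda$-dependent corrections in $\bar C$ shave roughly a quarter off of it (cf.\ the estimate~\eqref{cblb}). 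The margins are comfortable — the paper's own check in~\eqref{mpb}--\eqref{odh} succeeds with similar slack — so once those two points are accounted for your argument closes; the outline, the eigen-decomposition, and the role of $17/4$ are all right.
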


\begin{prop}\label{lba2R}
Let Assumption~\ref{A1} hold, let~$\bar{u}\in(0,h)$,~$\eta_0,\eta_1\in[0,1]$ satisfy~$0\leq\eta_1<\eta_0\leq 1$ and let~$c\in\mathbb{R}$,~$M\in\mathbb{R}^{2d\times 2d}$ be given by~\eqref{mdef}.
%\begin{equation*}
%\hat{\kappa} = !!!
%\end{equation*}
Assume~$0<Lh^2\leq \min((\eta_0-\eta_1)^2,2/16^2)$. Inequality~\eqref{lbaeqR} holds for all~$\bar{q},\bar{q}',\bar{p},\bar{p}'\in\mathbb{R}^d$ satisfying~$\abs{\bar{p} - \bar{p}'} \geq 4\sqrt{L}\abs{\bar{q} - \bar{q}'}$. %with~$\hat{c}$ in place~$c$.
\iffalse % Joint ba ab step
\begin{align}
&\bigg\|\begin{pmatrix} 
I_d & 0\\
0 & \eta_1^{-1} I_d
\end{pmatrix}
\begin{pmatrix}
\hat{q} \\%+ h(p-p') - \frac{h^2}{4}(b(q,\theta) -b(q',\theta))\\
p - \frac{h}{2}(b(q,\theta) + b(\hat{q},\theta) )
\end{pmatrix} \bigg\|_M\nonumber\\
&\quad\leq (1-\hat{c})\bigg\| \begin{pmatrix}
I_d & 0\\
0 & \eta_0^{-1} I_d
\end{pmatrix}
\begin{pmatrix}
q\\
p
\end{pmatrix}\bigg\|_M\label{lbaeq2}
\end{align}
\fi
%for all~$\theta\in\Theta$,~$q,p\in\mathbb{R}^d$ satisfying~$q\cdot p\leq -\abs{q}^2$, where~$\hat{q} = q+ hp - \frac{h^2}{2}b(q,\theta)$.
\end{prop}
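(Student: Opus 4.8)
The plan is to adapt the proof of Proposition~\ref{lba2}, accounting for the randomized midpoint evaluation point $\bar{q}+\bar{u}\bar{p}$ in exactly the way Proposition~\ref{lbaR} adapts Proposition~\ref{lba}. Write $z=\bar{q}-\bar{q}'$, $p=\bar{p}-\bar{p}'$ and $\xi=z+\bar{u}p$. Since $\nabla_x b(\cdot,\theta)$ is symmetric for every $\theta$ by Assumption~\ref{A1}, the matrix $H:=\int_0^1\nabla_x b(\bar{q}'+\bar{u}\bar{p}'+t\xi,\theta)\,dt$ is symmetric, satisfies $\abs{Hu}\leq L\abs{u}$ for all $u\in\R^d$ by~\eqref{A1a}, and $b(\bar{q}+\bar{u}\bar{p},\theta)-b(\bar{q}'+\bar{u}\bar{p}',\theta)=H\xi$. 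Every linear map in~\eqref{lbaeqR} is built from $I_d$ and $H$, while $M=A\otimes I_d$ with $A=\begin{pmatrix}1 & a\\ a & 2a^2\end{pmatrix}$ and $a=h/(2(\eta_0-\eta_1))$. Fixing an orthonormal eigenbasis $(u_j)_j$ of $H$ with eigenvalues $\lambda_j\in[-L,L]$ and writing $z=\sum_j x_j u_j$, $p=\sum_j y_j u_j$, both squared $M$-norms in~\eqref{lbaeqR}, as well as $\abs{p}^2-16L\abs{z}^2=\sum_j(y_j^2-16Lx_j^2)$, split into sums over $j$.

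The reduction to a scalar statement goes via the S-procedure. Set $p_1:=p-\tfrac h2 H\xi$ and $z_1:=z+\tfrac h2 p_1$, so that~\eqref{lbaeqR} reads $\psi\geq 0$ with $\psi:=(1-c)\norm{(z,\eta_0 p)}_M^2-\norm{(z_1,\eta_1 p_1)}_M^2$ (here $(z,\eta_0 p)$ denotes the $\R^{2d}$-vector with blocks $z$ and $\eta_0 p$, and likewise for $(z_1,\eta_1 p_1)$), and this is required only on the cone $\{\abs{p}\geq4\sqrt{L}\abs{z}\}$. As $\psi$ and $\abs{p}^2-16L\abs{z}^2$ are homogeneous quadratic forms and the cone has nonempty interior, it suffices (and, by the S-lemma, is equivalent) to find $\mu\geq0$ with $\psi-\mu(\abs{p}^2-16L\abs{z}^2)\geq0$ on all of $\R^{2d}$; by the splitting above this amounts to positive semidefiniteness, for a single $\mu\geq0$ and every $\lambda\in[-L,L]$, of the $2\times2$ quadratic form
\begin{equation*}
(x,y)\longmapsto (1-c)\big(x^2+2a\eta_0 xy+2a^2\eta_0^2 y^2\big)-\big(x_1^2+2a\eta_1 x_1 y_1+2a^2\eta_1^2 y_1^2\big)+16L\mu\, x^2-\mu\, y^2 ,
\end{equation*}
where $y_1=y-\tfrac h2\lambda(x+\bar{u}y)$ and $x_1=x+\tfrac h2 y_1$. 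I would take $\mu$ of size comparable to $h^2\eta_0/(\eta_0-\eta_1)$: large enough that the term $16L\mu\, x^2$ absorbs the indefinite $\lambda x^2$ and $\lambda xy$ contributions (which in Proposition~\ref{lba} are instead controlled by the lower bound $\lambda\geq m/2$ from~\eqref{A1b}), yet small enough that the $y^2$-coefficient stays bounded below by roughly the free-flight surplus $a^2(\eta_0^2-\eta_1^2)$, which is at least $\tfrac14 h^2$ in every regime. Positive semidefiniteness then follows from nonnegativity of the trace and the determinant, the crucial point being that $0<Lh^2\leq2/16^2$ renders every correction --- in particular the cross-term, which vanishes to leading order, and the $\lambda$- and $\bar{u}$-dependent perturbations --- negligible against the main terms. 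Summing the scalar inequalities over $j$ and using $\abs{p}^2-16L\abs{z}^2\geq0$ on the cone gives $\psi\geq0$, i.e.~\eqref{lbaeqR}.

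The only place where the argument genuinely departs from that of Proposition~\ref{lba2}, and hence the main obstacle, is the replacement of $y_1=y-\tfrac h2\lambda x$ (velocity Verlet) by $y_1=y-\tfrac h2\lambda(x+\bar{u}y)$: the extra term $-\tfrac{h\bar{u}}2\lambda y$ has magnitude at most $\tfrac{Lh^2}2\abs{y}\leq 16^{-2}\abs{y}$, yet it perturbs the $x^2$, $xy$ and $y^2$ coefficients of the scalar form, which is why the cone must be narrowed from $\{\abs{p}\geq\sqrt{17L/4}\,\abs{z}\}$ (as in Proposition~\ref{lba2}) to $\{\abs{p}\geq4\sqrt{L}\,\abs{z}\}$ and the restriction on $Lh^2$ adjusted to $\min((\eta_0-\eta_1)^2,2/16^2)$. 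Carrying these perturbations through while keeping the explicit rate $c=m\eta_0 h^2/(16(\eta_0-\eta_1))$, so that it coincides with those in Propositions~\ref{lba}, \ref{lba2} and~\ref{lbaR}, is the delicate part of the bookkeeping. No ``$\mathcal{AB}$'' counterpart is needed here since, as noted after Proposition~\ref{lab2R}, the half-step relevant to the randomized midpoint integrator is of $\mathcal{BA}$ type.
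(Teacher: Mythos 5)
Your reformulation via the S-procedure is the paper's argument in different clothing: the paper splits $C_\lambda$ by moving one quarter of its leading term $2b^2(\eta_0^2-\eta_1^2)$ into the $(1,1)$ block using the cone constraint $\abs{p}\geq 4\sqrt{L}\abs{z}$, which is exactly your multiplier $\mu=\tfrac{1}{2}b^2(\eta_0^2-\eta_1^2)$, and then verifies positivity of $A'_\lambda$ and of $A'_\lambda C'_\lambda-B_\lambda^2$ for every $\lambda\in[-L,L]$ — i.e.\ your ``trace and determinant'' criterion after diagonalizing $H$. So the plan is correct and matches the paper.

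The gap is that you stop precisely where the work begins. You assert that the perturbations coming from the $\bar u$-dependent entries and from $\lambda$ of either sign are ``negligible against the main terms,'' but this is not self-evident and is in fact the whole content of the paper's proof: one must take the form of $B$ from~\eqref{BdefR} (which is the part that genuinely changes in the randomized midpoint case, as the change to~\eqref{CdefR} only helps), bound $B_\lambda$ term by term using $\bar u\leq h$ and $Lh^2\leq 1/16^2$, apply Young's inequality to control $B_\lambda^2$, and then verify that $A'_\lambda C'_\lambda$ dominates. Without these explicit estimates one cannot tell whether $\mu$ can be taken large enough to handle $\lambda\leq 0$ while keeping $C'_\lambda$ positive, nor whether the cone really needs to be $\abs{p}\geq 4\sqrt L\abs{z}$ rather than the narrower $\sqrt{17L/4}$ used for Verlet — you assert the latter but give no computation that would determine the threshold. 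A minor further point: the remark that only a $\mathcal{BA}$ half-step is relevant for the randomized midpoint integrator appears before Proposition~\ref{lbaR}, not after a ``Proposition lab2R,'' which does not exist.
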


\subsection{Contraction and otherwise for full iterations}\label{caoffi}
%The main result in this section is Theorem~\ref{convthm}, where the %respective contraction or non-contraction up to velocity factors of individual~$\mathcal{BA}$ and~$\mathcal{AB}$ steps 
In this section, the results from Sections~\ref{lpd} and~\ref{rlvd} 
are combined to obtain Theorem~\ref{convthm}. A large part of the proof is to verify that given initially large position differences and relatively large velocity differences, the position and velocity differences stay large and relatively large respectively over the approximated Hamiltonian trajectory. In addition to Theorem~\ref{convthm}, for initial position and velocity differences that do not fall into the regions studied in Sections~\ref{lpd} and~\ref{rlvd}, the degree to which expansion may occur over iterations of reflectively coupled (as in Section~\ref{secsg}) SGgHMC chains is given in Propositions~\ref{expo},~\ref{expo2} and Corollary~\ref{refmod}. 
Let~$\bar{M}\in\mathbb{R}^{2d\times2d}$ be given by
\begin{equation}\label{Mdef}
\bar{M} = \begin{pmatrix} 1 & \gamma^{-1}\\ \gamma^{-1} & 2\gamma^{-2}\end{pmatrix}\otimes I_d.
\end{equation}
\iffalse % comment on the following assumption does not seem necessary
Under the assumption
\begin{equation}\label{gg}
4LT^2\leq (1-\eta)^2,
\end{equation}
the matrix~$\bar{M}$ is positive definite. Condition~\eqref{gg} is stronger than~$\gamma\geq 2\sqrt{L}$, but in a Langevin scaling where~$\eta = e^{-\bar{\gamma}T}$ for some~$\bar{\gamma}>0$, the two conditions converge to the same restriction on~$\bar{\gamma}$ as~$T\rightarrow 0$. Inequality~\eqref{gg} is also assumed in~\cite{lc23}. 
\fi
In Theorem~\ref{convthm}, the modified Euclidean norm associated with~$\bar{M}$ is that which is shown to be contractive under iterations of synchronously coupled SGgHMC chains. Recall that the parameter~$u$ determines whether the velocity Verlet integrator ($u=0$) is used or the randomized midpoint method ($u=1$) is used.
\begin{theorem}\label{convthm}
Let Assumption~\ref{A1} hold. Let~$x,y,v,w$ be~$\mathbb{R}^d$-valued r.v.'s,~$G,\hat{G}\sim\mathcal{N}(0,I_d)$ be independent of~$x,y,v,w,(u_{kh})_{k\in\mathbb{N}}$ and let~$X',Y',V',W'$ be given by~\eqref{ous}. 
%with~$A=\Omega$. 
Assume~$4LT^2/(1-\eta)^2\leq 1$ and %~$\gamma \geq2\sqrt{L}$ and
\begin{equation}\label{tars}
L(T+h)^2\leq 1/16^2%,\qquad Lh^2\leq 1/16^2
\end{equation}
Assume~$\hat{G}=G$ holds almost surely. If~$u=0$,~$c_1=3$,~$c_2=6$, then for a.a.~$\omega\in\Omega$ such that~$x,y,v,w$ satisfy either
\begin{equation}\label{evz}
\eta\abs{v-w} \geq c_1\sqrt{L}\abs{x-y}
\end{equation}
or
\begin{equation}\label{qzk2}
\abs{x-y} \geq c_2R(1+L/m),
\end{equation}
it holds a.s. that
\begin{equation}\label{somvo}
\mathbb{E}\bigg[\bigg\|
\begin{pmatrix}
X' - Y'\\
V' - W'
\end{pmatrix} \bigg\|_{\bar{M}}^2\bigg|x,y,v,w,G,\hat{G}\bigg]
\leq \bigg(1-\frac{mT^2}{16(1-\eta)}\bigg)\bigg\| 
\begin{pmatrix}
x - y\\
v - w
\end{pmatrix}\bigg\|_{\bar{M}}^2,
\end{equation}
where~$\bar{M}$ is given by~\eqref{Mdef}. Moreover, if~$u=1$,~$c_1=8$ and~$c_2=10$, then the same assertion holds.
\end{theorem}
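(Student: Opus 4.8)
The plan is to decompose one iteration of SGgHMC into the elementary maps $\mathcal O$ (velocity refreshment), $\mathcal A$ (half position drift) and $\mathcal B$ (half velocity kick) acting on the difference chain, and to track a family of $\eta$-weighted copies of a modified Euclidean norm. Since $\hat G=G$ a.s., the refreshment makes the velocity difference deterministic, $v'-w'=\eta(v-w)$; for $u=0$ the whole coupled trajectory is then deterministic given $(x,y,v,w)$, and for $u=1$ the only remaining randomness is $(u_{kh})$, over which all estimates below hold pointwise, so the conditional expectation is vacuous. For $u=0$ each velocity Verlet step factors, on the difference chain, as $(\mathcal B_\theta\mathcal A)(\mathcal A\mathcal B_{\theta'})$, the two half-drifts composing to the full drift and the second kick evaluated at the end position $\bar q_{jh}-\bar q'_{jh}$. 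Writing $W_\xi=\mathrm{diag}(I_d,\xi I_d)$ and letting $M=M(\Delta)$ be the matrix of \eqref{mdef} with gap $\Delta=\eta_0-\eta_1$, I would apply Proposition~\ref{lba} (or Proposition~\ref{lba2}) to each $\mathcal B\mathcal A$ sub-step — lowering the weight from $\eta_0$ to $\eta_0-\Delta$ and producing a factor $1-c$ with $c=\tfrac{m\eta_0h^2}{16\Delta}$ — and Proposition~\ref{lab} (or Proposition~\ref{lab2}) to each $\mathcal A\mathcal B$ sub-step — lowering the weight by a further $\Delta$ with no expansion. Choosing $\Delta=\tfrac{1-\eta}{2K}$ (for which $4LT^2\le(1-\eta)^2$ is exactly the condition $Lh^2\le\Delta^2$ needed by those propositions) and starting the chain at weight $1$ right after $\mathcal O$, the weight descends from $1$ to $1-2K\Delta=\eta$ over the $K$ steps; moreover $\|W_1(x-y,\eta(v-w))\|_M=\|(x-y,v-w)\|_{\bar M}$ and $\|W_\eta(X'-Y',V'-W')\|_M=\|(X'-Y',V'-W')\|_{\bar M}$, so the $\mathcal O$ step exactly absorbs the accumulated velocity weighting. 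Telescoping the $2K$ sub-step inequalities gives $\mathbb E\big[\|(X'-Y',V'-W')\|_{\bar M}^2\,\big|\,x,y,v,w,G,\hat G\big]\le\prod_{j=1}^K(1-c_j)\,\|(x-y,v-w)\|_{\bar M}^2$ with $c_j=\tfrac{m(1-2(j-1)\Delta)h^2}{16\Delta}$; a short computation gives $\sum_j c_j=\tfrac{mT^2}{16(1-\eta)}\big(1+\eta+\tfrac{1-\eta}{K}\big)$, and combining this with $\prod_j(1-c_j)\le 1-\sum_j c_j+\tfrac12(\sum_jc_j)^2$ and the smallness of $\sum_j c_j$ (from $L(T+h)^2\le16^{-2}$ and $m\le L$) yields the rate $1-\tfrac{mT^2}{16(1-\eta)}$.

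The technical heart is verifying, at every sub-step, the geometric precondition of the relevant proposition. For the $\mathcal B\mathcal A$ sub-step of step $j$ one needs, for $(\bar q_{(j-1)h}-\bar q'_{(j-1)h},\,\bar p_{(j-1)h}-\bar p'_{(j-1)h})$, either $|\bar q_{(j-1)h}-\bar q'_{(j-1)h}|\ge4R(1+L/m)$ (invoking Proposition~\ref{lba} through the monotonicity \eqref{A1b}) or $|\bar p_{(j-1)h}-\bar p'_{(j-1)h}|\ge\sqrt{17L/4}\,|\bar q_{(j-1)h}-\bar q'_{(j-1)h}|$ (invoking Proposition~\ref{lba2}); likewise for the $\mathcal A\mathcal B$ sub-step at the intermediate state, whose half-drifted position difference equals the end-of-step difference $\bar q_{jh}-\bar q'_{jh}$. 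I would obtain this dichotomy from the a priori bounds of Lemma~\ref{basic} — applicable because \eqref{tars} gives $LT(T+h)\le16^{-2}$ — applied to the two trajectories launched from $(x,v')$ and $(y,w')$ with $v'-w'=\eta(v-w)$: they show $\bar q_s-\bar q'_s$ stays within $\tfrac{3}{511}\max(|x-y|,|x-y+\eta T(v-w)|)$ of the line $x-y+s\eta(v-w)$, and $\bar p_s-\bar p'_s$ within $\tfrac{1536}{511}LT\max(\cdot)$ of $\eta(v-w)$. Under \eqref{evz} ($\eta|v-w|\ge c_1\sqrt L|x-y|$), these bounds together with $LT^2\le16^{-2}$ force $|\bar p_s-\bar p'_s|\ge\sqrt{17L/4}\,|\bar q_s-\bar q'_s|$ for all $s\in[0,T]$, with substantial slack — this is what the generous $c_1=3$ buys. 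Under \eqref{qzk2} ($|x-y|\ge c_2R(1+L/m)$), whenever $|\bar q_s-\bar q'_s|<4R(1+L/m)$ the straight-line part of the position difference must have dropped by roughly $2R(1+L/m)$ over time at most $T$, which (again using $LT^2\le16^{-2}$) forces $|\bar p_s-\bar p'_s|$ large enough that the velocity-dominance inequality holds there, while elsewhere the large-position-difference alternative holds. All required bounds concern trajectory points at times in $h\mathbb N\cap[0,T]$, the midpoint corrections being $O(Lh)$ via $|\nabla_x b|\le L$; and since both alternatives yield, respectively, the \emph{same} $\mathcal B\mathcal A$-contraction factor and the \emph{same} $\mathcal A\mathcal B$-non-expansion with the same $M$ and $c$, the telescoping of the first paragraph is unaffected by which one holds at each sub-step.

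For the randomized midpoint integrator ($u=1$) the step factors analogously as $(\mathcal B_{g_j}\mathcal A)(\mathcal A\mathcal B_{g_j})$ with both half-kicks using the frozen gradient $g_j=b(\bar q_{jh}+u_{jh}h\,\bar p_{jh},\cdot)$, $u_{jh}\in(0,1)$; I would replace Propositions~\ref{lba},~\ref{lba2} by Propositions~\ref{lbaR},~\ref{lba2R} (valid for every $\bar u\in(0,h)$, so the resulting bound holds a.s.\ and hence in conditional expectation) and run the same propagation argument, the shifted evaluation points being absorbed into slightly larger margins, which forces $c_1=8$ and $c_2=10$. The main obstacle throughout is precisely this propagation step: one must rule out, over the entire length-$T$ approximate Hamiltonian flow and with only the crude control of Lemma~\ref{basic} available, that the position difference shrinks below $4R(1+L/m)$ while the velocity-to-position ratio simultaneously drops below the propositions' thresholds $\sqrt{17/4}$ (resp.\ $4$); it is to keep this impossible with room to spare that $c_1,c_2$ are taken strictly larger than those thresholds.
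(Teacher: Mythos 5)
Your proposal is correct and follows the same structure as the paper's proof: decompose the iteration as $\mathcal{O}(\mathcal{BAAB})^K$, let the velocity weight descend from $1$ to $\eta$ in increments $\Delta=(1-\eta)/(2K)$ via Propositions~\ref{lba}/\ref{lba2} for each $\mathcal{BA}$ sub-step and \ref{lab}/\ref{lab2} for each $\mathcal{AB}$ sub-step, verify the geometric precondition (position- or velocity-dominance) along the whole trajectory using Lemma~\ref{basic}, and observe that $W_\eta M W_\eta=\bar M$ absorbs the weighting into the $\bar M$-norm at both ends. Your observation that the telescoping is insensitive to which alternative (Proposition~\ref{lba} versus \ref{lba2}, or \ref{lab} versus \ref{lab2}) applies at each sub-step, because they deliver the same factor, is exactly the point the paper exploits, and your computation $\sum_j c_j=\frac{mT^2}{16(1-\eta)}(1+\eta+\frac{1-\eta}{K})$ matches the paper's $1-\frac{mT^2}{8(1-\eta)}+\frac{mT(T-h)}{16}$.

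One place where your plan deviates and would hit a snag: for $u=1$ you propose the factorisation $(\mathcal B_{g_j}\mathcal A)(\mathcal A\mathcal B_{g_j})$ with a frozen midpoint gradient $g_j$, and then to ``replace Propositions~\ref{lba}, \ref{lba2} by \ref{lbaR}, \ref{lba2R}.'' But \ref{lbaR}/\ref{lba2R} cover only the $\mathcal{BA}$ sub-step; the $\mathcal{AB}$ sub-step with $g_j$ is \emph{not} of the form handled by \ref{lab}/\ref{lab2} (the kick is evaluated at the pre-step midpoint $\bar q_{jh}+u_{jh}h\bar p_{jh}$, which cannot be written as $\bar q+\bar u\bar p$ for the post-$\mathcal{BA}$ state), so you would need to prove an additional $\mathcal{AB}$-analogue. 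The paper instead treats each $u=1$ Verlet step as a \emph{single} application of Propositions~\ref{lbaR}/\ref{lba2R} with $h/2$ replaced by $h$ (i.e. stepsize doubled) and a coarser weight schedule $\hat\eta_i-\hat\eta_{i+1/2}=(1-\eta)/K$, for which the assumption $4LT^2\le(1-\eta)^2$ is again the precondition $L(2h)^2\le(\eta_0-\eta_1)^2$. This avoids an $\mathcal{AB}$ proposition entirely and explains why the weight is decremented once, not twice, per step in the $u=1$ case.

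On the product-to-sum step, your handling via $\prod_j(1-c_j)\le 1-\sum_jc_j+\frac12(\sum_jc_j)^2$ is in fact more careful than the paper, which just writes the factor as $1-\sum_j c_j$ (the Weierstrass inequality goes the other way). You assert that the smallness of $\sum_jc_j$ absorbs the quadratic term, but for $\eta=0$ and $K$ very large the margin $B-1=\eta+\frac{1-\eta}{K}$ shrinks to $1/K$ while $\frac12\alpha_0 B^2$ stays of order $mT^2$, so the claimed constant $\frac{mT^2}{16(1-\eta)}$ is only recovered up to an $O((mT^2)^2)$ correction; this is a shared glitch with the paper and is a matter of constants, not of structure.
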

\begin{proof}
First, let~$u=0$. Let~$s\in[0,T]$ and recall the notation~\eqref{ous},~\eqref{zdefs}. Consider first the case where~\eqref{evz} holds. 
For any~$\hat{\theta},\hat{\theta}'\in\Theta^{T/h}$ and~$\bar{\theta} = (\hat{\theta},\hat{\theta}')$, Lemma~\ref{basic} with~$c=16^2$ in particular implies a.s. that
\begin{equation}\label{lem64b}
\max_{\bar{s}\in[0,T]}\abs{\bar{q}_{\bar{s}}(x,v',\bar{\theta}) - \bar{q}_{\bar{s}}(y,w',\bar{\theta})} \leq (64/63)\max(\abs{x-y},\abs{x-y+(v'-w')T}).
\end{equation}
Moreover,~\eqref{lem64b} implies
\begin{equation}\label{lem64c}
\sqrt{L}\abs{\bar{q}_s(x,v',\bar{\theta}) - \bar{q}_s(y,w',\bar{\theta})} \leq (64/63)(\sqrt{L}\abs{x-y} + \eta\abs{v-w}/16).
\end{equation}
On the other hand, given~\eqref{tars}, %together with~\eqref{evz},~\eqref{tars} 
inequality~\eqref{lem64} with~$c=32$ holds, which implies a.s. that 
\begin{align}
\abs{\bar{p}_s(x,v',\bar{\theta}) - \bar{p}_s(y,w',\bar{\theta})}  &\geq \eta\abs{v - w} - \abs{\bar{p}_s(x,v',\bar{\theta}) - \bar{p}_s(y,w',\bar{\theta}) - (v' - w')}\nonumber\\
&\geq \eta\abs{v - w} - 3(64/63)LT(\abs{z} + \eta T\abs{v-w})\nonumber\\
%&\quad\geq [6(1-LT^2(1+2LT^2)) - \sqrt{L}T(1+2LT^2)] \sqrt{L}\abs{z}
&\geq (62/63)\eta\abs{v-w} - (12/63)\sqrt{L}\abs{z}.\label{sps}
\end{align}
Let~$r^*=(17/4)^{1/2}$. A numerical calculation gives
\begin{equation*}
c_1\bigg(\frac{62}{63} - \frac{64}{63}\cdot\frac{1}{16}\bigg(\frac{r^*}{1-r^*/32} + \frac{1}{32}\bigg) \bigg) \geq \frac{12}{63} + \frac{64}{63}\bigg(\frac{r^*}{1-r^*/32} + \frac{1}{32}\bigg),
\end{equation*}
therefore, we may apply~\eqref{evz} on the right-hand side of~\eqref{sps} to obtain a.s. that
\begin{equation}\label{sps2}
\abs{\bar{p}_s(x,v',\bar{\theta}) - \bar{p}_s(y,w',\bar{\theta})}
%&\geq 
%(479/512)\eta\abs{v-w} - (33/128)\sqrt{L}\abs{z}\\
%(128/127)\eta\abs{v-w} - (5/32)\sqrt{L}\abs{z}\nonumber\\
\geq \frac{64}{63}\bigg(\frac{r^*}{1-r^*/32} + \frac{1}{32}\bigg)\bigg(\frac{1}{16}\eta\abs{v-w} + \sqrt{L}\abs{z}\bigg).
\end{equation}
Substituting~\eqref{sps2} into the right-hand side of~\eqref{lem64c} yields
\begin{align}
\abs{\bar{p}_s(x,v',\bar{\theta}) - \bar{p}_s(y,w',\bar{\theta})} &\geq \bigg[\frac{r^*}{1-r^*/32} + \frac{1}{32}\bigg]\sqrt{L}\abs{\bar{q}_s(x,v',\bar{\theta}) - \bar{q}_s(y,w',\bar{\theta})} \nonumber\\
&\geq r^*\sqrt{L}\abs{\bar{q}_s(x,v',\bar{\theta}) - \bar{q}_s(y,w',\bar{\theta})}. \label{spr}
\end{align}
On the other hand, if~\eqref{evz} does not hold, then together with~\eqref{tars}, it holds that
\begin{equation*}
\abs{z + \eta s(v - w)} \geq \abs{z} - T\eta\abs{(v - w)} \geq \abs{z} - c_1\sqrt{L}T\abs{z} \geq (1-c_1/16)\abs{z},
\end{equation*}
which, by %again (the proof, with~$LT(T+h)\leq 1/64$, of) Lemma~3.2 in~\cite{MR4133372} 
\eqref{lem64a} with~$c=16^2$ and~\eqref{tars}, implies a.s. for~$s\in h\mathbb{N}\cap[h,T]$ that
\begin{align}
&\abs{\bar{q}_s(x,v',\bar{\theta}) - \bar{q}_s(y,w',\bar{\theta})}\nonumber\\
&\quad\geq \abs{z + s(v' - w')} - \abs{\bar{q}_s(x,v',\bar{\theta}) - \bar{q}_s(y,w',\bar{\theta}) - z - s(v'-w')}\nonumber\\
&\quad\geq \abs{z + \eta s(v - w)} - (3/511)\max(\abs{z}, \abs{z + s(v' - w')})\nonumber\\
&\quad\geq \bigg(1-\frac{c_1}{16}-\frac{3}{511}\bigg)\abs{z}.\label{ccv}%\nonumber\\
%&\quad\geq \frac{5\abs{z}}{8}.
\end{align}
Inequalities~\eqref{ccv} and~\eqref{qzk2} imply
\begin{equation}\label{qake}
\abs{\bar{q}_s(x,v',\bar{\theta}) - \bar{q}_s(y,w',\bar{\theta})} \geq c_2(1-c_1/16-3/511)R(1+L/m) \geq 4R(1+L/m).
\end{equation}
In order to make use of inequalities~\eqref{spr} and~\eqref{qake} for Propositions~\ref{lba} or~\ref{lba2}, for~$i\in\frac{1}{2}\mathbb{N}\cap[0,K]$, let~$\hat{\eta}_i\in [0,1]$ be given by~$\hat{\eta}_0 = 1$ and
\begin{equation}\label{etad}
\hat{\eta}_i - \hat{\eta}_{i+\frac{1}{2}} = (1-\eta)/(2K).
\end{equation}
\iffalse
\begin{equation}\label{etad}
\hat{\eta}_i - \hat{\eta}_{i+\frac{1}{2}} =\begin{cases}
\frac{(1-\eta)u_{ih}}{T} &\textrm{if }2i \textrm{ is even,}\\
\frac{(1-\eta)(h-u_{ih})}{T} &\textrm{if }2i \textrm{ is odd.}
\end{cases} 
\end{equation}
\fi
For any~$i\in\frac{1}{2}\mathbb{N}\cap[0,K]$,~\eqref{etad} implies by assumption that
\begin{equation}\label{etar}
Lh^2/(\hat{\eta}_i - \hat{\eta}_{i+\frac{1}{2}})^2) = 4LT^2/(1-\eta)^2 \leq 1.
\end{equation}
\iffalse % Proposition~\ref{lab2} condition on stepsize
and
\begin{equation}\label{lab2req}
\frac{Lh^2}{\hat{\eta}_i - \hat{\eta}_{i+\frac{1}{2}}} = \frac{2LTh}{1-\eta}  \leq \sqrt{L}h\leq \sqrt{L}T\leq \frac{1}{16},
\end{equation}
where~\eqref{lab2req} is only required later when Proposition~\ref{lab2} is applied. 
\fi
\iffalse % without explicit assumption on LT^2/(1-\eta)^2
If~$\eta^{-1}\leq \sqrt{2}$ holds, then~\eqref{etar} and the assumption on~$\gamma$ implies
\begin{equation*}
\frac{Lh^2}{(\hat{\eta}_i^{-1} - \hat{\eta}_{i+\frac{1}{2}}^{-1})^2} = \frac{4L\eta^2T^2}{(1-\eta)^2} = 4L\gamma^{-2} \leq 1.
\end{equation*}
Otherwise if~$\eta^{-1} > \sqrt{2}$ holds, then~\eqref{etar} together with~\eqref{tars} implies
\begin{equation*}
\frac{Lh^2}{(\hat{\eta}_i^{-1} - \hat{\eta}_{i+\frac{1}{2}}^{-1})^2} = \frac{4LT^2}{(\sqrt{2}-1)^2} < 1.
\end{equation*}
\fi
For any~$i\in \mathbb{N}\cap[0,K]$, if the full step
%\begin{subequations}\label{qtd}
\begin{align*}
\tilde{q}_{ih}&:=\bar{q}_{ih}(x,v',\bar{\theta}),\qquad\tilde{q}_{ih}':=\bar{q}_{ih}(y,w',\bar{\theta}),\\
\tilde{p}_{ih}&:=\bar{p}_{ih}(x,v',\bar{\theta}),\qquad\tilde{p}_{ih}':=\bar{p}_{ih}(y,w',\bar{\theta})
\end{align*} 
%\end{subequations}
along the trajectory satisfy~\eqref{spr} with~$s=ih$, then Proposition~\ref{lba2} may be applied with~$\eta_0 = \hat{\eta}_i$,~$\eta_1 = \hat{\eta}_{i+\frac{1}{2}}$ and~$\bar{q} = \tilde{q}_{ih}$,~$\bar{q}' = \tilde{q}_{ih}'$,~$\bar{p} = \tilde{p}_{ih}$,~$\bar{q}' = \tilde{p}_{ih}'$.
%\bar{q} = \bar{q}_{ih}(x,v',\bar{\theta}),\quad\bar{q}' = \bar{q}_{ih}(y,w',\bar{\theta}),\quad\bar{p} = \bar{p}_{ih}(x,v',\bar{\theta}),\quad\bar{q}' = \bar{p}_{ih}(y,w',\bar{\theta}).
Otherwise, if instead of~\eqref{spr}, inequality~\eqref{qake} is satisfied, then Proposition~\ref{lba} may be applied with the same~$\eta_0,\eta_1,\bar{q},\bar{q}',\bar{p},\bar{p}'$. In both cases, inequality~\eqref{lbaeq} holds with~\eqref{mdef} and the aforementioned substitutions; explicitly, for any~$i\in \mathbb{N}\cap[0,K]$, if either~\eqref{evz} or~\eqref{qzk2} holds, then it holds a.s. that
\begin{align}
&\bigg\|\begin{pmatrix} 
I_d & 0\\
0 & \hat{\eta}_{i+\frac{1}{2}} I_d
\end{pmatrix}
\begin{pmatrix}
\tilde{q}_{ih}-\tilde{q}_{ih}' + \frac{h}{2}(\tilde{p}_{ih}-\tilde{p}_{ih}') - \frac{h^2}{4}(b(\tilde{q}_{ih},\theta_{ih}) - b(\tilde{q}_{ih}',\theta_{ih}))\\
\tilde{p}_{ih} - \tilde{p}_{ih}' - \frac{h}{2}(b(\tilde{q}_{ih},\theta_{ih}) - b(\tilde{q}_{ih}',\theta_{ih}))
\end{pmatrix} \bigg\|_M^2 \nonumber \\
&\quad\leq \bigg(1 - \frac{mTh}{8(1-\eta)} + \frac{mih^2}{8} \bigg)\bigg\| \begin{pmatrix}
I_d & 0\\
0 & \hat{\eta}_i I_d
\end{pmatrix}
\begin{pmatrix}
\tilde{q}_{ih} - \tilde{q}_{ih}'\\
\tilde{p}_{ih} - \tilde{p}_{ih}'
\end{pmatrix}\bigg\|_M^2, %+hl\cdot b(0,\theta) + \kappa h^2\abs{b(0,\theta)}^2.
\label{lbaeq2}
\end{align}
where~$\theta_{ih}$ is given by~$\hat{\theta} = (\theta_{ih})_{i\in\mathbb{N}\cap [0,T/h)}$ and~$M$ is given by~\eqref{mdef}.
Next, to see that Propositions~\ref{lab},~\ref{lab2} may be applied to the half steps in the trajectory, consider first the case when~\eqref{spr} holds. For any~$i\in \mathbb{N}\cap[0,K]$,~\eqref{spr},~\eqref{A1a} and~\eqref{tars} imply,
\begin{align}
&\abs{\tilde{p}_{ih} - \tilde{p}_{ih}' - (h/2)(b(\tilde{q}_{ih},\theta_{ih}) - b(\tilde{q}_{ih}',\theta_{ih}))} \nonumber\\
&\quad\geq \bigg(\sqrt{L}\bigg(\frac{r^*}{1-r^*/32} + \frac{1}{32}\bigg) - \frac{Lh}{2}\bigg)\abs{\tilde{q}_{ih} - \tilde{q}_{ih}'}\nonumber\\
&\quad\geq \frac{r^*}{1-r^*/32}\sqrt{L}\abs{\tilde{q}_{ih} - \tilde{q}_{ih}'}.\label{czr}
\end{align}
Moreover, it holds that
\begin{align*}
\sqrt{L}\abs{\tilde{q}_{ih} - \tilde{q}_{ih}'} &\geq \sqrt{L}\abs{\tilde{q}_{ih} - \tilde{q}_{ih}' + (h/2)(\tilde{p}_{ih} - \tilde{p}_{ih}' - (h/2)(b(\tilde{q}_{ih},\theta_{ih}) - b(\tilde{q}_{ih}',\theta_{ih}))}\\
&\quad - \sqrt{L}(h/2)\abs{\tilde{p}_{ih} - \tilde{p}_{ih}' - (h/2)(b(\tilde{q}_{ih},\theta_{ih}) - b(\tilde{q}_{ih}',\theta_{ih}))},
\end{align*}
which, substituting back into the right-hand side of~\eqref{czr} and using~\eqref{tars}, implies
\begin{align*}
&\bigg(1+\frac{r^*/32}{1-r^* /32}\bigg)\abs{\tilde{p}_{ih} - \tilde{p}_{ih}' - (h/2)(b(\tilde{q}_{ih},\theta_{ih}) - b(\tilde{q}_{ih}',\theta_{ih}))}\\
&\quad\geq \frac{r^*}{1-r^*/32}\sqrt{L}\abs{\tilde{q}_{ih} - \tilde{q}_{ih}' + (h/2)(\tilde{p}_{ih} - \tilde{p}_{ih}' - (h/2)(b(\tilde{q}_{ih},\theta_{ih}) - b(\tilde{q}_{ih}',\theta_{ih})))}.
\end{align*}
In other words, it holds that
\begin{align*}
&\abs{\tilde{p}_{ih} - \tilde{p}_{ih}' - (h/2)(b(\tilde{q}_{ih},\theta_{ih}) - b(\tilde{q}_{ih}',\theta_{ih}))}\\
&\quad \geq \sqrt{L}r^*\abs{\tilde{q}_{ih} - \tilde{q}_{ih}' + (h/2)(\tilde{p}_{ih} - \tilde{p}_{ih}' - (h/2)(b(\tilde{q}_{ih},\theta_{ih}) - b(\tilde{q}_{ih}',\theta_{ih})))}.
\end{align*}
Therefore for any~$i\in \mathbb{N}\cap[0,K]$, if~\eqref{spr} is satisfied with~$s=ih$, then again since~$Lh^2\leq L(T+h)^2\leq 1/16^2$, Proposition~\ref{lab2} may be applied with~$\eta_0 = \hat{\eta}_{i+\frac{1}{2}}$,~$\eta_1 = \hat{\eta}_{i+1}$,
\begin{equation}\label{deq}
\bar{q} = \tilde{q}_{ih} + (h/2)\tilde{p}_{ih} - (h/2)^2b(\tilde{q}_{ih},\theta_{ih}),\qquad\bar{p} = \tilde{p}_{ih} - (h/2)b(\tilde{q}_{ih},\theta_{ih})
\end{equation}
and~$\bar{q}',\bar{p}'$ given by~\eqref{deq} with~$\bar{q}',\bar{p}',\tilde{q}_{ih}',\tilde{p}_{ih}'$ replacing~$\bar{q},\bar{p},\tilde{q}_{ih},\tilde{p}_{ih}$ respectively.
If for any~$i\in \mathbb{N}\cap[0,K]$,~\eqref{spr} with~$s=ih$ does not hold and inequality~\eqref{qake} is satisfied with~$s=ih$, then it holds by~\eqref{A1a} and applying the negation of~\eqref{spr} with~$s=ih$ that
%~\eqref{tars} that
\begin{align*}
&\abs{\tilde{q}_{ih} - \tilde{q}_{ih}' + h(\tilde{p}_{ih} - \tilde{p}_{ih}' - (h/2)(b(\tilde{q}_{ih},\theta_{ih}) - b(\tilde{q}_{ih}',\theta_{ih})))}\\
&\quad\geq \abs{\tilde{q}_{ih} - \tilde{q}_{ih}'} - h\abs{\tilde{p}_{ih} - \tilde{p}_{ih}'} -(h^2/2)\abs{b(\tilde{q}_{ih},\theta_{ih}) - b(\tilde{q}_{ih}',\theta_{ih})}\\
&\quad\geq \bigg(1-\bigg(\frac{r^*}{1-r^*/32}+\frac{1}{32}\bigg)\sqrt{L}h - \frac{Lh^2}{2}\bigg)\abs{\tilde{q}_{ih} - \tilde{q}_{ih}'},
\end{align*}
then applying~\eqref{tars} and~\eqref{qake} with~$s=ih$ gives
\begin{align*}
&\abs{\tilde{q}_{ih} - \tilde{q}_{ih}' + h(\tilde{p}_{ih} - \tilde{p}_{ih}' - (h/2)(b(\tilde{q}_{ih},\theta_{ih}) - b(\tilde{q}_{ih}',\theta_{ih})))}\\
&\quad\geq \bigg(1-\bigg(\frac{r^*}{1-r^*/32}+\frac{1}{32}\bigg)\frac{1}{32} - \frac{1}{32^2}\bigg)\abs{\tilde{q}_{ih} - \tilde{q}_{ih}'}\\
&\quad\geq \bigg(1-\bigg(\frac{r^*}{1-r^*/32}+\frac{1}{32}\bigg)\frac{1}{32} - \frac{1}{32^2}\bigg)\cdot c_2(1-c_1/16-1/63)R(1+L/m)\\
&\quad\geq 4R(1+L/m).
\end{align*}
Therefore in this case, Proposition~\ref{lab} may be applied with~$\eta_0 = \hat{\eta}_{i+\frac{1}{2}}$,~$\eta_1 = \hat{\eta}_{i+1}$,~$\bar{q},\bar{p}$ given by~\eqref{deq} and~$\bar{q}',\bar{p}'$ given by~\eqref{deq} with~$\bar{q}',\bar{p}',\tilde{q}_{ih}',\tilde{p}_{ih}'$ replacing~$\bar{q},\bar{p},\tilde{q}_{ih},\tilde{p}_{ih}$ respectively. In either case, by Proposition~\ref{lab} and~\ref{lab2}, inequality~\eqref{labeq} holds %with~$M$ given by~\eqref{Mdef} and 
the aforementioned substitutions. Together with~\eqref{lbaeq2}, by the obvious induction in~$i\in \mathbb{N}\cap[0,K]$, if either~\eqref{evz} or~\eqref{qzk2} holds, then it holds a.s. that
\begin{align*}
&\mathbb{E}\bigg[\bigg\|
\begin{pmatrix}
\bar{q}_{T}(x,v',\bar{\theta})-\bar{q}_{T}(y,w',\bar{\theta})\\
\eta(\bar{p}_{T}(x,v',\bar{\theta}) - \bar{p}_{T}(y,w',\bar{\theta})) 
\end{pmatrix} \bigg\|_M^2\bigg|x,y,v,w,G,\hat{G}\bigg]\\
&\quad\leq \bigg(1-\frac{mT^2}{8(1-\eta)}+\frac{mT(T-h)}{16}\bigg)\bigg\| 
\begin{pmatrix}
x - y\\
v' - w'
\end{pmatrix}\bigg\|_M^2\\
&\quad\leq \bigg(1-\frac{mT^2}{16(1-\eta)} \bigg)\bigg\| 
\begin{pmatrix}
x - y\\
\eta(v - w)
\end{pmatrix}\bigg\|_M^2.
\end{align*}
The proof concludes for~$u=0$ by writing
\begin{equation*}
\begin{pmatrix}
I_d & 0\\
0 & \eta I_d
\end{pmatrix}
\begin{pmatrix}
I_d & \frac{TI_d}{1-\eta}\\
\frac{TI_d}{1-\eta} & \frac{2T^2I_d}{(1-\eta)^2}
\end{pmatrix}
\begin{pmatrix}
I_d & 0\\
0 & \eta I_d
\end{pmatrix} =
\begin{pmatrix}
1 & \frac{\eta T}{1-\eta}\\
\frac{\eta T}{1-\eta} & \frac{2\eta^2T^2}{(1-\eta)^2}
\end{pmatrix}\otimes I_d = \bar{M}.
\end{equation*}
For~$u=1$,~$c_1=8$,~$c_2=10$, the proof follows in a similar way, with the differences as follows. Let~$r^*$ instead be~$4$. Either inequality~\eqref{spr} or~\eqref{qake} holds by similar calculations using instead that either~$\eta\abs{v-w}\geq 8\sqrt{L}\abs{x-y}$ or~$\abs{x-y}\geq 10R(1+L/m)$. We set instead
\begin{equation}\label{Rvr}
\hat{\eta}_i-\hat{\eta}_{i+\frac{1}{2}} = (1-\eta)/K,
\end{equation}
so that
\begin{equation}\label{Rvr2}
\frac{L(2h)^2}{(\hat{\eta}_i-\hat{\eta}_{i+\frac{1}{2}})^2} = \frac{4LT^2}{(1-\eta)^2}\leq 1.
\end{equation}
If~\eqref{spr} holds, then we may apply Proposition~\ref{lba2R} with~$h/2$ replaced by~$h$ and with~$\bar{q} = \tilde{q}_{ih}$,~$\bar{q}' =\tilde{q}_{ih}'$,~$\bar{p}=\tilde{p}_{ih}$,~$\bar{p}'=\tilde{p}_{ih}'$ and~$\bar{u} = u_{ih}h$,~$\eta_0=\hat{\eta}_i$,~$\eta_1=\hat{\eta}_{i+\frac{1}{2}}$. If~\eqref{spr} does not hold (its negation holds) and instead~\eqref{qake} holds, then it holds that
\begin{align*}
&\abs{\tilde{q}_{ih} - \tilde{q}_{ih}' + u_{ih}h(\tilde{p}_{ih} - \tilde{p}_{ih}')} \\
&\quad\geq \abs{\tilde{q}_{ih} - \tilde{q}_{ih}'} -u_{ih}h\abs{\tilde{p}_{ih} - \tilde{p}_{ih}'}\\
&\quad\geq (1-u_{ih}(r^*/(1-r^*/32) + 1/32)\sqrt{L}h) \abs{\tilde{q}_{ih} - \tilde{q}_{ih}'}\\
&\quad\geq c_2(1-c_1/16-3/511)(1-u_{ih}(r^*/(1-r^*/32) + 1/32)/32)R(1+L/m)\\
&\quad\geq 4R(1+L/m),
\end{align*}
so that we may apply Proposition~\ref{lbaR} with again~$h/2$ replaced by~$h$ and with~$\bar{q} = \tilde{q}_{ih}$,~$\bar{q}' =\tilde{q}_{ih}'$,~$\bar{p}=\tilde{p}_{ih}$,~$\bar{p}'=\tilde{p}_{ih}'$ and~$\bar{u} = u_{ih}h$,~$\eta_0=\hat{\eta}_i$,~$\eta_1=\hat{\eta}_{i+\frac{1}{2}}$. The applications of Propositions~\ref{lbaR} and~\ref{lba2R} conclude in the same way as before.
\end{proof}

The following Propositions~\ref{expo},~\ref{expo2},~\ref{RvRth} have proofs that make use of the settings and strategies already introduced 
in the %beginning of the 
proofs of Proposition~\ref{lba},~\ref{lab},~\ref{lba2} and~\ref{lab2}. %These proofs may be found in the arXiv version of the present manuscript. 
These proofs may be found in %Section~1 
Appendix~\ref{conapp}. 
%of the supplementary material.

\begin{prop}\label{expo}
%Let~\eqref{A1a} hold for all~$u,x\in\mathbb{R}^d$ and~$\theta\in\Theta$. 
Let~$\eta_0,\eta_1\in[0,1]$ satisfy~$\eta_0<\eta_1$. Let~$c\in\mathbb{R}$ be given by
\begin{equation}\label{cbdef}
\bar{c}= 3L\eta_0h^2/(\eta_0 - \eta_1)
\end{equation}
and~$M$ given by~\eqref{mdef}. 
Assume~\eqref{A1a} and~$%\max(4(\eta_0-\eta_1),\eta_0^2\eta_1^2)
Lh^2\leq (\eta_0-\eta_1)^2$. It holds that
\begin{align}
&\bigg\|\begin{pmatrix} 
I_d & 0\\
0 & \eta_1 I_d
\end{pmatrix}
\begin{pmatrix}
\bar{q}-\bar{q}' + \frac{h}{2}(\bar{p}-\bar{p}') - \frac{h^2}{4}(b(\bar{q},\theta) - b(\bar{q}',\theta))\\
\bar{p} - \bar{p}' - \frac{h}{2}(b(\bar{q},\theta) - b(\bar{q}',\theta))
\end{pmatrix} \bigg\|_M^2 \nonumber \\
&\quad\leq (1+\bar{c})\bigg\| \begin{pmatrix}
I_d & 0\\
0 & \eta_0 I_d
\end{pmatrix}
\begin{pmatrix}
\bar{q} - \bar{q}'\\
\bar{p} - \bar{p}'
\end{pmatrix}\bigg\|_M^2 %+hl\cdot b(0,\theta) + \kappa h^2\abs{b(0,\theta)}^2.
\label{lbaeq00}
\end{align}
for all~$\theta\in\Theta$,~$\bar{q},\bar{q}',\bar{p},\bar{p}'\in\mathbb{R}^d$.
\end{prop}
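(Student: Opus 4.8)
The plan is to reproduce the proof of Proposition~\ref{lba}, dropping every use of the convexity inequality~\eqref{A1b}; this is what turns the contraction factor $(1-c)$ into a bound $(1+\bar c)$ that holds for all $\bar q,\bar q'$ with no large‑separation hypothesis. Write $\delta b := b(\bar q,\theta)-b(\bar q',\theta) = \bar L\,(\bar q-\bar q')$, where $\bar L := \int_0^1 \nabla_x b\bigl(\bar q'+t(\bar q-\bar q'),\theta\bigr)\,dt$, so that~\eqref{A1a} gives $|\bar L u|\le L|u|$ for every $u\in\R^d$ (under the full Assumption~\ref{A1} the matrix $\bar L$ would also be symmetric, which is what underlies the eigenvalue reduction of~\cite{lc23} used for Proposition~\ref{lba}; here only the norm bound on $\bar L$ is needed, so no such reduction is performed and $|u^\top\bar L u|\le L|u|^2$ is used instead). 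Put $u:=\bar q-\bar q'$, $w:=\bar p-\bar p'$, $\kappa:=\eta_0-\eta_1$, and let $w_\ast:=w-\tfrac h2\bar L u$ be the post‑$\mathcal B$ velocity difference; the first component of the left vector of~\eqref{lbaeq00} is then exactly $u+\tfrac h2 w_\ast$ and the second is $w_\ast$.

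Next I would expand $\bigl\|\bigl(u+\tfrac h2 w_\ast,\,\eta_1 w_\ast\bigr)\bigr\|_M^2$ via the explicit entries of $M$ from~\eqref{mdef}. Using $\eta_1=\eta_0-\kappa$ (hence $1+\eta_1/\kappa=\eta_0/\kappa$) together with the identity $\kappa^2+2\kappa\eta_1+2\eta_1^2=\eta_0^2+\eta_1^2$, the cross terms collapse and one gets
\[
\Bigl\|\Bigl(u+\tfrac h2 w_\ast,\,\eta_1 w_\ast\Bigr)\Bigr\|_M^2
= |u|^2 + \frac{h\eta_0}{\kappa}\langle u,w_\ast\rangle + \frac{h^2(\eta_0^2+\eta_1^2)}{4\kappa^2}\,|w_\ast|^2 .
\]
Substituting $w_\ast=w-\tfrac h2\bar L u$ and subtracting this from
$(1+\bar c)\|(u,\eta_0 w)\|_M^2 = (1+\bar c)\bigl(|u|^2+\tfrac{h\eta_0}{\kappa}\langle u,w\rangle+\tfrac{h^2\eta_0^2}{2\kappa^2}|w|^2\bigr)$
expresses the difference of the two sides of~\eqref{lbaeq00} as an explicit quadratic form in $(u,w)$ whose coefficients are scalar multiples of $I_d$, of $\bar L$, and of $\bar L^\top\bar L$.

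The remaining step is to verify that this quadratic form is nonnegative. I would bound each $\bar L$‑dependent contribution by its worst case, $|u^\top\bar L u|\le L|u|^2$, $|\langle w,\bar L u\rangle|\le L|u||w|$ and $|\bar L u|^2\le L^2|u|^2$, which reduces the problem to positive semidefiniteness of a $2\times2$ matrix in $(|u|,|w|)$, to be checked through its trace and determinant. Here the hypothesis $Lh^2\le\kappa^2$ absorbs all terms of order $h^3$ and $h^4$, and the value $\bar c=3L\eta_0h^2/\kappa$ is exactly what is needed to close the determinant estimate — playing the role that the favourable bound $u^\top\bar L u\ge\tfrac m2|u|^2$ plays in Proposition~\ref{lba}. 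The main obstacle is precisely this concluding bookkeeping and, in particular, the signs: since $\eta_0<\eta_1$, the off‑diagonal entry $h/(2\kappa)$ of $M$ is negative, the reverse of the situation in Proposition~\ref{lba}, so the term that carried the contraction there must now be controlled from the unfavourable side — which is exactly why only the weaker estimate $(1+\bar c)$ can survive.
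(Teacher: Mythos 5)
Your strategy is sound and, with one correction, it does close — but it is a genuinely different route from the paper's, so let me compare the two and then flag the one real issue.

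The paper's proof of Proposition~\ref{expo} reuses the machinery of Proposition~\ref{lba}: it works with the block matrices $\tilde A,\tilde B,\tilde C$ of~\eqref{abcdefs} (with $-\bar c$ in place of $c$) and then, crucially, diagonalizes by passing to the eigenvalues $\lambda\in[-L,L]$ of the mean-value Hessian $H$ of~\eqref{mvth}, reducing the $2d\times 2d$ positivity question to a one-parameter family of scalar $2\times 2$ checks $\tilde A_\lambda\tilde C_\lambda-\tilde B_\lambda^2>0$. This reduction is exact but requires $H$ symmetric, i.e.\ the symmetry of $\nabla_x b$ from the full Assumption~\ref{A1}. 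Your approach skips the eigenvalue reduction entirely and bounds each $\bar L$-dependent inner product by its worst Cauchy–Schwarz value, producing a single $2\times2$ form in $(|u|,|w|)$. That is a looser sufficient condition, since it decouples the occurrences of $\bar L$ across the off-diagonal and diagonal entries rather than tying them to a common eigenvalue, but it has the advantage of needing only the operator-norm bound~\eqref{A1a} and no symmetry. Writing $a,c,2e$ for the $|u|^2,|w|^2,|u||w|$ coefficients of the worst-case form and plugging in $\bar c=6L\eta_0 bh$, I find $a\ge \tfrac{19}{4}L\eta_0 bh$, $c=12L\eta_0^3b^3h+b^2(\eta_0^2-\eta_1^2)\ge 0$ and $|e|\le\tfrac12 L(13\eta_0^2+\eta_1^2)b^2h$; then $ac-e^2\ge (57-49)L^2\eta_0^4b^4h^2+\tfrac{19}{4}L\eta_0(\eta_0^2-\eta_1^2)b^3h>0$ after using $Lbh\le\kappa/2\le\tfrac12$ and $Lb^2\le\tfrac14$ to absorb the $h^3$ and $h^4$ correction terms, so your plan indeed survives the bookkeeping.

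The one genuine defect is the closing sign narrative. The statement's ``$\eta_0<\eta_1$'' is a typo — look at~\eqref{cbdef}, which needs $\eta_0>\eta_1$ for $\bar c>0$, and at the use of this proposition inside Corollary~\ref{refmod}, where $\eta_0=\hat\eta_i>\hat\eta_{i+\frac12}=\eta_1$ by~\eqref{etad}, consistent with every other proposition in Section~\ref{convex}. You took the typo at face value and built an argument around the off-diagonal of $M$ being negative; with $\eta_0<\eta_1$ the claim would actually be \emph{false} (for instance $b<0$ forces the $|w|^2$ coefficient $12L\eta_0^3b^3h+b^2(\eta_0^2-\eta_1^2)$ negative, and $\bar c<0$ makes the target bound an impossible contraction). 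The correct reason the conclusion is $(1+\bar c)$ rather than $(1-c)$ is exactly the one in your opening sentence: without the separation hypothesis you lose $u^\top\bar Lu\ge\tfrac m2|u|^2$ and must live with $u^\top\bar Lu\ge -L|u|^2$, which flips the sign of the $\langle u,\bar Lu\rangle$ contribution; the off-diagonal sign of $M$ has nothing to do with it. Fix the hypothesis to $\eta_0>\eta_1$ and drop the last paragraph's sign discussion, and the rest of your argument is a valid and in one respect stronger (symmetry-free) alternative to the paper's.
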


\begin{prop}\label{expo2}
%Let~\eqref{A1a} hold for all~$u,x\in\mathbb{R}^d$ and~$\theta\in\Theta$. 
Let~$\eta_0,\eta_1\in[0,1]$ satisfy~$0<\eta_0-\eta_1\leq\eta_1$,~$c\in\mathbb{R}$ be given by~$\bar{c}=2L\eta_0h^2/(\eta_0 - \eta_1)$ 
%\begin{equation*}%\label{cbdef2}
%\bar{c}=2L\eta_0h^2/(\eta_0 - \eta_1)
%\end{equation*}
and~$M\in\mathbb{R}^{2d\times 2d}$ be given by~\eqref{mdef}. 
Assume~\eqref{A1a} and~$%\max(4(\eta_0-\eta_1),\eta_0^2\eta_1^2)
Lh^2\leq (\eta_0-\eta_1)^2$. It holds that
\begin{align}
&\bigg\|\begin{pmatrix} 
I_d & 0\\
0 & \eta_1 I_d
\end{pmatrix}
\begin{pmatrix}
\bar{q}-\bar{q}' + \frac{h}{2}(\bar{p}-\bar{p}')\\
\bar{p} - \bar{p}' - \frac{h}{2}(b(\bar{q} + \frac{h}{2}\bar{p},\theta) - b(\bar{q}'+ \frac{h}{2}\bar{p}',\theta))
\end{pmatrix} \bigg\|_M^2 \nonumber \\
&\quad\leq (1+\bar{c})\bigg\| \begin{pmatrix}
I_d & 0\\
0 & \eta_0 I_d
\end{pmatrix}
\begin{pmatrix}
\bar{q} - \bar{q}'\\
\bar{p} - \bar{p}'
\end{pmatrix}\bigg\|_M^2 %+hl\cdot b(0,\theta) + \kappa h^2\abs{b(0,\theta)}^2.
%\label{lbaeq}
\end{align}
for all~$\theta\in\Theta$,~$\bar{q},\bar{q}',\bar{p},\bar{p}'\in\mathbb{R}^d$.
\end{prop}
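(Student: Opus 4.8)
The plan is to prove this exactly as its ``$\mathcal{BA}$''-counterpart Proposition~\ref{expo} (and, more generally, by the eigenvalue argument underlying Propositions~\ref{lba}--\ref{lab2}). First I would pass to the difference variables $q:=\bar q-\bar q'$, $p:=\bar p-\bar p'$ and write
\[
b\big(\bar q+\tfrac h2\bar p,\theta\big)-b\big(\bar q'+\tfrac h2\bar p',\theta\big)=\mathcal{Q}\big(q+\tfrac h2 p\big),\qquad
\mathcal{Q}:=\int_0^1\nabla_x b\Big(\bar q'+\tfrac h2\bar p'+t\big(q+\tfrac h2 p\big),\theta\Big)\,dt,
\]
which by Assumption~\ref{A1} is symmetric and by~\eqref{A1a} has operator norm at most $L$. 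On difference variables the $\mathcal{AB}$ step acts by $(q,p)\mapsto\big(q+\tfrac h2 p,\ p-\tfrac h2\mathcal{Q}\big(q+\tfrac h2 p\big)\big)$, so with $D_i:=\mathrm{diag}(1,\eta_i)\otimes I_d$, $a:=\tfrac h{2(\eta_0-\eta_1)}$ and $M=M_0\otimes I_d$ for $M_0:=\left(\begin{smallmatrix}1&a\\a&2a^2\end{smallmatrix}\right)$, the vector inside the norm on the left of the asserted inequality is $D_1 S_{\mathcal{Q}}(q,p)$ with $S_{\mathcal{Q}}:=\left(\begin{smallmatrix}I_d&\tfrac h2 I_d\\-\tfrac h2\mathcal{Q}&I_d-\tfrac{h^2}4\mathcal{Q}\end{smallmatrix}\right)$. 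Diagonalizing the symmetric matrix $\mathcal{Q}$ by an orthogonal change of coordinates---which commutes with $M$ and with the $D_i$ and conjugates $S_{\mathcal{Q}}$ blockwise---reduces the claim to the scalar matrix inequality
\[
S_\lambda^{\top} D_1 M_0 D_1 S_\lambda\ \preceq\ (1+\bar{c})\,D_0 M_0 D_0,\qquad \lambda\in[-L,L],
\]
where now $D_i=\mathrm{diag}(1,\eta_i)\in\mathbb{R}^{2\times2}$ and $S_\lambda:=\left(\begin{smallmatrix}1&h/2\\-\lambda h/2&1-\lambda h^2/4\end{smallmatrix}\right)$.

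Next I would prove this scalar inequality by setting $A:=(1+\bar{c})D_0 M_0 D_0-S_\lambda^{\top}D_1 M_0 D_1 S_\lambda$ and checking $A\succeq0$ through $A_{11}\ge0$ and $\det A\ge0$. Expanding the quadratic forms, the identity $2a(\eta_0-\eta_1)=h$ makes the $\lambda$-independent cross term cancel; explicitly, at $\lambda=0$ and $\bar{c}=0$ one obtains $A=\mathrm{diag}\big(0,\tfrac{h^2(\eta_0+\eta_1)}{4(\eta_0-\eta_1)}\big)$, the structural fact (a consequence of the particular shape of $M_0$) that a position half-step together with the damping decrement $\eta_0\to\eta_1$ does not expand $\|\cdot\|_{M_0}$ and in fact contracts it by an amount proportional to $|p|^2$. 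For general $\lambda$ and $\bar{c}$ the matrix $A$ is then the sum of the positive-definite term $\bar{c}\,D_0 M_0 D_0$, the nonnegative ``position-half-step margin'' just described, and a remainder each of whose entries carries at least one factor $\lambda h$. Using $|\lambda|\le L$ and $a h=h^2/(2(\eta_0-\eta_1))$, the part of this remainder linear in $\lambda h$ is of the same order $Lh^2/(\eta_0-\eta_1)$ as $\bar{c}$ but with a strictly smaller constant (since $\eta_1<\eta_0$), while the rest is of genuinely lower order by the hypothesis $Lh^2\le(\eta_0-\eta_1)^2$; together with $\eta_0-\eta_1\le\eta_1$, one checks that the stated value $\bar{c}=2L\eta_0h^2/(\eta_0-\eta_1)$ makes $A_{11}>0$ and $\det A\ge0$, hence $A\succeq0$.

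The only substantive work, and the main obstacle, is the bookkeeping in this last step: tracking the interplay between the off-diagonal entry $a$ of $M_0$, the half-step $h/2$, the ratio $\eta_1/\eta_0$ and the sign of $\lambda$, and confirming that the slack furnished by $Lh^2\le(\eta_0-\eta_1)^2$ together with the explicit constant in $\bar{c}$ is exactly enough for $\det A\ge0$ uniformly in $\lambda\in[-L,L]$---in particular for negative $\lambda$, where (unlike in Proposition~\ref{lab}) no convexity is available. No idea beyond those already used in Propositions~\ref{lba}--\ref{expo} is required; since the statement is merely an expansion bound, there is no contraction term to extract, and the computation is if anything lighter than that of Proposition~\ref{lba}.
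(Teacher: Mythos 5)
Your overall framework matches the paper's proof exactly: reduce via the symmetric mean-value matrix $\mathcal{Q}=\bar H$ to a fixed eigenvalue $\lambda\in[-L,L]$, then check positive semi-definiteness of the $2\times 2$ matrix $A=(1+\bar c)D_0M_0D_0-S_\lambda^{\!\top}D_1M_0D_1S_\lambda$ through $A_{11}>0$ and $\det A\ge 0$, which is the same criterion the paper applies in the form $\hat A_\lambda>0$, $\hat A_\lambda\hat C_\lambda-\hat B_\lambda^2>0$. Your base-case computation (at $\lambda=0$, $\bar c=0$: $A_{12}=0$ and $A_{22}=h^2(\eta_0+\eta_1)/(4(\eta_0-\eta_1))$) is correct and is indeed the structural source of the slack being traded against the $\lambda$-dependent remainder.

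The gap is that you stop precisely at the point where the proof happens. Asserting that ``one checks'' $A_{11}>0$ and $\det A\ge0$ for the specific constant $\bar c=2L\eta_0h^2/(\eta_0-\eta_1)$ over all $\lambda\in[-L,L]$ is the entire content of the statement, and the paper devotes roughly a page to it: it expands $\hat\mu:=\hat A_\lambda\hat C_\lambda-\hat B_\lambda^2$ in orders of $h$ (treating $\eta_0-\eta_1$ as $O(h)$ and $b=h/(2(\eta_0-\eta_1))$ as $O(1)$), isolates the pieces $\hat\mu_2,\hat\mu_3,\hat\mu_{4+}$, shows $\hat\mu_{4+}=\bar\mu_{4+}+(\text{small})$ collapses after exact cancellations as in Proposition~\ref{lab}, and then balances several competing contributions in $\hat\mu_2,\hat\mu_3$ using both $Lh^2\le(\eta_0-\eta_1)^2$ and $\eta_0-\eta_1\le\eta_1$. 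Your heuristic --- that the $\lambda h$-linear remainder is the same order as $\bar c$ with a smaller constant and the rest is lower order --- is true at the level of orders of magnitude (as you correctly argue), but the leading coefficients are close enough that it is not obvious without the explicit accounting that the prefactor $2$ in $\bar c$ suffices, especially since (as you note) $\lambda$ can be negative. In particular, your remark that the computation is ``lighter than that of Proposition~\ref{lba}'' is not borne out: the paper's proof of Proposition~\ref{expo2} is comparable in length and delicacy to that of Proposition~\ref{lba}, since the disappearance of the convexity restriction is paid for by having to survive the full range $\lambda\in[-L,L]$. So the proposal is a correct proof \emph{plan} identical to the paper's, but the claim it sets out to establish is left unverified at exactly the step where the arithmetic could fail.
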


Next, we extend Proposition~\ref{expo} to cover iterations of the randomized midpoint version of the algorithm ($u=1$).
\begin{prop}\label{RvRth}
Let~$\eta_0,\eta_1\in[0,1]$ satisfy~$\eta_0<\eta_1$ and let~$\bar{u}\in(0,h)$. Let~$\bar{c}\in\mathbb{R}$ be given by~\eqref{cbdef} and~$M$ be given by~\eqref{mdef}. Assume~\eqref{A1a} and~$Lh^2\leq \min((\eta_0-\eta_1)^2,1/16^2)$. Inequality~\eqref{lbaeq00} holds with~$b(\bar{q},\theta),b(\bar{q}',\theta)$ replaced by~$b(\bar{q}+\bar{u}\bar{p},\theta)$ and~$b(\bar{q}'+\bar{u}\bar{p}',\theta)$ respectively for all~$\theta\in\Theta$,~$\bar{q},\bar{q}',\bar{p},\bar{p}' \in\mathbb{R}^d$.
\end{prop}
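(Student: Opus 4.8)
The plan is to run the proof of Proposition~\ref{expo} with the adjustment that upgrades Proposition~\ref{lba} to Proposition~\ref{lbaR} (equivalently Proposition~\ref{lba2} to Proposition~\ref{lba2R}). Set $D_q = \bar q - \bar q'$ and $D_p = \bar p - \bar p'$. Using that each $\nabla_x b(\cdot,\theta)$ is symmetric (Assumption~\ref{A1}) together with a mean value argument, write
\begin{equation*}
b(\bar q + \bar u\bar p,\theta) - b(\bar q' + \bar u\bar p',\theta) = \tilde H\,(D_q + \bar u D_p), \qquad \tilde H := \int_0^1 \nabla_x b\big(\bar q' + \bar u\bar p' + t(D_q + \bar u D_p),\theta\big)\,dt,
\end{equation*}
so that $\tilde H = \tilde H^{\top}$ and $-L I_d \preceq \tilde H \preceq L I_d$ by \eqref{A1a} (the lower bound \eqref{A1b} plays no role here, since only an expansion estimate is sought). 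Consequently the vector on the left-hand side of \eqref{lbaeq00} with the midpoint evaluations is obtained from the corresponding vector in Proposition~\ref{expo} by replacing every occurrence of $b(\bar q,\theta) - b(\bar q',\theta)$, which in that proof equals $H D_q$ for a symmetric $H$, by $\tilde H(D_q + \bar u D_p)$; all of its blocks are then polynomials in the single symmetric matrix $\tilde H$ with coefficients depending only on $h$ and $\bar u$.

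Since $M$ in \eqref{mdef} is a fixed $2\times 2$ matrix tensored with $I_d$, every matrix entering the inequality is a polynomial in $\tilde H$; diagonalising $\tilde H$ reduces \eqref{lbaeq00} to a family of scalar inequalities indexed by the eigenvalues $\lambda\in[-L,L]$ of $\tilde H$, which is exactly the eigenvalue reduction of \cite{lc23} used throughout Sections~\ref{lpd} and~\ref{rlvd}. For a fixed $\lambda$, writing $M_\lambda$ for the associated $2\times 2$ matrix from \eqref{mdef}, the scalar inequality to prove is the one appearing in the proof of Proposition~\ref{expo} for that eigenvalue, except that the scalar $\lambda a$ arising from $H D_q$ is replaced by $\lambda(a + \bar u b)$; setting $\bar u = 0$ recovers Proposition~\ref{expo} verbatim.

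It therefore remains to absorb the extra terms carrying the factor $\bar u b$. Each such term picks up an extra $\bar u\lambda$ which, combined with the powers of $h$ and of $h/(\eta_0-\eta_1)$ already forced by the structure of $M_\lambda$ and of the vector, and using $\bar u < h$ together with $Lh^2 \le (\eta_0-\eta_1)^2$ and the additional hypothesis $Lh^2 \le 1/16^2$, is bounded by a small multiple — of order $h\sqrt L$ — of a term already controlled in the Proposition~\ref{expo} estimate. Reproducing that estimate with the (modest) slack left by the constant $3$ in \eqref{cbdef}, these $O(h\sqrt L)$ corrections are absorbed without enlarging $\bar c = 3L\eta_0 h^2/(\eta_0-\eta_1)$, exactly as in the passage from Proposition~\ref{lba} to Proposition~\ref{lbaR}. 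The main obstacle is precisely this bookkeeping: one must exhibit the remainder as manifestly of higher order in $h\sqrt L$ and verify that it fits within the gap between the Proposition~\ref{expo} bound and $(1+\bar c)\|(a,\eta_0 b)\|_{M_\lambda}^2$. Reassembling the eigenvalue-wise inequalities into a matrix inequality then yields \eqref{lbaeq00} with the midpoint evaluation.
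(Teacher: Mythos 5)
Your proposal takes essentially the paper's route: the mean-value decomposition with the symmetric $\tilde H$ evaluated at the $\bar u$-shifted midpoint arguments (this is precisely~\eqref{mvthR}), the eigenvalue reduction of \cite{lc23} to scalar quantities $A_\lambda,B_\lambda,C_\lambda$, and the observation that the $\bar u$-corrections are controlled using $\bar u<h$ and $Lh^2\le\min((\eta_0-\eta_1)^2,1/16^2)$ without needing to enlarge $\bar c$. The paper carries out the bookkeeping as a fresh direct bound on $A_\lambda,B_\lambda,C_\lambda$ (with $A$ from~\eqref{Adef} unchanged and $B,C$ from~\eqref{abcdefsR}) rather than as a perturbative absorption into the slack of the Proposition~\ref{expo} estimate; your claim that the corrections are uniformly $O(h\sqrt L)$ relative to individual Proposition~\ref{expo} terms is a bit optimistic (for $B_\lambda$ the ratio can be of order $(\eta_0-\eta_1)/\eta_0$), but the direct verification confirms the conclusion all the same.
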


Before combining the previous propositions, we give next an elementary result about the second moments of~$K_{\cdot}$ that will be useful in quantifying the noisy part of the dynamics.
\begin{lemma}\label{teos}
Let~$r^*=(1-\eta)^2/(T^2(1-\eta^2))$, let~$G\sim\mathcal{N}(0,I_d)$,~$\mathcal{U}\sim\mathcal{U}(0,1)$ be independent and let~$K_{\cdot}$ be given by~\eqref{kdef} and~\eqref{Gdef2} with some arbitrary~$\hat{q}\in\mathbb{R}^d$ and its unit vector~$e$. %For any~$\bar{v}\in\mathbb{R}^d$, 
It holds that
\begin{equation*}
\mathbb{E}[\abs{K_{\hat{q}}}^2] = \frac{T^2(1-\eta^2)}{(1-\eta)^2} \bigg((4+\abs{\hat{q}}^2)\bigg(\Phi\bigg(\frac{\abs{\hat{q}}}{2}\bigg) - \Phi\bigg(-\frac{\abs{\hat{q}}}{2}\bigg)\bigg) + 4\hat{q}\varphi_{0,1}\bigg(\frac{\abs{\hat{q}}}{2}\bigg)\bigg).
\end{equation*}
%where~$\hat{v}$ is given by~\eqref{vhat}.
\end{lemma}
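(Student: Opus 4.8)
The plan is to compute $\mathbb{E}[|K_{\hat q}|^2]$ by conditioning on the coupling event in~\eqref{Gdef2} and performing explicit Gaussian integrals. Write $\hat T = T\sqrt{1-\eta^2}/(1-\eta)$, so that $r^* = 1/\hat T^2$ and
\[
K_{\hat q} = \Bigl| \frac{\hat q}{r^*(T+\gamma^{-1})\sqrt{1-\eta^2}} + (T+\gamma^{-1})\sqrt{1-\eta^2}\,\bar G \Bigr|.
\]
With $\gamma=(1-\eta)/(\eta T)$ one has $(T+\gamma^{-1})\sqrt{1-\eta^2} = \hat T$, hence the prefactor $r^*(T+\gamma^{-1})\sqrt{1-\eta^2} = 1/\hat T$, and therefore $K_{\hat q} = \hat T\,|\,\hat T\,\hat q + \bar G\,|$ — wait, more precisely $K_{\hat q}/\hat T = |\hat q/( \hat T\cdot 1) \cdot(1/(\hat T))^{?}\dots|$; in any case the upshot is $K_{\hat q} = \hat T\, |w|$ for an explicit vector $w$ depending on $\bar G$, so that $\mathbb{E}[|K_{\hat q}|^2] = \hat T^2\,\mathbb{E}[|w|^2]$, which accounts for the prefactor $T^2(1-\eta^2)/(1-\eta)^2$ in the statement. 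It then remains to compute $\mathbb{E}[|w|^2]$.

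The key step is to split according to whether the ``translation'' branch $\bar G = -\hat q$ or the ``reflection'' branch $\bar G = 2ee^\top G$ occurs in~\eqref{Gdef2}, where $e = \hat q/|\hat q|$. On the translation event, $\hat T\hat q + \bar G$ reduces to a deterministic multiple of $e$ (the $\hat q$ contributions combine), so its squared norm is an explicit scalar. On the reflection event, decompose $G = (e\cdot G)e + G^\perp$; the reflection only affects the $e$-component, turning $(e\cdot G)$ into $-(e\cdot G)$, while $G^\perp$ is untouched and contributes its expected value $d-1$ — but note this is cancelled by the corresponding $|G^\perp|^2$ term in the translation branch, since the coupling leaves the orthogonal component of $G$ unchanged in both branches; so only the one-dimensional $e$-direction carries any $\hat q$-dependence and the $(d-1)$ transverse dimensions cancel out, consistent with the dimension-free answer. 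One is thus reduced to a one-dimensional computation: letting $g = e\cdot G \sim \mathcal N(0,1)$ and $\mathcal U\sim\mathcal U(0,1)$ independent, the acceptance probability in~\eqref{cou1} is $\min\bigl(1, \varphi_{0,1}(g+|\hat q|)/\varphi_{0,1}(g)\bigr)$ and one must compute $\mathbb{E}$ of the square of the scalar $(\,|\hat q| + g\,)$ on acceptance and $(\,-g\,)$-type expression on rejection — actually the relevant scalar on each branch of $\hat T\hat q + \bar G$ projected onto $e$.

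The main obstacle will be bookkeeping the one-dimensional Gaussian integrals cleanly: specifically, evaluating $\int (\text{linear in } g)^2 \,\varphi_{0,1}(g)\,\mathbf 1_{\{g \le -|\hat q|/2\}}\,dg$ and its complement, using the standard identities $\int_{-\infty}^{a}\varphi_{0,1} = \Phi(a)$, $\int_{-\infty}^a t\,\varphi_{0,1}(t)\,dt = -\varphi_{0,1}(a)$, and $\int_{-\infty}^a t^2\varphi_{0,1}(t)\,dt = \Phi(a) - a\varphi_{0,1}(a)$, together with the symmetrization coming from the reflection $g\mapsto -g$. The threshold $g \le -|\hat q|/2$ arises because the acceptance ratio $\varphi_{0,1}(g+|\hat q|)/\varphi_{0,1}(g) = \exp(-|\hat q|g - |\hat q|^2/2)$ exceeds $1$ exactly when $g \le -|\hat q|/2$; combining the accept/reject contributions and the reflection symmetry collapses everything to integrals over $(-\infty, -|\hat q|/2)$ against $\varphi_{0,1}$, which by the three identities above produce exactly the terms $(4+|\hat q|^2)(\Phi(|\hat q|/2)-\Phi(-|\hat q|/2))$ and $4\hat q\,\varphi_{0,1}(|\hat q|/2)$ after simplification. (I would double-check the constant ``$4$'': it should trace back to $((T+\gamma^{-1})\sqrt{1-\eta^2}\cdot r)^{-2}\cdot|\hat q|^2$-type normalizations, i.e. to the fact that $r = |\hat q|/(r^*\hat T)$ so that $\hat T\hat q$ has squared $e$-component $|\hat q|^2$ while a factor of $2$ from the reflection $2ee^\top$ gets squared to $4$.) Once the one-dimensional integral is assembled, multiplying back by $\hat T^2 = T^2(1-\eta^2)/(1-\eta)^2$ yields the claimed formula.
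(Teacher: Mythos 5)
Your high-level plan — condition on the two branches of the coupling, reduce to a one-dimensional Gaussian integral over the $e$-direction, and evaluate using the standard second-moment identities — is indeed the route the paper's proof takes. However, your description of the reduction contains a concrete error that would derail the computation if carried out as written. The key structural fact is that $\hat{q}+\bar{G}$ lies exactly in $\operatorname{span}(e)$ on \emph{both} branches of~\eqref{Gdef2}, not merely up to a cancellation of transverse terms. On the accept branch $\bar{G}=-\hat{q}$, so $\hat{q}+\bar{G}=0$ and the contribution vanishes identically. On the reject branch $\bar{G}=G-(I-2ee^{\top})G=2(e\cdot G)e$, which is parallel to $e$, as is $\hat{q}=|\hat{q}|e$; hence $\hat{q}+\bar{G}=\big(|\hat{q}|+2(e\cdot G)\big)e$ with no orthogonal component at all. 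Your picture of a $|G^{\perp}|^{2}$ term contributing $d-1$ on each branch and then cancelling does not correspond to anything in the computation: there is nothing transverse to cancel, and the one-dimensionality is structural, not coincidental. You also slipped in the prefactor algebra: with $r^{*}=1/\hat{T}^{2}$ and $(T+\gamma^{-1})\sqrt{1-\eta^{2}}=\hat{T}$, the definition~\eqref{kdef} gives $K_{\hat{q}}=\hat{T}\,|\hat{q}+\bar{G}|$, not $\hat{T}\,|\hat{T}\hat{q}+\bar{G}|$ — and this matters, since the spurious factor would make the accept branch contribute $(\hat{T}-1)^{2}|\hat{q}|^{2}$ instead of $0$.

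Once those two points are corrected, the rest of your plan is precisely the paper's argument: writing $g=e\cdot G$, the accept branch contributes zero and the reject branch occurs with conditional probability $1-\varphi_{0,1}(g+|\hat{q}|)/\varphi_{0,1}(g)$ on $\{g>-|\hat{q}|/2\}$, so
\[
\mathbb{E}\big[|K_{\hat{q}}|^{2}\big]=\hat{T}^{2}\int_{-|\hat{q}|/2}^{\infty}\big(|\hat{q}|+2g\big)^{2}\big(\varphi_{0,1}(g)-\varphi_{0,1}(g+|\hat{q}|)\big)\,dg .
\]
The change of variable $g\mapsto g-|\hat{q}|/2$ symmetrizes this to $4\int_{0}^{\infty} g^{2}\big[\varphi_{0,1}(g-|\hat{q}|/2)-\varphi_{0,1}(g+|\hat{q}|/2)\big]\,dg$, after which the Gaussian moment identities you quote produce the stated formula (the $4$ you wanted to double-check comes from the squared coefficient $2$ in $(|\hat{q}|+2g)^{2}$, which becomes $(2g)^{2}$ after the shift).
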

\begin{proof}
Fix~$\hat{q}\in\mathbb{R}^d$. By definitions~\eqref{kdef} and~\eqref{Gdef2}, it holds that 
\begin{equation*}
\mathbb{E}[\abs{K_{\hat{q}}}^2] = \frac{T^2(1-\eta^2)}{(1-\eta)^2}\int_{-\frac{\abs{\hat{q}}}{2}}^{\infty} (\abs{\hat{q}} + 2g)^2  (\varphi_{0,1}(g) - \varphi_{0,1}(g+\abs{\hat{q}})) dg,
\end{equation*}
which, by a change in variable, yields
\begin{equation}\label{keo}
\frac{(1-\eta)^2}{T^2(1-\eta^2)}\mathbb{E}[\abs{K_{\hat{q}}}^2] = \int_0^{\infty} 4g^2\bigg[\varphi_{0,1}\bigg(g-\frac{\abs{\hat{q}}}{2}\bigg) - \varphi_{0,1}\bigg(g+\frac{\abs{\hat{q}}}{2}\bigg)\bigg]dg.
\end{equation}
In the rest of the proof, let~$\hat{q}_2:=\frac{\abs{\hat{q}}}{2}$ and let~$\varphi$ denote~$\varphi_{0,1}$. For the first term in the square bracket on the right-hand side of~\eqref{keo}, integration by parts yields
\begin{align*}
&\int_0^{\infty} g^2\varphi(g-\hat{q}_2)dg\\
&\quad= \int_0^{\infty} ((g-\hat{q}_2)^2 + 2g\hat{q}_2 - \hat{q}_2^2)\varphi(g-\hat{q}_2)dg\\
&\quad= -\hat{q}_2\varphi(-\hat{q}_2) + \Phi(\hat{q}_2) + 2\hat{q}_2\int_0^{\infty}(g-\hat{q}_2)\varphi(g-\hat{q}_2)dg + \hat{q}_2^2\Phi(\hat{q}_2)\\
&\quad= \hat{q}_2\varphi(-\hat{q}_2) + (1+\hat{q}_2^2)\Phi(\hat{q}_2).
\end{align*}
Similarly, for the second term in the square bracket on the right-hand side of~\eqref{keo}, it holds that
\begin{align*}
&\int_0^{\infty} g^2\varphi(g+\hat{q}_2)dg\\
&\quad= \int_0^{\infty} ((g+\hat{q}_2)^2 - 2g\hat{q}_2 - \hat{q}_2^2)\varphi(g+\hat{q}_2)dg\\
&\quad= \hat{q}_2\varphi(-\hat{q}_2) + \Phi(-\hat{q}_2) - 2\hat{q}_2\int_0^{\infty}(g+\hat{q}_2)\varphi(g+\hat{q}_2)dg + \hat{q}_2^2\Phi(-\hat{q}_2)\\
&\quad= -\hat{q}_2\varphi(-\hat{q}_2) + (1+\hat{q}_2^2)\Phi(-\hat{q}_2),
\end{align*}
from which the assertion follows.
\end{proof}

Putting together Propositions~\ref{expo},~\ref{expo2} and Lemma~\ref{teos}, we obtain the following result on full iterations of a SGgHMC pair chain when the coupling is that which is studied in Section~\ref{secsg}.
\begin{corollary}\label{refmod}
Let Assumption~\ref{A1} hold. Assume~$4LT^2\leq (1-\eta)^2$ and~\eqref{carot}. Let~$x,y,v,w$ be~$\mathbb{R}^d$-valued r.v.'s, let~$G,\hat{G}\sim\mathcal{N}(0,I_d)$ be independent of~$x,y,v,w,(u_{kh})_{k\in\mathbb{N}}$ and let~$X',Y',V',W'$ be given by~\eqref{ous}. 
%with~$A=\Omega$.
Suppose~$\hat{G}$ satisfies a.s. that
\begin{equation}\label{ggk}
\hat{G} = G - \bar{G},
\end{equation}
where~$\bar{G}$ is given by~\eqref{Gdef2} and~$\hat{q},e$ given by~\eqref{cou1},~\eqref{edef} and~\eqref{zdef5} with~$r^*=(1-\eta)^2/(T^2(1-\eta^2))$ and~$\gamma$ given by~\eqref{gdef}. %If~$u=0$ holds, that is for the velocity Verlet integrator, then 
It holds a.s. that
\begin{equation}\label{mcon2}
\mathbb{E}\bigg[\bigg\|
\begin{pmatrix}
X' - Y'\\
V' - W'
\end{pmatrix} \bigg\|_{\bar{M}}^2\bigg|x,y,v,w\bigg] \leq \bigg(1+\frac{10LT^2}{1-\eta}\bigg)\bigg\| 
\begin{pmatrix}
x - y\\
v - w
\end{pmatrix}\bigg\|_{\bar{M}}^2 + \hat{c}
\end{equation}
where~$\bar{M}$ is given by~\eqref{Mdef} and the constant~$\hat{c}$ is given by~$\hat{c} = \frac{2T^2(1+\eta)}{1-\eta}\max(\frac{8\abs{\hat{q}}}{\sqrt{2\pi}},4)$. 
%the function~$\hat{v}:[0,\infty)\rightarrow[0,\infty)$ is given by~\eqref{vhat} and~$q$ is given by~\eqref{zdefs}. 
Otherwise, if~$\hat{G}=G$ holds a.s., then~\eqref{mcon2} holds a.s. with~$\hat{c}=0$.
%Moreover, if~$u=1$, that is for the randomized midpoint method, inequality~\eqref{mcon2} holds 
\end{corollary}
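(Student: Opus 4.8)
The plan is to run the argument behind Theorem~\ref{convthm} with the contraction Propositions~\ref{lba}--\ref{lab2} replaced by the expansion Propositions~\ref{expo}, \ref{expo2}, \ref{RvRth}, and then to account for the extra randomness introduced by the reflection. One iteration of SGgHMC is the refreshment $v'=\eta v+\sqrt{1-\eta^2}G$, $w'=\eta w+\sqrt{1-\eta^2}\hat G$ followed by the integrator $(\bar q_\cdot,\bar p_\cdot)$ of length $T$; with the reflection coupling \eqref{cou1}--\eqref{edef} and $\bar G=G-\hat G$ as in \eqref{Gdef2} this gives $v'-w'=\eta(v-w)+\sqrt{1-\eta^2}\bar G$. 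Three identities will be used: (i) with $r^*=(1-\eta)^2/(T^2(1-\eta^2))$ and $\gamma$ as in \eqref{gdef} one has $T+\gamma^{-1}=T/(1-\eta)$, so $\hat T:=(T+\gamma^{-1})\sqrt{1-\eta^2}$ satisfies $r^*\hat T^2=1$ and, by \eqref{kdef}, \eqref{zdef5}, $K_{\hat q}=\hat T\,|\hat q+\bar G|$; (ii) $\mathrm{diag}(I_d,\eta I_d)\,M\,\mathrm{diag}(I_d,\eta I_d)=\bar M$, exactly as at the end of the proof of Theorem~\ref{convthm}, where $M$ is \eqref{mdef} with $\eta_0-\eta_1=(1-\eta)/(2K)$ and $h=T/K$; (iii) by the maximal--coupling construction, $\hat G\sim\mathcal N(0,I_d)$ conditionally on $x,y,v,w$, hence $\mathbb E[\bar G\mid x,y,v,w]=0$.

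First I would treat the integrator part. Using the interpolating weights $\hat\eta_i$ of \eqref{etad} (with $\hat\eta_0=1$, $\hat\eta_K=\eta$; resp. \eqref{Rvr} with $h\mapsto 2h$ when $u=1$), the hypothesis $4LT^2\le(1-\eta)^2$ gives $Lh^2\le(\hat\eta_i-\hat\eta_{i+1/2})^2$. Applying Proposition~\ref{expo} (resp.~\ref{RvRth}) to each $\mathcal{BA}$ half--move and Proposition~\ref{expo2} to each $\mathcal{AB}$ half--move along the approximated Hamiltonian trajectory, and chaining them by induction over $i\in\mathbb N\cap[0,K]$ just as the contraction propositions are chained in the proof of Theorem~\ref{convthm} — but now with no need to split into the large--position / large--velocity regimes, since the expansion propositions hold for all $\bar q,\bar q',\bar p,\bar p'$ — gives, a.s. given all the randomness,
\[
\Big\|\begin{pmatrix} I_d & 0 \\ 0 & \eta I_d\end{pmatrix}\begin{pmatrix}\bar q_T(x,v',\bar\theta)-\bar q_T(y,w',\bar\theta)\\ \bar p_T(x,v',\bar\theta)-\bar p_T(y,w',\bar\theta)\end{pmatrix}\Big\|_M^2\le \prod_i(1+\bar c_i)\,\Big\|\begin{pmatrix}x-y\\ v'-w'\end{pmatrix}\Big\|_M^2 ,
\]
where $\sum_i\bar c_i=O\!\big(LT^2/(1-\eta)\big)$ and, since \eqref{carot} forces $LT^2/(1-\eta)\le 16^{-2}$, $\prod_i(1+\bar c_i)\le 1+10LT^2/(1-\eta)$. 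By identity (ii) the left--hand side is $\|(X'-Y',\,V'-W')\|_{\bar M}^2$.

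Next I would take $\mathbb E[\cdot\mid x,y,v,w]$ in the last display (legitimate, as it holds a.s.), which by (iii) kills the $\bar G$--linear terms and leaves
\[
\mathbb E\Big[\Big\|\begin{pmatrix}x-y\\ v'-w'\end{pmatrix}\Big\|_M^2\,\Big|\,x,y,v,w\Big]=\Big\|\begin{pmatrix}x-y\\ \eta(v-w)\end{pmatrix}\Big\|_M^2+\frac{2T^2(1+\eta)}{1-\eta}\,\mathbb E\big[|\bar G|^2\mid x,y,v,w\big] .
\]
For the variance term, (i) and $\mathbb E[\bar G]=0$ give $\mathbb E[|\bar G|^2]=\hat T^{-2}\mathbb E[|K_{\hat q}|^2]-|\hat q|^2$, which by Lemma~\ref{teos} equals $4-\big(2(|\hat q|^2+4)\Phi(-|\hat q|/2)-4|\hat q|\varphi_{0,1}(|\hat q|/2)\big)$; the bracket is nonnegative by the elementary bound $\Phi(-t)\ge \varphi_{0,1}(t)\,t/(t^2+1)$, so $\mathbb E[|\bar G|^2]\le 4\le\max(8|\hat q|/\sqrt{2\pi},4)$ (and, with a little more care, $\mathbb E[|\bar G|^2]\le\min(4,8|\hat q|/\sqrt{2\pi})$). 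Combining this with (ii) and $\big\|\begin{smallmatrix}x-y\\ \eta(v-w)\end{smallmatrix}\big\|_M^2=\|(x-y,v-w)\|_{\bar M}^2$, and using once more the smallness of $LT^2/(1-\eta)$ to absorb the factor $1+10LT^2/(1-\eta)$ in front of the variance term into $\hat c=\frac{2T^2(1+\eta)}{1-\eta}\max(8|\hat q|/\sqrt{2\pi},4)$, yields \eqref{mcon2}. The synchronous case $\hat G=G$ is the same argument verbatim with $\bar G\equiv 0$: then $K_{\hat q}=|q|$ deterministically, the refreshment contributes no variance, and $\hat c=0$.

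The main obstacle is the inductive chaining in the first step: one must check that the expansion Propositions~\ref{expo}, \ref{expo2} (resp.~\ref{RvRth}) genuinely compose along the entire $K$--step trajectory with the decreasing weights $\hat\eta_i$, and that the accumulated factor $\prod_i(1+\bar c_i)$ stays below $1+10LT^2/(1-\eta)$; this mirrors the (somewhat delicate) induction in the proof of Theorem~\ref{convthm}, and it is exactly here that both $4LT^2\le(1-\eta)^2$ and the strong smallness \eqref{carot} enter. The only genuinely new ingredient is the identity $\mathbb E[|\bar G|^2]=\hat T^{-2}\mathbb E[|K_{\hat q}|^2]-|\hat q|^2$ together with its control via Lemma~\ref{teos}, which works precisely because the chosen $r^*$ produces the normalization $r^*\hat T^2=1$.
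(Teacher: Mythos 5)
Your proposal follows the paper's own route almost exactly: chain the unconditional expansion Propositions~\ref{expo}, \ref{expo2} (resp.~\ref{RvRth}) along the $K$-step trajectory using the weights $\hat\eta_i$ from \eqref{etad} (resp.~\eqref{Rvr}); use $\mathrm{diag}(I_d,\eta I_d)M\,\mathrm{diag}(I_d,\eta I_d)=\bar M$ to pass from the $M$-norm to the $\bar M$-norm; expand $\mathbb E\big[\|(x-y,v'-w')\|_M^2\mid x,y,v,w\big]$ and use $\mathbb E[\bar G\mid q]=0$ to kill the cross term; and finally control $\mathbb E[|\bar G|^2\mid q]$. All of this is exactly what the paper does (compare \eqref{kcr}, \eqref{pwk}, \eqref{cm1}).

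The one place where you genuinely diverge is the bound $\mathbb E[|\bar G|^2\mid q]\le 4$. The paper obtains it by the elementary computation $\mathbb E[|\bar G|^2\mid q]=2-2\mathbb E[(G\cdot e)(\hat G\cdot e)\mid q]=2\mathbb E[(G\cdot e)\bar G\mid q]\le 2+\tfrac12\mathbb E[|\bar G|^2\mid q]$, i.e.\ Young's inequality in one dimension, whereas you read it off directly from the exact expression in Lemma~\ref{teos} by observing that
\[
2(|\hat q|^2+4)\Phi(-|\hat q|/2)-4|\hat q|\varphi_{0,1}(|\hat q|/2)\ge 0
\]
via the Mills-ratio inequality $\Phi(-t)\ge t\varphi_{0,1}(t)/(t^2+1)$. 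This is a clean alternative: it is more uniform in the sense that a single lemma (Lemma~\ref{teos}) delivers both the small-$|\hat q|$ bound $8|\hat q|/\sqrt{2\pi}$ and the uniform bound $4$, at the cost of invoking the Mills-ratio inequality. The paper's route avoids that external inequality but introduces a second ad hoc calculation. Either way the result is the same and both are correct.

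Two remarks that do not affect the verdict. First, your statement about the a.s.\ validity of \eqref{kcr} after the chaining is right: the only randomness integrated out in that step (when $u=1$) is the uniform variables $\hat u$, and the induction needs $Lh^2\le(\hat\eta_i-\hat\eta_{i+1/2})^2$ (resp.\ $L(2h)^2\le(\hat\eta_i-\hat\eta_{i+1/2})^2$), which is precisely $4LT^2\le(1-\eta)^2$, as you note. Second, the final ``absorption'' of the prefactor $1+10LT^2/(1-\eta)$ acting on the variance term into $\hat c$ is stated a bit lightly; note however that the paper itself is equally terse at that point (``the assertion follows''), so this is not a defect specific to your write-up.
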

\begin{proof}
Recall the notation~\eqref{ous}. First, consider the case~$u=0$ (velocity Verlet). For~$i\in\frac{1}{2}\mathbb{N}\cap[0,K]$, let~$\hat{\eta}_i\in[0,1]$ be given by~$\hat{\eta}_0 = 1$ and~\eqref{etad}. By the assumption~$4LT^2\leq (1-\eta)^2$, inequality~\eqref{etar} holds. Therefore Propositions~\ref{expo} and~\ref{expo2} with~$\eta_0=\hat{\eta}_i$,~$\eta_1=\hat{\eta}_{i+\frac{1}{2}}$ imply a.s. that
\begin{align}
&\mathbb{E}\bigg[\bigg\|
\begin{pmatrix}
\bar{q}_{T}(x,v',\bar{\theta})-\bar{q}_{T}(y,w',\bar{\theta})\\
\eta(\bar{p}_{T}(x,v',\bar{\theta}) - \bar{p}_{T}(y,w',\bar{\theta})) 
\end{pmatrix} \bigg\|_M^2\bigg|x,y,v,w,G,\hat{G}\bigg]\nonumber\\
&\quad\leq \bigg(1+\frac{10LT^2}{1-\eta}\bigg)\bigg\| 
\begin{pmatrix}
x - y\\
v' - w'
\end{pmatrix}\bigg\|_M^2,\label{kcr}
\end{align}
where~$M$ is given by~\eqref{mdef}. In the case~$u=1$ (randomized midpoint), let instead~$\hat{\eta}_i$ be given by~\eqref{Rvr}. Again by assumption inequality~\eqref{Rvr2} holds. Therefore Proposition~\ref{RvRth} with~$h/2$ replaced by~$h$ and again~$\eta_0=\hat{\eta}_i$,~$\eta_1=\hat{\eta}_{i+\frac{1}{2}}$ implies a.s. that~\eqref{kcr} holds. 
For the right-hand side of~\eqref{kcr}, %in case of equation~\eqref{ggk}, 
it holds a.s. that
\begin{align}
\bigg\|\begin{pmatrix}
z\\
v' - w'
\end{pmatrix}\bigg\|_M^2 &= 
\abs{z}^2 + \frac{2T}{1-\eta}z\cdot(\eta(v-w)+\sqrt{1-\eta^2}(G-\hat{G}))\nonumber\\
&\quad+ \frac{2T^2}{(1-\eta)^2}\abs{\eta(v-w)+\sqrt{1-\eta^2}(G-\hat{G})}^2\nonumber\\
&= 
\bigg\|\begin{pmatrix}
z\\
v - w
\end{pmatrix}\bigg\|_{\bar{M}}^2
+ \frac{2T^2(1-\eta^2)}{(1-\eta)^2}\abs{G-\hat{G}}^2\nonumber\\
&\quad+ \frac{2T\sqrt{1-\eta^2}}{1-\eta}(z+2\gamma^{-1}(v-w))\cdot(G-\hat{G}). \label{pwk}
\end{align}
In the case of~$\hat{G}=G$, the assertion follows. If instead equation~\eqref{ggk} holds, then by~$\mathbb{E}[\bar{G}|q] = \mathbb{E}[G-\hat{G}|q] = 0$, equation~\eqref{pwk} implies a.s. that
\begin{equation}\label{cm1}
\mathbb{E}\bigg[\bigg\|\begin{pmatrix}
z\\
v' - w'
\end{pmatrix}\bigg\|_M^2\bigg|x,y,v,w \bigg]
- 
\bigg\|\begin{pmatrix}
z\\
v - w
\end{pmatrix}\bigg\|_{\bar{M}}^2 = \frac{2T^2(1-\eta^2)}{(1-\eta)^2}\mathbb{E}[\abs{\bar{G}}^2|q].
\end{equation}
Moreover, using~\eqref{kdef}, Lemma~\ref{teos} and~\eqref{ggk}, it holds a.s. that %the right-hand side of~\eqref{cm1} satisfies
\begin{align}
\mathbb{E}[\abs{\bar{G}}^2|q]
&= \mathbb{E}[\abs{\hat{q} + \bar{G}}^2|q] - \abs{\hat{q}}^2 - 2\abs{\hat{q}}\cdot\mathbb{E}[ \bar{G}|q]\nonumber\\
&= (1-\eta)^2/(T^2(1-\eta^2))\mathbb{E}[\abs{K_{\hat{q}}}^2|q] - \abs{\hat{q}}^2 - 2\abs{\hat{q}}\cdot\mathbb{E}[e\cdot G - e\cdot\hat{G}]\nonumber\\
&=(4+\abs{\hat{q}}^2)(\Phi(\abs{\hat{q}}/2) - \Phi(-\abs{\hat{q}}/2)) + 4\abs{\hat{q}}\varphi_{0,1}(\abs{\hat{q}}/2) - \abs{\hat{q}}^2\nonumber\\
&\leq 8\abs{\hat{q}}/(\sqrt{2\pi}). %- 2\abs{q}^2\Phi\bigg(-\frac{\abs{q}}{2}\bigg) 
\label{jero}
\end{align}
%\iffalse % Much rougher constant bound
%Recall the definition~$e = \hat{q}\abs{\hat{q}}^{-1} = q\abs{q}^{-1}$ for non-zero~$\hat{q}$. 
On the other hand, if equation~\eqref{ggk} holds, then it holds %by the definition~\eqref{Gdef2} for~$\bar{G}_{\abs{q}}$
that~$\abs{\bar{G}} = \abs{e\cdot e\bar{G}} = \abs{e \cdot(G - \hat{G})}$, 
which implies
\iffalse
\begin{equation}\label{jer}
\mathbb{E}[\abs{\bar{G}_{\abs{q}}}^2|q] = \mathbb{E}[\abs{e\cdot (G-\hat{G})}^2|q] = \mathbb{E}[\abs{G_1-\hat{G}_1}^2],
\end{equation}
where~$G_1,\hat{G}_1\sim\mathcal{N}(0,1)$. Equation~\eqref{jer} implies
\fi
\begin{equation*}
\mathbb{E}[\abs{\bar{G}}^2|q] = 2 - 2\mathbb{E}[(G\cdot e)(\hat{G}\cdot e)|q] = 2\mathbb{E}[G\cdot e\bar{G}|q] \leq 2\mathbb{E}[\abs{G\cdot e}^2|q] + \frac{1}{2}\mathbb{E}[\abs{\bar{G}}^2|q]
\end{equation*}
and therefore~$\mathbb{E}[\abs{\bar{G}}^2|q] \leq 4$. 
Consequently, together with~\eqref{kcr},~\eqref{cm1},~\eqref{jero} and the inequality~$1-\eta^2=(1+\eta)(1-\eta)$, the assertion~\eqref{mcon2} follows.
%\fi
\end{proof}

\section{Global contraction}\label{global}
The main result in this section is Theorem~\ref{coth}, where both couplings from Sections~\ref{ncsec} and~\ref{convex} are used to obtain a contraction in an appropriate semimetric under the assumption~$4LT^2\leq (1-\eta)^2$. The semimetric used is inspired by the class of additive metrics studied in~\cite{MR3939573}. A related metric also appears in~\cite{schuh2022global}. 
In addition, without assuming~$4LT^2\leq (1-\eta)^2$ and instead assuming the existence of a Lyapunov function, contraction in another suitable semimetric, more akin to that in~\cite[equation~(33)]{MR4133372}, is given in Theorem~\ref{coth2}. 
Thereafter, some consequences of (only) Theorem~\ref{coth} are given in Section~\ref{conseqs}.

\subsection{Semimetric contraction}\label{semco}
To state our results, notation from the previous sections is assumed; for the reader's convenience, some notation is explicitly recalled. Some new notation is also introduced. Recall that~$u\in\{0,1\}$ determines whether the velocity Verlet or randomized midpoint integrator is considered. 
%Let~$\alpha$ be given 
%by~\eqref{alpdef},
Let~$\alpha=LT^2/(1-\eta)^2$,~$\gamma$ be given by~\eqref{gdef} and fix~$\hat{R}$ to be
\begin{equation}\label{rhdef}
%\hat{R} = \max\bigg(8(4+\alpha)R\bigg(1+\frac{L}{m}\bigg),1\bigg).
%\hat{R} &= \max((6+4u)(1+(3+5u)\alpha^{\frac{1}{2}} + 1.09\alpha)R'/4,(4L)^{-\frac{1}{2}})\nonumber\\
\hat{R}= \max((6+4u)(1+(3+5u)\alpha^{\frac{1}{2}} + 1.09\alpha)R(1+L/m),(4L)^{-\frac{1}{2}}). 
\end{equation}
Let~$g$ be given by~\eqref{gdef??}.  Let~$\epsilon^*>0$ be given by
\begin{equation}\label{epidef}
\epsilon^* = \min\bigg(\frac{e^{-g\hat{R}}}{
%13231 R(1+L/m)
101(2\mathds{1}_{\eta\neq 0}/\alpha^2 + \mathds{1}_{\eta=0})\hat{R}},\frac{\sqrt{2\pi}ge^{-g\hat{R}}}{1024}\bigg),
\end{equation}
\iffalse
\begin{equation}\label{epidef}
\epsilon^* = \frac{Le^{-g\hat{R}}}{220}\bigg(72LR\cdot
%\bigg(10+\frac{g\hat{R}}{7(e^{g\hat{R}}-1)}\bigg)
\frac{71}{7}
\bigg(1+\frac{L}{m}\bigg) + \frac{224g}{\sqrt{2\pi}}\bigg)^{-1},
\end{equation}
\fi
let~$\bar{M}$ be given by~\eqref{Mdef} and 
%let~$f:\mathbb{R}^{2d}\rightarrow[0,\infty)$ and
let~$\rho^*:\mathbb{R}^{2d}\times\mathbb{R}^{2d}\rightarrow[0,\infty)$ be given by
\begin{equation}\label{rsdef}
%f((x,v),(y,w)) &= f_0\bigg(\min\bigg(\abs{q},\bigg(1+\frac{6\sqrt{L}T}{1-\eta}\bigg)\abs{z}\bigg)+\alpha \abs{z}\bigg)\\
%f((x,v),(y,w)) &= f_0(\abs{q}+\alpha \abs{z})\\
\rho^*((x,v),(y,w)) = f_0(\abs{q}+1.09\alpha \abs{z})+\epsilon^*\| (x-y,v-w) \|_{\bar{M}}^2
%f(\abs{x-y+\gamma^{-1}(v-w)}+\alpha \abs{x-y})+\epsilon^*\| (x-y,v-w) \|_{\bar{M}}^2
\end{equation}
for all~$x,y,v,w\in\mathbb{R}^d$, 
where~$q,z$ are given by~\eqref{zdefs} and~$f_0$ is defined by~\eqref{f0def}. 
\iffalse
The discontinuity at~$\eta=0$ present in the definition~\eqref{epidef} for~$\epsilon^*$ arises from our use of the expressions~$\abs{q}+1.09\alpha\abs{z}$ and~$\|(x-y,v-w)\|_{\bar{M}}^2$. This discontinuity also appears in the Lipschitz constant of the observable function in empirical averages for a similar reason, see Remark~\ref{ewr}.
\fi
\iffalse
and
\begin{equation}\label{cadef}
c_{\alpha} = 1.09. 
\end{equation}
\fi
% with~$g$ as in~\eqref{gedef}. 
%Moreover, for any~$\bar{v}\in[0,\infty)$, recall the notation~$\bar{G}$ given by~\eqref{Gdef2}.
\begin{theorem}\label{coth}
Let Assumption~\ref{A1} hold. Let~$x,y,v,w$ be~$\mathbb{R}^d$-valued r.v.'s,~$G,\hat{G}\sim\mathcal{N}(0,I_d)$ be independent of~$x,y,v,w,(u_{kh})_{k\in\mathbb{N}}$ and let~$X',Y',V',W'$ be given by~\eqref{ous}.
%with~$A=\Omega$. 
In addition, assume~$4LT^2\leq (1-\eta)^2$ and~\eqref{carot}.
\iffalse % carot has already been stated
\begin{equation}\label{carot}
%\max(28\sqrt{e(1-\eta)},568\sqrt{L(1-\eta)}\hat{R},2)\sqrt{L}T\leq (1-\eta).
\max(28\sqrt{e},568\sqrt{L}\hat{R})^2LT^2\leq 1-\eta.
\end{equation}
\fi
If it holds a.s. that
\begin{equation}\label{kcou}
\hat{G} = \begin{cases}
G&\textrm{if }\abs{q} + 1.09\alpha\abs{z} \geq \hat{R} %\textrm{ or } \eta\abs{v-w} \geq (9/2)\sqrt{L}\abs{z} 
\\
G - \bar{G}&\textrm{if }\abs{q} + 1.09\alpha\abs{z} < \hat{R},
%\textrm{ and }\eta\abs{v-w} < (9/2)\sqrt{L}\abs{z},
\end{cases}
\end{equation}
where~$\bar{G}$ satisfies~\eqref{Gdef2} with~$\hat{q},e$ given by~\eqref{zdefs},~\eqref{cou1},~\eqref{edef} and~$r^*=(1-\eta)/(T^2(1+\eta))$, then it holds a.s. that
\begin{equation}\label{coy}
\mathbb{E}[\rho^*((X',V'),(Y',W'))|x,y,v,w] \leq (1-c)\rho^*((x,v),(y,w)),
\end{equation}
where~$c$ is given by
\begin{equation}\label{cothrate}
%c= \frac{T^2}{1-\eta}\min\bigg(\frac{Lg\hat{R}}{10(e^{g\hat{R}}-1)},\frac{m\epsilon^* g\hat{R}^2}{16(2(1-e^{-g\hat{R}}) + \epsilon^* g\hat{R}^2)}\bigg).
c=\frac{mT^2}{e^{g\hat{R}}(1-\eta)} \cdot \frac{\min(4/(2\mathds{1}_{\eta\neq 0}/\alpha^2 + \mathds{1}_{\eta=0}),1)}{6592}.
\end{equation}
\end{theorem}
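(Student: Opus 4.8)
The plan is to combine the two regimes analysed in the previous sections by a case distinction based on whether $\abs{q}+1.09\alpha\abs{z}$ lies above or below the threshold $\hat R$, exactly matching the two branches of the coupling~\eqref{kcou}. Throughout write $r = \abs{q}+1.09\alpha\abs{z}$ for the ``concave'' component of the distance and $E = \|(x-y,v-w)\|_{\bar M}^2$ for the Euclidean component, so that $\rho^* = f_0(r) + \epsilon^* E$. The idea is that in each regime one of the two couplings contracts one of the two pieces of $\rho^*$ at rate proportional to $LT^2/(1-\eta)$ (or $mT^2/(1-\eta)$), while the other piece may grow, but only by a controlled amount which is absorbed either by the smallness of $\epsilon^*$ or by the extra negative term appearing in Theorem~\ref{main2}.

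First I would treat the far regime $r \geq \hat R$, where $\hat G = G$ (synchronous coupling). Here $f_0(r) = f_0(\hat R) = \int_0^{\hat R} e^{-gs}\,ds$ is constant, so the $f_0$ term contributes nothing to a decrease; the work is done by Corollary~\ref{refmod} with $\hat c = 0$, giving $\mathbb{E}[E'\mid x,y,v,w] \le (1-\tfrac{?}{})E$ — wait, Corollary~\ref{refmod} with $\hat G=G$ gives only $\mathbb{E}[E'] \le (1+10LT^2/(1-\eta))E$, i.e.\ possible expansion, so instead the decrease must come from Theorem~\ref{convthm}. The key point is that when $r\ge \hat R$ with $\hat R$ given by~\eqref{rhdef}, either $\abs{x-y}\ge c_2 R(1+L/m)$ or $\eta\abs{v-w}\ge c_1\sqrt L\,\abs{x-y}$: indeed $\hat R$ is built precisely so that $r\ge\hat R$ forces $\abs q + \text{(velocity part)}$ large enough that one of~\eqref{evz},~\eqref{qzk2} must hold (one checks this by splitting according to whether $\abs z$ or $\gamma^{-1}\abs{v-w}$ dominates $\abs q$, using $\hat q \propto q$). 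Then Theorem~\ref{convthm} yields $\mathbb{E}[E'\mid x,y,v,w,G,\hat G] \le (1 - mT^2/(16(1-\eta)))E$, while $f_0(r')\le f_0(\hat R) = f_0(r)$ trivially since $f_0$ is bounded by $f_0(\hat R)$. Combining, $\mathbb{E}[\rho^*{}'] \le f_0(\hat R) + \epsilon^*(1-mT^2/(16(1-\eta)))E$; to get a genuine multiplicative contraction of the \emph{sum} one needs $f_0(\hat R) \le (1-c)f_0(\hat R) + (\text{slack from }E)$, which forces the contraction to be driven by the $E$ term, so one uses $f_0(r)\le \hat R$ and $\epsilon^* E \ge \epsilon^*\cdot(\text{something})$... more carefully, since $r\ge\hat R$ we have $E$ comparable to $r^2/\bar M$-scale which is $\gtrsim \hat R^2$, hence $\epsilon^* E \gtrsim \epsilon^* \hat R^2 \gtrsim f_0(\hat R)$ by the choice~\eqref{epidef} of $\epsilon^*$, and the lost mass $\epsilon^* \cdot \frac{mT^2}{16(1-\eta)} E$ dominates $c\, f_0(\hat R)$ for $c$ as in~\eqref{cothrate}. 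This comparison of $\epsilon^* E$ with $f_0(r)$ is the crux of gluing the two pieces in the far regime.

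Next the near regime $r < \hat R$, where $\hat G = G - \bar G$ (reflection coupling). Here Theorem~\ref{main2} applies directly and gives
\[
\mathbb{E}[f_0(r')\mid x,y,v,w] \le \Bigl(1-\tfrac{c_0 LT^2}{1-\eta}\Bigr)f_0(r) - \tfrac{g e^{-g\hat R}T^2(1+\eta)}{32(1-\eta)}\min\bigl(\abs{\hat q},\tfrac{67}{50}\bigr),
\]
with $c_0$ from~\eqref{cor}, while Corollary~\ref{refmod} controls the Euclidean part: $\mathbb{E}[E'\mid x,y,v,w] \le (1+\tfrac{10LT^2}{1-\eta})E + \hat c$ with $\hat c = \tfrac{2T^2(1+\eta)}{1-\eta}\max(\tfrac{8\abs{\hat q}}{\sqrt{2\pi}},4)$. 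Multiplying the second by $\epsilon^*$ and adding, the dangerous terms are the $+\epsilon^*\tfrac{10LT^2}{1-\eta}E$ growth (absorbed because $E\le (\text{const})\hat R^2/\alpha^2$ on the region $r<\hat R$, and $\epsilon^* \hat R^2/\alpha^2 \cdot LT^2$ is made $\le c\, f_0(r)$-scale, again by~\eqref{epidef} after noting $f_0(r)\gtrsim r e^{-g\hat R}$ and relating $E$ to $r$) and the additive $\epsilon^* \hat c$ term, which must be cancelled by the \emph{negative} $\min(\abs{\hat q},67/50)$ term from Theorem~\ref{main2}: since $\hat c \propto \max(\abs{\hat q},1)$ and the negative term is $\propto \min(\abs{\hat q},67/50)$, and $\abs{\hat q}\propto \abs q \le r$, the choice $\epsilon^* \le \sqrt{2\pi} g e^{-g\hat R}/1024$ makes $\epsilon^* \hat c$ at most half the negative term (handling separately $\abs{\hat q}$ small vs.\ large). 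What remains is $\mathbb{E}[\rho^*{}'] \le (1-\tfrac{c_0 LT^2}{1-\eta})f_0(r) + \epsilon^*(1+\tfrac{10LT^2}{1-\eta})E + (\text{net negative})$, and one checks $c_0 LT^2/(1-\eta) \ge c$ and that the $E$-growth is reabsorbed, giving~\eqref{coy} with $c$ as in~\eqref{cothrate}.

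The main obstacle, and where the bookkeeping is genuinely delicate rather than routine, is the \emph{cross-control between the two components}: one must verify that on $\{r<\hat R\}$ the Euclidean norm $E$ is bounded by a constant multiple of $r^2$ (or at least of $\hat R^2$) with the right $\alpha$-dependence — this is where the $2\mathds 1_{\eta\neq0}/\alpha^2 + \mathds 1_{\eta=0}$ factor in~\eqref{epidef} and~\eqref{cothrate} comes from, since $\bar M$ has entries of size $\gamma^{-2} = (\eta T/(1-\eta))^2$ and relating $\|(z,v-w)\|_{\bar M}$ back to $\abs q + \alpha\abs z$ costs a factor $\alpha^{-2}$ — and, symmetrically, that on $\{r\ge\hat R\}$ the quantity $f_0(r)=f_0(\hat R)$ is dominated by $\epsilon^* E$. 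Getting all the numerical constants ($101$, $1024$, $6592$) to line up is exactly the content of choosing $\epsilon^*$ and $c$ as in~\eqref{epidef},~\eqref{cothrate}; I would set up the two regime estimates with generic constants first, then at the end impose the constraints that determine $\epsilon^*$ and $c$, and finally read off the displayed values. The randomized-midpoint case ($u=1$) is handled in parallel throughout, since Theorem~\ref{convthm}, Corollary~\ref{refmod} and Theorem~\ref{main2} all already include it, the only change being the larger constants $c_1,c_2$ reflected in the $(6+4u)$, $(3+5u)$ factors in~\eqref{rhdef}.
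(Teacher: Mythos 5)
Your proposal is correct and follows essentially the same approach as the paper: case split on $\abs{q}+1.09\alpha\abs{z}\gtrless\hat R$, with the near regime handled by combining Theorem~\ref{main2} and Corollary~\ref{refmod} (using the negative $\min(\abs{\hat q},\cdot)$ term to absorb $\hat c$, and smallness of $\epsilon^*$ plus the bound $\|(z,v-w)\|_{\bar M}^2\lesssim(\hat R/\alpha^2)(\abs q+1.09\alpha\abs z)$ to absorb the Euclidean expansion), and the far regime handled by verifying that one of~\eqref{evz}/\eqref{qzk2} must hold, applying Theorem~\ref{convthm}, and reallocating the Euclidean contraction to the bounded $f_0$-part via $(\abs q+1.09\alpha\abs z)^2\le 2\|(z,v-w)\|_{\bar M}^2$. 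You correctly identify the cross-control between the two components of $\rho^*$ — and the source of the $\alpha^{-2}$ factor — as the genuine crux.
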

\begin{remark}\label{rrcom}
We may compare this convergence rate~\eqref{cothrate} with~\cite[equation~(32)]{MR4133372} in terms of the dependence on (large) nonconvexity radius. Recall from Remark~\ref{Rcom} that the appropriate value for comparison is~$R'=4R(1+L/m)$ as in Remark~\ref{Rcom}. In the best case in~\cite[equation~(27)]{MR4133372}, one may take~$T+h=1/(16LR')$. In that work, this choice of~$T$ gives a convergence rate bounded by
\begin{equation}\label{hera}
\frac{1}{10}\min\bigg(1,\frac{1}{4}mT^2(1+16L(R')^2)e^{-8L(R')^2}\bigg)e^{-32L(R')^2}.
\end{equation}
On the other hand, for large~$R'$ and~$u=0$ (velocity Verlet integrator), the convergence rate~\eqref{cothrate} is equal to
\begin{equation}\label{cothrp}
C(R')^2e^{-(2/5)16L(1.5(1+3\alpha^{\frac{1}{2}} + 1.09\alpha)R')^2}
\end{equation}
for some constant~$C>0$ independent of~$R'=4R(1+L/m)$. Therefore, for~$\alpha^{\frac{1}{2}}< 1/5$, the rate~\eqref{cothrp} has improved dependence on~$R'$ than~\eqref{hera}. %, which is atypical amongst similar results~\cite{MR3933205,MR4132634}.
\end{remark}
\begin{proof}
Recall the notation~\eqref{zdefs}. 
The probability space is split into different regions based on the values of~$x,y,v,w$. Moreover, if~$u=0$ (velocity Verlet), then let~$c_1=3$,~$c_2=6$. Otherwise if~$u=1$ (randomized midpoint), then let~$c_1=8$,~$c_2=10$. Firstly, assume
\begin{equation}\label{ae1}
\abs{q} + 1.09\alpha\abs{z} < \hat{R}.
%,\qquad\eta\abs{v-w} < c_1\sqrt{L}\abs{z},\qquad\abs{z}< c_2R(1+L/m).
\end{equation}
%and~$\abs{q}< (1+\frac{6\sqrt{L}T}{1-\eta})\abs{z}$. 
%Inequality~\eqref{zw}, monoticity of~$f_0$, 
In this case,~$\hat{G}=G-\bar{G}$ a.s. by assumption. Theorem~\ref{main2} and Corollary~\ref{refmod} imply a.s. that
\begin{align}
&\mathbb{E}[\rho^*((X',V'),(Y',W'))|x,y,v,w]\nonumber\\
&\quad= \mathbb{E}[f_0(\abs{Q'} + 1.09\alpha \abs{Z'})+\epsilon^*\|(Z',V'-W') \|_{\bar{M}}^2|x,y,v,w]\nonumber\\
&\quad\leq \bigg(1-\frac{c_0LT^2}{1-\eta}\bigg)f_0(\abs{q}+1.09\alpha \abs{z}) + \epsilon^*\bigg(1+\frac{10LT^2}{1-\eta}\bigg)\bigg\|\begin{pmatrix}
z\\
v-w
\end{pmatrix}\bigg\|_{\bar{M}}^2\nonumber\\
&\qquad- \frac{ge^{-g\hat{R}}T^2(1+\eta)}{32(1-\eta)}\cdot\min\bigg(\abs{\hat{q}},\frac{67}{50}\bigg) + \frac{2\epsilon^* T^2(1+\eta)}{1-\eta} 
%\min\bigg(\bigg(1+\frac{4}{\sqrt{2\pi}}\bigg)\abs{q},4\bigg),\label{frh}
\cdot\min\bigg(\frac{8\abs{\hat{q}}}{\sqrt{2\pi}},4\bigg),\label{frh}
\end{align}
where~$c_0$ is given by~\eqref{cor}. 
%In order to obtain a contraction, 
Moreover, 
by direct calculation and by~\eqref{ae1}, it holds that
\begin{equation}\label{suc}
\bigg\|\begin{pmatrix}
z\\
v-w
\end{pmatrix}\bigg\|_{\bar{M}}^2 
=\abs{q}^2 + \abs{q - z}^2 
\leq 3\abs{q}^2 + 2\abs{z}^2
\leq 3\hat{R}\abs{q} + 2\hat{R}\abs{z}/(1.09\alpha),
\end{equation}
the right-hand side of which may be bounded, using~$\alpha\leq 1/4$, as
\begin{equation*}%\label{matfor}
3\hat{R}\abs{q} + 2\hat{R}\abs{z}/(1.09\alpha) = 3\hat{R}\abs{q} + 2(\hat{R}/(1.09\alpha)^2)1.09\alpha\abs{z} \leq (2\hat{R}/\alpha^2)(\abs{q} + 1.09\alpha\abs{z}).
\end{equation*}
In addition, if~$\eta=0$, then the left-hand side of~\eqref{suc} is equal to~$\abs{z}^2\leq \hat{R}(\abs{q} +1.09\alpha\abs{z})$. 
\iffalse % extended version using previous reasoning, which works but does not cover enought the coordinate space
This %, again by~\eqref{ae1}, %and~\eqref{carot}, 
implies
\begin{align*}
\bigg\|\begin{pmatrix}
z\\
v-w
\end{pmatrix}\bigg\|_{\bar{M}}^2
&\leq \max\bigg(\frac{c_1^2c_2}{1.09}R\bigg(1+\frac{L}{m}\bigg),c_3\bigg)(\abs{q}+1.09\alpha\abs{z})\\%\nonumber\\
&\leq \max\bigg(\frac{c_1^2c_2}{1.09}R\bigg(1+\frac{L}{m}\bigg),c_3\bigg)\frac{g\hat{R}}{1-e^{-g\hat{R}}}f_0(\abs{q} + 1.09\alpha\abs{z})%\label{frh2}
\end{align*}
\fi
Therefore it holds, by definition~\eqref{epidef} of~$\epsilon^*$ and definition~\eqref{f0def} of~$f_0$, that
\begin{align}
&\epsilon^*\bigg(\frac{10LT^2}{1-\eta} 
%+ \frac{g\hat{R}LT^2}{7(e^{g\hat{R}}-1)(1-\eta)}
+ \frac{LT^2}{10(1-\eta)}
\bigg)\bigg\|\begin{pmatrix}
z\\
v-w
\end{pmatrix}\bigg\|_{\bar{M}}^2\nonumber\\
&\quad\leq \frac{\epsilon^* LT^2}{1-\eta}
%\bigg(10 + \frac{g\hat{R}}{7(e^{g\hat{R}}-1)}\bigg)
\cdot\frac{101}{10}
\bigg(\frac{2}{\alpha^2}\mathds{1}_{\eta\neq0} + \mathds{1}_{\eta=0}\bigg) \cdot
\frac{g\hat{R}^2}{1-e^{-g\hat{R}}}f_0(\abs{q} + 1.09\alpha\abs{z})\nonumber\\
&\quad\leq \frac{LT^2}{1-\eta}\cdot\frac{e^{-g\hat{R}}}{10}\cdot\frac{g\hat{R}}{1-e^{-g\hat{R}}}f_0(\abs{q} + 1.09\alpha\abs{z}).\label{frh2}
\end{align}
In addition, for the last two terms on the right-hand side of~\eqref{frh}, it holds that
\begin{equation*}
\min(\abs{\hat{q}},67/50) %= \frac{\sqrt{2\pi}}{8}\min\bigg(\frac{8\abs{\hat{q}}}{\sqrt{2\pi}},\frac{68\cdot8}{25\sqrt{2\pi}}\bigg) 
\geq (\sqrt{2\pi}/8)\min(8\abs{\hat{q}}/(\sqrt{2\pi}),4),
\end{equation*}
which by definition~\eqref{epidef} of~$\epsilon^*$ implies
\begin{align}
\frac{2\epsilon^*T^2(1+\eta)}{1-\eta}\cdot\min\bigg(\frac{8\abs{\hat{q}}}{\sqrt{2\pi}},4\bigg) &\leq \frac{\sqrt{2\pi}ge^{-g\hat{R}}}{256}\cdot\frac{T^2}{1-\eta}\cdot \min\bigg(\frac{8\abs{\hat{q}}}{\sqrt{2\pi}},4\bigg)\nonumber\\
&\leq \frac{ge^{-g\hat{R}}T^2}{32(1-\eta)}\cdot \min\bigg(\abs{\hat{q}},\frac{67}{50}\bigg).\label{hwo2}
\end{align}
Gathering~\eqref{frh2} and~\eqref{hwo2}, inserting into the right-hand side of~\eqref{frh} and recalling the definition~\eqref{cor} of~$c_0$ yields a.s. that
\begin{align*}
&\mathbb{E}[\rho^*((X',V'),(Y',W'))|x,y,v,w]\\%\nonumber\\
&\quad\leq \rho^*((x,v),(y,w)) -\frac{g\hat{R}e^{-g\hat{R}}LT^2}{10(1-e^{-g\hat{R}})(1-\eta)}f_0(\abs{q}+1.09\alpha\abs{z})\\%\nonumber\\
&\qquad
%-\frac{\epsilon^* g\hat{R}e^{-g\hat{R}}LT^2}{7(1-e^{-g\hat{R}})(1-\eta)}
-\frac{\epsilon^* LT^2}{10(1-\eta)}
\bigg\|\begin{pmatrix}
z\\
v-w
\end{pmatrix}\bigg\|_{\bar{M}}^2,%\nonumber\\
\end{align*}
which, by~$\bar{x}e^{-\bar{x}}/(1-e^{-\bar{x}})\leq 1$ for all~$\bar{x}\geq 0$, implies
\begin{align}
&\mathbb{E}[\rho^*((X',V'),(Y',W'))|x,y,v,w]\nonumber\\
&\quad\leq \bigg(1-\frac{g\hat{R}e^{-g\hat{R}}LT^2}{10(1-e^{-g\hat{R}})(1-\eta)}\bigg)\rho^*((x,v),(y,w)).\label{ae1r}
\end{align}
\iffalse % uses both reflection and synchronous coupling at the same time, which is erroneous
\begin{equation}\label{ae2}
\abs{q} + 1.09\alpha\abs{z} < \hat{R},\qquad\eta\abs{v-w} < c_1\sqrt{L}\abs{z},\qquad\abs{z}\geq c_2R(1+L/m).
\end{equation}
Theorem~\ref{convthm} and again 
%inequality~\eqref{zw}, monoticity of~$f_0$, 
Theorem~\ref{main2} imply a.s. that
\begin{align}
&\mathbb{E}[\rho^*((X',V'),(Y',W'))|x,y,v,w]\nonumber\\
&\quad\leq \bigg(1-\frac{c_0LT^2}{1-\eta}\bigg)f_0(\abs{q}+1.09\alpha\abs{z}) + \epsilon^* \bigg(1-\frac{mT^2}{16(1-\eta)}\bigg)\bigg\|\begin{pmatrix}
z\\
v-w
\end{pmatrix}\bigg\|_{\bar{M}}^2\nonumber\\
&\quad\leq \bigg(1-\min\bigg(\frac{g\hat{R}L}{5(e^{g\hat{R}}-1)}, \frac{m}{16}\bigg)\frac{T^2}{1-\eta} \bigg)\rho^*((x,v),(y,w)).\label{ae2r}
\end{align}
The same inequality~\eqref{ae2r} holds a.s. by the same arguments if instead of~\eqref{ae2}, it is assumed that
\begin{equation}\label{ae3}
\abs{q} + 1.09\alpha\abs{z} < \hat{R},\qquad\eta\abs{v-w} \geq c_1\sqrt{L}\abs{z}.
\end{equation}
\fi
Now instead of~\eqref{ae1}, assume
\begin{equation}\label{ae4}
\abs{q} + 1.09\alpha\abs{z} \geq \hat{R}.
\end{equation}
%By definition~\eqref{rhdef} of~$\hat{R}$, 
In this case~$\hat{G}=G$ holds a.s. by assumption. If both~$\eta\abs{v-w}<c_1\sqrt{L}\abs{z}$ and~$\abs{z}< c_2 R(1+L/m)$ hold, then it holds %by the assumption~$4LT^2\leq (1-\eta)^2$ 
that
\begin{equation*}
\abs{q} = \bigg|z + \frac{\eta T}{1-\eta}(v-w)\bigg| < \bigg(1+\frac{c_1\sqrt{L}T}{1-\eta}\bigg)\abs{z} = (1+c_1\alpha^{\frac{1}{2}})\abs{z}
\end{equation*}
and consequently, %the definitions~\eqref{cadef} of~$c_{\alpha}$ and
by definition~\eqref{rhdef} of~$\hat{R}$,
\begin{equation}
\abs{q} + 1.09\alpha\abs{z} < (1+c_1\alpha^{\frac{1}{2}}+1.09\alpha)\cdot c_2R(1+L/m) \leq \hat{R}.
\end{equation}
Therefore, either~$\eta\abs{v-w}\geq c_1\sqrt{L}\abs{z}$ or~$\abs{z}\geq c_2R(1+L/m)$ holds. 
\iffalse % more general derivations, incase other cases are needed
Moreover, similar to~\eqref{uexp}, Lemma~\ref{base} implies
\begin{align*}
&\mathbb{E}[(f_0(\abs{Q'}+1.09\alpha\abs{Z'}) - f_0(\abs{q}+1.09\alpha\abs{z}))]\\%\nonumber\\
&\quad\leq g^{-1}e^{-g(\hat{R}\wedge(\abs{q}+1.09\alpha\abs{z}))} (1-e^{- g(C_z-1)\cdot1.09\alpha\abs{z} - g(C_q-1)\abs
{q}}).
\end{align*}
with~$C_z<1$ (see the derivations in the proof of Theorem~\ref{main2}) and~$C_q$ satisfying~\eqref{cq1i},~\eqref{cstard} with~$c_{\alpha}=1.09$,~$c_0=1/16$. Therefore it holds a.s. that
\begin{align}
&\mathbb{E}[f_0(\abs{Q'}+1.09\alpha\abs{Z'})] - f_0(\abs{q} + 1.09\alpha\abs{Z'})]\nonumber\\
&\quad\leq g^{-1}e^{-g(\hat{R}\wedge (\abs{q}+1.09\alpha\abs{z}))}[1-e^{-g(C_q-1)(\abs{q}+1.09\alpha\abs{z})}].\label{fhq}
\end{align}
By the general inequality~$e^{-x}\geq 1-x$,~\eqref{cq1i} and~\eqref{cstard}, the expression in the square brackets on the right-hand side of~\eqref{fhq} may be bounded as
\begin{equation}
1-e^{-g(C_q-1)(\abs{q} + 1.09\alpha\abs{z})}\leq \frac{2gLT^2(1+\eta)}{1-\eta}(\abs{q}+1.09\alpha\abs{z})\leq \frac{4gLT^2}{1-\eta}(\abs{q}+1.09\alpha\abs{z}).
\end{equation}
Therefore, inequality~\eqref{fhq} implies
%, by the form~\eqref{f0def} of~$f_0$, that
\begin{equation}\label{fjq}
\mathbb{E}[f_0(\abs{Q'}+1.09\alpha\abs{Z'}] - f_0(\abs{q} + 1.09\alpha\abs{z})] \leq  e^{-g\hat{R}}(4LT^2/(1-\eta))(\abs{q} + 1.09\alpha\abs{z}).
\end{equation}
\fi
Theorem~\ref{convthm} may then be applied to obtain, using also~$\abs{q} + 1.09\alpha\abs{z}\geq\hat{R}$, a.s. that
\begin{align}
&\mathbb{E}[\rho^*((X',V'),(Y',W'))|x,y,v,w]\nonumber\\
&\quad\leq f_0(\abs{q} + 1.09\alpha\abs{z}) 
+ \epsilon^*\bigg(1-\frac{mT^2}{16(1-\eta)}\bigg)\bigg\|\begin{pmatrix}
z\\
v-w
\end{pmatrix}\bigg\|_{\bar{M}}^2.\label{tiw}
\end{align}
Moreover, the assumption~$4LT^2\leq(1-\eta)^2$ %, the definition~\eqref{cadef} of~$c_{\alpha}$ 
%and the equation within~\eqref{matfor} 
implies
\begin{align}
%\abs{q} + \alpha\abs{z} &\leq 
(\abs{q} + 1.09\alpha\abs{z})^2 &\leq \bigg(1+\frac{24(1.09/4)^2}{5}\bigg)\abs{q}^2 + \bigg(\frac{1.09^2}{16} + \frac{5}{24}\bigg)\abs{z}^2\nonumber\\
&\leq \bigg(1+\frac{3(1.09)^2}{10}\bigg)\abs{q}^2 + \bigg(\frac{1.09^2}{16} + \frac{5}{24}\bigg)\bigg(6\abs{z-q}^2 + \frac{6}{5}\abs{q}^2 \bigg)\nonumber\\
&\leq 2(\abs{q}^2+\abs{z-q}^2)\nonumber\\
&= 2\|(z,v-w) \|_{\bar{M}}^2,\label{ruq}
\end{align}
which, by~\eqref{ae4}, implies in particular that~$\abs{q} + 1.09\alpha\abs{z} \leq\hat{R}^{-1}(\abs{q} + 1.09\alpha\abs{z})^2 \leq 2\hat{R}^{-1}\|(z,v-w) \|_{\bar{M}}^2$. 
Therefore, for any~$\bar{c}\geq 0$, 
inequality~\eqref{tiw} implies
\begin{align}
&\mathbb{E}[\rho^*((X',V'),(Y',W'))|x,y,v,w]\nonumber\\
&\quad\leq \bigg( 1 - \frac{\epsilon^*\bar{c} mT^2}{16(1-\eta)} \bigg)f_0(\abs{q} + 1.09\alpha\abs{z}) + \frac{\epsilon^*\bar{c} mT^2 f_0(\hat{R})}{16(1-\eta)\hat{R}}(\abs{q} + 1.09\alpha \abs{z})\nonumber\\
&\qquad+ \epsilon^*\bigg(1-\frac{mT^2}{16(1-\eta)}\bigg)\bigg\|\begin{pmatrix}
z\\
v-w
\end{pmatrix}\bigg\|_{\bar{M}}^2\nonumber\\
&\quad\leq \bigg( 1 - \frac{\epsilon^*\bar{c} mT^2}{16(1-\eta)} \bigg)f_0(\abs{q} + 1.09\alpha\abs{z})\nonumber\\
&\qquad+ \epsilon^*\bigg[ 1-\frac{mT^2}{16(1-\eta)} + \frac{\bar{c} mT^2 f_0(\hat{R})}{8(1-\eta)\hat{R}^2}\bigg]\bigg\|\begin{pmatrix}
z\\
v-w
\end{pmatrix}\bigg\|_{\bar{M}}^2.\label{tji}
\end{align}
In particular if~$\bar{c}$ is given by~$\bar{c} = \hat{R}^2g/(2(1-e^{-g\hat{R}}) + \epsilon^*\hat{R}^2 g)$, 
then the expression in the square bracket on the right-hand side of~\eqref{tji} satisfies
\begin{align*}
1-\frac{mT^2}{16(1-\eta)} + \frac{2\bar{c} mT^2 f_0(\hat{R})}{16(1-\eta)\hat{R}^2} &= 1-\frac{mT^2}{16(1-\eta)}\bigg(1-\frac{2(1-e^{-g\hat{R}})}{2(1-e^{-g\hat{R}}) + \epsilon^*\hat{R}^2g}\bigg)\\
&\leq 1-\frac{\epsilon^* \bar{c}mT^2}{16(1-\eta)}.
\end{align*}
Together with the contraction rate given by~\eqref{ae1r}, this implies that~\eqref{coy} holds with~$c$ given by
\begin{equation}\label{cothrate0}
c= (T^2/(1-\eta))\min(Lg\hat{R}/(10(e^{g\hat{R}}-1)),m_0),
\end{equation}
where
\begin{align}
m_0&:=\frac{m\epsilon^* g\hat{R}^2}{16(2(1-e^{-g\hat{R}})+\epsilon^*g\hat{R}^2)} \nonumber\\
&= \frac{m}{16e^{g\hat{R}}}\cdot\frac{\min((101(2\mathds{1}_{\eta\neq 0}/\alpha^2+\mathds{1}_{\eta=0})g\hat{R})^{-1},\sqrt{2\pi}/1024) g^2\hat{R}^2}{2(1-e^{-g\hat{R}}) + \min((101(2\mathds{1}_{\eta\neq 0}/\alpha^2+\mathds{1}_{\eta=0})g\hat{R})^{-1},\sqrt{2\pi}/1024)g^2\hat{R}^2}.\label{m0def}
\end{align}
In order to bound the expression~\eqref{m0def} for~$m_0$, note first that~$\hat{R}\in [(4L)^{\frac{1}{2}},\infty)$ by definition~\eqref{rhdef} and (recall~\eqref{gdef}) so~$g\hat{R} = (4/5)\max(8L\hat{R}^2,\sqrt{L}\hat{R}) \in [8/5,\infty)$. Moreover, for any constant~$\bar{\epsilon}>0$, the functions
\begin{equation*}
[0,\infty)\ni x\mapsto \frac{\bar{\epsilon} x}{2(1-e^{-x}) + \bar{\epsilon} x} \quad\textrm{and}\quad 
[0,\infty)\ni x\mapsto \frac{\bar{\epsilon} x^2}{2(1-e^{-x}) + \bar{\epsilon} x^2} 
\end{equation*}
are both non-decreasing, which is readily verified by checking that their first derivatives at zero and their second derivatives are non-negative.
Therefore~\eqref{m0def} may bounded as
\begin{equation}\label{m0b}
m_0 \geq \frac{m}{16e^{g\hat{R}}} \cdot \frac{\min(8/(505(2\mathds{1}_{\eta\neq 0}/\alpha^2 + \mathds{1}_{\eta=0})),\sqrt{2\pi}/640)}{2(1-e^{-8/5}) + \min(8/(505(2\mathds{1}_{\eta\neq 0}/\alpha^2 + \mathds{1}_{\eta=0})),\sqrt{2\pi}/640)}
\end{equation}
The other argument in the minimum appearing in~\eqref{cothrate0} may be bounded below as~$Lg\hat{R}/(10(e^{g\hat{R}}-1)) \geq m(8/5)/(10e^{g\hat{R}})$ and this is always greater than the right-hand side of~\eqref{m0b}. Therefore, by multiplying top and bottom in~\eqref{m0b} and some simple bounds, the proof concludes.
\end{proof}
%\iffalse % if a separate assumption about a Lyapunov function is used instead
The next result gives a contraction rate in case~$4LT^2>(1-\eta)^2$. Here it is not possible to use the Lyapunov function arising from synchronous coupling and the modified norm. Instead, we assume as in~\cite[Section~2.5.3]{MR4133372} the existence of a Lyapunov function, which has already been established elsewhere in certain cases. Consequently, in place of the additive semimetric~\eqref{rsdef}, we take a multiplicative semimetric.

Let~$(G_i)_{i\in\mathbb{N}}$ 
%~$(\hat{G}_i)_{i\in\mathbb{N}}$ each 
be an i.i.d. sequence 
%with~$G_i,\hat{G}_i\sim\mathcal{N}(0,I_d)$. 
with~$G_i\sim\mathcal{N}(0,I_d)$. 
For any~$i\in\mathbb{N}$, let~$\hat{\theta}_i,\hat{\theta}_i'$ be~$\Theta^{T/h}$-valued r.v.'s independent 
%of~$(G_i)_i,(\hat{G}_i)_i$. 
of~$(G_i)_i$. 
Let~$(\hat{u}_i)_{i\in\mathbb{N}} = ((u_{kh}^{(i)})_{k\in\mathbb{N}})_{i\in\mathbb{N}}$ be an independent sequence such that for any~$i\in\mathbb{N}$,~$(u_{kh}^{(i)})_{k\in\mathbb{N}}$ is an independent sequence of r.v.'s independent 
%of~$(\hat{\theta}_i)_i,(\hat{\theta}_i')_i,(G_i)_i,(\hat{G}_i)_i$
of~$(\hat{\theta}_i)_i,(\hat{\theta}_i')_i,(G_i)_i$ and such that~$u_{kh}^{(i)}\sim\mathcal{U}(0,1)$ for all~$k\in\mathbb{N}$. Denote~$\bar{\theta}_i:=(\hat{\theta}_i,\hat{\theta}_i',u,\hat{u}_i)$. For any~$\mathbb{R}^d$-valued r.v.'s~$x$ and~$v$ both independent 
%of~$(G_i)_i,(\hat{G}_i)_i,(\bar{\theta}_i)_i,((u_{kh}^{(i)})_k)_i$
of~$(G_i)_i,(\bar{\theta}_i)_i,((u_{kh}^{(i)})_k)_i$ and any~$i\in\mathbb{N}$, let~$X^{(i)},V^{(i)}$ be given by~$X^{(0)}=x$,~$V^{(0)}=v$, then inductively by~\eqref{ous} with~$X^{(i+1)},V^{(i+1)},X^{(i)},V^{(i)},G_i,\hat{G}_i,\bar{\theta}_i$ replacing~$X',V',x,v,G,\hat{G},\bar{\theta}$ respectively. 
%and with~$A=\Omega$. 
Let~$Z^{(i)},Q^{(i)},\hat{q}^{(i)}$ be given by~\eqref{zdefs} and let~$\hat{G}_i$ be given by~\eqref{cou1},~\eqref{edef} all with the obvious replacements, with~$r^*=(1-\eta)/(T^2(1+\eta))$, with~$\gamma$ given by~\eqref{gdef} and with~$\mathcal{U}$ replaced by an i.i.d. sequence~$(\mathcal{U}_i)_i$ independent of~$(G_i)_i,(\bar{\theta}_i)_i,((u_{kh}^{(i)})_k)_i,x,v$. 
For any~$\mathbb{R}^d$-valued r.v.'s~$y,w\in\mathbb{R}^d$ independent 
of~$(G_i)_i,(\hat{G}_i)_i,(\mathcal{U}_i)_i,(\bar{\theta}_i)_i,((u_{kh}^{(i)})_k)_i$, 
%of~$(G_i)_i,(\bar{\theta}_i)_i,((u_{kh}^{(i)})_k)_i$
let~$Y^{(i)},W^{(i)}$ be defined analogously as for~$X^{(i)},V^{(i)}$ but with~$\hat{G}_i$ in place of~$G_i$.
\begin{assumption}\label{A2}
Inequalities~\eqref{A1a} and~\eqref{carot} hold. 
Moreover, there exist~$c_1\in(0,1]$,~$c_2\in\mathbb{R}$,~$V:\mathbb{R}^d\times\mathbb{R}^d\rightarrow [0,\infty)$ and~$n\in\mathbb{N}\setminus\{0\}$ such that for any~$x,v\in\mathbb{R}^d$, it holds that~$\mathbb{E}[V(X^{(n)},V^{(n)})] \leq (1-c_1) V(x,v) + c_2$, 
and the set~$\{x,v\in\mathbb{R}^d:V(x,v)\leq 4c_2/c_1\}$ is compact.
\end{assumption}
By Theorem~5 in~\cite{durmus2023uniform}, Assumption~\ref{A2} is satisfied for~$K=1$,~$\eta\neq 0$. By Theorem~3 in~\cite{camrud2023second}, it is also satisfied in a case where the force field~$b$ is conservative (there exists~$U$ such that~$b=\nabla U$) and deterministic ($\Theta=\{\theta\}$). 

\begin{remark}\label{deprem}
In Assumption~\ref{A2}, no information is given about the dependence of~$c_1,c_2$ on the parameters~$m,R,d$, which is existing in the aforementioned references. In particular, in contrast to the Lyapunov inequality~\eqref{mcon2} satisfied by the modified norm, the constant~$c_2$ 
%in Assumption~\ref{A2} 
typically depends explicitly on~$d$. Moreover, a further step-size restriction may be %required that is 
implicitly assumed by taking Assumption~\ref{A2}. As a result, the one-step convergence rate generally worsens in terms of the dependency on~$m$ from that present in Theorem~\ref{coth}, despite the~$O(\sqrt{m})$ rate for certain friction values in the Gaussian and continuous time cases as mentioned at the beginning of Section~\ref{convex}. %, which impedes the one-step rate of convergence.
%Moreover, by Proposition~5 in~\cite{MR4185819} for conservative and deterministic~$b$ under mild conditions.
%\fi
\end{remark}

Under Assumption~\ref{A2}, let~$\epsilon$ be given by
\begin{equation}\label{epidef2}
\epsilon = c_0/(4c_2),
\end{equation}
where~$c_0$ is defined by~\eqref{cor}, 
let~$\hat{R}'$ be given by
\begin{equation}
\hat{R}' = 4\sup\{(1+\alpha)\abs{x},\gamma^{-1}\abs{v}:V(x,v)\leq 4c_2/c_1\},
\end{equation}
where~$\alpha= 1.09LT^2/(1-\eta)^2$, 
%is given by~\eqref{alpdef}, 
let~$f_1:[0,\infty)\rightarrow[0,\infty)$ be given by~\eqref{f0def} with~$f_1,\hat{R}'$ replacing~$f_0,\hat{R}$ 
and let~$\rho:\mathbb{R}^{2d}\times\mathbb{R}^{2d}\rightarrow[0,\infty)$ be given by
\begin{equation*}
\rho((x,v),(y,w)) = \sqrt{f_1(\abs{q} + 1.09\alpha\abs{z})(1 + \epsilon V(x,v) + \epsilon V(y,w))}.
\end{equation*}
The proof of the next Theorem~\ref{coth2} follows closely to that of Theorem~2.7 in~\cite{MR4133372}.
\begin{theorem}\label{coth2}
Let Assumption~\ref{A2} hold. 
%and for~$i\in\mathbb{N}\cap[0,N]$, let~$(G_i)_{i\in\mathbb{N}\cap[0,N]},(\hat{G}_i)_{i\in\mathbb{N}\cap[0,N]}$ be i.i.d. sequences independent of~$x,y,v,w$ such that~$G_i,\hat{G}_i\sim\mathcal{N}(0,I_d)$. 
Assume~\eqref{A1a},~\eqref{carot} and~$4LT^2> (1-\eta)^2$. Let~$x,y,v,w$ be~$\mathbb{R}^d$-valued r.v.'s independent of~$(G_i)_i,(\hat{G}_i)_i,(\bar{\theta}_i)_i,((u_{kh}^{(i)})_k)_i$. It holds a.s. that
\begin{equation*}
\mathbb{E}[\rho((X^{(n)},V^{(n)}),(Y^{(n)},W^{(n)}))|x,y,v,w] \leq(1-\min(c_0,c_1)/8)\rho((x,v),(y,w)),
\end{equation*}
where~$c_0$ is given by~\eqref{cor}.
\end{theorem}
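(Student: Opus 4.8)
The plan is to mirror the proof of \cite[Theorem~2.7]{MR4133372}, splitting the configuration space according to whether the pair $((x,v),(y,w))$ lies in the compact sublevel set $\mathcal{S}:=\{(x,v):V(x,v)\le 4c_2/c_1\}$. Writing $r_i:=|Q^{(i)}|+1.09\alpha|Z^{(i)}|$ and $W_i:=1+\epsilon V(X^{(i)},V^{(i)})+\epsilon V(Y^{(i)},W^{(i)})$, so that $\rho((X^{(i)},V^{(i)}),(Y^{(i)},W^{(i)}))=\sqrt{f_1(r_i)\,W_i}$, the estimate will be obtained from the conditional Cauchy--Schwarz inequality $\mathbb{E}[\sqrt{f_1(r_n)W_n}\mid x,y,v,w]\le\sqrt{\mathbb{E}[f_1(r_n)\mid x,y,v,w]}\cdot\sqrt{\mathbb{E}[W_n\mid x,y,v,w]}$, which decouples the concave-distance factor from the Lyapunov factor so that each can be handled by a separate mechanism.

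Two preliminary estimates feed into this. First, since each marginal $(X^{(i)},V^{(i)})_i$ and $(Y^{(i)},W^{(i)})_i$ is an SGgHMC chain with the same parameters (the reflected Gaussians $\hat{G}_i$ being themselves standard normal), Assumption~\ref{A2} gives $\mathbb{E}[V(X^{(n)},V^{(n)})\mid x,v]\le(1-c_1)V(x,v)+c_2$ and the analogue for the second chain, hence $\mathbb{E}[W_n\mid x,y,v,w]\le 1+2\epsilon c_2+\epsilon(1-c_1)(V(x,v)+V(y,w))$. Second, $f_1$ from \eqref{f0def} (built with $\hat{R}'$) is bounded by $f_1(\hat{R}')$, so $f_1(r)=f_1(\hat{R}')$ for $r\ge\hat{R}'$; combining this with Theorem~\ref{main2}, applied with $\hat{R}$ replaced by $\hat{R}'$ and with the reflection coupling \eqref{cou1} for $r^*=(1-\eta)/(T^2(1+\eta))$ that is in force in the definition of the chains, one gets $\mathbb{E}[f_1(r_{i+1})\mid\mathcal{F}_i]\le f_1(r_i)$ always (contraction inside $\{r_i\le\hat{R}'\}$, trivial non-expansion outside since $f_1$ is then already maximal), and $\mathbb{E}[f_1(r_{i+1})\mid\mathcal{F}_i]\le(1-\kappa)f_1(r_i)$ on $\{r_i\le\hat{R}'\}$, where $\kappa$ denotes the one-step contraction rate supplied by Theorem~\ref{main2}. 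The tower property then yields $\mathbb{E}[f_1(r_n)\mid x,y,v,w]\le f_1(r_0)$ unconditionally, and $\mathbb{E}[f_1(r_n)\mid x,y,v,w]\le(1-\kappa)f_1(r_0)$ as soon as $r_0\le\hat{R}'$; in particular no control on the chains wandering out of $\hat{R}'$ at later times is needed.

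It remains to combine these in the two regimes. If $V(x,v)+V(y,w)\ge 4c_2/c_1$, then $2c_2\le\frac{c_1}{2}(V(x,v)+V(y,w))$ and $\epsilon(V(x,v)+V(y,w))\ge\epsilon\cdot 4c_2/c_1=c_0/c_1$, so the drift estimate gives $\mathbb{E}[W_n\mid x,y,v,w]\le 1+\epsilon(1-\frac{c_1}{2})(V(x,v)+V(y,w))\le(1-\frac{c_0c_1}{2(c_0+c_1)})W_0\le(1-\frac{\min(c_0,c_1)}{4})W_0$; together with $\mathbb{E}[f_1(r_n)]\le f_1(r_0)$, Cauchy--Schwarz and $\sqrt{1-x}\le 1-x/2$ this gives the contraction with rate $1-\min(c_0,c_1)/8$. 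If instead $V(x,v)+V(y,w)<4c_2/c_1$, then both points lie in $\mathcal{S}$, and from the definition of $\hat{R}'$ one checks $r_0\le\hat{R}'$, whence $\mathbb{E}[f_1(r_n)]\le(1-\kappa)f_1(r_0)$; since $W_0\ge 1$ and $2\epsilon c_2=c_0/2$, the drift estimate gives $\mathbb{E}[W_n\mid x,y,v,w]\le(1+2\epsilon c_2)W_0$, and Cauchy--Schwarz yields $\mathbb{E}[\rho((X^{(n)},V^{(n)}),(Y^{(n)},W^{(n)}))\mid x,y,v,w]\le\sqrt{(1-\kappa)(1+2\epsilon c_2)}\,\rho((x,v),(y,w))$, the choice $\epsilon=c_0/(4c_2)$ in \eqref{epidef2} being exactly what balances the $f_1$-contraction against the additive Lyapunov perturbation so that the right-hand factor is at most $1-\min(c_0,c_1)/8$.

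I expect the main obstacle to be the constant bookkeeping in this last step: one has to verify that the reflection-coupling contraction $\kappa$ furnished by Theorem~\ref{main2} (exploiting boundedness of $f_1$ so that only the first of the $n$ steps needs to contract) genuinely dominates the perturbation $2\epsilon c_2$ with the prescribed $\epsilon$, which is what pins down the mutual scaling of $\epsilon$, $c_0$ and the integration time. Subsidiary points requiring care are checking that the hypotheses of Theorem~\ref{main2}, in particular the integration-time bound \eqref{carot}, remain valid when $\hat{R}$ is replaced by the possibly larger $\hat{R}'$, and confirming $r_0\le\hat{R}'$ whenever $((x,v),(y,w))\in\mathcal{S}\times\mathcal{S}$; both follow by tracking the constants in \eqref{rhdef}, in the definition of $\hat{R}'$ and in \eqref{epidef2}, exactly as in the adaptation of \cite[Theorem~2.7]{MR4133372} to the present twisted distance and discrete-time coupling.
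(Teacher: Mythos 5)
Your proposal is correct and follows essentially the same route as the paper's proof: a conditional Cauchy--Schwarz (the paper's Hölder) decouples the $f_1$-factor, treated via Theorem~\ref{main2} with contraction at the first step and non-expansion thereafter, from the Lyapunov factor, treated via the $n$-step drift of Assumption~\ref{A2}, with $\epsilon=c_0/(4c_2)$ balancing the two. The only cosmetic difference is that you dichotomize on $V(x,v)+V(y,w)$ versus $4c_2/c_1$ while the paper dichotomizes on $|q|+1.09\alpha|z|$ versus $\hat R'$; these partitions are equivalent, since the paper deduces exactly your condition from its own via the definition of $\hat R'$ and the contrapositive implication you state.
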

\begin{proof}
Firstly, assume
\begin{equation}\label{as1}
\abs{q} + 1.09\alpha\abs{z}\leq\hat{R}'.
\end{equation}
%Inequality~\eqref{wvdef}, monoticity of~$f_0$ and Theorem~\ref{fcont} imply a.s. that
By conditional H\"older inequality, it holds a.s. that
\begin{align}
&\mathbb{E}[\rho((X^{(n)},V^{(n)}),(Y^{(n)},W^{(n)}))|x,y,v,w]\nonumber\\
%&\quad\leq \mathbb{E}[\sqrt{f_0(V_{k+n} + \alpha W^{(n)})}\\
%&\qquad\cdot\sqrt{1+\epsilon V(X^{(n)},V^{(n)}) + \epsilon V(Y^{(n)},W^{(n)})}|x,y,v,w]\\
&\quad\leq \mathbb{E}[f_1(\abs{Q^{(n)}} + 1.09\alpha \abs{Z^{(n)}})|x,y,v,w]^{\frac{1}{2}}\nonumber\\
&\qquad\cdot\mathbb{E}[1+\epsilon V(X^{(n)},V^{(n)}) + \epsilon V(Y^{(n)},W^{(n)})|x,y,v,w]^{\frac{1}{2}}.\label{rfv}
\end{align}
For each~$i\in[1,n-1]\cap\mathbb{N}$, in either cases~$\abs{Q^{(i)}} + 1.09\alpha \abs{Z^{(i)}} \leq \hat{R}'$ and~$\abs{Q^{(i)}} + 1.09\alpha \abs{Z^{(i)}} > \hat{R}'$, by either 
%inequality~\eqref{wvdef} together with monoticity of~$f_1$ and 
Theorem~\ref{main2} or~$\max f_1 = f_1(\abs{Q^{(i)}}+1.09\alpha\abs{Z^{(i)}})$, it holds a.s. that~$\mathbb{E}[f_1(\abs{Q^{(i+1)}} + 1.09\alpha \abs{Z^{(i+1)}})|X^{(i)},Y^{(i)},V^{(i)},W^{(i)}] \leq f_1(\abs{Q^{(i)}} + 1.09\alpha \abs{Z^{(i)}})$. 
By the tower property and again 
%inequality~\eqref{wvdef}, monoticity of~$f_1$, 
Theorem~\ref{main2}, this implies a.s. that
\begin{equation}
\mathbb{E}[f_1(\abs{Q^{(n)}} + 1.09\alpha \abs{Z^{(n)}})|x,y,v,w] \leq (1-c_0)f_1(\abs{q} + 1.09\alpha \abs{z}).
\end{equation}
%where~$c_0$ is given by~\eqref{cor}. 
Therefore, by Assumption~\ref{A2} and the definition~\eqref{epidef2} of~$\epsilon$, inequality~\eqref{rfv} implies a.s. that
\begin{align}
&\mathbb{E}[\rho((X^{(n)},V^{(n)}),(Y^{(n)},W^{(n)}))|x,y,v,w]\nonumber\\
&\quad\leq (1-c_0)^{\frac{1}{2}}(1+2\epsilon c_2)^{\frac{1}{2}}\rho((x,v),(y,w))\nonumber\\
&\quad\leq(1-c_0/4)\rho((x,v),(y,w)).\label{fcr}
\end{align}
Instead of~\eqref{as1}, assume
\begin{equation}\label{as2}
\abs{q} + 1.09\alpha\abs{z}>\hat{R}'.
\end{equation}
%By definition~\eqref{alpdef} of~$\alpha$, it holds that~$\alpha>1$. Therefore 
If all of the inequalities
\begin{align}
(1+1.09\alpha)\abs{x}&\leq\hat{R}'/4,\qquad(1+1.09\alpha)\abs{y}\leq\hat{R}'/4,\nonumber\\
\gamma^{-1}\abs{v}&\leq\hat{R}'/4,\qquad\gamma^{-1}\abs{w}\leq\hat{R}'/4\label{ote}
\end{align}
hold, then it holds that
\begin{equation*}
\abs{q}+1.09\alpha\abs{z} \leq (1+1.09\alpha)(\abs{x} + \abs{y}) + \gamma^{-1}(\abs{v} + \abs{w}) \leq \hat{R}',
\end{equation*}
which contradicts~\eqref{as2}. Consequently, one of the inequalities in~\eqref{ote} fails to hold and
\begin{align*}
&\mathbb{E}[1+\epsilon V(X^{(n)},V^{(n)}) + \epsilon V(Y^{(n)},W^{(n)})|x,y,v,w]\\
&\quad\leq 1+\epsilon((1-c_1) V((x,v)) + (1-c_1)V((y,w)) + 2c_2)\\
&\quad\leq 1+\epsilon\bigg(\bigg(1-\frac{c_1}{4}\bigg) (V((x,v)) + V((y,w))) - c_2\bigg)\\
&\quad\leq \bigg(1-\min\bigg(\frac{c_1}{4},\epsilon c_2\bigg)\bigg)(1+\epsilon V(x,v) + \epsilon V(y,w)).
\end{align*}
Therefore, by~$\max f_1 = f_1(\abs{q}+1.09\alpha\abs{z})$ and the definition~\eqref{epidef2} of~$\epsilon$, it holds that
\begin{align*}
\mathbb{E}[\rho((X^{(n)},V^{(n)}),(Y^{(n)},W^{(n)}))|x,y,v,w] &\leq (1-\min(c_1/4,\epsilon c_2))^{\frac{1}{2}}\rho((x,v),(y,w))\\
&\leq (1-\min(c_1/8,c_0/8))\rho((x,v),(y,w)).
\end{align*}
Together with the contraction rate given in~\eqref{fcr}, the proof concludes.
\end{proof}

\section{Consequences for empirical averages}\label{conseqs}
Only the consequences of Theorem~\ref{coth} are given, where~$4LT^2\leq (1-\eta)^2$ holds. The main underlying Corollary~\ref{maincore} shows that~$\mathcal{W}_1$ contraction is possible, that is, a convergence bound on~$\mathcal{W}_1$ with a right-hand side in terms of only~$\mathcal{W}_1$. %Simlar consequences for Theorem~\ref{coth2} hold but with worse constants, requiring stronger assumptions 
As consequence of this contraction result, in Corollary~\ref{gaucor_old} of Section~\ref{gausec}, Gaussian concentration bounds on ergodic averages are given. %using the results of~\cite{MR2078555}. 
In addition, for the case where~$b$ is a stochastic approximation of some~$\nabla U$, a bound on the bias of the estimator to the average w.r.t. the probability measure with density proportional to~$e^{-U}$ is given in Section~\ref{bisec} by Corollary~\ref{bicor_old}.

For the rest of this section, %fix~$\alpha = 4LT^2/(1-\eta)^2$, 
recall the notations at the beginning of Section~\ref{semco}, the definitions just before Assumption~\ref{A2} 
%let~$\gamma$ be given by~\eqref{gdef} %be given by~\eqref{alpha} 
and that~$\mathcal{W}_1,\mathcal{W}_2$ denote the~$L^1$,~$L^2$ Wasserstein distance with respect to the twisted Euclidean metric~$\hat{d}$ given by~\eqref{tmet}. Moreover, let~$\mathcal{W}_{2,e}$ denote the~$L^2$ Wasserstein distance w.r.t. the distance~$\mathbb{R}^{2d}\times\mathbb{R}^{2d}\ni((x,v),(y,w))\mapsto \sqrt{\abs{x-y}^2 + \abs{v-w}^2/L}$. 
%Recall that for any~$j\in\mathbb{N}$ %, let~$G_i$,~$\hat{G}_i$,~$\bar{\theta}_i$ and for 
%and any~$\mathbb{R}^d$-valued r.v.'s~$x,v,y,w\in\mathbb{R}^d$ independent of~$(G_i)_i,(\hat{G}_i)_i,(\bar{\theta}_i)_i,((u_{kh}^{(i)})_k)_i$, the r.v.'s~$X^{(j)},V^{(j)},Y^{(j)},W^{(j)}$ are given as in the paragraph just before Assumption~\ref{A2}. 
For any distribution~$\nu$ of an~$\mathbb{R}^{2d}$-valued r.v.~($\mathbb{R}^d$-valued when~$\gamma=\infty$) that is independent of~$(G_i)_i,(\hat{G}_i)_i,(\bar{\theta}_i)_i,((u_{kh}^{(i)})_k)_i$, recall~$\pi_n(\nu)$ to denote the distribution after~$n$ iterations of SGgHMC; more precisely,~$\pi_n(\nu)$ denotes the distribution of~$(X^{(i)},V^{(i)})$ for~$(x,v)\sim \nu$~(and of~$X^{(i)}$ for~$x\sim\nu$ if~$\gamma=\infty$). 
%Let~$E=\mathbb{R}^{2d}$ if~$\gamma<\infty$ and~$E=\mathbb{R}^d$ otherwise. 
%Moreover, recall the notation that~$\hat{d},\abs{\cdot}_{\hat{d}}$ denote the twisted Euclidean metric given by~\eqref{tmet}, interpreted as a metric and the corresponding norm on~$E$.
%~$\mathbb{R}^d$ instead if~$\gamma=\infty$, and the corresponding norm.
\iffalse % separate lemma to specify expansion of \alpha \abs{z}+\abs{q} if necessary
\begin{lemma}
%Suppose~\eqref{A1a} holds for all~$u,x\in\mathbb{R}^d$ and~$\theta\in\Theta$. 
Let~$x,y,v,w$ be~$\mathbb{R}^d$-valued r.v.'s,~$G,\hat{G}\sim\mathcal{N}(0,I_d)$ be independent of~$x,y,v,w$ and let~$X',Y',V',W'$ be given by~\eqref{ous}.
%with~$A=\Omega$. 
Assume~\eqref{A1a},~$4LT^2\geq (1-\eta)^2$ and~\eqref{carot}. If either it holds a.s. that~$\hat{G} = G$ or there exists~$\bar{c}>0$ such that it holds a.s. that~$\hat{G} = G - \bar{G}_{\abs{q}}$, then it holds a.s. that
\begin{equation}
\mathbb{E}[\alpha\abs{Z'} + \abs{Q'}|x,y,v,w] \leq (1+\hat{c})(\alpha \abs{z} + \abs{q})
\end{equation}
where~$Z',Q'$ are given by~\eqref{zdefs} and~$\hat{c}$ is given by
\begin{equation}
a
\end{equation}
\end{lemma}
\begin{proof}
a
\end{proof}
\fi

By the triangle inequality, the~$L_1$ Wasserstein distance (w.r.t. the twisted metric given by~\eqref{tmet} in particular) may be split into a sum of many smaller intermediate distances. Using modified distance as a Lyapunov function in~\eqref{rsdef} allows one to capitalize on this. More specifically, 
we rely 
%a main idea 
in the following Corollary~\ref{maincore} 
%is to rely 
on the Lyapunov term being second order in the distance. 
\begin{corollary}\label{maincore}
Let Assumption~\ref{A1} hold, let~$x,y,v,w$ be~$\mathbb{R}^d$-valued r.v.'s independent of~$(G_i)_i,(\hat{G}_i)_i,(\bar{\theta}_i)_i,((u_{kh}^{(i)})_k)_i$ such that~$\mathbb{E}[\abs{x}^2+\abs{y}^2+\abs{v}^2+\abs{w}^2]<\infty$ holds. 
%Let~$z,q,Z',Q'$ be given by~\eqref{zdefs}. 
Assume~$4LT^2\leq (1-\eta)^2$ and~\eqref{carot}. %~\eqref{kcou} %and~$\mathbb{E}[\alpha\abs{x}+\abs{x+\gamma^{-1}v}] + \mathbb{E}[\alpha\abs{y}+\abs{y+\gamma^{-1}w}]<\infty$. 
For any~$n\in\mathbb{N}$, it holds that
\begin{align*}
\mathcal{W}_1(\pi_n(\delta_{(x,v)}),\pi_n(\delta_{(y,w)})) &\leq \bigg(\frac{g}{2\log(2)\epsilon^*}\bigg)^{\frac{1}{2}}(1-c)^n \mathcal{W}_1(\delta_{(x,v)},\delta_{(y,w)}),\\
\mathcal{W}_2(\pi_n(\delta_{(x,v)}),\pi_n(\delta_{(y,w)}))^2 &\leq \frac{2(1-c)^n}{\epsilon^*} \mathcal{W}_{\rho^*}(\delta_{(x,v)},\delta_{(y,w)}) %\mathbb{E}[\rho^*((x,v),(y,w))],
\end{align*}
where~$\delta_{(x,v)},\delta_{(y,w)}$ denote the distributions of~$(x,v)$ and~$(y,w)$ respectively,~$c,g$ are given by~\eqref{cothrate},~\eqref{gdef??},~\eqref{rhdef} and~$\rho^*$,~$\epsilon^*$ and~$f_0$ are given by~\eqref{rsdef},~\eqref{epidef} and~\eqref{f0def} respectively along with~\eqref{gdef??}.
\end{corollary}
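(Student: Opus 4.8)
The plan is to bootstrap the one-step semimetric contraction of Theorem~\ref{coth} into the two stated Wasserstein bounds. The starting point is the observation that Theorem~\ref{coth} provides, for every pair of initial points~$(x,v),(y,w)$, a Markovian coupling of the two SGgHMC chains under which $\mathbb{E}[\rho^*((X',V'),(Y',W'))\mid x,y,v,w]\leq (1-c)\rho^*((x,v),(y,w))$. Iterating this coupling $n$ times and taking expectations yields, for the $n$-step coupling $(X^{(n)},V^{(n)}),(Y^{(n)},W^{(n)})$ started from deterministic $(x,v),(y,w)$,
\begin{equation*}
\mathbb{E}[\rho^*((X^{(n)},V^{(n)}),(Y^{(n)},W^{(n)}))] \leq (1-c)^n \rho^*((x,v),(y,w)).
\end{equation*}
This is the engine; the rest is comparison of $\rho^*$ with the twisted metric $\hat d$ (and its square).

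For the $\mathcal W_2$ bound I would simply observe that $\rho^*((x,v),(y,w)) \geq \epsilon^* \|(x-y,v-w)\|_{\bar M}^2$, and that $\|(x-y,v-w)\|_{\bar M}^2$ dominates (up to a constant of the right form, using $\hat\gamma = \gamma$ and the definitions~\eqref{tmet},~\eqref{Mdef}) the square of $\hat d((x,v),(y,w))$; more precisely $\hat d^2 \leq 2\hat\alpha^2|x-y|^2 + 2|x-y+\gamma^{-1}(v-w)|^2$, which is bounded by a constant times $\|(x-y,v-w)\|_{\bar M}^2$ since $\hat\alpha = 1.09\alpha \leq 1.09/4 < 1$. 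Hence $\epsilon^* \hat d^2((X^{(n)},V^{(n)}),(Y^{(n)},W^{(n)})) \lesssim \rho^*$, and combining with the iterated bound and taking the infimum over couplings of $\pi_n(\delta_{(x,v)}),\pi_n(\delta_{(y,w)})$ gives $\mathcal W_2^2 \leq (2(1-c)^n/\epsilon^*)\,\rho^*((x,v),(y,w))$; the right-hand side is exactly $\mathcal W_{\rho^*}(\delta_{(x,v)},\delta_{(y,w)})$ by definition, giving the second displayed inequality. (The constant $2$ comes from the comparison $\hat d^2 \leq 2\|\cdot\|_{\bar M}^2$ established in~\eqref{ruq}-type computations.)

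For the $\mathcal W_1$ bound the subtlety is that $\rho^*$ is not directly comparable to $\hat d$ from below in a way that survives iteration, because $f_0$ is bounded and the quadratic Lyapunov term $\epsilon^*\|\cdot\|_{\bar M}^2$ grows faster than $\hat d$ at infinity. The standard trick (as used for additive metrics in~\cite{MR3939573} and reflection couplings generally) is: on the one hand $\rho^* \geq f_0(|q|+1.09\alpha|z|) \geq f_0(\hat d)$ when $\hat d$ is small, and on the other hand, since $f_0(r) \geq r e^{-g\hat R}$ for $r\leq \hat R$ and $f_0$ is bounded below by $f_0(\hat R) = (1-e^{-g\hat R})/g$ for $r \geq \hat R$, we have in all cases $\rho^*((x,v),(y,w)) \geq \frac{2\log 2}{g}\big(1 - 2^{-\hat d/\hat R}\big)$-type lower bounds — but cleaner is to use $\epsilon^*\|\cdot\|_{\bar M}^2 \geq \epsilon^* \hat d^2/2$ together with $f_0(\hat d) \geq (\hat d/\hat R) f_0(\hat R)$ for small $\hat d$ and the concavity/monotonicity of $f_0$, and to check that the map $r \mapsto f_0(r) + (\epsilon^*/2) r^2$ is bounded below by a constant times $\min(r, r^2)$, which in turn dominates $(2\log 2 \,\epsilon^*/g)^{-1/2}\cdot(\text{something linear in }r)$ — the precise statement to verify is that $\rho^*((x,v),(y,w)) \geq \frac{2\log 2\,\epsilon^*}{g}\,\hat d^2((x,v),(y,w))$ fails for large $\hat d$ but the \emph{square root} inequality $\sqrt{\rho^*} \geq (2\log 2\,\epsilon^*/g)^{1/2}$ holds once $\hat d \geq$ some threshold, while for small $\hat d$ one has $\rho^* \geq (\epsilon^*/2)\hat d^2$; matching the two regimes and using $(1-c)^n \leq 1$ yields $\mathcal W_1 \leq (g/(2\log 2\,\epsilon^*))^{1/2}(1-c)^n \mathcal W_1$ after the infimum over couplings.

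The main obstacle I expect is precisely this two-regime comparison of $\rho^*$ with $\hat d$ for the $\mathcal W_1$ statement: one must pin down the constant $(g/(2\log 2\,\epsilon^*))^{1/2}$ exactly, which forces the threshold in the $\min(r,r^2)$ estimate to be placed so that the linear and quadratic pieces meet at the value where $f_0$ saturates, i.e. near $r \sim \hat R$, and this is where the particular form of $\epsilon^*$ in~\eqref{epidef} and of $g$ in~\eqref{gdef??} must be used. The remaining steps — iterating the one-step contraction, passing from coupling bounds to Wasserstein bounds via the infimum, and the $\mathcal W_2$ comparison — are routine. I would also note that the finite-second-moment hypothesis on $(x,v),(y,w)$ is what guarantees $\mathcal W_1, \mathcal W_2$ and $\mathcal W_{\rho^*}$ are finite so that the inequalities are not vacuous.
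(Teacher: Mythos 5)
Your $\mathcal{W}_2$ argument is correct and matches the paper: $\rho^*\geq\epsilon^*\|\cdot\|_{\bar M}^2\geq (\epsilon^*/2)(\abs{q}+1.09\alpha\abs{z})^2$ via~\eqref{ruq}, then iterate the Theorem~\ref{coth} coupling and take the infimum.

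For $\mathcal{W}_1$ there is a genuine gap. You correctly identify the difficulty — the quadratic Lyapunov piece $\epsilon^*\|\cdot\|_{\bar M}^2$ in $\rho^*$ grows faster than $\hat d$ at infinity — and you correctly compare $\rho^*$ to $\hat d$ from \emph{below} to bound the ratio $\hat X^{(n)}/\rho^*$ by $(g/(2\log 2\,\epsilon^*))^{1/2}$ (the paper does exactly this at step $n$, inserting a multiply-and-divide by $\rho^*$ and conditioning on $\mathcal F_{n-1}$). But after iterating, you are left with a bound of the form $\mathcal{W}_1(\pi_n,\pi_n)\lesssim(1-c)^n\,\rho^*((x,v),(y,w))$, and the remaining step — converting $\rho^*((x,v),(y,w))$ on the \emph{right-hand side} into $\hat d((x,v),(y,w))$ — needs a comparison in the opposite direction, $\rho^*\lesssim\hat d$, which fails globally precisely because of that same quadratic growth. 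Your "matching the two regimes" sketch does not resolve this; the inequality you end with, $\mathcal W_1\leq(\cdot)^{1/2}(1-c)^n\mathcal W_1$, is circular as written.

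The missing idea is the subdivision (or ``line-segment discretization'') trick. For each $N$, interpolate linearly between $(x,v)$ and $(y,w)$ at $N+1$ points $(x_j,v_j)$, apply the $\rho^*$ bound to each consecutive pair, and sum via the triangle inequality for $\mathcal W_1$. Since $f_0(r)\leq r$ and the quadratic term is $O(1/N^2)$ per piece while there are $N$ pieces, the sum $\sum_j\rho^*((x_j,v_j),(x_{j+1},v_{j+1}))\to \abs{q}+1.09\alpha\abs{z}=\hat d((x,v),(y,w))$ as $N\to\infty$ by dominated convergence (here the finite-second-moment hypothesis is used for dominated convergence, not merely for finiteness of the Wasserstein distances). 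This is what lets the paper replace $\rho^*(\text{initial})$ by $\hat d(\text{initial})$ and close the loop to get a genuine $\mathcal W_1$-to-$\mathcal W_1$ contraction.
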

\begin{proof}
%For any~$N\in\mathbb{N}$, let~$(\bar{x}_{i,N})_{i\in[0,N]\cap\mathbb{N}}$ and~$(\bar{v}_{i,N})_{i\in[0,N]\cap\mathbb{N}}$ be the sequences of r.v.'s given by~$\bar{x}_{i,N}=  i(y-x)/N$ and~$\bar{v}_{i,N}= i(w-v)/N$. Moreover, 
%Since~$X^{(i)},V^{(i)},Y^{(i)},W^{(i)}$ are Lipschitz maps of finite subsequences of~$(G_i)$ and~$(\hat{G}_i)$, the concentration inequality in~\cite[Proposition~1.2]{MR1849347} implies that they each have finite moments. 
For the rest of the proof, fix~$n\in\mathbb{N}$. 
It holds that
\begin{align}
&W_1(\pi_n(\delta_{(x,v)}),\pi_n(\delta_{(y,w)})) \nonumber\\
&\quad\leq \mathbb{E}[\mathbb{E}[1.09\alpha\abs{Z^{(n)}} + \abs{Q^{(n)}}|X^{(n-1)},V^{(n-1)},Y^{(n-1)},W^{(n-1)}]]\nonumber\\
&\quad\leq \mathbb{E}\bigg[\mathbb{E}\bigg[\bigg(\frac{1.09\alpha\abs{Z^{(n)}} + \abs{Q^{(n)}}}{\rho^*((X^{(n)},V^{(n)}),(X^{(n)},V^{(n)}))}\bigg)\nonumber\\
&\qquad\cdot\rho^*((X^{(n)},V^{(n)}),(Y^{(n)},W^{(n)}))\bigg|X^{(n-1)},V^{(n-1)},Y^{(n-1)},W^{(n-1)}\bigg]\bigg],\label{wfi}%\nonumber\\
\end{align}
where~$0/0:=0$ here. 
%For each~$(j,N)\in\Delta$, denoting~$X_{j,N} = \alpha\abs{Z_{j,N}'} + \abs{Q_{j,N}'}$, the fraction appearing on the right-hand side of~\eqref{wfi} may be bounded 
Using~\eqref{ruq}, denoting~$\hat{X}^{(n)} = 1.09\alpha\abs{Z^{(n)}} + \abs{Q^{(n)}}$, it holds that
\begin{align}
\frac{\hat{X}^{(n)}}{\rho^*((X^{(n)},V^{(n)}),(Y^{(n)},W^{(n)}))} &= \frac{\hat{X}^{(n)}}{f_0(\hat{X}^{(n)}) + \epsilon^*\|(Z^{(n)}, V^{(n)} - W^{(n)})\|_{\bar{M}}^2}\nonumber\\
&\leq \frac{\hat{X}^{(n)}}{f_0(\hat{X}^{(n)}) + \epsilon^*(\hat{X}^{(n)})^2/2}.\label{zai}
%&\leq \frac{X_{j,N}}{\frac{1}{g}(1-\exp(-gX_{j,N})) + \frac{1}{5}\epsilon^*(X_{j,N})^2}.\label{zai}
\end{align}
Since it holds that~$f_0(\bar{x})=(1-e^{-g\bar{x}})/g\geq \min(\bar{x}/2,\log(2)/g)$, the right-hand side of~\eqref{zai} may be bounded above by the expression
\begin{equation*}
\max((1/2+\epsilon^*\hat{X}^{(n)}/2)^{-1},\hat{X}^{(n)}(\log(2)/g + \epsilon^*(\hat{X}^{(n)})^2/2)^{-1}),
\end{equation*}
which may in turn be bounded above to obtain
\begin{equation*}
\frac{\hat{X}^{(n)}}{\rho^*((X^{(n)},V^{(n)}),(Y^{(n)},W^{(n)}))}\leq \max\bigg(2,\sqrt{\frac{g}{2\log (2)\epsilon^*}}\bigg)\leq\sqrt{\frac{g}{2\log (2)\epsilon^*}}
\end{equation*}
where we have used the definitions~\eqref{gdef??} of~$g$ and~\eqref{epidef} of~$\epsilon^*$. 
\iffalse % previous way of bounding
Since the right-hand side of~\eqref{zai} may be bounded above by~$2/(\epsilon^*\hat{X}^{(n)})$, it holds that
\begin{align*}
\frac{\hat{X}^{(n)}}{\rho^*((X^{(n)},V^{(n)}),(Y^{(n)},W^{(n)}))} &\leq \begin{cases}
\frac{2}{\epsilon^*f_0(2/\epsilon^*)}&\textrm{if }\hat{X}^{(n)}\leq \frac{2}{\epsilon^*}\\
1&\textrm{otherwise}
\end{cases}\\
&\leq \frac{2}{\epsilon^*f_0(2/\epsilon^*)}.
\end{align*}
\fi
Therefore, by choosing appropriately between reflection and synchonous coupling, inequality~\eqref{wfi} and Theorem~\ref{coth} imply that %there exists~$N'\in\mathbb{N}$ such that for any~$N>N'$ it holds that
\begin{align}
&\mathcal{W}_1(\pi_n(\delta_{(x,v)}),\pi_n(\delta_{(y,w)})) \nonumber\\
&\quad\leq \bigg(\frac{g}{2\log(2)\epsilon^*}\bigg)^{\frac{1}{2}}(1-c)\mathbb{E}[\rho^*((X^{(n-1)},V^{(n-1)}),(Y^{(n-1)},W^{(n-1)}))]\nonumber\\
&\quad\leq \bigg(\frac{g}{2\log(2)\epsilon^*}\bigg)^{\frac{1}{2}}(1-c)^n\mathbb{E}[\rho^*((x,v),(y,w))].\label{tae}%\\
%&\quad\leq \frac{\epsilon^*(1-c)^n}{5f_0(\epsilon^*/5)}(\alpha\abs{z} + \abs{q}).
\end{align}
%Taking the infimum over couplings between~$(x,v)$ and~$(y,w)$ concludes.
%Let~$x',y',v',w'\in\mathbb{R}^d$ and~$z',q'$ be correspondingly defined by~\eqref{zdefs}. 
For any~$j,N\in\mathbb{N}$ with~$j< N$, by applying inequality~\eqref{tae} with~$x,y,v,w$ replaced respectively by
\begin{align*}
x&\leftarrow x_j:=x+j(y-x)/N,\qquad y\leftarrow x_{j+1}:=x+(j+1)(y-x)/N,\\
v&\leftarrow v_j:=v+j(v-w)/N,\qquad w\leftarrow v_{j+1}:=v+(j+1)(v-w)/N,
\end{align*}
the triangle inequality yields
\begin{align}
\mathcal{W}_1(\pi_n(\delta_{(x,v)}),\pi_n(\delta_{(y,w)})) &\leq \sum_{j=0}^{N-1}\mathcal{W}_1(\pi_n(\delta_{(x_j,v_j)}),\pi_n(\delta_{(x_{j+1},v_{j+1})}))\nonumber\\
&\leq \sum_{j=0}^{N-1}\bigg(\frac{g}{2\log(2)\epsilon^*}\bigg)^{\frac{1}{2}}(1-c)^n\mathbb{E}[\rho^*((x_j,v_j),(x_{j+1},v_{j+1}))].\label{nte}
\end{align}
Since~\eqref{nte} holds for any~$N$, by considering~$N\rightarrow\infty$ and the forms~\eqref{rsdef},~\eqref{f0def} of~$\rho^*,f_0$, it holds by~$f_0(\bar{x})\leq \bar{x}$, dominated convergence and the assumption~$\mathbb{E}[\abs{x}^2+\abs{y}^2 + \abs{v}^2+\abs{w}^2]<\infty$ that
\begin{equation}\label{w1t}
\mathcal{W}_1(\pi_n(\delta_{(x,v)}),\pi_n(\delta_{(y,w)}))\leq (g/(2\log(2)\epsilon^*))^{\frac{1}{2}}(1-c)^n \mathbb{E}[\abs{q}+\alpha\abs{z}].
\end{equation}
Therefore, since~\eqref{w1t} holds for any coupling between~$\delta_{(x,v)}$ and~$\delta_{(y,w)}$, the first assertion follows.
\iffalse
%Therefore, by integrating~$x,y,v,w$ w.r.t. couplings of distributions of~$\mathbb{R}^{2d}$-valued r.v.'s and taking the infimum, 
it holds that
\begin{equation*}
\mathcal{W}_1(\pi_n(\delta_{(x,v)}),\pi_n(\delta_{(y,w)}))\leq \bigg(\frac{g}{2\log(2)\epsilon^*}\bigg)^{\frac{1}{2}}(1-c)^n \mathcal{W}_1(\delta_{(x,v)},\delta_{(y,w)}),
\end{equation*}
which is the first assertion. 
\fi
For the second assertion, by~\eqref{ruq}, it holds for any~$\bar{x},\bar{y},\bar{v},\bar{w}\in\mathbb{R}^d$ with~$\bar{q},\bar{z}\in\mathbb{R}^d$ defined correspondingly by~\eqref{zdefs} that~$(\abs{\bar{q}} + 1.09\alpha\abs{\bar{z}})^2 \leq 2\|(\bar{z},\bar{v}-\bar{w}) \|_{\bar{M}}^2 \leq 2(\epsilon^*)^{-1}\rho^*((\bar{x},\bar{v}),(\bar{y},\bar{w}))$ 
and therefore
\begin{align*}
\mathcal{W}_2(\pi_n(\delta_{(x,v)}),\pi_n(\delta_{(y,w)}))^2 &\leq 2(\epsilon^*)^{-1}\mathcal{W}_{\rho^*}(\pi_n(\delta_{(x,v)}),\pi_n(\delta_{(y,w)}))\\
&\leq 2(\epsilon^*)^{-1}(1-c)^n\mathbb{E}[\rho^*((x,v),(y,w))],
\end{align*}
whereby taking the infimum over all couplings concludes the proof.
\end{proof}

\iffalse % separate W_2 corollary
\begin{corollary}
Let Assumption~\ref{A1} hold, let~$x,y,v,w\in\mathbb{R}^d$-valued r.v.'s and assume~$4LT^2\leq (1-\eta)^2$ and~\eqref{carot}. For any~$n\in\mathbb{N}$, it holds that
\begin{equation*}
\mathcal{W}_2(\pi_n(\delta
\end{equation*}
\end{corollary}
\fi
\subsection{Gaussian concentration}\label{gausec}

%Let~$G\sim\mathcal{N}(0,I_d)$. Let~$\mathcal{O}:\mathbb{R}^{2d}\Omega\rightarrow\mathbb{R}^{2n}$ be given by~$\mathcal{O}(x,v,g) = (x,v+\eta v + \sqrt{1-\eta^2}G)$. 

The main goal of this section is to prove Corollary~\ref{gaucor_old}, which gives the announced Corollary~\ref{gaucor}. In order to do this, we prove in Lemma~\ref{conlem} that conditional distributions of the SGgHMC chain satisfy log-Sobolev inequalities and thus transportation cost-information inequalities (by Otto-Villani). These inequalities are consequences of the fact that the conditional distributions of SGgHMC are pushforwards of Gaussian distributions by Lipschitz maps and that they are absolutely continuous, as shown in Lemma~\ref{bbcon} and Lemma~\ref{abcon}. All of the proofs in this section may be found in Appendix~\ref{gauapp}.

Let~$\gamma_{0,\sigma^2}^d$ denote the Gaussian measure on~$\mathbb{R}^d$ with mean~$0$ and variance~$\sigma^2I_d$. 
%Let~$\bar{\theta}_1,\bar{\theta}_2\in\Theta^{T/h}\times\Theta^{T/h}\times\{0,1\}\times[0,1]$ and 
%For any~$t\in[0,T]$,~$\bar{\theta}\in(\Theta^{T/h})^2\times\{0,1\}^2$ and 
%For a probability measure~$\mu$ on~$\mathbb{R}^{2d}$, 
%let~$\mu P_t^{\bar{\theta}},\mu P_R$ denote the pushforward of~$\mu$ by the mappings~$(x,v)\mapsto(\bar{q}_t(x,v,\bar{\theta},\bar{p}_t(x,v,\bar{\theta}))$ and~$(x,v)\mapsto (x,\eta v + \sqrt{1-\eta^2}G)$ respectively. 
%let~$\mu_R$ denote the measure on~$\mathbb{R}^{2d}$ given by the pushforward of~$\mu\otimes \gamma_{0,1}$ by the mapping~$\mathbb{R}^{3d}\ni (x,v,g)\mapsto(x,L^{-\frac{1}{2}}(\eta v+\sqrt{1-\eta^2}g))$. 
%For any~$\bar{\theta}\in\{\bar{\theta}_1,\bar{\theta}_2\}$, 
For any~$i\in\{1,2,3\}$, let the function~$h^{(i)}:\mathbb{R}^{2d}\rightarrow\mathbb{R}^{2d}$ be 
given by~$h^{(i)}(x,v)=(\bar{q}_T(x,L^{\frac{1}{2}}v,\bar{\theta}_i),L^{-\frac{1}{2}}\bar{p}_T(x,L^{\frac{1}{2}}v,\bar{\theta}_i))$. Moreover, let~$h^{(0)}:\mathbb{R}^{3d}\rightarrow\mathbb{R}^{2d}$ be given by~$h^{(0)}(x,v,g) = (x,\eta v + g)$ 
and for any~$x,v\in\mathbb{R}^d$, let~$\hat{h}_{x,v}:\mathbb{R}^d\times\mathbb{R}^d\rightarrow\mathbb{R}^d\times\mathbb{R}^d$,~$\hat{h}_{x,v}':\mathbb{R}^d\times\mathbb{R}^d\times\mathbb{R}^d\rightarrow\mathbb{R}^d\times\mathbb{R}^d$ be given by
\begin{align*}
\hat{h}_{x,v}(g,g')&= h^{(2)}(h^{(0)}(h^{(1)}(x,\eta v+g),g')),\\
\hat{h}_{x,v}'(g,g',g'')&= h^{(3)}(h^{(0)}(h^{(2)}(h^{(0)}(h^{(1)}(x,\eta v+g),g')),g'')).
\end{align*}
%where here and in the rest of this section,~$\bar{q}_{\cdot},\bar{p}_{\cdot}$ are considered as row vectors. 
In the special case~$\eta=0$, it will be more useful to consider for any~$x\in\mathbb{R}^d$ the mapping~$\hat{h}_x^{(0)}:\mathbb{R}^d\rightarrow\mathbb{R}^d$ given by~$\hat{h}_x^{(0)}(g)=\textrm{proj}_1(h^{(1)}(x,g))$, 
where~$\textrm{proj}_1:\mathbb{R}^{2d}\rightarrow\mathbb{R}^d$ is the projection to the first~$d$ coordinates given by~$\textrm{proj}_1((\bar{x},\bar{v}))=\bar{x}$. 
\begin{lemma}\label{bbcon}
Assume~\eqref{A1a} and~\eqref{carot}. For any~$x,v\in\mathbb{R}^d$,~$\nabla \hat{h}_{x,v},\nabla \hat{h}_{x,v}'$ exist almost everywhere and for any~$z_1,z_2\in\mathbb{R}^{2d}$,~$z_1'\in\mathbb{R}^{3d}$, they satisfies a.e. that
\begin{equation}\label{ddev}
\abs{(z_1\cdot\nabla) (\hat{h}_{x,v}\cdot z_2)}^2 \leq 4\abs{z_1}^2\abs{z_2}^2,\qquad
\abs{(z_1'\cdot\nabla) (\hat{h}_{x,v}'\cdot z_2)}^2 \leq 8\abs{z_1'}^2\abs{z_2}^2.
\end{equation}
If~$\eta=0$, then for any~$x\in\mathbb{R}^d$,~$\nabla\hat{h}_x^{(0)}$ exists a.e. and satisfies a.e. that
\begin{equation}\label{ddev2}
\abs{(z_1\cdot\nabla)(\hat{h}_x^{(0)}\cdot z_2)}^2\leq 2LT^2 \abs{z_1}^2\abs{z_2}^2.
\end{equation}
\end{lemma}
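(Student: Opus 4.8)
The plan is to compute the differential of each composite map explicitly, step by step through the chain of maps $h^{(1)}, h^{(0)}, h^{(2)}, h^{(0)}, h^{(3)}$, bound the operator norm of each factor, and multiply. First I would record the Jacobians of the elementary pieces. The map $h^{(0)}(x,v,g) = (x,\eta v + g)$ is linear, with derivative the block matrix $\bigl(\begin{smallmatrix} I & 0 & 0\\ 0 & \eta I & I\end{smallmatrix}\bigr)$, whose operator norm is bounded by $\sqrt{1+\eta^2}\le\sqrt 2$ when the $v$ and $g$ slots are both present (and exactly $1$ in the $x$ slot). For $h^{(i)}(x,v) = (\bar q_T(x,L^{1/2}v,\bar\theta_i), L^{-1/2}\bar p_T(x,L^{1/2}v,\bar\theta_i))$, the derivative in $(x,v)$ is governed by the sensitivity estimates for the Verlet/randomized-midpoint integrator. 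This is where Lemma~\ref{basic} does the work: taking $(x-y,v-w)$ infinitesimal in \eqref{lem64a}–\eqref{lem64} with $c=16^2$ (legitimate since \eqref{carot} forces $LT(T+h)\le 1/16^2$), one gets that the Jacobian of $(x,v)\mapsto(\bar q_T,\bar p_T)$ applied to a direction $(\xi,\zeta)$ satisfies $|D\bar q_T(\xi,\zeta)| \le (1+\tfrac{3}{511})\max(|\xi|,|\xi+T\zeta|)$ and $|D\bar p_T(\xi,\zeta) - \zeta|\le \tfrac{6c}{2c-1}LT\max(|\xi|,|\xi+T\zeta|)$, which with the $LT(T+h)\le 1/16^2$ bound and the $L^{\pm1/2}$ rescaling in the $v$ slot gives a crude but comfortable bound of, say, $\|Dh^{(i)}\|_{\mathrm{op}}\le \tfrac{3}{2}$ in all slots. (Almost-everywhere differentiability of $\bar q_T,\bar p_T$ in $(x,v)$ holds because $b(\cdot,\theta)\in C^1$ and the integrator is a finite composition of $C^1$ maps, hence $C^1$; actually it is genuinely differentiable everywhere, so "a.e." is only needed for safety.)

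Next I would assemble the chain. For $\hat h_{x,v}(g,g') = h^{(2)}\bigl(h^{(0)}(h^{(1)}(x,\eta v+g),g')\bigr)$, viewed as a function of $(g,g')$, the chain rule gives $D\hat h_{x,v} = Dh^{(2)}\cdot Dh^{(0)}\cdot \bigl(\text{derivative of }(g,g')\mapsto(h^{(1)}(x,\eta v+g),g')\bigr)$. The innermost derivative in the $(g,g')$ variables has the block form: in $g$ it is $Dh^{(1)}$ composed with $g\mapsto \eta v+g$ (so $\partial_g = D_v h^{(1)}$, bounded by $\tfrac32$), and in $g'$ it injects into the second slot of $h^{(0)}$ (bounded by $1$). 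Stacking, this innermost map has operator norm $\le \sqrt{(3/2)^2+1}\le 2$. Then $\|D h^{(0)}\|\le\sqrt 2$ on the relevant slots and $\|Dh^{(2)}\|\le 3/2$. Multiplying: $\|D\hat h_{x,v}\|_{\mathrm{op}} \le \tfrac32\cdot\sqrt2\cdot 2 \le 2\sqrt 2$, hence $\|D\hat h_{x,v}\|_{\mathrm{op}}^2 \le 8$ — but we want $4$, so the constants above must be sharpened. The honest route: keep the factors as $(1+\tfrac3{511})$ from the position part and $1+O(LT(T+h))=1+O(1/16^2)$ from the momentum part, track that the $h^{(0)}$ step only adds noise in the momentum slot (which is subsequently hit only by the $\le (1+\tfrac{3}{511})\cdot\tfrac T{(\cdots)}$-type factor times small $LT$), and observe that over one more application the growth constant stays below $\sqrt 2$, so that the whole of $\hat h_{x,v}$ (two integrator steps plus two linear refreshments) has Lipschitz constant $\le 2$. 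For $\hat h_{x,v}'$ there is one additional $h^{(0)}\circ h^{(3)}$ block, contributing another factor bounded by $\sqrt 2$, giving Lipschitz constant $\le 2\sqrt 2$ and the squared bound $8$. For the $\eta=0$ special case, $\hat h_x^{(0)}(g) = \mathrm{proj}_1(h^{(1)}(x,g)) = \bar q_T(x,g,\bar\theta_1)$, and here one does not pass through a full $\bar p$-step; the single relevant estimate is \eqref{lem64a} differentiated in the $v$-direction, $|D_v \bar q_T(\zeta)| \le T\cdot(1+\tfrac3{511})|\zeta| \le \sqrt{2LT^2}\,|\zeta|$ after using $LT(T+h)\le 1/16^2$ to absorb the $(1+\tfrac3{511})$ into the slack between $T$ and $\sqrt{2L^{-1}}\cdot\sqrt{L}T$. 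Wait — the claimed bound \eqref{ddev2} has constant $2LT^2$, i.e. Lipschitz constant $\sqrt 2\,\sqrt L\,T$; since $|D_v\bar q_T|\lesssim T$ and $\sqrt{2LT^2}/T = \sqrt{2L}$, this needs $1+\tfrac3{511}\le\sqrt 2$... no: it needs $T(1+\tfrac3{511}) \le \sqrt{2L}\,T\cdot$? — that is false for small $L$. The resolution is that \eqref{ddev2} is an estimate of the map $x\mapsto \bar q_T$ in the $\eta=0$ chain where the velocity refresh sets $v' = G$ directly and $\bar q_T$ depends on $x$ through the integrator, so the relevant Jacobian is $D_x\bar q_T$, bounded via \eqref{lem64a} by $(1+\tfrac3{511})\max(1,\ldots)$ times... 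I would re-examine the precise definition (it is $\mathrm{proj}_1(h^{(1)}(x,g))$ so the variable is $g$, i.e. the refreshed velocity) — but in either reading the needed bound follows directly from the appropriate directional specialization of Lemma~\ref{basic}, with the constant $2LT^2$ being generous.

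The main obstacle is bookkeeping the constants so that the final squared operator norms come out as exactly $4$ and $8$ rather than something slightly larger. The structure of the argument is entirely routine — chain rule plus Lemma~\ref{basic} — but one must be careful that (i) the $L^{\pm 1/2}$ conjugation in the definition of $h^{(i)}$ is what makes the momentum-slot contribution dimensionally comparable to the position slot, (ii) the extra $h^{(0)}$ refreshment steps contribute a genuine $\sqrt 2$ (not $2$) because they act as $v\mapsto \eta v+g$ with $\eta\le 1$, a map of norm $\le\sqrt 2$ from the $(v,g)$ pair, and (iii) all the $O(LT(T+h))$ correction factors are $\le 1+1/255$ under \eqref{carot}, so a product of four of them stays below, say, $1.02$, which is absorbed into the slack between the nominal constant ($2$ from two integrator+refresh stages, $2\sqrt2$ from three) and the claimed square roots. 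Concretely I would: state that $b(\cdot,\theta)\in C^1$ makes every elementary map $C^1$, hence the composites are differentiable everywhere (so "a.e." is automatic); write $D\hat h_{x,v} = Dh^{(2)}\,Dh^{(0)}\,J_1$ with $J_1$ the Jacobian of $(g,g')\mapsto (h^{(1)}(x,\eta v+g),g')$; bound $\|J_1\|_{\mathrm{op}}$, $\|Dh^{(0)}\|_{\mathrm{op}}$, $\|Dh^{(2)}\|_{\mathrm{op}}$ one by one via Lemma~\ref{basic} and elementary linear algebra; multiply; square; and repeat verbatim for $\hat h_{x,v}'$ with one more block and for $\hat h_x^{(0)}$ with the single directional estimate.
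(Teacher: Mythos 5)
Your approach matches the paper's exactly: bound the Lipschitz constant of each composed factor via Lemma~\ref{basic}, then multiply and appeal to Rademacher for a.e.\ differentiability. However, your arithmetic as written does not close the gap to the claimed constants, and you correctly flag this but leave it unresolved. Two concrete issues. First, the operator norm of the block map $(g,g')\mapsto(h^{(1)}(x,\eta v+g),g')$ is $\max(\|\partial_v h^{(1)}\|,1)$, not $\sqrt{(3/2)^2+1}$; you are computing the Frobenius-like combination of block norms rather than the operator norm of a block-diagonal Jacobian. This error actually works in your favor (the true value is smaller), so it is not fatal, but it reveals that the bottleneck is really the Lipschitz constant you assign to $h^{(i)}$. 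With your crude bound $3/2$ the product $\tfrac32\cdot\sqrt2\cdot\tfrac32\approx3.18$ exceeds $2$; you need the tighter bound. Applying Lemma~\ref{basic} with $c=16^2$ together with the rescaling built into $h^{(i)}$ gives
\[
\abs{\bar q_T(x,L^{1/2}v)-\bar q_T(y,L^{1/2}w)}^2 + L^{-1}\abs{\bar p_T(x,L^{1/2}v)-\bar p_T(y,L^{1/2}w)}^2 \le 2^{1/3}\big(\abs{x-y}^2+\abs{v-w}^2\big),
\]
so $h^{(i)}$ is Lipschitz with constant $2^{1/6}$, and the chain gives $2^{1/6}\cdot\sqrt2\cdot2^{1/6}=2^{5/6}<2$ for $\hat h_{x,v}$, with one more $\sqrt2\cdot2^{1/6}$ factor for $\hat h_{x,v}'$ yielding $2^{3/2}=2\sqrt2$.

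Second, your confusion in the $\eta=0$ case stems from misreading the definition of $h^{(1)}$: you write $\hat h_x^{(0)}(g)=\bar q_T(x,g,\bar\theta_1)$, whereas by construction $h^{(1)}(x,g)=(\bar q_T(x,L^{1/2}g,\bar\theta_1),\,L^{-1/2}\bar p_T(x,L^{1/2}g,\bar\theta_1))$, so $\hat h_x^{(0)}(g)=\bar q_T(x,L^{1/2}g,\bar\theta_1)$. The derivative in $g$ therefore carries an explicit factor $L^{1/2}$, and Lemma~\ref{basic} gives Lipschitz constant at most $(1+\tfrac3{511})L^{1/2}T\le\sqrt2\,L^{1/2}T$, whose square is $2LT^2$. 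There is no tension with small $L$; the $\sqrt L$ comes from the conjugation you overlooked. Once these two points are fixed, the proof is complete and identical to the paper's.
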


\begin{lemma}\label{abcon}
Assume~\eqref{A1a} and~\eqref{carot}. 
%and that~$\nabla b$ is symmetric everywhere. 
For any~$x,v\in\mathbb{R}^d$, the pushforward of~$\gamma_{0,(1-\eta^2)/L}^{2d}$ by the map~$\hat{h}_{x,v}$ and the pushforward of~$\gamma_{0,(1-\eta^2)/L}^{3d}$ by the map~$\hat{h}_{x,v}'$ are absolutely continuous. Moreover, if~$\eta=0$, then the pushforward of~$\gamma_{0,1/L}^d$ by~$\hat{h}_x^{(0)}$ is absolutely continuous.
\end{lemma}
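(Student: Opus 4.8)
The plan is to prove Lemma~\ref{abcon} by showing that each of the composite maps $\hat h_{x,v}$, $\hat h_{x,v}'$ and (in the case $\eta=0$) $\hat h_x^{(0)}$ is, up to composition with affine maps, built out of the velocity Verlet (or randomized midpoint) flow map, and that one iteration of that flow map is a $C^1$ (indeed Lipschitz) diffeomorphism onto its image. Concretely, from the defining equations~\eqref{hameqs} one reads off that for fixed $\bar\theta$ the map $(x,v)\mapsto(\bar q_T(x,v,\bar\theta),\bar p_T(x,v,\bar\theta))$ is obtained by composing finitely many elementary maps of the form $(x,v)\mapsto(x,v-\tfrac h2 b(x+c,\theta))$ and $(x,v)\mapsto(x+hv,v)$; each of these is a $C^1$ bijection with $C^1$ inverse (the $b$-kicks are shears in the velocity variable, the drift is a shear in position), and a composition of $C^1$ diffeomorphisms is a $C^1$ diffeomorphism. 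Hence the Verlet flow map is a $C^1$ diffeomorphism of $\mathbb R^{2d}$, and so is $h^{(i)}$ after conjugation by the linear rescaling $(x,v)\mapsto(x,L^{1/2}v)$.

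First I would observe that the partial velocity refreshment step $h^{(0)}(x,v,g)=(x,\eta v+g)$, \emph{viewed as a map in $g$ alone with $(x,v)$ fixed}, is the affine bijection $g\mapsto(x,\eta v+g)$ of $\mathbb R^d$ onto the affine subspace $\{x\}\times\mathbb R^d$; more importantly, composing $h^{(0)}$ (as a function of $g$) with the subsequent Verlet flow map $h^{(i)}$ produces a map from $\mathbb R^d$ into $\mathbb R^{2d}$ whose derivative has full rank $d$ a.e.\ (it is the Verlet flow's Jacobian, which is invertible, restricted to the $d$-dimensional velocity-input directions). The key point for absolute continuity is then the following: if a measure $\nu$ on $\mathbb R^d$ is absolutely continuous and $F:\mathbb R^d\to\mathbb R^m$ is locally Lipschitz with $\nabla F$ of full rank $d$ a.e., then the pushforward $F_\#\nu$ on $\mathbb R^m$, when $m>d$, is not absolutely continuous with respect to Lebesgue measure on $\mathbb R^m$ — so one must be careful about what measure "absolutely continuous" refers to here. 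I would read the statement as: the pushforward is absolutely continuous as a measure \emph{on $\mathbb R^{2d}$} in the sense that it charges no Lebesgue-null set, which requires the composed map to hit a set of positive $2d$-dimensional measure, i.e.\ for $\hat h_{x,v}:\mathbb R^{2d}\to\mathbb R^{2d}$ (two Gaussian inputs $g,g'$, total dimension $2d$ equal to the target dimension) this is genuinely a diffeomorphism-type statement, and for $\hat h_{x,v}'$ the input dimension $3d$ exceeds $2d$ so the image has full dimension and one uses the coarea/Sard-type argument. Thus for $\hat h_{x,v}$ I would argue: it is the composition (in $(g,g')$) of the affine injection $g\mapsto(x,\eta v+g)$, then $h^{(1)}$ (a diffeomorphism), then $g'$-refreshment, then $h^{(2)}$ (a diffeomorphism); tracking dimensions, $\hat h_{x,v}$ is a $C^1$ map $\mathbb R^{2d}\to\mathbb R^{2d}$ whose Jacobian is a product of invertible matrices and an invertible block corresponding to the refreshments, hence invertible a.e., so it is a $C^1$ diffeomorphism onto an open set and the pushforward of the (absolutely continuous) Gaussian $\gamma^{2d}_{0,(1-\eta^2)/L}$ has density $\varphi\circ\hat h_{x,v}^{-1}\cdot|\det\nabla\hat h_{x,v}^{-1}|$ by the change-of-variables formula.

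The case of $\hat h_{x,v}'$ is handled similarly but now the input is $3d$-dimensional: after composing the first four maps (which land in $\mathbb R^{2d}$ via a diffeomorphism), one applies a further $g''$-refreshment and then $h^{(3)}$. Since the map $(g,g',g'')\mapsto\hat h_{x,v}'(g,g',g'')$ factors through the surjective-on-its-image diffeomorphism already established and then adds an extra $\mathbb R^d$ of "free" Gaussian directions acted on by an invertible linear map composed with a diffeomorphism, the image has nonempty interior and the pushforward is the image under a diffeomorphism of the product of an absolutely continuous measure with a Gaussian convolution; one can make this rigorous by noting $\hat h_{x,v}'(g,g',\cdot)$ is, for fixed $g,g'$, a diffeomorphism of $\mathbb R^d$, so the conditional laws of the pushforward given the first two coordinates' analogue are absolutely continuous, and then integrate. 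For $\eta=0$ and $\hat h_x^{(0)}(g)=\mathrm{proj}_1(h^{(1)}(x,g))$: here the target is $\mathbb R^d$ and $g\in\mathbb R^d$, and the claim~\eqref{ddev2} from Lemma~\ref{bbcon} already gives a Lipschitz bound; I would additionally note that $\nabla\hat h_x^{(0)}$ is invertible a.e.\ (it equals the upper-left $d\times d$ block of the Verlet Jacobian plus the mixing from the $b$-kicks — under~\eqref{carot} the stepsize is small enough that this block is a small perturbation of $h\,I_d$ times an invertible factor, hence invertible), so $\hat h_x^{(0)}$ is a local $C^1$ diffeomorphism and $(\hat h_x^{(0)})_\#\gamma^d_{0,1/L}$ is absolutely continuous.

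The main obstacle is the dimension bookkeeping: for $\hat h_{x,v}$ and $\hat h_{x,v}'$ the position coordinate $x$ is transported by the dynamics in a way that does genuinely move mass in all $2d$ directions (it is the two successive Verlet flows, driven by the two independent Gaussian velocity refreshments, that make the position-velocity pair spread into a $2d$-dimensional set), and one has to verify carefully that the composite Jacobian is invertible a.e.\ rather than degenerate — this is where~\eqref{carot}, guaranteeing a small stepsize so that the $b$-dependent shears are near-identity and cannot conspire to kill a direction, is used. Everything else (change-of-variables formula, preservation of absolute continuity under $C^1$ diffeomorphisms, and the fact that a shear $g\mapsto g - \tfrac h2 b(\cdot)$ is a diffeomorphism when $b\in C^1$) is standard; I would keep the write-up at the level of "$\hat h_{x,v}$ is a finite composition of the elementary diffeomorphisms listed above and the affine refreshment maps, hence a diffeomorphism onto an open subset of $\mathbb R^{2d}$ with locally Lipschitz inverse, so it pushes the absolutely continuous Gaussian forward to an absolutely continuous measure; the argument for $\hat h_{x,v}'$ adds one more refreshment–Verlet block and is identical, and the $\eta=0$ case is the restriction to the position coordinate, which is a local diffeomorphism of $\mathbb R^d$ by~\eqref{carot}."
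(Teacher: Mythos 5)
Your broad strategy — reduce to an a.e.-invertible-Jacobian statement, then transfer absolute continuity — is the right one and is what the paper does (via [MR4687848, Corollary 4.3 and Theorem 4.4] rather than a bare change-of-variables formula, but the idea is the same). There is, however, a genuine gap at the central step.

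You argue that the Jacobian of $\hat h_{x,v}$ is "a product of invertible matrices," because each elementary Verlet sub-step and each Verlet flow $h^{(i)}$ is a $C^1$ diffeomorphism of $\mathbb R^{2d}$. That is not what happens. The chain rule gives
\[
\nabla_{(g,g')}\hat h_{x,v} \;=\; \nabla h^{(2)}\cdot
\begin{pmatrix}
\nabla_g q_1 & 0\\
\eta\,\nabla_g p_1 & I_d
\end{pmatrix},
\]
where $(q_1(g),p_1(g))=h^{(1)}\bigl(x,L^{-1/2}\eta v+g\bigr)$. The position output $q_1$ does not depend on $g'$, and $g'$ enters only additively in the velocity, so the second factor is block lower-triangular. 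Invertibility of $\nabla h^{(1)}$ as a $2d\times 2d$ matrix tells you nothing about the $d\times d$ block $\nabla_g q_1$, which is what actually needs to be invertible. (As a sanity check: a pure kick step $\mathcal B$ has a perfectly invertible $2d\times 2d$ Jacobian, yet its velocity-to-position block is zero; and over $K$ Verlet steps a near-resonant frequency could make $\nabla_g q_1$ degenerate. This is exactly what~\eqref{carot} excludes, and why the paper cannot avoid a quantitative estimate.) Concretely, the paper expands $\nabla h^{(1)}\bigl(\begin{smallmatrix}0\\I_d\end{smallmatrix}\bigr)$ as a product of $K$ near-identity factors~\eqref{nabh}, separates the diagonal shear part from the $b$-dependent perturbations, and shows the position block equals $L^{1/2}T I_d + M$ with $\lvert M\rvert\le L^{1/2}T/4$ (inequality~\eqref{pebl}), which is the real content of the proof. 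Your proposal invokes~\eqref{carot} only as a vague reassurance that "shears cannot conspire to kill a direction"; to close the argument you would have to prove the analogue of~\eqref{pebl}.

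Two smaller remarks. For $\hat h_{x,v}'$ you suggest a disintegration over $(g,g')$: this would work, but the paper's route (Theorem 4.4 of [MR4687848] plus invertibility of $\nabla h^{(3)}$) is cleaner once the $\hat h'$ case is settled. For $\eta=0$ you correctly identify $\nabla\hat h_x^{(0)}$ as the velocity-to-position block and observe it is close to a multiple of $I_d$ under~\eqref{carot}; that part of your plan is sound and is precisely what~\eqref{pebl} delivers.
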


The measures in Lemma~\ref{abcon} are of course essentially the distributions of the SGgHMC chains after two and three iterations (one iteration in the~$\eta=0$ case), see the accompanying Remark~\ref{conrem}. The next Lemma~\ref{conlem} states that they satisfy the~$L^1$ transportation cost-information inequality~\cite{MR2078555} on~$(\mathbb{R}^{2d},\hat{d})$ (and on~$(\mathbb{R}^d,\hat{d})$ in case~$\eta=0$). 
\iffalse
For any probability measures~$\mu_1,\mu_2$ with~$\mu_1$ absolutely continuous w.r.t.~$\mu_2$, define
\begin{equation*}
H(\mu_1/\mu_2):= \int\log \frac{d\mu_1}{d\mu_2}d\mu_1.
\end{equation*}
\fi

For a measure~$\mu$ on~$\mathbb{R}^{2d}$, let~$\mu^*$ denote the pushforward of~$\mu$ by~$\mathbb{R}^{2d}\ni(\bar{x},\bar{v})\mapsto(\bar{x},L^{\frac{1}{2}}\bar{v})$. For a probability measure~$\mu$ and constant~$\bar{C}>0$, we say that~$\mu\in T_1(\bar{C})$ if for any probability measure~$\mu'$, it holds that~$\mathcal{W}_1(\mu,\mu') \leq \sqrt{2\bar{C}\textrm{Ent}(\mu'|\mu)}$.
\begin{lemma}\label{conlem}
Assume~\eqref{A1a} and~\eqref{carot}. For any~$x,v\in\mathbb{R}^d$, let~$\nu_{x,v}$ denote the pushforward of~$\gamma_{0,(1-\eta^2)/L}^{2d}$ by the map~$\hat{h}_{x,v}$ and let~$\bar{\nu}_{x,v}$ denote the pushforward of~$\gamma_{0,(1-\eta^2)/L}^{3d}$ by~$\hat{h}_{x,v}'$. Moreover, let~$\nu_x$ denote the pushforward of~$\gamma_{0,1/L}^d$ by the map~$\hat{h}_x^{(0)}$. It holds that
\begin{enumerate}[label=(\roman*)]
\item~$\nu_{x,v}^*\in T_1(4(1.09/4+1)^2(1-\eta^2)/L)$, \label{conlem1}
\item~$\bar{\nu}_{x,v}^*\in T_1(8(1.09/4+1)^2(1-\eta^2)/L)$. \label{conlem2}
\end{enumerate}
If~$\eta=0$, then it holds that~$\nu_x\in T_1((1.09/4+1)^2T^2)$.
\end{lemma}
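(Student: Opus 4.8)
The plan is to obtain each of the three inequalities by transferring a transportation inequality for the underlying Gaussian through a Lipschitz map. Two classical facts do the work. First, the Gaussian $\gamma_{0,\sigma^2}^n$ on $(\mathbb{R}^n,\abs{\cdot})$ satisfies the logarithmic Sobolev inequality with constant $\sigma^2$, hence by the Otto--Villani theorem lies in $T_2(\sigma^2)$ and a fortiori in $T_1(\sigma^2)$. Second, $T_1$ is stable under Lipschitz pushforward: if $\mu\in T_1(C)$ on $(\mathbb{R}^n,\abs{\cdot})$ and $F$ is $\kappa$-Lipschitz into a metric space $(E,\rho)$, then $F_\#\mu\in T_1(\kappa^2C)$ on $(E,\rho)$. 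For this second fact I would either invoke the stability of the log-Sobolev inequality under Lipschitz images --- which requires $F_\#\mu$ to be absolutely continuous so that the Dirichlet form is meaningful, precisely what Lemma~\ref{abcon} provides --- followed by Otto--Villani, or argue directly via the Bobkov--G\"otze dual characterisation of $T_1$ (Gaussian concentration for $1$-Lipschitz functionals), noting that a $1$-Lipschitz function on $(E,\rho)$ pulls back to a $\kappa$-Lipschitz function on $\mathbb{R}^n$; this second route avoids absolute continuity entirely. Both are covered by \cite{MR2078555}.

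Next I would unwind the definitions to confirm that $\nu_{x,v}^*$ is the pushforward of $\gamma_{0,(1-\eta^2)/L}^{2d}$ by $S\circ\hat h_{x,v}$, that $\bar\nu_{x,v}^*$ is the pushforward of $\gamma_{0,(1-\eta^2)/L}^{3d}$ by $S\circ\hat h_{x,v}'$, and that, for $\eta=0$, $\nu_x$ is the pushforward of $\gamma_{0,1/L}^{d}$ by $\hat h_x^{(0)}$, where $S(\bar x,\bar v)=(\bar x,L^{1/2}\bar v)$. Lemma~\ref{bbcon} then supplies the Euclidean Lipschitz constants of $\hat h_{x,v}$, $\hat h_{x,v}'$ and $\hat h_x^{(0)}$, namely $2$, $2\sqrt2$ and $\sqrt{2LT^2}$ (more precisely, it bounds the relevant Jacobians, with the position component being of order $\alpha^{1/2}(1-\eta)=\sqrt{LT^2}$ and hence small).

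It remains to convert from the Euclidean target metric to the twisted metric $\hat d$. Writing $(\bar q,\tilde p)$ for the output of $\hat h_{x,v}$, so that $S(\bar q,\tilde p)$ is the genuine position--velocity pair, and using $\hat\gamma^{-1}L^{1/2}=\eta L^{1/2}T/(1-\eta)=\eta\alpha^{1/2}$ with $\alpha=LT^2/(1-\eta)^2$ and $\hat\alpha=1.09\alpha$, one has
\[
\hat d\bigl(S(\bar q_1,\tilde p_1),S(\bar q_2,\tilde p_2)\bigr)=\hat\alpha\,\abs{\bar q_1-\bar q_2}+\bigl|\bar q_1-\bar q_2+\eta\alpha^{1/2}(\tilde p_1-\tilde p_2)\bigr|.
\]
Using $\alpha\le 1/4$ (the high-friction bound $4LT^2\le(1-\eta)^2$, in force whenever this lemma is applied) together with the smallness of the position part of the Jacobian just mentioned, the right-hand side is at most $(1+\hat\alpha)\le 1+1.09/4$ times the Euclidean distance $\sqrt{\abs{\bar q_1-\bar q_2}^2+\abs{\tilde p_1-\tilde p_2}^2}$ on the range of $\hat h_{x,v}$. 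This gives $\hat d$-Lipschitz constants $2(1+1.09/4)$ for $S\circ\hat h_{x,v}$, $2\sqrt2(1+1.09/4)$ for $S\circ\hat h_{x,v}'$, and --- in the case $\eta=0$, where $\hat\gamma=\infty$ and $\hat d$ reduces to a multiple of the Euclidean metric on $\mathbb{R}^d$ --- a constant of the stated form for $\hat h_x^{(0)}$. Multiplying the squares by the respective source variances $(1-\eta^2)/L$, $(1-\eta^2)/L$ and $1/L$ yields the three asserted $T_1$ constants.

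The functional-inequality input is entirely classical, so the real work, and the main obstacle, is the last step: carefully tracking the rescaling $S$, the twist $\hat\gamma^{-1}$, the coefficient $\hat\alpha$, and the separate sizes of the position and velocity parts of the Jacobians from Lemma~\ref{bbcon}, so as to land on the precise numerical constants rather than merely comparable ones. A secondary point to keep straight is that the Lipschitz transfer of $T_1$ is only automatic once absolute continuity of the image (Lemma~\ref{abcon}) is in hand, or one argues via the Bobkov--G\"otze characterisation instead.
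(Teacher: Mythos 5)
Your overall strategy—Gaussian LSI, Lipschitz transfer (via either LSI stability or Bobkov--G\"otze), Otto--Villani, metric conversion—is exactly the paper's approach, and your first sub-route (LSI transfer needing Lemma~\ref{abcon}, then Otto--Villani) is literally what the paper does in equations \eqref{thecon2}--\eqref{t1c}. Your observation that the Bobkov--G\"otze dual characterisation of $T_1$ would bypass the absolute-continuity input (Lemma~\ref{abcon}) entirely is a genuine alternative that the paper does not exploit.

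However, the metric-conversion step as you state it is wrong, and this is not a cosmetic issue. You claim that
\[
\hat\alpha\,\abs{\bar q_1-\bar q_2}+\bigl|\bar q_1-\bar q_2+\eta\alpha^{1/2}(\tilde p_1-\tilde p_2)\bigr|\leq(1+\hat\alpha)\sqrt{\abs{\bar q_1-\bar q_2}^2+\abs{\tilde p_1-\tilde p_2}^2},
\]
i.e.\ that $(1+\hat\alpha)a+\eta\alpha^{1/2}b\leq(1+\hat\alpha)\sqrt{a^2+b^2}$ for all $a,b\geq0$, but taking $a=1$ and $b$ small shows the left side exceeds the right to first order in $b$ whenever $\eta\alpha^{1/2}>0$. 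The sharp factor, by Cauchy--Schwarz, is $\sqrt{(1+\hat\alpha)^2+(\eta\alpha^{1/2})^2}$, and the paper's \eqref{t1c} uses the looser but simpler $\sqrt{2}(1+\hat\alpha)$. Consequently the $\hat d$-Lipschitz constants you announce for $S\circ\hat h_{x,v}$ and $S\circ\hat h_{x,v}'$ (namely $2(1+1.09/4)$ and $2\sqrt{2}(1+1.09/4)$) are not obtainable as (Euclidean Lipschitz)$\times$(metric factor): with the correct factor you would get $2\sqrt{2}(1+1.09/4)$ and $4(1+1.09/4)$ respectively, and squaring and multiplying by the Gaussian variance would overshoot the constants in the lemma. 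The paper avoids this by keeping $\nu_{x,v}$ in Euclidean coordinates until the very last line, obtaining $T_2$ for $\nu_{x,v}$ via Otto--Villani (eq.\ \eqref{t2c}) and only then comparing $\mathcal{W}_1$ in $\hat d$ with $\mathcal{W}_{2,e'}$ at the Wasserstein level. So while your high-level plan is correct, the stated pointwise bound is false and the route you describe (direct $T_1$ pushforward, if one tracks constants honestly) lands on a strictly larger constant than the lemma asserts. You also attribute to Lemma~\ref{bbcon} a decomposition of the Jacobian into position and velocity parts, which that lemma does not provide: it gives only a single operator-norm bound in each case.
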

\begin{remark}\label{conrem}
In the construction of~$\nu_{x,v}^*$ and~$\nu_x$, the parameters~$\bar{\theta}_1,\bar{\theta}_2$ are arbitrary. Therefore for any~$x,v\in\mathbb{R}^n$ and~$i\in\mathbb{N}$, the measure~$\nu_{x,v}^*$ may be identified accordingly as the regular conditional distribution for~$(X^{(i+2)},V^{(i+2)})$ given~$(X^{(i)},V^{(i)}) = (x,v)$ as in~\cite[Chapter~4.2-4.3, p.79]{MR1163370}\footnote{Alternatively,~$\nu_{x,v}^*$ is~$\mu((x,v),\cdot)$ for the probability kernel~$\mu$ given by Theorem~8.5 in~\cite{MR4226142} with~$\xi=(X^{(i)},V^{(i)})$,~$\eta = (X^{(i+2)},V^{(i+2)})$.}. If~$\eta=0$, then for suitable~$\bar{\theta}_1$,~$\nu_x$ is the regular conditional distribution for~$X^{(i+1)}$ given~$X^{(i)}=x$.
\end{remark}

We are now ready to precisely state concentration inequalities for empirical averages of the chain. The following Corollary~\ref{gaucor_old} is a more detailed version of Corollary~\ref{gaucor}. Its proof strategy is mostly known. The assumption~$N_0\neq0$ made in Corollary~\ref{gaucor_old} is a technical one, which arises from our requirement for the absolute continuity of conditional measures as in Lemma~\ref{abcon}. 
%This requ in order to obtain log-Sobolev inequalities and subsequently the transportation cost-information inequalities as in Lemma~\ref{conlem}.

\begin{corollary}\label{gaucor_old}
Let Assumption~\ref{A1} hold, let~$x,v\in\mathbb{R}^d$ %be~$\mathbb{R}^d$-valued r.v.'s 
and let~$f:\mathbb{R}^d\rightarrow\mathbb{R}$ be Lipschitz. 
%satisfy~$\|f\|_{\textrm{Lip}(\hat{d})} <\infty$. 
Assume~$4LT^2\leq(1-\eta)^2$ and~\eqref{carot}. Moreover, let~$c,\epsilon^*$ be given as in Corollary~\ref{maincore} and~$g$ be as in~\eqref{gdef??}. 
Let~$N_0\in\mathbb{N}\setminus\{0\}$ and if~$\eta=0$, then let~$N_0\in\mathbb{N}$. For any~$N\in\mathbb{N}$,~$t>0$, it holds a.s. that
\begin{equation*}
%\mathbb{E}\Big[\mathds{1}_{\{\frac{1}{jN}\sum_{i=0}^{jN-1} (f(X^{(i)}) - \mathbb{E}[f(X^{(i)})]) > t\}}\Big|x,v\Big] 
\mathbb{P}\bigg(\frac{1}{N}\sum_{i=N_0+1}^{N_0+N} (f(X^{(i)}) - \mathbb{E}[f(X^{(i)})]) > t\bigg) \leq \exp\bigg(-\frac{ (1-\eta)Nc^2t^2}{CT^2\|f\|_{\textrm{Lip}(\hat{d})}^2(1+(cN)^{-1})}\bigg),%\label{gaucoreq_old}
\end{equation*}
where~$C$ is given by
\begin{equation}\label{csi_old}
C= \begin{cases}
19g(1+\eta)/(\alpha\epsilon^*) & \textrm{if }\eta\neq 0\\
3g/\epsilon^* & \textrm{if }\eta=0.
\end{cases}
\end{equation}
%In case~$\eta=0$, the same assertion holds when~$N_0$ is allowed to be~$0$.
\end{corollary}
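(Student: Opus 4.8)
The plan is to combine the $\mathcal W_1$-contraction of Corollary~\ref{maincore} with the transport–cost inequalities for the conditional laws obtained in Lemma~\ref{conlem}, following the by-now standard strategy of Djellout--Guillin--Wu~\cite{MR2078555} for additive functionals of geometrically contractive Markov chains. Writing $S_N := \sum_{i=N_0+1}^{N_0+N}(f(X^{(i)}) - \mathbb{E}[f(X^{(i)})])$, by Markov's inequality it suffices to produce, for every $\lambda>0$, a Gaussian bound $\mathbb{E}[e^{\lambda S_N}]\le e^{A\lambda^2(N+c^{-1})}$ with an explicit $A$, and then optimise in $\lambda$.

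First I would record the two structural inputs. (i) For $j\le k$ the map $(x,v)\mapsto\mathbb{E}[f(X^{(k)})\mid(X^{(j)},V^{(j)})=(x,v)]$ is Lipschitz with respect to $\hat{d}$ of norm at most $(g/(2\log 2\,\epsilon^*))^{1/2}(1-c)^{k-j}\|f\|_{\textrm{Lip}(\hat{d})}$; this is immediate from Kantorovich--Rubinstein duality and the $\mathcal W_1$-bound of Corollary~\ref{maincore} (recall $\mathcal W_1(\delta_{(x,v)},\delta_{(y,w)})=\hat{d}((x,v),(y,w))$). Consequently, for any $j$, the partial sum $(x,v)\mapsto\sum_{i\ge j}\mathbb{E}[f(X^{(i)})\mid(X^{(j)},V^{(j)})=(x,v)]$ has $\hat{d}$-Lipschitz norm at most $(1-c)^{(N_0-j)_+}(g/(2\log 2\,\epsilon^*))^{1/2}c^{-1}\|f\|_{\textrm{Lip}(\hat{d})}=:(1-c)^{(N_0-j)_+}\bar M$. (ii) By Lemma~\ref{conlem} together with the Bobkov--Götze characterisation used in~\cite{MR2078555} ($\mu\in T_1(\bar C)$ iff every $1$-Lipschitz $\varphi$ has $\mathbb{E}_\mu[e^{\lambda(\varphi-\mathbb{E}_\mu\varphi)}]\le e^{\bar C\lambda^2/2}$), the two-step kernel $(X^{(j)},V^{(j)})\mapsto(X^{(j+2)},V^{(j+2)})$ lies in $T_1(\bar C_0)$ on $(\mathbb{R}^{2d},\hat{d})$ and the three-step kernel in $T_1(2\bar C_0)$, with $\bar C_0\asymp(1+\eta)(1-\eta)/L$ after converting from the velocity-rescaled metric in which Lemma~\ref{conlem} is phrased; when $\eta=0$ the one-step kernel $\nu_x$ itself lies in $T_1(\bar C_0')$ with $\bar C_0'\asymp T^2$.

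Next comes the peeling. Conditioning successively on $\mathcal F_j:=\sigma(X^{(j)},V^{(j)})$ and using the tower property, one removes the summands from the top: at the step that conditions on $\mathcal F_{j-2}$, the conditional law of $(X^{(j)},V^{(j)})$ is the two-step kernel, which is $T_1(\bar C_0)$, and the function $f(X^{(j)})+f(X^{(j-1)})+\sum_{i>j}\mathbb{E}[f(X^{(i)})\mid\mathcal F_j]$ being exponentiated is $\hat{d}$-Lipschitz with norm controlled by (i); applying the Bobkov--Götze estimate to this kernel multiplies the bound by $\exp\!\big(\tfrac12\bar C_0\lambda^2\cdot(\text{Lipschitz norm})^2\big)$. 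Because the one-step kernel is degenerate --- it is not absolutely continuous when $\eta\ne0$, cf. Lemma~\ref{abcon} --- one genuinely must move two indices at a time; the three-step $T_1$ estimate is invoked once, to close the recursion against the deterministic initial condition when the parity forces it, which is exactly why $N_0\ge1$ is required when $\eta\ne0$ (when $\eta=0$ the peeling is one index at a time using $\nu_x\in T_1(\bar C_0')$). The Lipschitz norms are $\le\bar M$ for the $\asymp N$ removals with indices inside $[N_0+1,N_0+N]$ and decay geometrically, summing to $\asymp\bar M^2/c$, for those below $N_0+1$. Summing the exponents gives
\[
\mathbb{E}\big[e^{\lambda S_N}\big]\ \le\ \exp\!\Big(A\,\lambda^2\,\|f\|_{\textrm{Lip}(\hat{d})}^2\,(N+c^{-1})\Big),\qquad A\ \asymp\ \frac{\bar C_0\,g}{\epsilon^*\,c^2}
\]
(resp.\ with $\bar C_0$ replaced by $\bar C_0'$ when $\eta=0$).

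Finally, taking $\lambda=Nt/\big(2A\|f\|_{\textrm{Lip}(\hat{d})}^2(N+c^{-1})\big)$ in $\mathbb{P}(S_N/N>t)\le e^{-\lambda Nt}\mathbb{E}[e^{\lambda S_N}]$ yields the bound with exponent $-Nt^2/\big(4A\|f\|_{\textrm{Lip}(\hat{d})}^2(1+(cN)^{-1})\big)$. Substituting $\bar C_0\asymp(1-\eta^2)/L$ (resp.\ $\bar C_0'\asymp T^2$) and $\alpha=LT^2/(1-\eta)^2$ rewrites $4A$ in the form $CT^2/((1-\eta)c^2)$ with $C$ of the type in~\eqref{csi_old}; keeping track of the explicit numerical constants --- the $4(1.09/4+1)^2$ factors from Lemma~\ref{conlem}, the prefactor $(g/(2\log 2\,\epsilon^*))^{1/2}$ from Corollary~\ref{maincore}, and the Bobkov--Götze constant --- is what pins down $19g(1+\eta)/(\alpha\epsilon^*)$ (resp.\ $3g/\epsilon^*$). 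The main obstacle is precisely this bookkeeping: propagating the geometrically decaying $\hat{d}$-Lipschitz constants through the nested conditioning while correctly matching the three metrics in play --- $\hat{d}$, the velocity-rescaled metric in which $T_1$ is stated in Lemma~\ref{conlem}, and the semimetric $\rho^*$ in which Corollary~\ref{maincore} is cleanest --- together with the parity/initialisation complication forced by the degeneracy of the one-step kernel when $\eta\ne0$.
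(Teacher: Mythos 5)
Your overall blueprint—Kantorovich--Rubinstein plus Corollary~\ref{maincore} for the $\hat d$-Lipschitz contraction of $(x,v)\mapsto\mathbb{E}[f(X^{(i)})]$, Lemma~\ref{conlem} and the Bobkov--G\"otze/$T_1$ machinery of~\cite{MR2078555} for the conditional kernels, a peeling argument, and a Chernoff optimisation in $\lambda$—matches the paper's strategy. But there is a genuine gap in the peeling step as you describe it. You propose to ``move two indices at a time'' and, at the step that conditions on $\mathcal F_{j-2}$, apply the two-step $T_1$ bound to the exponential of $f(X^{(j)})+f(X^{(j-1)})+\sum_{i>j}\mathbb{E}[f(X^{(i)})\mid\mathcal F_j]$. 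This cannot be done: the $T_1$ estimate for the conditional law of $(X^{(j)},V^{(j)})$ given $\mathcal F_{j-2}$ only controls $\mathbb{E}[e^{\lambda\varphi(X^{(j)},V^{(j)})}\mid\mathcal F_{j-2}]$ for $\hat d$-Lipschitz $\varphi$, but $f(X^{(j-1)})$ is \emph{not} a function of $(X^{(j)},V^{(j)})$; conditionally on $\mathcal F_{j-2}$ it is a separate random variable carried by the intermediate state, and the only way to integrate it out would be via a $T_1$ bound for the one-step kernel, which (as you rightly observe) fails when $\eta\neq0$ because that kernel is degenerate. So the recursion as written cannot be closed.

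The paper circumvents this by \emph{never} placing two consecutive indices in the same peeling chain. It splits $\sum_{i=N_0+1}^{N_0+N}f(X^{(i)})$ into the even-offset subsum $\sum_k f(X^{(N^*-2k)})$ with $N^*=N_0+2\lfloor N/2\rfloor$ and the odd-offset subsum $\sum_k f(X^{(N_0+\bar N-2k)})$, peels each subsum strictly two indices at a time (so each stage really does only see a Lipschitz function of a single state, two steps down, and the two-step/three-step $T_1$ bounds from Lemma~\ref{conlem} apply cleanly), and finally recombines the two halves with H\"older's inequality. The H\"older step costs a factor of $2$ in the exponent, which is absorbed into the explicit constant $C$ in~\eqref{csi_old}. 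This even/odd decoupling is the missing ingredient in your argument; once you adopt it, the rest of your bookkeeping (the geometrically decaying Lipschitz norms, the $N+c^{-1}$ aggregation, the conversion between the velocity-rescaled metric of Lemma~\ref{conlem} and $\hat d$, and the Chernoff optimisation) is essentially what the paper does.
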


\subsection{Empirical average bias}\label{bisec}
In this section, 
%we recall the setting in the relevant paragraph in Section~\ref{mr}, i.e. there exists a potential~$U\in C^2$ that~$b$ will be assumed to estimate. 
we fix~$U\in C^2(\mathbb{R}^d)$ satisfying~$\nabla U(0) = 0$ and~\eqref{A10},~\eqref{A1a} both with~$b(\cdot,\theta)$ replaced by~$\nabla U$. Moreover, recall that~$\mu$ denotes the probability measure with density proportional to~$\mathbb{R}^{2d}\ni(x,v)\mapsto e^{-U(x)-\abs{v}^2/2}$ and recall that~$\hat{d}$ is given by~\eqref{tmet}.
%The results in this section deal with the case where~$b$ is a stochastic approximation of~$\nabla U$. 
%for some~$U:\mathbb{R}^d\rightarrow\mathbb{R}$. %Following Propositions~16,~18 in~\cite{gouraud2023hmc} and their proofs, 
Corollary~\ref{invd} below gives the~$L^1$ Wasserstein distance w.r.t. the twisted metric~$\hat{d}$ between the target measure~$\mu$ and the invariant measure~$\tilde{\mu}$ for the unadjusted, non-stochastic gHMC chain. Proposition~\ref{invd2} then gives the same distance between~$\tilde{\mu}$ and~$\pi_n(\tilde{\mu})$ given suitable assumptions on the stochastic gradient. %Note that the integrator for Hamiltonian dynamics is velocity Verlet here instead of position Verlet in~\cite{gouraud2023hmc}.
Note that %~$\tilde{\mu}$ %(more precisely, it denotes the invariant measure for the transition kernel of SGgHMC with~$b(\cdot,\theta)=\nabla U$) 
%with~$\Theta=\{\theta\}$) 
%and~$A=\Omega$
%exists by 
Corollary~\ref{maincore} and~\cite[Proposition~2.10]{MR2078555} imply the existence of unique invariant measures for~$\bar{n}$ and~$\bar{n}+1$ successive non-stochastic SGgHMC iterations for large enough~$\bar{n}$, which must coincide and imply the existence of~$\tilde{\mu}$ (in the application of~\cite[Proposition~2.10]{MR2078555}, the~$T_1(C)$ property follows by and similarly to Lemma~\ref{abcon},~\ref{conlem}). The proofs of Corollary~\ref{invd}, Proposition~\ref{invd2} and Corollary~\ref{bicor_old} follow closely to previous proofs appearing in the literature. 
%The complete proofs for these statements can be found in the arXiv version of the present manuscript. 
%given in Section~2 
These proofs can be found in Appendix~\ref{spw}. 
%of the supplementary material. The first two are similar in spirit to those of Propositions~16 and~18 in~\cite{gouraud2023hmc} (in the former case, see also the triangle inequality trick as in~\cite{durmus2023asymptotic} for example), but note that the setting here is not a Markov chain (due to the interaction with~$\Theta$), that the contraction is in~$\mathcal{W}_1$ and that the velocity Verlet integrator is used. The main Corollary~\ref{bicor_old} then uses these results to give a bound on the bias to~$\mu$ of empirical averages.

\begin{corollary}\label{invd}
Let Assumption~\ref{A1} hold. 
% and suppose there exists~$U:\mathbb{R}^d\rightarrow\mathbb{R}^d$ satisfying~$b(\cdot,\theta) = \nabla U$ for all~$\theta\in\Theta$ and~$\nabla U(0)=0$. 
Assume~$4LT^2\leq (1-\eta)^2$ and~\eqref{carot}. 
%that of~$e^{-U(x)-v^2/2}dxdv$.
In the case~$u=0$, where the velocity Verlet integrator is used, it holds that
\begin{equation*}
\mathcal{W}_1(\mu,\tilde{\mu})\leq %8\sqrt{3}\bigg(\frac{\epsilon^*f_0(5/\epsilon^*)}{10}\bigg)^{-2\sqrt{L}T/c} \cdot h\max\bigg(\frac{dL}{m},\frac{(LR')^2}{m}\bigg)^{\frac{1}{2}}
16\bigg(\frac{2g}{\log(2)\epsilon^*}\bigg)^{\sqrt{L}T/c} \cdot h\max\bigg(\frac{dL}{m},\frac{(LR')^2}{m}\bigg)^{\frac{1}{2}}
\end{equation*}
where~$c,\epsilon^*,f_0$ are given as in Corollary~\ref{maincore} and~$R'=4R(1+\frac{L}{m})$. 
In the case~$u=1$, where the randomized midpoint integrator is used, it holds that
\begin{equation*}
\mathcal{W}_1(\mu,\tilde{\mu})\leq 144\sqrt{6}T^{-1}\bigg(\frac{2g}{\log(2)\epsilon^*}\bigg)^{\sqrt{L}T/c}\cdot L^{-1/4}h^{3/2}\max\bigg(\frac{dL}{m},\frac{(LR')^2}{m}\bigg)^{\frac{1}{2}}.
\end{equation*}
%\iffalse % extra Lipschitz Hessian assumption
For~$u=0$, if in addition there exists~$L_2>0$ such that~$\abs{\nabla^2 U(x)-\nabla^2 U(x')}\leq L_2 \abs{x-x'}$ for all~$x,x'\in\mathbb{R}^d$, then it holds that
\begin{equation*}
\mathcal{W}_1(\mu,\tilde{\mu})\leq 66\bigg(1+\frac{L_2}{L^{3/2}}\bigg)\bigg(\frac{2g}{\log(2)\epsilon^*}\bigg)^{\sqrt{L}T/c} \cdot \sqrt{L}h^2\max\bigg(\frac{dL}{m},\frac{(LR')^2}{m}\bigg).
\end{equation*}
%\fi
\end{corollary}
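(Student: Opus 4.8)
The plan is to compare the numerical chain with the idealized one at stationarity, reduce to a one-iteration discretization error averaged against $\mu$, and then control that error by a telescoping over the $K$ Hamiltonian substeps against the expanding Lipschitz constant of the exact flow. Write $\mathcal{Q}$ for the transition kernel of one iteration of the non-stochastic ($b(\cdot,\theta)=\nabla U$) gHMC chain with the chosen integrator, so $\tilde\mu\mathcal{Q}=\tilde\mu$, and $\mathcal{Q}_{\star}$ for the idealized kernel obtained by replacing \eqref{vvi} (resp.\ the randomized-midpoint map \eqref{hameqs} with $u=1$) by the exact Hamiltonian flow $\phi_t=(q^{\mathrm H}_t,p^{\mathrm H}_t)$ of $(x,v)\mapsto(v,-\nabla U(x))$ over time $T$, so $\mu\mathcal{Q}_{\star}=\mu$. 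Since $\|\nabla^2U\|\le L$, the exact flow $\phi_t$ is Lipschitz on $\mathbb{R}^{2d}$ with constant $\le Ce^{\sqrt{L}t}$ in each component, and the velocity refreshment is non-expansive; these two facts drive the whole argument. First I fix $\bar n$ with $(g/(2\log 2\,\epsilon^*))^{1/2}(1-c)^{\bar n}\le 1/2$, so that $\bar n\le Cc^{-1}\log(g/\epsilon^*)$. Using $\mu=\mu\mathcal{Q}_{\star}^{\,\bar n}$, $\tilde\mu=\tilde\mu\mathcal{Q}^{\bar n}$, the triangle inequality and the $\mathcal{W}_1$-contraction of Corollary~\ref{maincore},
\begin{equation*}
\mathcal{W}_1(\mu,\tilde\mu)\le \mathcal{W}_1(\mu\mathcal{Q}_{\star}^{\,\bar n},\mu\mathcal{Q}^{\bar n})+\tfrac12\mathcal{W}_1(\mu,\tilde\mu),
\end{equation*}
hence $\mathcal{W}_1(\mu,\tilde\mu)\le 2\,\mathcal{W}_1(\mu\mathcal{Q}_{\star}^{\,\bar n},\mu\mathcal{Q}^{\bar n})$ once $\mathcal{W}_1(\mu,\tilde\mu)<\infty$ (both measures have finite second moments; for $\tilde\mu$ by the $\mathcal{W}_2$-contraction of Corollary~\ref{maincore} together with a one-step second-moment bound).

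Next I estimate $\mathcal{W}_1(\mu\mathcal{Q}_{\star}^{\,\bar n},\mu\mathcal{Q}^{\bar n})$ by coupling the two chains over these $\bar n$ iterations with common Gaussian refreshments and telescoping over iterations: the iteration-$i$ discrepancy is amplified by the Lipschitz constant of the remaining $\bar n-i$ exact iterations, $\le Ce^{\sqrt{L}\bar nT}=(2g/(\log 2\,\epsilon^*))^{\sqrt{L}T/c}$ up to a constant, which is exactly the prefactor in the statement. Within one iteration, writing $\psi$ for one integrator substep and telescoping $\phi_T-\psi^{K}=\sum_{j=0}^{K-1}(\phi_{T-(j+1)h}\circ\phi_h-\phi_{T-(j+1)h}\circ\psi)\circ\psi^{j}$, the position, velocity and $\hat d$-components of $\phi_T(z)-\psi^{K}(z)$ are all bounded by $C\sum_{j=0}^{K-1}e^{\sqrt{L}(T-(j+1)h)}\,|(\phi_h-\psi)(\psi^{j}z)|$; under $4LT^2\le(1-\eta)^2$ and \eqref{carot} the exponential weights are $O(1)$, the sum has $K=T/h$ terms, and a priori estimates of Lemma~\ref{basic} type give $\sup_j|p\text{-part of }\psi^{j}z-z|\le C(|v|+T|\nabla U(x)|)$ and $\sup_j|\nabla U(q\text{-part of }\psi^{j}z)|\le C(|\nabla U(x)|+\sqrt LT|v|)$. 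So everything is reduced to the one-substep local error $|(\phi_h-\psi)(w)|$. For velocity Verlet, Taylor expansion of $\phi$ gives position local error $\int_0^h(h-s)(\nabla U(q)-\nabla U(q(s)))\,ds=O(Lh^3)(|p|+h|\nabla U(q)|)$, and velocity local error equal to the trapezoidal-rule error of $\int_0^h\nabla U(q(s))\,ds$, which \emph{without} extra smoothness is only $O(Lh^2)(|p|+h|\nabla U(q)|)$ but, \emph{if} $\nabla^2U$ is $L_2$-Lipschitz, improves to $O((L+L_2)h^3)$ times comparable moments. Summing $K=T/h$ substeps, the velocity error over one iteration is $O(LTh)(|v|+|\nabla U(x)|)$ in general and $O((L+L_2)Th^2)(|v|+|\nabla U(x)|)$ under the $L_2$-assumption, the position error being lower order; inserting into $\hat d$ (with $\hat\alpha=1.09LT^2/(1-\eta)^2\le1/4$ and $\hat\gamma^{-1}=\eta T/(1-\eta)$) and using $LT^2/(1-\eta)\le1/4$ yields $\hat d(\phi_T(x,v),\psi^{K}(x,v))\le C\,h\,\Xi(x,v)$, resp.\ $\le C(1+L_2/L^{3/2})\,h^2\,\Xi(x,v)$, with $\Xi$ a bounded-degree polynomial in $|v|$ and $|\nabla U(x)|$.

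Finally I take expectations under $\mu$ (after noting that for $(x,v)\sim\mu$ the refreshed $v'$ has law $\mathcal{N}(0,I_d)$ independent of $x$, so only $e^{-U}\otimes\mathcal{N}(0,I_d)$-moments enter, and that over the bounded number $\bar n$ of iterations the moments of $\mu\mathcal{Q}^{i}$ stay comparable to those of $\mu$ by Corollary~\ref{refmod} and $L$-smoothness): $\mathbb{E}_\mu|v|^2=d$; since $\nabla U(0)=0$, $|\nabla U(x)|\le L|x|$, and the convexity-at-infinity bound \eqref{A10} (via the computation leading to \eqref{A1b}, applied with the origin) gives $\nabla U(x)\cdot x\ge\frac m2|x|^2$ for $|x|\ge R'=4R(1+L/m)$, whence with $\mathbb{E}_\mu[\nabla U(x)\cdot x]=d$ (integration by parts) and $|\nabla U(x)|<LR'$ on $\{|x|<R'\}$ one gets $m\,\mathbb{E}_\mu|x|^2\le C(d+L(R')^2)$ and $\mathbb{E}_\mu|\nabla U|^2\le L^2\mathbb{E}_\mu|x|^2\le C\,L\max(dL/m,(LR')^2/m)$. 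Collecting, $\mathbb{E}_\mu[\hat d(\phi_T,\psi^{K})]\le C\,h\,\max(dL/m,(LR')^2/m)^{1/2}$, resp.\ $\le C(1+L_2/L^{3/2})\,h^2\,\max(dL/m,(LR')^2/m)$, and multiplying by $\bar n$ and the Step-2 prefactor yields the first and third inequalities, the explicit constants $16$ and $66(1+L_2/L^{3/2})$ coming from the bookkeeping. The randomized-midpoint case ($u=1$) follows the same scheme, except that its substep map from \eqref{hameqs} has a mean-zero part of size $O(h^2)$ and a deterministic part of size $O(h^3)$: a Cauchy--Schwarz over the $K$ independent randomizations turns the accumulated mean-zero error into an $O(T^{-1}h^{3/2})$ contribution (carrying the $L^{-1/4}$), which dominates the $O(Th^2)$ deterministic part, giving the stated bound.

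The main obstacle is this last step: the local truncation errors are evaluated along the \emph{numerical} trajectory $\psi^{j}(x,v)$, for which $\mu$ is not invariant, so one must transfer them to bounds in expectation under $\mu$ --- using the a priori estimates of Lemma~\ref{basic} and the modified-norm Lyapunov property of Corollary~\ref{refmod} --- while keeping the dimension dependence sharp (a single $\sqrt d$ for Verlet, full $d$ in the Hessian-Lipschitz case), which forces the careful combination of $\mathbb{E}_\mu[\nabla U(x)\cdot x]=d$, the bound $|\nabla U(x)|\le L|x|$, and the convexity-at-infinity correction $(LR')^2/m$. Distinguishing the general $O(h)$ regime from the Hessian-Lipschitz $O(h^2)$ regime inside the trapezoidal-rule error, and isolating the variance cancellation in the randomized-midpoint substep, are the other points that require care.
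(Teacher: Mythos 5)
Your proposal is essentially the paper's proof, just with slightly different bookkeeping in the propagation step. Both arguments (i) pick $n$ so that the contraction prefactor from Corollary~\ref{maincore} after $n$ iterations is at most $1/2$, use the triangle inequality $\mathcal{W}_1(\mu,\tilde\mu)\leq\mathcal{W}_1(\pi_n(\mu),\tilde\mu)+\mathcal{W}_1(\pi_n(\mu),\mu)$, and absorb the first term to obtain $\mathcal{W}_1(\mu,\tilde\mu)\leq 2\,\mathcal{W}_1(\pi_n(\mu),\mu)$ — your $\mathcal{W}_1(\mu\mathcal{Q}_{\star}^{\bar n},\mu\mathcal{Q}^{\bar n})$ is exactly $\mathcal{W}_1(\mu,\pi_n(\mu))$ since $\mu\mathcal{Q}_{\star}=\mu$; (ii) reduce to the one-step error at stationarity and propagate with a factor of order $e^{C\sqrt{L}nT}$, which gives the $(2g/(\log 2\,\epsilon^*))^{\sqrt{L}T/c}$ prefactor; and (iii) close with the $\mu$-moment bounds of Lemma~\ref{eal} and the one-substep local-error estimates (the paper invokes the already-proved estimates from \cite{gouraud2023hmc} and \cite{bourabee2022unadjusted} in Euclidean $\mathcal{W}_2$ and then converts to the twisted $\mathcal{W}_1$ via $\mathcal{W}_1\leq\sqrt 2\max(1+1.09\alpha,\gamma^{-1})\mathcal{W}_{2,e}$, rather than redoing the Taylor expansions in $\hat d$ directly). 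Your explicit telescoping of $\phi_T-\psi^K$ through the exact flow is a legitimate alternative to the paper's inequality $\mathcal{W}_{2,e}(\pi_n(\mu),\mu)\leq\frac{(1+2h)^{nK}}{2h}\mathcal{W}_{2,e}(\mu P_h,\mu)$, but it leads to the same conclusion.
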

%\begin{remark}
%Corollary~\ref{invd} may be improved under stronger assumptions on~$U$, see~\cite[Remark~17]{gouraud2023hmc}.
%\end{remark}

\begin{assumption}\label{A3}
There exist~$p\in\mathbb{N}\setminus\{0\}$, a measurable index set~$\Theta_0$ and a family of functions~$(U_{\vartheta})_{\vartheta\in\Theta_0}$ satisfying
\begin{enumerate}
\item for any~$x\in\mathbb{R}^d$, the function~$\vartheta\mapsto\nabla U_{\vartheta}(x)$ is measurable,
\item for any~$\vartheta\in\Theta_0$, it holds that~$U_{\vartheta}\in C^2(\mathbb{R}^d)$ and~$\abs{\nabla U_{\vartheta}(x)-\nabla U_{\vartheta}(y)}\leq L\abs{x-y}$ for all~$x,y\in\mathbb{R}^d$,
\item for each~$\theta\in\Theta$, there exist~$\vartheta_1 %= \vartheta_1(\theta)
,\dots,\vartheta_p\in\Theta_0$ satisfying~$b(x,\theta) = \frac{1}{p}\sum_{i=1}^p \nabla U_{\vartheta_i}(x)$ for all~$x\in\mathbb{R}^d$.
%for all~$x\in\mathbb{R}^d$. 
\end{enumerate}
\end{assumption}
Let~$C_U>0$. Under Assumption~\ref{A3}, a~$\Theta$-valued r.v.~$\theta$ will be called unbiased if all of the following conditions hold:
\begin{enumerate}
\item there exist measurable~$\vartheta_1,\dots,\vartheta_p:\Theta\rightarrow\Theta_0$ from Assumption~\ref{A3} such that~$\vartheta_1(\theta),\dots,\vartheta_p(\theta)$ are pairwise independent,
\item it holds that~$\mathbb{E}[\nabla U_{\vartheta_i(\theta)}(x)] = \nabla U(x)$, for all~$x\in\mathbb{R}^d$ and~$i\in\mathbb{N}\cap[1,p]$ (or equivalently,~$i\in\{1\}$),
\item it holds that~$\mathbb{E}[\abs{\nabla U_{\vartheta_1(\theta)}(x) - \nabla U(x)}^2] < C_U$ for all~$x\in\mathbb{R}^d$. % for all~$i\in\mathbb{N}\cap[1,p]$.
%there exist~$x\in\mathbb{R}^d$ such that~$\mathbb{E}[\abs{\nabla U_{\vartheta_1(\theta)}(x) - \nabla U(x)}^2] < \infty$. % for all~$i\in\mathbb{N}\cap[1,p]$.
\end{enumerate}

\begin{prop}\label{invd2}
Let Assumption~\ref{A3},~\eqref{A1a} and~\eqref{carot} hold. Suppose~$(\hat{\theta}_i)_i,(\hat{\theta}_i')_i$ are sequences of tuples of unbiased~$\Theta$-valued r.v.'s. 
%and that~$\hat{\theta}_i$ has the same distribution as~$\hat{\theta}_i'$ for all~$i$.
For any~$n\in\mathbb{N}$, it holds that~$\mathcal{W}_{2,e}(\pi_n(\tilde{\mu}),\tilde{\mu})\leq (C_U\sqrt{L}h(1+3\sqrt{L}h)^{nK}/p)^{\frac{1}{2}}$.
\end{prop}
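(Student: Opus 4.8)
\textbf{Proof plan for Proposition~\ref{invd2}.}

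The plan is to estimate, for each single iteration of SGgHMC with the randomized-midpoint or Verlet integrator, the $L^2$ discrepancy between the chain driven by the true gradient $\nabla U$ and the chain driven by the stochastic estimator $b$, started from the same point, and then to iterate. First I would fix $n$ and introduce, on a common probability space, two coupled chains: $(\tilde X^{(i)},\tilde V^{(i)})$ run with the exact gradient $\nabla U$ (so its law is stationary, equal to $\tilde\mu$, by the definition of $\tilde\mu$) and $(X^{(i)},V^{(i)})$ run with the stochastic $b$, both using the \emph{same} Gaussian refreshment variables $G_i$, the same midpoint uniforms $u_{kh}^{(i)}$, and with $(X^{(0)},V^{(0)})=(\tilde X^{(0)},\tilde V^{(0)})\sim\tilde\mu$; the stochastic parameters $\hat\theta_i,\hat\theta_i'$ are drawn independently as unbiased estimators. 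Since the exact chain is stationary, its marginal at time $n$ is $\tilde\mu$, while the stochastic chain has law $\pi_n(\tilde\mu)$, so this coupling bounds $\mathcal W_{2,e}(\pi_n(\tilde\mu),\tilde\mu)$.

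The core estimate is a one-step perturbation bound in the modified Euclidean norm $\|\cdot\|_M$ of Propositions~\ref{lba}–\ref{lab} (or directly in $\|\cdot\|_{\bar M}$). Because the two chains share the Gaussian variables, the $\mathcal O$ step contributes nothing to their difference; the only source of discrepancy is the difference $b(\cdot,\theta)-\nabla U(\cdot)$ entering the half-velocity kicks $\mathcal B$ inside the Verlet/midpoint substeps. I would expand $\bar q_T,\bar p_T$ via~\eqref{hameqs} for both chains, subtract, and split each $b(q,\theta)-\nabla U(\tilde q)$ as $[b(q,\theta)-\nabla U(q)] + [\nabla U(q)-\nabla U(\tilde q)]$. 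The second bracket is Lipschitz-controlled by $L$ times the current position discrepancy and is handled exactly as in the synchronous-coupling arguments of Section~\ref{convex} (using $Lh^2\le 1/16^2$ and the a priori estimates of Lemma~\ref{basic}), producing a factor $(1+3\sqrt L h)^{K}$ per iteration — this is where the exponent $nK$ in the statement comes from. The first bracket, $b(q,\theta)-\nabla U(q)$, is the genuinely new term: conditionally on the position $q$ and on the shared randomness, its mean is zero by the unbiasedness assumption on $\theta$ (more precisely on the $\vartheta_i(\theta)$), so when we take second moments the cross terms with the "deterministic" part vanish, and the remaining variance is bounded by $C_U/p$ at each of the $K$ substeps per iteration (the $1/p$ being the variance reduction from averaging $p$ pairwise-independent terms in Assumption~\ref{A3}, item 3 together with the unbiased definition). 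Multiplying by the $h$-sized prefactor $(h/2)$ in the velocity kick, summing the $K$ substeps, and tracking the $\sqrt L$ from the conversion to the norm $\mathcal W_{2,e}$ gives an incremental $L^2$ error of order $(C_U\sqrt L h/p)^{1/2}$ per iteration (in the midpoint case; the Verlet case is identical in scaling here).

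Putting these together, one obtains a recursion of the schematic form $e_{i+1}^2 \le (1+3\sqrt L h)^{2K} e_i^2 + c\,C_U\sqrt L h/p$ for the squared $L^2$ discrepancy $e_i$, with $e_0=0$; since $\tilde\mu$ is invariant for the exact chain there is no additional drift. Summing the geometric series and using $(1+3\sqrt L h)^{2K}\le$ a constant times $(1+3\sqrt L h)^{nK}$ over the range $i\le n$, one arrives at $e_n^2 \le C_U\sqrt L h(1+3\sqrt L h)^{nK}/p$ after absorbing numerical constants, which is the claimed bound. The main obstacle I anticipate is the bookkeeping in the one-step estimate: one must carefully isolate the martingale-difference (zero-mean) part of $b-\nabla U$ at each of the $K$ interior Verlet substeps, control its propagation through the subsequent substeps of the \emph{same} iteration without losing the $1/p$ factor or picking up spurious $L$-powers, and verify that the pairwise-independence in Assumption~\ref{A3} suffices to keep the variance additive across substeps — this requires being precise about which sigma-algebra one conditions on and about the fact that $\hat\theta_i,\hat\theta_i'$ are independent of the Gaussian and midpoint variables, as stipulated in the setup. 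The Lipschitz/a-priori part is routine given Lemma~\ref{basic} and the arguments already carried out in Section~\ref{convex}.
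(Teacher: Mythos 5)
Your proposal is correct and follows essentially the same route as the paper: couple the stochastic-gradient chain and the exact-gradient chain synchronously (same Gaussians, same midpoint uniforms), use stationarity of the exact chain, split $b(x_j,\theta)-\nabla U(y_j)$ into the zero-mean fluctuation $b(x_j,\theta)-\nabla U(x_j)$ plus the Lipschitz-controlled $\nabla U(x_j)-\nabla U(y_j)$, observe that unbiasedness kills the cross-term in the second moment, bound the variance by $C_Uh^2/(4p)$ per substep via pairwise independence, and sum the resulting geometric recursion over the $nK$ substeps. The only quibble is a bookkeeping slip in your schematic one-iteration recursion (the exponent should be $K$, not $2K$, on the decay factor); the paper sidesteps this by keeping the recursion at the substep level and summing $\sum_{j=0}^{nK-1}(1+3h)^j$ directly, which is perhaps cleaner and avoids the constant-absorption you alluded to.
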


Gathering the previous results, we can now state a more precise version of Corollary~\ref{bicor} to provide an explicit estimate on the bias of the estimator as in Corollary~\ref{gaucor_old}.

\begin{corollary}\label{bicor_old}
%Let~$(x,v)$ be an~$\mathbb{R}^{2d}$-valued r.v.
Let Assumptions~\ref{A1},~\ref{A3} hold, 
%let~$(x,v)$ be an~$\mathbb{R}^{2d}$-valued r.v. 
let~$(x,v)\in\mathbb{R}^{2d}$ 
and~$f:\mathbb{R}^d\rightarrow\mathbb{R}$ be Lipschitz. Assume~$4LT^2\leq(1-\eta)^2$,~\eqref{carot} and that~$(\hat{\theta}_i)_i,(\hat{\theta}_i')_i$ are sequences of tuples of unbiased~$\Theta$-valued r.v.'s. If~$u=0$ (velocity Verlet integrator case), then for any~$N_0,N\in\mathbb{N}$, it holds that
\begin{align*}
&\frac{1}{\|f\|_{\textrm{Lip}(\hat{d})}}\bigg|\mathbb{E}\bigg[\frac{1}{N}\sum_{i=0}^{N-1}\bigg(f(X^{(N_0+i)}) - \int fd\mu\bigg)\bigg]\bigg|\\%\nonumber\\
&\quad\leq \frac{c^*(1-c)^{N_0}}{Nc}\Big(\mathbb{E}[\abs{(x,v)}_{\hat{d}}
%\abs{x+\gamma^{-1}v} + \alpha\abs{x}
] + 4\sqrt{3}(d^*)^{\frac{1}{2}} + B\Big)\\%\nonumber\\
&\qquad+ 2\bigg(\frac{C_U\sqrt{L}h}{p}\bigg)^{\frac{1}{2}}\cdot\frac{\exp(\frac{3}{2}\sqrt{L}T(N_0+N))}{\frac{3}{2}N\sqrt{L}T} + B,%\label{bicoreq_old}
\end{align*}
where~$d^*,c^*,B$ are given by
\begin{equation*}
d^* = \frac{\max(L(R')^2,d)}{m},\quad c^* = \bigg(\frac{g}{2\log(2)\epsilon^*}\bigg)^{\frac{1}{2}},\quad B = 16\sqrt{L}h(d^*)^{\frac{1}{2}}(2c^*)^{2\sqrt{L}T/c},
\end{equation*}
the constant~$R'$ is given by~$R'= 4R(1+\frac{L}{m})$,~$c,\epsilon^*$,~$g$ are given by~\eqref{cothrate},~\eqref{epidef},~\eqref{rhdef} and~\eqref{gdef??} and~$f_0$ is given by~\eqref{f0def}.
If instead~$u=1$ (randomized midpoint case), the same assertion holds but with~$B$ given by
\begin{equation*}
B = 144\sqrt{6}L^{1/4}T^{-1}h^{3/2}(d^*)^{\frac{1}{2}}(2c^*)^{2\sqrt{L}T/c}.
\end{equation*}
In case~$u=0$, if in addition there exists~$L_2>0$ such that~$\abs{\nabla^2 U(x) - \nabla^2 U(x)}\leq L_2\abs{x-x'}$ for all~$x,x'\in\mathbb{R}^d$, then the same assertion 
%inequality~\eqref{bicoreq_old} 
holds but with the definition of~$B$ replaced by
\begin{equation*}
B = 66(L^{3/2}+L_2)h^2d^*(2c^*)^{2\sqrt{L}T/c}.
\end{equation*}
\end{corollary}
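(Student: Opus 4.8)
The plan is to decompose the bias of the empirical average into the three contributions announced just before Corollary~\ref{bicor}: the non-stationarity of the chain, the numerical integration error, and the stochastic gradient error. Write $\tilde\mu$ for the invariant measure of the non-stochastic SGgHMC chain (existence guaranteed by Corollary~\ref{maincore} and \cite[Proposition~2.10]{MR2078555}), and for each $i$ insert $\int f\,d\mu$ as $\int f\,d\mu - \int f\,d\tilde\mu + \int f\,d\tilde\mu$. First I would bound $|\mathbb{E}[f(X^{(N_0+i)})] - \int f\,d\tilde\mu_{\mathrm{stoch}}|$, where $\tilde\mu_{\mathrm{stoch}}$ is the (stochastic-gradient) stationary law: by Kantorovich--Rubinstein duality this is $\le \|f\|_{\mathrm{Lip}(\hat d)}\,\mathcal{W}_1(\pi_{N_0+i}(\delta_{(x,v)}),\tilde\mu_{\mathrm{stoch}})$, and by Corollary~\ref{maincore} (applied along a coupling with an initial draw from $\tilde\mu_{\mathrm{stoch}}$) this is at most $c^*(1-c)^{N_0+i}\,\mathbb{E}_{(y,w)\sim\tilde\mu_{\mathrm{stoch}}}[\,|(x,v)-(y,w)|_{\hat d}\,]$. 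Then I use the triangle inequality in $\hat d$ together with a second-moment bound on $\tilde\mu_{\mathrm{stoch}}$ (obtained from the Lyapunov/modified-norm estimate of Corollary~\ref{refmod} applied to $\tilde\mu_{\mathrm{stoch}}$ against $\tilde\mu$, plus the moment bound for $\tilde\mu$) to replace $\mathbb{E}_{(y,w)}[\,|(x,v)-(y,w)|_{\hat d}\,]$ by $|(x,v)|_{\hat d} + O((d^*)^{1/2}) + B$; summing the geometric series $\sum_{i=0}^{N-1}(1-c)^{N_0+i}\le (1-c)^{N_0}/c$ and dividing by $N$ produces the first term.

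Next I would handle the numerical bias by invoking Corollary~\ref{invd}, which controls $\mathcal{W}_1(\mu,\tilde\mu)$ with respect to $\hat d$ in each of the three regimes (velocity Verlet with $O(h)$, randomized midpoint with $O(T^{-1}h^{3/2})$, and velocity Verlet with the extra Hessian-Lipschitz hypothesis giving $O(h^2)$). This directly yields $|\int f\,d\tilde\mu - \int f\,d\mu| \le \|f\|_{\mathrm{Lip}(\hat d)}\,\mathcal{W}_1(\mu,\tilde\mu) = \|f\|_{\mathrm{Lip}(\hat d)}\,B$ in each case, which is the final $+B$ term; note $B$ already incorporates the $(2c^*)^{2\sqrt L T/c}$ prefactor from Corollary~\ref{invd} and the $d^* = \max(L(R')^2,d)/m$ dimension dependence. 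For the stochastic gradient contribution I would use Proposition~\ref{invd2}, which gives $\mathcal{W}_{2,e}(\pi_n(\tilde\mu),\tilde\mu) \le (C_U\sqrt L h (1+3\sqrt L h)^{nK}/p)^{1/2}$; after converting $\mathcal{W}_{2,e}$ to $\mathcal{W}_1$ with respect to $\hat d$ (using that the norms $\hat d$ and the $L$-weighted Euclidean distance are comparable up to the constants recorded in Remark~\ref{ewr}), I would average over $i = 0,\dots,N-1$ the terms $\|f\|_{\mathrm{Lip}(\hat d)}\,\mathcal{W}_1(\pi_{N_0+i}(\tilde\mu),\tilde\mu)$, bound $(1+3\sqrt L h)^{(N_0+i)K}\le \exp(\tfrac32\sqrt L T(N_0+i))$, and sum the resulting geometric-type series to get the $2(C_U\sqrt L h/p)^{1/2}\exp(\tfrac32\sqrt L T(N_0+N))/(\tfrac32 N\sqrt L T)$ term.

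The main obstacle I expect is the careful bookkeeping in the first (non-stationarity) term: one must set up the coupling so that the starting point of the second chain is genuinely distributed according to the stochastic-gradient stationary law $\tilde\mu_{\mathrm{stoch}}$ — whose very existence and finite second moment need to be argued, since Corollary~\ref{refmod} only gives a one-step Lyapunov inequality for the reflectively-coupled pair, not directly a moment bound on the stationary measure — and then control $\mathbb{E}_{\tilde\mu_{\mathrm{stoch}}}[\,|(x,v)-(y,w)|_{\hat d}\,]$ uniformly. Concretely one iterates the modified-norm Lyapunov inequality (Corollary~\ref{refmod} with $\hat G = G$, or the synchronous version) to get $\sup_n \mathbb{E}[\,\|(X^{(n)},V^{(n)})\|_{\bar M}^2\,] \le C(d^* + \text{initial})$, pass to the stationary measure, and translate the $\|\cdot\|_{\bar M}$ bound into an $\hat d$ bound via the comparison $(\,|q| + 1.09\alpha|z|\,)^2 \le 2\|(z,v-w)\|_{\bar M}^2$ from \eqref{ruq}. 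A secondary but purely technical nuisance is tracking how the various $\alpha$-dependent prefactors (from $\hat\alpha$ in $\hat d$, from $\epsilon^*$, and from the norm comparisons) assemble into the clean constants $c^*$, $d^*$, $B$ stated; these are all bounded by the explicit expressions already derived in Corollaries~\ref{maincore},~\ref{invd} and Proposition~\ref{invd2}, so no new estimate is required, only consolidation. Finally, the claim that $C'=0$ when $b(\cdot,\theta)=\nabla U$ for all $\theta$ is immediate since then $C_U = 0$ may be taken in Assumption~\ref{A3}, killing the middle term entirely.
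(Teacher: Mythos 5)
Your decomposition via the stochastic-gradient stationary law $\tilde\mu_{\mathrm{stoch}}$ is a genuine departure from what the paper does, and it has two problems that are not easily repaired. First, $\tilde\mu_{\mathrm{stoch}}$ is not available: the hypotheses only assume $(\hat\theta_i)_i,(\hat\theta_i')_i$ are sequences of unbiased tuples, not that they are i.i.d., so the stochastic chain is in general time-inhomogeneous and has no invariant law. Second and more decisively, the decomposition you propose is incompatible with the shape of the target bound. If a stationary $\tilde\mu_{\mathrm{stoch}}$ existed, the middle piece $|\int f\,d\tilde\mu_{\mathrm{stoch}} - \int f\,d\tilde\mu|$ would be a single number, independent of $i$, $N_0$ and $N$; but the bound you are asked to prove has the stochastic-gradient contribution growing like $\exp(\tfrac32\sqrt L T(N_0+N))$, which can only arise from averaging a quantity $\mathcal W_1(\pi_{N_0+i}(\tilde\mu),\tilde\mu)$ that itself grows in $i$. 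This is exactly what Proposition~\ref{invd2} supplies, and indeed you cite it correctly in the second half of your plan — but that is precisely the paper's decomposition, which keeps $\tilde\mu$ (the \emph{non}-stochastic stationary measure) as the reference throughout and never introduces $\tilde\mu_{\mathrm{stoch}}$ at all. Your proposal mixes the two routes in a way that does not close: if the first piece ends at $\tilde\mu_{\mathrm{stoch}}$, the middle piece must be $\mathcal W(\tilde\mu_{\mathrm{stoch}},\tilde\mu)$, but Proposition~\ref{invd2} gives $\mathcal W_{2,e}(\pi_n(\tilde\mu),\tilde\mu)\le(C_U\sqrt L h(1+3\sqrt L h)^{nK}/p)^{1/2}$, which blows up as $n\to\infty$ and therefore does not control the distance to a hypothetical fixed point.

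The paper's route is cleaner and the one you should follow: write the bias of each $f(X^{(N_0+i)})$ as the sum of $\mathcal W_1(\pi_{N_0+i}(\delta_{(x,v)}),\pi_{N_0+i}(\tilde\mu))$, $\mathcal W_1(\pi_{N_0+i}(\tilde\mu),\tilde\mu)$, and $\mathcal W_1(\tilde\mu,\mu)$, all with respect to $\hat d$, where $\tilde\mu$ is the non-stochastic gHMC stationary measure and $\pi_{N_0+i}(\tilde\mu)$ is the stochastic chain started from $\tilde\mu$. Corollary~\ref{maincore} contracts the first term to $c^*(1-c)^{N_0+i}\mathcal W_1(\delta_{(x,v)},\tilde\mu)$; this avoids any need for moment bounds on an auxiliary stationary measure, because one can bound $\mathcal W_1(\delta_{(x,v)},\tilde\mu)\le|(x,v)|_{\hat d}+\mathcal W_1(\delta_{(0,0)},\mu)+\mathcal W_1(\mu,\tilde\mu)$ and then use Lemma~\ref{eal} (explicit moments of the \emph{known} measure $\mu$) for the middle piece and Corollary~\ref{invd} for the last. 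That is where the constants $4\sqrt3(d^*)^{1/2}$ and the $+B$ inside the first parenthesis come from, as opposed to a Lyapunov-iteration argument via Corollary~\ref{refmod}, which would produce different and less explicit constants and is in any case unnecessary. Your handling of the final bias term (Corollary~\ref{invd}), your averaging of $\mathcal W_1(\pi_{N_0+i}(\tilde\mu),\tilde\mu)$ with the $\exp(\tfrac32\sqrt L T i)$ bound, and your observation that $C_U=0$ kills the middle term in the deterministic-gradient case are all correct, so the gap is concentrated in the first, non-stationarity piece and the internal consistency of the overall decomposition.
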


%%%%%%%%%%%%%%%%%%%%%%%%%%%%%%%%%%%%%%%%%%%%%%
%% Single Appendix:                         %%
%%%%%%%%%%%%%%%%%%%%%%%%%%%%%%%%%%%%%%%%%%%%%%
%\begin{appendix}
%\section*{???}%% if no title is needed, leave empty \section*{}.
%\end{appendix}
%%%%%%%%%%%%%%%%%%%%%%%%%%%%%%%%%%%%%%%%%%%%%%
%% Multiple Appendixes:                     %%
%%%%%%%%%%%%%%%%%%%%%%%%%%%%%%%%%%%%%%%%%%%%%%
\begin{appendix}

\section{Proofs for concentration}\label{gauapp}

\begin{proof}[Proof of Lemma~\ref{bbcon}]
By Lemma~\ref{basic} with~$c=1/16^2$,~$h^{(i)}$ satisfies for any~$x,y,v,w\in\mathbb{R}^d$,~$i=1,2$ that
%given by~$h(x,v)=\bar{q}_T(x,L^{\frac{1}{2}}v,\bar{\theta})$ satisfies for any~$x,y,v,w\in\mathbb{R}^d$ that
%\iffalse
\begin{align*}
&\abs{\bar{q}_T(x,L^{\frac{1}{2}}v,\bar{\theta}_i) - \bar{q}_T(y,L^{\frac{1}{2}}w,\bar{\theta}_i)}^2 + L^{-1}\abs{\bar{p}_T(x,L^{\frac{1}{2}}v,\bar{\theta}_i) - \bar{p}_T(y,L^{\frac{1}{2}}w,\bar{\theta}_i)}^2 \\
&\quad\leq 2^{\frac{1}{3}}(\abs{x-y}^2 + \abs{v-w}^2),
\end{align*}
%\fi
\iffalse
\begin{equation*}
\abs{\bar{q}_T(x,L^{\frac{1}{2}}v,\bar{\theta}) - \bar{q}_T(y,L^{\frac{1}{2}}w,\bar{\theta})}^2 \leq 2(\abs{x-y}^2 + \abs{v-w}^2),
\end{equation*}
\fi
which is that~$h^{(i)}$ is Lipschitz with constant~$2^{1/6}$. 
Moreover, the mapping~$h^{(0)}:\mathbb{R}^{3d}\rightarrow\mathbb{R}^{2d}$ given by~$h^{(0)}(x,v,g) = (x,\eta v + g)$ satisfies for any~$x,y,v,w,g,g'\in\mathbb{R}^d$ that
\begin{align*}
\abs{h^{(0)}(x,v,g)-h^{(0)}(y,w,g')}^2 &= \abs{x-y}^2 + \abs{\eta (v-w) + g-g'}^2\\
%&= \abs{x-y}^2 + \eta^2\abs{v-w}^2 + 2\eta (v-w)\cdot (g-g') + \abs{g-g'}^2\\
&\leq \abs{x-y}^2 + (1+\eta^2)\abs{v-w}^2 + (1+\eta^2)\abs{g-g'}^2,
\end{align*}
so that~$h^{(0)}$ is Lipschitz with constant~$\sqrt{2}$. 
Therefore, for any~$x,v\in\mathbb{R}^d$,~$\hat{h}_{x,v}$ is Lipschitz with constant~$2$ and~$\hat{h}_{x,v}'$ is Lipschitz with constant~$2\sqrt{2}$. The first assertion follows by Rademacher's theorem. 
%be the transition operators given by~$P_t^{\bar{\theta}}f(x,v)=f(\bar{q}_t(x,v,\bar{\theta}),\bar{p}_t(x,v,\bar{\theta}))$ and~$P_Rf(x,v) = \mathbb{E}f(x,\eta v + \sqrt{1-\eta^2}G)$. 
In case~$\eta=0$, Lemma~\ref{basic} with~$c=1/16^2$ gives for any~$x,y,v,w\in\mathbb{R}^d$ that~$\abs{\bar{q}_T(x,L^{\frac{1}{2}}v,\bar{\theta}_1) - \bar{q}_T(y,L^{\frac{1}{2}}w,\bar{\theta}_1)} \leq \sqrt{2}(\abs{x-y} + \sqrt{L}T\abs{v-w})$, 
so that~$\hat{h}_x^{(0)}$ is Lipschitz with constant~$\sqrt{2L}T$, which concludes again by Rademacher's theorem. 
\end{proof}

\begin{proof}[Proof of Lemma~\ref{abcon}]
Fix~$x,v\in\mathbb{R}^d$. Let~$\hat{h}':\mathbb{R}^d\times\mathbb{R}^d\rightarrow\mathbb{R}^d\times\mathbb{R}^d$ be given by~$\hat{h}'(g,g')= h^{(0)}(h^{(1)}(x,L^{-\frac{1}{2}}\eta v+g),g')$, so that~$\hat{h}_{x,v}=h^{(2)}\circ\hat{h}'$. 
For the assertion about the pushforward of~$\gamma_{0,(1-\eta^2)/L}^{2d}$, by Corollary~4.3 in~\cite{MR4687848}, it suffices to prove that the matrices~$\nabla h^{(2)}$ and~$\nabla \hat{h}'$ are invertible (in the sense of matrices) everywhere. 

For any~$\bar{\theta}_0=(\theta,\theta',r,r_0)\in\Theta\times\Theta\times\{0,1\}\times[0,1]$, let~$A_{\bar{\theta}_0}:\mathbb{R}^d\times\mathbb{R}^d\rightarrow\mathbb{R}^{2d\times 2d}$,~$A_{\bar{\theta}_0}:\mathbb{R}^d\times\mathbb{R}^d\times\mathbb{R}^d\rightarrow\mathbb{R}^{2d\times 2d}$ be given by 
\begin{align*}
A_{\bar{\theta}_0}(\bar{y},\bar{z}) &= \begin{pmatrix}
0 & L^{\frac{1}{2}}hI_d \\
-L^{-\frac{1}{2}}\frac{h}{2}(1+r)\nabla b(\bar{y},\theta) -L^{-\frac{1}{2}}\frac{h}{2}(1-r)\nabla b(\bar{z},\theta') & 0
\end{pmatrix},\\
A_{\bar{\theta}_0}'(\bar{x},\bar{y},\bar{z}) &= \begin{pmatrix}
-\frac{h^2}{2}\nabla b(\bar{y},\theta) & - L^{\frac{1}{2}}rr_0\frac{h^3}{2}\nabla b(\bar{y},\theta)\\
L^{-\frac{1}{2}}(h^3/4)(1-r)\nabla b(\bar{z},\theta')\nabla b(\bar{x},\theta) & - rr_0\frac{h^2}{2}(1+r)\nabla b(\bar{y},\theta)
\end{pmatrix},
\end{align*}
\iffalse
where
\begin{align*}
(A_{\bar{\theta}_0}(\bar{x},\bar{y},\bar{z}))_{2,1} &= -L^{-\frac{1}{2}}(h/2)(1+r)\nabla b(\bar{y},\theta) \\
&\quad-L^{-\frac{1}{2}}(h/2)(1-r)\nabla b(\bar{z},\theta')(I_d - (h^2/2)\nabla b(\bar{x},\theta)).
\end{align*}
\fi
Below, we omit in the notation the dependence on~$\bar{\theta}_0$ and write~$A,A'$ in place of~$A_{\bar{\theta}_0},A_{\bar{\theta}_0}'$. 
Let~$\abs{\cdot}_1$ denote the norm on~$\mathbb{R}^{2d}$ given by~$\abs{(x,v)}_1 = \abs{x} + \abs{v}$ for~$x,v\in\mathbb{R}^d$. 
For any~$\bar{x},\bar{y},\bar{z}$ and~$\bar{\theta}_0$, the operator norm~$\abs{A+A'}_1$ of~$(\mathbb{R}^{2d},\abs{\cdot}_1)\ni \hat{x}\mapsto (A(\bar{y},\bar{z}) + A'(\bar{x},\bar{y},\bar{z}))\hat{x}\in(\mathbb{R}^{2d},\abs{\cdot}_1)$ satisfies by~\eqref{A1a} and~\eqref{carot} that%~$\abs{A}\leq L^{\frac{1}{2}}h/2 + Lh^2 + L^{\frac{3}{2}}h^3/2$
%\iffalse
\begin{equation}\label{aanorm}
\abs{A + A'}_1\leq L^{\frac{1}{2}}h + Lh^2 + L^{\frac{3}{2}}h^3/2 <1.
\end{equation}
%\fi
%Fix~$x,v\in\mathbb{R}^d$. 
By definition~\eqref{hameqs} of the one-step integrator~$(\bar{q}_h,\bar{p}_h)$, 
for any~$\bar{\theta}
%=((\theta_{ih})_i,(\theta_{ih}')_i,u,\hat{u}) \in\Theta^{T/h}\times\Theta^{T/h}\times\{0,1\}\times[0,1]
\in\{\bar{\theta}_1,\bar{\theta}_2\}$, 
there exist~$y_0,y_1,y_0'\in\mathbb{R}^d$ such that the mapping~$\bar{h}:\mathbb{R}^d\times\mathbb{R}^d\rightarrow\mathbb{R}^d\times\mathbb{R}^d$ given by~$\bar{h}(\bar{x},\bar{v})=(\bar{q}_h(\bar{x},L^{\frac{1}{2}}\bar{v},\bar{\theta}),L^{-\frac{1}{2}}\bar{p}_h(\bar{x},L^{\frac{1}{2}}\bar{v},\bar{\theta}))$ satisfies
\iffalse
\begin{align*}
\nabla\!_x\bar{q}_h(x,L^{\frac{1}{2}}v,\bar{\theta}) &= I_d +(h^2/2)\nabla b(x_0',\theta_0),\\
\nabla\!_v\bar{q}_h(x,L^{\frac{1}{2}}v,\bar{\theta}) &= L^{\frac{1}{2}}hI_d -L^{\frac{1}{2}}(uu_0h^3/2)\nabla b(x_0',\theta_0),\\
L^{-\frac{1}{2}}\nabla\!_x\bar{p}_h(x,L^{\frac{1}{2}}v,\bar{\theta}) &= -L^{-\frac{1}{2}}(h/2)(1+u)\nabla b(x_0',\theta_0)\\
&\quad- L^{-\frac{1}{2}}(h/2)(1-u)\nabla b(x_1,\theta_0')(I_d-(h^2/2)\nabla b(x_0,\theta_0)),\\
L^{-\frac{1}{2}}\nabla\!_v\bar{p}_h(x,L^{\frac{1}{2}}v,\bar{\theta}) &= I_d - (uu_0h^2/2)(1+u)\nabla b(x_0',\theta_0).
\end{align*}
In particular, it holds that
\fi
\begin{equation*}
\nabla\bar{h}(x,v) = 
\begin{pmatrix}
\nabla\!_x\bar{q}_h & L^{\frac{1}{2}}\nabla\!_v\bar{q}_h\\
L^{-\frac{1}{2}}\nabla\!_x\bar{p}_h & \nabla\!_v\bar{p}_h
\end{pmatrix}(x,L^{\frac{1}{2}}v,\bar{\theta})
= I_{2d} + A(y_1,y_0') + A'(y_0,y_1,y_0')
\end{equation*}
\iffalse
= \begin{pmatrix}
I_d & L^{\frac{1}{2}}hI_d\\
0 & I_d
\end{pmatrix} + 
A(y_0,y_1,y_0')
\end{equation*}
\fi
for some~$\bar{\theta}_0=(\theta,\theta',r,r_0)\in\Theta\times\Theta\times\{0,1\}\times[0,1]$.
Therefore by~\eqref{aanorm},~$\nabla \bar{h}(x,v)$ is invertible 
(with inverse~$\sum_{i=0}^{\infty} (-A-A')^i$). 
%(with inverse~$\sum_{i=0}^{\infty} (I_{2d}-\nabla \bar{h})^i$). 
Similarly and by repeated applications of the chain rule,~$\nabla h^{(2)}(\bar{x},\bar{v})$ is invertible for any~$\bar{x},\bar{v}\in\mathbb{R}^d$. Moreover, it holds by definition for any~$g,g'\in\mathbb{R}^d$ that
\begin{equation}\label{peba}
\nabla\!_g\hat{h}'(g,g')=
\begin{pmatrix}
I_d & 0\\
0 & \eta I_d
\end{pmatrix}
\nabla h^{(1)}(x,L^{-\frac{1}{2}}\eta v + g)
\begin{pmatrix}
0\\ I_d
\end{pmatrix}
\end{equation}
and there exists~$(y_i,y_i')_{i\in[0,K]\cap\mathbb{N}}$ with~$y_i,y_i'\in\mathbb{R}^d$ for all~$i$ such that 
\begin{equation}\label{nabh}
\nabla h^{(1)}(x,L^{-\frac{1}{2}}\eta v + g) = \prod_{i=0}^{K-1}( I_{2d} + A(y_{i+1},y_i') +
A'(y_i,y_{i+1},y_i')).
\end{equation}
\iffalse
\begin{equation}\label{nabh}
\nabla h^{(1)}(x,L^{-\frac{1}{2}}\eta v + g) = \prod_{i=0}^{K-1}\bigg(\!
\begin{pmatrix}
I_d & L^{\frac{1}{2}}hI_d\\
0 & I_d
\end{pmatrix} + 
A(y_i,y_{i+1},y_i')\bigg).
\end{equation}
\fi
for some finite sequence of~$\bar{\theta}_0$. 
We proceed to show that the expression in~\eqref{peba} is a small enough perturbation of~$(L^{\frac{1}{2}}TI_d\ \eta I_d)^{\top}$, 
using that the latter is its precise form if in~\eqref{nabh} we neglect~$A'$ and the lower triangular parts of~$A$. 

For any~$i$, let~$A_i=A(y_{i+1},y_i')$,~$A_i'=A'(y_i,y_{i+1},y_i')$ and, with abuse of notation, let~$\abs{\cdot}_1$ denote the operator norm on~$(\mathbb{R}^{2d},\abs{\cdot}_1)$. Moreover, for any~$M,M'\in\mathbb{R}^{d\times d}$, let~$\abs{(M\ M')^{\top}}_{\infty} = \max(\abs{M},\abs{M'})$, where~$\abs{M}$ is the operator norm of~$M$ on~$(\mathbb{R}^d,\abs{\cdot})$. In particular, for any~$\bar{M}\in\mathbb{R}^{2d\times 2d}$, we have~$\abs{\bar{M}(0\ I_d)^{\top}}_{\infty}\leq \abs{\bar{M}}_1$.
Therefore, it holds by~\eqref{nabh} that
\begin{align}
&\bigg\vert\nabla h^{(1)}(x,L^{-\frac{1}{2}}\eta v+g)\begin{pmatrix} 0\\ I_d\end{pmatrix} - \begin{pmatrix}L^{\frac{1}{2}}TI_d\\I_d\end{pmatrix}\bigg\vert_{\infty}\nonumber\\
&\quad\leq \bigg\vert \prod_{i=0}^{K-1}(I_{2d}+A_i)\begin{pmatrix} 0\\ I_d\end{pmatrix}- \begin{pmatrix}L^{\frac{1}{2}}TI_d\\I_d\end{pmatrix}\bigg\vert_{\infty}\nonumber\\
&\qquad+ 
\bigg\vert \bigg(\prod_{i=0}^{K-1}(I_{2d} + A_i + A_i') - \prod_{i=0}^{K-1}(I_{2d} + A_i)\bigg)\bigg\vert_1.\label{njd}
\end{align}
\iffalse
By definition of~$\abs{\cdot}_1,\abs{\cdot}_{\infty}$, the last term on the right hand-side of~\eqref{njd} satisfies
\begin{align*}
&\bigg\vert \bigg(\prod_{i=0}^{K-1}(I_{2d} + A_i + A_i') - \prod_{i=0}^{K-1}(I_{2d} + A_i)\bigg)\begin{pmatrix}0\\I_d\end{pmatrix}\bigg\vert_{\infty} \\
&\quad\leq \bigg\vert \bigg(\prod_{i=0}^{K-1}(I_{2d} + A_i + A_i') - \prod_{i=0}^{K-1}(I_{2d} + A_i)\bigg)\bigg\vert_1.
\end{align*}
\fi
By definition of~$A,A'$ and~\eqref{A1a},~\eqref{carot}, it holds for any~$i$ that
\begin{equation}
\abs{I_{2d}+A_i}_1 = 1+L^{\frac{1}{2}}h,\qquad\abs{A_i'}_1 \leq Lh^2.
\end{equation}
Therefore, the last term on the right-hand side of~\eqref{njd} satisfies
\begin{align}
\bigg\vert \prod_{i=0}^{K-1}(I_{2d} + A_i + A_i') - \prod_{i=0}^{K-1}(I_{2d} + A_i)\bigg\vert_1 &\leq \prod_{j = 0}^{K-1} \begin{pmatrix} K \\ j\end{pmatrix} (1+L^{\frac{1}{2}}h)^j (Lh^2)^{K-j} \nonumber\\
&\leq (1+L^{\frac{1}{2}}h + Lh^2)^K - (1+L^{\frac{1}{2}}h)^K,\label{prbk}%\nonumber\\
%&\leq e^{L^{\frac{1}{2}}T}(e^{LTh}-1),\label{prbk}
\end{align}
where the right-hand side may be bounded using~\eqref{carot} 
%and convexity of~$\bar{x}\mapsto e^{\bar{x}}-1$ 
as
\begin{align}
(1+L^{\frac{1}{2}}h + Lh^2)^K - (1+L^{\frac{1}{2}}h)^K 
&\leq (1+L^{\frac{1}{2}}h)^K((1+Lh^2)^K - 1)\nonumber\\
&\leq e^{L^{\frac{1}{2}}T}(e^{LT^2}-1)\nonumber\\
&\leq 2LT^2.\label{prbk2}
\end{align}
For the first term on the right-hand side of~\eqref{njd}, denoting~$(A_i)_{2,1}$ to be the lower triangular part of~$A_i$, it holds that
\begin{align}
&\prod_{i=0}^{K-1} (I_{2d} + A_i) \begin{pmatrix} 0\\ I_d\end{pmatrix} - \begin{pmatrix} L^{\frac{1}{2}}TI_d \\ I_d\end{pmatrix} \nonumber\\
&\quad=  \prod_{i=0}^{K-1} \bigg(\!\begin{pmatrix} I_d & L^{\frac{1}{2}}hI_d \\ 0 & I_d\end{pmatrix} + (A_i)_{2,1}\bigg) \begin{pmatrix} 0\\ I_d\end{pmatrix} 
- \prod_{i=0}^{K-1}\begin{pmatrix} I_d & L^{\frac{1}{2}}hI_d \\ 0 & I_d\end{pmatrix}\begin{pmatrix} 0 \\ I_d\end{pmatrix}.\label{prbl}
\end{align}
In expanding the first product on the right-hand side of~\eqref{prbl}, there are exactly~$K$ terms in which there appears one and only one~$(A_i)_{2,1}$. By direct calculation, these terms are of the form
\begin{align}
A_i^{(1)}:=&\begin{pmatrix} I_d & L^{\frac{1}{2}}hI_d \\ 0 & I_d\end{pmatrix}^i (A_i)_{2,1} \begin{pmatrix} I_d & L^{\frac{1}{2}}hI_d \\ 0 & I_d\end{pmatrix}^{K-1-i}
\begin{pmatrix} 0 \\ I_d \end{pmatrix} \nonumber \\
&= 
\begin{pmatrix} 
-L^{\frac{1}{2}}(K-1-i)\frac{ih^3}{2}((1+r)\nabla b(y_{i+1},\theta) + (1-r)\nabla b(y_i',\theta'))\\
-(K-1-i)\frac{h^2}{2}((1+r)\nabla b(y_{i+1},\theta) + (1-r)\nabla b(y_i',\theta'))\end{pmatrix}\label{prbh}
%&\quad= \begin{pmatrix} I_d & L^{\frac{1}{2}}hI_d \\ 0 & I_d\end{pmatrix}^i (A_i)_{2,1} \begin{pmatrix} L^{\frac{1}{2}}(K-1-i)hI_d \\ I_d \end{pmatrix}
\end{align}
for some~$\theta,\theta'\in\Theta$. In particular, 
%denoting~$\abs{(M\ M')^{\top}}_{\infty}=\max(\abs{M},\abs{M'})$ for~$M,M'\in\mathbb{R}^{d\times d}$, where~$\abs{M}$ is the operator norm of~$M$ on~$(\mathbb{R}^d,\abs{\cdot})$, 
substituting~\eqref{prbh} into~\eqref{prbl} yields
\begin{align}
&\bigg\vert\prod_{i=0}^{K-1} (I_{2d} + A_i) \begin{pmatrix} 0\\ I_d\end{pmatrix} - \begin{pmatrix} L^{\frac{1}{2}}TI_d \\ I_d\end{pmatrix}\bigg\vert_{\infty} \nonumber\\
&\quad \leq \sum_{i=0}^{K-1}\abs{A_i^{(1)}}_{\infty} + \prod_{j=0}^{K-2}\begin{pmatrix} K\\ j\end{pmatrix} (1+L^{\frac{1}{2}}h)^j(L^{\frac{1}{2}}h)^{K-j}\nonumber\\
&\quad\leq LT^2 + (1+2L^{\frac{1}{2}}h)^K - (1+L^{\frac{1}{2}}h)^K - L^{\frac{1}{2}}T(1+L^{\frac{1}{2}}h)^{K-1},\label{pko}
%&\quad\leq LT^2 + (1+L^{\frac{1}{2}}h)^{K-1}((1+2L^{\frac{1}{2}}h)(1+L^{\frac{1}{2}}h)^{K-1} - (1+L^{\frac{1}{2}}h) - L^{\frac{1}{2}}T)\\
\end{align}
where the last terms on the right-hand side of~\eqref{pko} may be bounded as
\begin{equation}\label{kpo}
(1+2L^{\frac{1}{2}}h)^K - (1+L^{\frac{1}{2}}h)^K - L^{\frac{1}{2}}T(1+L^{\frac{1}{2}}h)^{K-1} \leq e^{2L^{\frac{1}{2}}T} - (1+L^{\frac{1}{2}}T) - L^{\frac{1}{2}}T.
%&\quad\leq (1+2L^{\frac{1}{2}}h)(e^{2L^{\frac{1}{2}}T} - 1)
\end{equation}
The right-hand side of~\eqref{kpo} is a convex function of~$T$, so that substituting back into~\eqref{pko} and using~\eqref{carot} gives
\begin{equation}\label{nfj}
\bigg\vert\prod_{i=0}^{K-1} (I_{2d} + A_i) \begin{pmatrix} 0\\ I_d\end{pmatrix} - \begin{pmatrix} L^{\frac{1}{2}}TI_d \\ I_d\end{pmatrix}\bigg\vert_{\infty} \leq \frac{L^{\frac{1}{2}}T}{16} + \bigg(e^{1/8}-\frac{9}{8}\bigg)L^{\frac{1}{2}}T \leq \frac{L^{\frac{1}{2}}T}{10}.
\end{equation}
Gathering~\eqref{njd},~\eqref{prbk},~\eqref{prbk2} and~\eqref{nfj}, it holds by~\eqref{carot} that 
\begin{equation}\label{pebl}
\bigg\vert\nabla h^{(1)}(x,L^{-\frac{1}{2}}\eta v+g)\begin{pmatrix} 0\\ I_d\end{pmatrix} - \begin{pmatrix}L^{\frac{1}{2}}TI_d\\I_d\end{pmatrix}\bigg\vert_{\infty}\leq \frac{L^{\frac{1}{2}}T}{4}.
\end{equation}
By~\eqref{peba}, inequality~\eqref{pebl} implies for any~$g,g'\in\mathbb{R}^d$ that
\begin{equation*}%\label{seja}
\nabla\!_g\hat{h}'(g,g') = \begin{pmatrix} L^{\frac{1}{2}}TI_d + M\\ \eta I_d + M' \end{pmatrix}
\end{equation*}
for some~$M,M'\in\mathbb{R}^{d\times d}$ satisfying~$\abs{M}\leq L^{\frac{1}{2}}T/4$. In particular,~$L^{\frac{1}{2}}TI_d + M$ is invertible. On the other hand, it holds by definition that~$\nabla\!_{g'}\hat{h}'(g,g') = 
(0 \ I_d)^{\top}$. Therefore~$\nabla \hat{h}'$ is invertible everywhere, which concludes the proof of the assertion about the pushforward of~$\gamma_{0,(1-\eta^2)/L}^{2d}$. In the same way, the assertion about the pushforward of~$\gamma_{0,1/L}^d$ follows easily from~\eqref{pebl}.% and Corollary~4.3 in~\cite{MR4687848}.
The remaining assertion 
%about~$\hat{h}_{x,v}'$ 
about~$\gamma_{0,(1-\eta^2)/L}^{3d}$ 
follows easily from Theorem~4.4 in~\cite{MR4687848} and the fact that~$\nabla h^{(3)}$ is invertible. 
\end{proof}

\begin{proof}[Proof of Lemma~\ref{conlem}]
In the following, we use the shorthand that an absolutely continuous measure~$\mu$ on~$\mathbb{R}^{2d}$ or~$\mathbb{R}^d$ satisfies~$\textrm{LSI}(\bar{C})$ for a constant~$\bar{C}$ if the log-Sobolev inequality
\begin{equation}\label{thecon0}
\textrm{Ent}_{\mu}(\hat{f}^2):=\int \hat{f}^2\log \hat{f}^2d\mu - \int \hat{f}^2d\mu \log \bigg(\int \hat{f}^2d\mu\bigg) \leq 2\bar{C} \int \abs{\nabla \hat{f}}^2d\mu
\end{equation}
holds for all almost everywhere differentiable~$\hat{f}$. 
It is classical~\cite[Theorem~5.2]{MR1849347} that 
%Dirac and Gaussian measures, as well as their tensorizations~\cite[Theorem~5.2, Corollary~5.7]{MR1849347}. 
the Gaussian measures~$\gamma_{0,(1-\eta^2)/L}^{2d}$,~$\gamma_{0,1/L}^d$ satisfy~$\textrm{LSI}((1-\eta^2)/L)$ and~$\textrm{LSI}(1/L)$ respectively. 
Therefore, for any almost everywhere differentiable function~$\bar{f}:\mathbb{R}^{2d}\rightarrow\mathbb{R}$ and~$x,v\in\mathbb{R}^d$, 
%and denoting~$\pi_x$ to be the projection in the~$x$ variable, 
inequality~\eqref{thecon0} with~$\hat{f}=\bar{f}\circ \hat{h}_{x,v}$, 
inequality~\eqref{ddev} with~$z_1=\nabla (\hat{h}_{x,v}\cdot z_2)$,~$z_2=(\nabla\bar{f})\circ \hat{h}_{x,v}$ and Lemma~\ref{abcon} imply 
%the inequality
\begin{align}
\textrm{Ent}_{\gamma_{0,(1-\eta^2)/L}^{2d}}\!\!(\hat{f}^2) &\leq 
%\frac{2(1-\eta^2)}{L} \int \abs{((\nabla \bar{f})\cdot\nabla)h}^2d\nu
\frac{2(1-\eta^2)}{L} \!\int \sum_{i= 1}^d\bigg(\sum_{j=1}^d\partial_i(\hat{h}_{x,v})_j\cdot((\partial_j\bar{f})\circ \hat{h}_{x,v})\bigg)^2\!\!d\gamma_{0,(1-\eta^2)/L}^{2d}\nonumber\\
&\leq \frac{8(1-\eta^2)}{L}\int \abs{(\nabla\bar{f})\circ \hat{h}_{x,v}}^2 d\gamma_{0,(1-\eta^2)/L}^{2d}.\label{thecon2}
\end{align}
%where the right-hand sides make sense by Lemma~\ref{abcon}. 
In other words, for any~$x,v\in\mathbb{R}^d$, the measure~$\nu_{x,v}$, which is absolutely continuous by Lemma~\ref{abcon}, 
satisfies~$\textrm{LSI}(4(1-\eta^2)/L)$. 
Similarly, the measure~$\bar{\nu}_{x,v}$ satisfies~$\textrm{LSI}(8(1-\eta^2)/L)$. 
%Similarly in case~$\eta=0$, for any almost everywhere differentiable~$\bar{f}:\mathbb{R}^d\rightarrow\mathbb{R}$ and~$x\in\mathbb{R}^d$, inequality~\eqref{thecon2} holds with~$\hat{f}=\bar{f}\circ\hat{h}_x^{(0)}$ and~$\gamma_{0,(1-\eta^2)/L}^{2d}$,~$\hat{h}_{x,v}$ replaced by~$\gamma_{0,1/L}^d$ and~$\hat{h}_x^{(0)}$ respectively. 
In the case~$\eta=0$, using instead~\eqref{ddev2}, for any~$x\in\mathbb{R}^d$, the measure~$\nu_x$ satisfies~$\textrm{LSI}(2T^2)$. 
%Note that~$\nu$ is equal for example to~$\pi_2(\delta_{(x,L^{-1/2}v)})$ up to a rescaling in the velocity component for some pair of~$\bar{\theta}$. 
Let~$\mathcal{W}_{2,e'}$ denote the~$L^2$ Wasserstein distance w.r.t. the standard Euclidean distance. 
By Theorem~6.6 in~\cite{MR1849347} and~\eqref{thecon2}, for any~$x,v\in\mathbb{R}^d$, for any probability measure~$\mu$ absolutely continuous with respect to~$\nu_{x,v}$, it holds that
\begin{equation}\label{t2c}
(\mathcal{W}_{2,e'}(\nu_{x,v},\mu))^2
\leq 
%\frac{2(1-\eta^2)}{L}\int \log \frac{d\mu}{d\nu_{x,v}}d\mu =: 
\frac{4(1-\eta^2)}{L}\textrm{Ent}(\mu|\nu_{x,v}).
\end{equation}
If~$\eta=0$, then the same inequality~\eqref{t2c} holds with~$\nu_{x,v}$ and~$4(1-\eta^2)$ replaced by~$\nu_x$ and~$2LT^2$ respectively.
By the assumption~$4LT^2\leq (1-\eta)^2$, inequality~\eqref{t2c} implies
\begin{align}
\mathcal{W}_1(\nu_{x,v}^*,\mu^*) &\leq \sqrt{2}(1.09/4+1)\mathcal{W}_{2,e'}(\nu_{x,v},\mu)\nonumber\\
&\leq 2(1.09/4+1)\sqrt{2((1-\eta^2)/L)\textrm{Ent}(\mu|\nu_{x,v})}\nonumber\\
&= 2(1.09/4+1)\sqrt{2((1-\eta^2)/L)\textrm{Ent}(\mu^*|\nu_{x,v}^*)}.\label{t1c}
\end{align}
Inequality~\eqref{t1c} implies~$\nu_{x,v}^*\in T_1(4(1.09/4+1)^2(1-\eta^2)/L)$. Similarly,~$\bar{\nu}_{x,v}^*\in T_1(8(1.09/4+1)^2(1-\eta^2)/L)$.
In case~$\eta=0$, we have instead~$\mathcal{W}_1(\nu_x,\mu)\leq (1.09/4+1)\mathcal{W}_{2,e'}(\nu_x,\mu)$, which leads to~$\nu_x\in T_1((1.09/4+1)^2T^2)$. 
\end{proof}

\begin{proof}[Proof of Corollary~\ref{gaucor_old}]
%For any~$i\in\mathbb{N}\setminus\{0,1\}$, let~$\mu_i$ denote the regular conditional distribution of~$(X^{(i)},V^{(i)})$ given~$(X^{(i-2)},V^{(i-2)})$, or in case~$\eta=0$, of~$X^{(i)}$ given~$X^{(i-1)}$. For any~$i$,~$\mu_i$ may be identified as~$\nu_{X^{(i-1)},V^{(i-1)}}^*$ or~$\nu_{X^{(i-1)}}$, where~$\nu_{\cdot,\cdot}^*$ and~$\nu_{\cdot}$ are as in Lemma~\ref{conlem} (see Remark~\ref{conrem}). 
%Lemma~\ref{conlem} implies for any~$i$, the conditional law~$\mu_{i}$ %, again denoted~$\mu$, satisfies~$\mu\in T_1(C^*)$ 
%belongs in~$T_1(8 \hat{T}^2 C^*)$ as in~\cite[inequality~(1.3)]{MR2078555}
%Moreover, b
In this proof, we use~$\floor{r}=\inf\{k:\mathbb{Z}:k\geq r\}$ as its usual meaning. 
By Kantorovich-Rubenstein duality and Corollary~\ref{maincore}, it holds for any~$i\in\mathbb{N}$ and~$x,v,y,w\in\mathbb{R}^d$ that
\begin{align}
&\abs{\mathbb{E}[f(X^{(i)}) - f(Y^{(i)})]}\nonumber\\
&\quad\leq \|f\|_{\textrm{Lip}(\hat{d})} \mathcal{W}_1(\pi_{i}(\delta_{(x,v)}),\pi_{i}(\delta_{(y,w)})\nonumber\\
%&\leq \bigg(\frac{g}{2\log(2)\epsilon^*}\bigg)^{\frac{1}{2}}(1-c)^i\|f\|_{\textrm{Lip}(\hat{d})} \mathcal{W}_1(\delta_{(x,v)},\delta_{(y,w)})\nonumber\\\
&\quad= \bigg(\frac{g}{2\log(2)\epsilon^*}\bigg)^{\frac{1}{2}}(1-c)^{i}\|f\|_{\textrm{Lip}(\hat{d})} (\abs{x-y+\gamma^{-1}(v-w)} + 1.09\alpha\abs{x-y}),\label{fili}
\end{align}
where~\eqref{fili} holds independently of~$(\bar{\theta}_j)_{j\in\mathbb{N}}$. 
Inequality~\eqref{fili} shows that~$(\mathbb{R}^{2d},\hat{d}) \ni(x,v)\mapsto\mathbb{E}[f(X^{(i)})]$ (and~$(\mathbb{R}^d,\hat{d}) \ni x\mapsto\mathbb{E}[f(X^{(i)})]$ when~$\eta=0$) is Lipschitz with constant~$(g/(2\log(2)\epsilon^*))^{1/2}(1-c)^{i}\|f\|_{\textrm{Lip}(\hat{d})}$. 
In the following, we focus mostly on the case~$\eta\neq 0$, but the case~$\eta=0$ follows in a similar way, if not more easily. 
Let~$\lambda>0$ and let~$\mathcal{F}_i$ denote the sigma-algebra generated by~$(X^{(i)},V^{(i)})$. 
Let~$N^*=N_0+2\floor{N/2}$. 
Assume for induction that for some~$\bar{m}\in\mathbb{N}\cap[0,\floor{N/2}-2]$ it holds a.s. that
\begin{align}
%e^{-\lambda n\bar{t}}
\mathbb{E}\Big[e^{\lambda\sum_{k=0}^{\floor{N/2}-1}f(X^{(N^* - 2k)})}\Big] 
&\leq %e^{-\lambda n \bar{t} + 
e^{M_{\bar{m}}\|f\|_{\textrm{Lip}(\hat{d})}^2} \mathbb{E}\Big[e^{\lambda\sum_{k=\bar{m}}^{\floor{N/2}-1}f(X^{(N^*-2k)})} \nonumber\\
&\quad\cdot e^{\lambda\sum_{k=0}^{\bar{m}-1}\mathbb{E}[f(X^{(N^*-2k)})|\mathcal{F}_{N^*-2\bar{m}}]}\Big],\label{hwog}
\end{align}
where
\begin{equation}\label{Mmdef}
M_{\bar{m}} = \frac{2\bar{m}\lambda^2T^2C^*g}{\log(2)(1-\eta)\epsilon^*c^2},%\sum_{k=0}^{\bar{m}-1} (1-c)^{2kj}.
\qquad C^* = \begin{cases} 
(\frac{1.09}{4}+1)^2(1+\eta)\frac{(1-\eta)^2}{2LT^2} & \textrm{if }\eta\neq 0\\
\frac{1}{8}(\frac{1.09}{4}+1)^2 & \textrm{if }\eta=0.
\end{cases}
\end{equation}
The base case~$\bar{m}=0$ is clear. 
%substituting~\eqref{jvh} into~\eqref{jdb} then into~\eqref{hwos} and using the definitions~\eqref{epidef},~\eqref{gdef??} of~$\epsilon^*$,~$g$.
In the right-hand side of~\eqref{hwog}, it holds by the tower property a.s. that
\begin{align}
&\mathbb{E}\Big[e^{\lambda\sum_{k=\bar{m}}^{\floor{N/2}-1} f(X^{(N^*-2k)})} e^{\lambda\sum_{k=0}^{\bar{m}-1}\mathbb{E}[f(X^{(N^*-2k)})|\mathcal{F}_{N^*-2\bar{m}}]}\Big] \nonumber\\
&\quad= \mathbb{E}\Big[e^{\lambda\sum_{k=\bar{m}+1}^{\floor{N/2}-1} f(X^{(N^*-2k)})} \mathbb{E}\Big[e^{\lambda f(X^{(N^*-2\bar{m})})} \nonumber\\
&\qquad\cdot e^{\lambda\sum_{k=0}^{\bar{m}-1}\mathbb{E}[f(X^{(N^*-2k)})|\mathcal{F}_{N^*-2\bar{m}}]}\Big|\mathcal{F}_{N^*-2\bar{m}-2}\Big]\Big].\label{jdb}
\end{align}
By Doob-Dynkin Lemma~1.14 in~\cite{MR4226142} and the Lipschitz estimate~\eqref{fili}, the sum~$f(X^{(N^*-2\bar{m})}) + \sum_{k=0}^{\bar{m}-1}\mathbb{E}[f(X^{(N^*-2k)})|\mathcal{F}_{N^*-2\bar{m}}]$ is Lipschitz with constant
\begin{equation}\label{gei}
\|f\|_{\textrm{Lip}(\hat{d})}\bigg(1+\bigg(\frac{g}{2\log(2)\epsilon^*}\bigg)^{\frac{1}{2}}\sum_{k=1}^{\bar{m}} (1-c)^{2k}\bigg)  \leq \frac{\|f\|_{\textrm{Lip}(\hat{d})}}{c} \bigg(\frac{g}{2\log(2)\epsilon^*}\bigg)^{\frac{1}{2}} =:\bar{M}
\end{equation}
as a function of~$(X^{(N^*-2\bar{m})},V^{(N^*-2\bar{m})})$. Therefore
by Theorem~1.1 in~\cite{MR2078555} and Lemma~\ref{conlem}\ref{conlem1}, 
%that the law~$\mu_{N_0+N-\bar{m}}$ belongs in~$T_1(8\hat{T}^2C^*)$, 
the (first) conditional expectation on the right-hand side of~\eqref{jdb} satisfies a.s. the inequality
\begin{align}
&\mathbb{E}\Big[e^{\lambda f(X^{(N^*-2\bar{m})})}  e^{\lambda\sum_{k=0}^{\bar{m}-1}\mathbb{E}[f(X^{(N^*-2k)})|\mathcal{F}_{N^*-2\bar{m}}]}\Big| \mathcal{F}_{N^*-2\bar{m}-2}\Big]\nonumber\\
&\quad\leq e^{\lambda\sum_{k=0}^{\bar{m}}\mathbb{E}[f(X^{(N^*-2k)})|\mathcal{F}_{N^*-2\bar{m}-2}] + 4\lambda^2T^2C^* \bar{M}^2}, \label{jvh} %\cdot\frac{g}{2\log(2)\epsilon^*}(1-c)^{}}
\end{align}
where~$\bar{M}$ is defined in~\eqref{gei} and satisfies
\begin{equation}\label{qhf}
4\lambda^2T^2 C^*\bar{M}^2 \leq \frac{2\lambda^2T^2 C^*g}{\log(2)(1-\eta)\epsilon^*c^2} \|f\|_{\textrm{Lip}(\hat{d})}^2.
\end{equation}
Substituting~\eqref{qhf} back into~\eqref{jvh} then into~\eqref{jdb} and subsequently~\eqref{hwog} completes the inductive step. In particular, inequality~\eqref{hwog} holds with~$\bar{m}=\floor{N/2}-1$, which is that %for any~$t>0$, %by setting~$\bar{t}=t + \frac{1}{n}\sum_{k=0}^{n-1}\mathbb{E}[f(X^{(kj)})]$, 
\begin{equation}\label{gbi}
%\mathbb{P}\bigg(\frac{1}{n}\sum_{k=0}^{n-1}(f(X^{(kj)}) - \mathbb{E}[f(X^{(kj)})]) > t\bigg) 
%e^{-\lambda nt} 
\mathbb{E}\Big[e^{\lambda\sum_{k=0}^{\floor{N/2}-1}f(X^{(N^*-2k)})}\Big] \leq 
%e^{-\lambda n t + 
e^{M_{\floor{N/2}-1}\|f\|_{\textrm{Lip}(\hat{d})}^2} \mathbb{E}\Big[ e^{\lambda\sum_{k=0}^{\floor{N/2}-1}\mathbb{E}[f(X^{(N^*-2k)})|\mathcal{F}_{N_0+2}]}\Big].
\end{equation}
%\iffalse
By the same arguments, denoting~$\bar{N}=\sup\{k\in\mathbb{N}\setminus(2\mathbb{N}):k\leq N\}$, 
%~\eqref{gbi} holds with instead~$N^*=$. Together with~\eqref{gbi}, this implies
it holds that
\begin{align}\label{gbi2}
&\mathbb{E}\Big[e^{\lambda\sum_{k=0}^{(\bar{N}-1)/2}f(X^{(N_0+\bar{N}-2k)})}\Big] \nonumber\\
&\quad\leq 
%e^{-\lambda n t + 
e^{M_{(\bar{N}-1)/2}\|f\|_{\textrm{Lip}(\hat{d})}^2} \mathbb{E}\Big[ e^{\lambda\sum_{k=0}^{(\bar{N}-1)/2}\mathbb{E}[f(X^{(N_0+\bar{N}+2k)})|\mathcal{F}_{N_0+1}]}\Big].
\end{align}
Note that the exponents on the left-hand sides of~\eqref{gbi} and~\eqref{gbi2} are concerned with respectively the even and odd SGgHMC iterations after the initial~$N_0$ iterations.
%\fi
%Inequalities~\eqref{gbi},~\eqref{gbi2} and H\"{o}lder's inequality imply
For the right-hand side of~\eqref{gbi}, assume for induction that for some~$\bar{m}\in\mathbb{N}\cap[0,\floor{N_0/2})$ it holds that
\begin{align}\label{gvq}
&\mathbb{E}\Big[ e^{\lambda\sum_{k=0}^{\floor{N/2}-1}\mathbb{E}[f(X^{(N^*-2k)})|\mathcal{F}_{N_0+2}]}\Big] \nonumber\\
&\quad\leq e^{\bar{M}_{\bar{m}}\|f\|_{\textrm{Lip}(\hat{d})}^2}\mathbb{E}\Big[ e^{\lambda\sum_{k=0}^{\floor{N/2}-1}\mathbb{E}[f(X^{(N^*-2k)})|\mathcal{F}_{N_0+2-2\bar{m}}]}\Big],
\end{align}
where~$\bar{M}_{\bar{m}}$ is given together with~$\hat{M}$ by
\begin{equation}\label{M2def}
\bar{M}_{\bar{m}} = \lambda^2\hat{M}\sum_{\bar{k}=0}^{2\bar{m}-2}(1-c)^{2\bar{k}+2},\qquad \hat{M} = \frac{ 2T^2C^* g}{\log(2)(1-\eta)\epsilon^*c^2}.
\end{equation}
Again, the base case~$\bar{m}=0$ is clear. Since the sum in the exponent within the expectation on the right-hand side of~\eqref{gvq} is Lipschitz with constant
\begin{equation*}
\bigg(\frac{g}{2\log(2)\epsilon^*}\bigg)^{\frac{1}{2}}\|f\|_{\textrm{Lip}(\hat{d})}\!\!\!\sum_{k=0}^{\floor{N/2}-1} (1-c)^{2k+2\bar{m}+2} \leq (1-c)^{2\bar{m}+2}\bigg(\frac{g\|f\|_{\textrm{Lip}(\hat{d})}^2}{2\log(2)\epsilon^*c^2}\bigg)^{\frac{1}{2}}
\end{equation*}
as a function of~$(X^{(N_0-2\bar{m})},V^{(N_0-2\bar{m})})$, by Theorem~1.1 in~\cite{MR2078555} and 
%that~$\mu_{N_0+1-\bar{m}}$ belongs in~$T_1(8\hat{T}^2C^*)$, 
Lemma~\ref{conlem}\ref{conlem1}, 
it holds a.s. that
\begin{align}\label{gbo}
&\mathbb{E}\Big[ e^{\lambda\sum_{k=0}^{\floor{N/2}-1}\mathbb{E}[f(X^{(N^*-2k)})|\mathcal{F}_{N_0+2-2\bar{m}}]} \Big| \mathcal{F}_{N_0-2\bar{m}}\Big]\nonumber\\
&\quad\leq e^{\lambda\sum_{k=0}^{\floor{N/2}-1}\mathbb{E}[f(X^{(N^*-2k)})|\mathcal{F}_{N_0-2\bar{m}}] + \lambda^2(1-c)^{4\bar{m}+4}\hat{M}\|f\|_{\textrm{Lip}(\hat{d})}^2}.
\end{align}
Substituting inequality~\eqref{gbo} back into~\eqref{gvq} completes the induction argument to obtain~\eqref{gvq} with~$\bar{m}=\floor{N_0/2}$. Therefore, by either Lemma~\ref{conlem}\ref{conlem1} or Lemma~\ref{conlem}\ref{conlem2}, repeating the inductive argument one more time yields
\begin{align}
&\mathbb{E}\Big[ e^{\lambda\sum_{k=0}^{\floor{N/2}-1}\mathbb{E}[f(X^{(N^*-2k)})|\mathcal{F}_{N_0+2}]}\Big] \nonumber\\
&\quad\leq e^{\bar{M}_{\floor{N_0/2}+1}\|f\|_{\textrm{Lip}(\hat{d})}^2}\mathbb{E}\Big[ e^{\lambda\sum_{k=0}^{\floor{N/2}-1}\mathbb{E}[f(X^{(N^*-2k)})]}\Big].\label{halfc1}
\end{align}
\iffalse
to obtain
\begin{align*}
&\mathbb{E}\Big[ e^{\lambda\sum_{k=0}^{\floor{N/2}-1}\mathbb{E}[f(X^{(N^*-2k)})|\mathcal{F}_{N_0+2}]}\Big] \nonumber\\
&\quad\leq e^{\bar{M}_{\floor{N_0/2}}\|f\|_{\textrm{Lip}(\hat{d})}^2}\mathbb{E}\Big[ e^{\lambda\sum_{k=0}^{\floor{N/2}-1}\mathbb{E}[f(X^{(N^*-2k)})|\mathcal{F}_{N_0+2-2\floor{N/2}}]}\Big],
\end{align*}

In case~$N_0\neq1$ is odd, for~$\bar{m}=(N_0-1)/2$, in place of~\eqref{gbo} and using instead the assertions about~$\bar{\nu}_{x,v}$ in Lemma~\ref{conlem}, note that it holds a.s. that
\begin{equation*}
\mathbb{E}\Big[ e^{\lambda\sum_{k=1}^{N/2}\mathbb{E}[f(X^{(N_0+2k)})|\mathcal{F}_3]} \Big] \leq e^{\lambda\sum_{k=1}^{N/2}\mathbb{E}[f(X^{(N_0+2k)})] + 2\lambda^2(1-c)^{2(N_0-1)}\hat{M}\|f\|_{\textrm{Lip}(\hat{d})}^2}.
\end{equation*}
In both cases, inequality~\eqref{gvq} holds with~$\bar{m}=N_0/2+1$, 
\fi
In the same way, starting from the right-hand side of~\eqref{gbi2} instead of that of~\eqref{gbi}, it holds that
\begin{align}
&\mathbb{E}\Big[ e^{\lambda\sum_{k=1}^{(\bar{N}-1)/2}\mathbb{E}[f(X^{(N_0+\bar{N}+2k)})|\mathcal{F}_{N_0+1}]}\Big]\nonumber\\
&\quad\leq e^{\bar{M}_{\floor{(N_0-1)/2}+1}\|f\|_{\textrm{Lip}(\hat{d})}^2}\mathbb{E}\Big[e^{\lambda\sum_{k=0}^{(\bar{N}-1)/2}\mathbb{E}[f(X^{(N_0+\bar{N}+2k)})]}\Big].\label{halfc2}
\end{align}
Inequalities~\eqref{halfc1},~\eqref{halfc2} may be substituted into the right-hand sides of~\eqref{gbi},~\eqref{gbi2} respectively, so that H\"{o}lder's inequality implies %for any~$t>0$ that
\begin{align}
&\Big(\mathbb{E}\Big[e^{\frac{\lambda}{2}\sum_{k=1}^N(f(X^{(N_0+k)}) - \mathbb{E}[f(X^{(N_0+k)})])}\Big]\Big)^2 \nonumber\\
&\quad\leq 
\mathbb{E}\Big[e^{\lambda\sum_{k=0}^{\floor{N/2}-1}(f(X^{(N^*-2k)}) - \mathbb{E}[f(X^{(N^*-2k)})])}\Big] \nonumber\\
&\qquad\cdot
\mathbb{E}\Big[e^{\lambda\sum_{k=0}^{(\bar{N}-1)/2}(f(X^{(N_0+\bar{N}-2k)}) - \mathbb{E}[f(X^{(N_0+\bar{N}-2k)})])}\Big] \nonumber\\
&\quad\leq e^{ (M_{N-2} + 2\bar{M}_{\floor{N_0/2}+1})\|f\|_{\textrm{Lip}(\hat{d})}^2}.\label{dig}
\end{align}
Using (again) the definitions~\eqref{Mmdef},~\eqref{M2def} of~$M_{N-2}$,~$\bar{M}_{\floor{N_0/2}+1}$,~$\hat{M}$, it holds that
\begin{equation}\label{dig2}
M_{N-2} + 2\bar{M}_{N_0/2+1} \leq \lambda^2\hat{M}(N + c^{-1}).
\end{equation}
Therefore, in particular for any~$t>0$ and~$\lambda = \frac{Nt}{2\hat{M}\|f\|_{\textrm{Lip}(\hat{d})}^2(N+c^{-1})}$, 
%which has been obtained by optimizing the right-hand bound of~\eqref{dig} w.r.t.~$\lambda>0$, 
inequalities~\eqref{dig},~\eqref{dig2} imply
\begin{equation}\label{oag}
e^{-\frac{\lambda}{2} N t}\mathbb{E}\Big[e^{\frac{\lambda}{2}\sum_{k=1}^{N}(f(X^{(N_0+k)}) - \mathbb{E}[f(X^{(N_0+k)})])}\Big] \leq e^{-t^2 \hat{C}},%\leq e^{-nt^2C_{j,n}},
\end{equation}
where~$\hat{C} = \frac{N^2}{8\hat{M}\|f\|_{\textrm{Lip}(\hat{d})}^2(N+c^{-1})}$. 
By Markov's inequality, it holds for any~$t\in\mathbb{R}$ that
\begin{align*}
&\mathbb{P}\bigg(\frac{1}{N}\sum_{k=N_0+1}^{N_0+N}(f(X^{(k)})-\mathbb{E}[f(X^{(k)})])>t\bigg) \\%\nonumber\\
&\quad=\mathbb{P}\Big(e^{\frac{\lambda}{2} \sum_{k=N_0+1}^{N_0+N}(f(X^{(k)}) - \mathbb{E}[f(X^{(k)})])}\geq e^{\frac{\lambda}{2} Nt}\Big)\\%\nonumber\\
&\quad\leq e^{-\frac{\lambda}{2} Nt} \mathbb{E}\Big[e^{\frac{\lambda}{2} \sum_{k=N_0+1}^{N_0+N}(f(X^{(k)}) - \mathbb{E}[f(X^{(k)})])}\Big],%\label{hwos}
\end{align*}
which concludes by~\eqref{oag} and substituting the definitions~\eqref{M2def},~\eqref{Mmdef} for~$\hat{M},C^*$.
\end{proof}

\section{Wasserstein-to-TV bounds}\label{sec:WtoTV}

Theorem~\ref{prop:TV/W1}, concerned with the Verlet and randomized middle-point integrators~\eqref{hameqs}, is similar to \cite[Proposition~3]{monmarche2023entropic} where the former are replaced by the true Hamiltonian flow, and the proofs are the same. In order to highlight the similarities, we introduce now a general framework which encompasses these various cases and many other similar splitting schemes. Then we state our main result in these general settings, Theorem~\ref{thm:generalTVW1}. Theorem~\ref{prop:TV/W1} is then a simple corollary. The proofs rely on technical computations which are exactly the same as in previous studies \cite{MR4497240,camrud2023second,monmarche2023entropic}  and thus are omitted in the following.

\subsection{General settings}

In all this section, we consider a space $\Theta$ and, for $T\in(0,1]$ and $\theta \in \Theta$, a function $\Phi_T^\theta \in\mathcal C^1 (\R^{2d},\R^{2d})$, which plays the role of an Hamiltonian propagator with integration time $T$ and parameter $\theta$. To fix ideas, for instance, in \cite{monmarche2023entropic}, $\Theta=\{0\}$ and $\Phi_T^0$ is the Hamiltonian flow while, in the present work, $\Phi_T^{\bar \theta}(x,v) = (\bar q_T,\bar p_T) $ as given by \eqref{hameqs}. The precise assumptions on $\Phi_T^\theta$ are given below. The parameter $\theta $ may account for random seeds (as in \eqref{hameqs}, either due to a stochastic approximation of the force or to a random integrator) but also for time-inhomogeneous dynamics, which are thus covered in the following. For now, a sequence  $(\theta_k)_{k\in\N} \in \Theta^{\N}$  is fixed.

Given $T\in(0,1]$, $\eta\in(0,1]$, and an initial distribution $\nu_0\in\mathcal P(\R^{2d})$, we construct a Markov chain as follows. Let $z_0 = (x_0,v_0)\sim \nu_0$. Assume that $z_k=(x_k,v_k)$ has been defined for some $k\in\N$ and, independently, let $(G_k,G_k')$ be two independent standard $d$-dimensional Gaussian variables. The next transition of the chain is given by
\begin{subequations}\label{eq:DVD}
\begin{align}
\tilde v_{k+1/3} & = \eta v_k + \sqrt{1-\eta^2} G_k \label{eq:DVD1} \\
(x_{k+1},\tilde v_{k+2/3}) &=  \Phi_T^{\theta_k}(x_k,\tilde v_{k+1/3})\label{eq:DVD2}\\
 v_{k+1} & =  \eta \tilde v_{k+2/3} + \sqrt{1-\eta^2} G_k'\,.\label{eq:DVD3}
 \end{align}
\end{subequations}
We write $\hat \pi_n(\nu_0)$ the law of $z_n$ for $n\in\N$. By contrast to the SGgHMC chain considered e.g. in Theorems~\ref{intma} and \ref{prop:TV/W1} (with law $\pi_n(\nu_0)$ after $n$ steps), two velocity refreshments are performed in~\eqref{eq:DVD}, respectively before and after the Hamiltonian step~\eqref{eq:DVD2}. As discussed in~\cite{monmarche2023entropic}, this is necessary to ensure that both velocity and position are regularized even after~$n=1$ transition. When~$n>1$, applying successively \eqref{eq:DVD3} and \eqref{eq:DVD1} amounts to performing a single velocity refreshment, with parameter~$\eta^2$ instead of~$\eta$. It is then not difficult,  up to replacing $\eta$ by $\sqrt{\eta}$, to pass from results written in terms of~$\hat\pi_n(\nu_0)$ and~$\pi_n(\nu_0)$ (using  that total variation and Wasserstein $2$ distances are decreased by velocity refreshments) when $n>1$ (see Section~\ref{subsec:appliTVW1}).

The proof of \cite[Proposition~3]{monmarche2023entropic} works as soon as $\Phi_T^{\theta}$ satisfy estimates similar to the Hamiltonian flow, such as stated in \cite[Lemmas 1 and 2]{monmarche2023entropic}. They are essentially  quantitative ways to state that~$\Phi_T^\theta(x,v) = (x + T v, v-T F_{\theta}(x) ) + \underset{T\rightarrow 0}{\mathcal O}(T^2)$ for some Lipschitz continuous $F_{\theta}$, uniformly in $\theta$. We gather them in the next assumption, where we decompose the position and velocity after the Hamiltonian step as $\Phi_T^\theta=(\Phi_T^{\theta,1},\Phi_T^{\theta,2})$.

\begin{assumption}\label{assum:generalTVW1}
There exists $C,T_0>0$ such that for all $T\in(0,T_0]$ and $\theta\in\Theta$, the following holds.
\begin{enumerate}
    \item For any $z,z'\in\R^d$, with $z=(x,v)$, $z'=(x',v')$,  denote $(q,p)=\Phi_T^\theta(z)$ and $(q',p')=\Phi_T^\theta(z')$. Then 
\begin{eqnarray}
|\na_x q - I|&   \leq &    CT^2    \label{eq:lem1_na_xPhi1}     \\
|\na_v q - T I| &  \leq &     CT^3  \label{eq:lem1_na_vPhi1}\\
|\na_x p| &   \leq &   T + CT^3   \label{eq:lem1_na_xPhi2}\\
|\na_v p - I| &   \leq &     C T^2   \label{eq:lem1_na_vPhi2}
\end{eqnarray} 
and   
\begin{eqnarray}
|q - q'| & \leq &  \po 1 + CT^2 \  \pf |x-x'| + \po T + C T^3 \pf |v-v'|   \label{eq:lem1_Phi1z-z'} \\
|p-p'| 
& \leq  &  \po T + C T^3 \pf |x-x'| + \po 1 + C T^2  \pf |v-v'|\,.   \label{eq:lem1_Phi2z-z'}
\end{eqnarray}
Moreover,
\begin{eqnarray}
  \|\na_x q - \na_xq'\|_F  &\leq&   C T^2   |x-x'| + C T^3  L_H |v-v'| \label{eq:lem1_Frob_naxPhi1}\\
  \|\na_v q - \na_v q'\|_F  &\leq& CT^3  |x-x'| + CT^4 |v-v'| \label{eq:lem1_Frob_navPhi1} \\
  \|\na_x p - \na_x p'\|_F  &\leq&  CT  |x-x'| + CT^2  |v-v'|
   \label{eq:lem1_Frob_naxPhi2}  \\
  \|\na_v p- \na_vp'\|_F  &\leq& CT^2 |x-x'| + CT^3  |v-v'|  \label{eq:lem1_Frob_navPhi2} \,.
\end{eqnarray}
\item  There exists a function $K\in\mathcal C^1 (\R^{2d},\R^d)$ such that, for all $x,x'\in\R^{d}$, $v=K(x,x')$ is the unique solution of $\Phi_T^{\theta,1}(x,K(x,x'))=x'$.  For $u_0,u_1\in\R^d$, consider the function $K_{u_0,u_1}\in\mathcal C^1(\R^{2d},\R^d)$ given by $K_{u_0,u_1}(x,v) = K(x+u_0,\Phi_T^{\theta,1}(x,v)+u_1)$, i.e. such that $v'=K_{u_0,u_1}(x,v)$  is the unique solution of  $\Phi_T^{\theta,1}(x+u_0,v')=\Phi_T^{\theta,1}(x,v)+u_1$. Then, for all  $x,v,u_0,u_1\in\R^d$, 
\begin{eqnarray}
|u_1-u_0+tv -t K_{u_0,u_1}(x,v)| &\leq&   CT^2\po  |u_0| +   |u_1-u_0|\pf   \label{lem:Hxx'_eq1}\\
 \| \na_v K_{u_0,u_1}(x,v) - I_d \|_F & \leq & C T^2 \po  |u_0| + T |u_1-u_0|\pf   \label{lem:Hxx'_eq2}  \\
  | \na_v K_{u_0,u_1}(x,v) - I_d | & \leq & C T^2  \label{lem:Hxx'_eq3}  \\
 \| \na_x K_{u_0,u_1}(x,v) \|_F & \leq &  CT \po  |u_0| + T |u_1-u_0|\pf \label{lem:Hxx'_eq4}\\ 
  | \na_x K_{u_0,u_1}(x,v) | &\leq & CT   \,. \label{lem:Hxx'_eq5}
\end{eqnarray}
\end{enumerate}
\end{assumption}

%\pierre{
\begin{remark}
The existence of $K$ in the second item, for $T$ small enough, is in fact implied by \eqref{eq:lem1_na_vPhi1}. Indeed, for a fixed $q\in\R^d$, consider the application $g:p \mapsto \Phi_T^{\theta,1}(q,p)$. Using \eqref{eq:lem1_na_vPhi1}, for $p,p'\in\R^d$,
\begin{equation}
    \label{eq:gp}
    |g(p)-g(p') - T (p-p')| \leq C T^3|p-p'|\,,
\end{equation}
and in particular $g$ is injective for $T$ small enough since
\begin{equation*}
|g(p)-g(p')| \geqslant \po T - C T^3\pf |p-p'|.
\end{equation*}
Moreover, \eqref{eq:lem1_na_vPhi1} implies that $\na g(p)$ is non-singular for all $p$ when $CT^2<1$. It follows by the global inversion theorem that $g$ is a $\mathcal C^1$-diffeomorphism from $\R^d$ to its image. To see that $g$ is surjective, take $q'\in\R^d$ and, given any $p_0\in\R^d$, define by induction~$p_{n+1} = p_n + \frac1T \po q'-g(p_n)\pf$. 
Thanks to \eqref{eq:gp}, for all $n\in\N$, we have~$|q' - g(p_{n+1})|  \leq CT^3 |p_{n+1}-p_n| =  CT^2 |q' - g(p_{n})|$. 
Assuming $CT^2<1$, we get that $g(p_n)$ converges geometrically fast to $q'$, and since $|p_{n+1}-p_n| = |q'-g(p_n)|/T$, we get that $(p_n)_{n\in\N}$ is a Cauchy sequence, hence it converges to some $p'$ with $g(p')=q'$.
\end{remark}
%}

\begin{theorem}\label{thm:generalTVW1}
Under Assumption~\ref{assum:generalTVW1}, there exists $C',T_0'>0$, which depend only on $C,T_0$, such that the following holds. For any $z,z'\in\R^{2d}$,  $T\in(0,T_0']$, $(\theta_k)_{k\in\N}\in\Theta^{\N}$,
\[\mathrm{Ent} \po \hat\pi_1(\delta_z)|\hat \pi_1(\delta_{z'})\pf \leq   \frac{C'}{(1-\eta)T^2}|z-z'|^2 \,, \]
and, for any $n\geqslant 1$, assuming furthermore that $\eta>0$ and $s:=nT\leq 1$,
\[\mathrm{Ent} \po \hat\pi_n(\delta_z)|\hat \pi_n(\delta_{z'})\pf \leq   C' \frac{s}{\gamma^*} \po \frac{1}{\eta s^2} + \frac{\gamma^*}{\eta s} + 1\pf^2 |z-z'|^2 \,, \]
where $\gamma^*=(1-\eta)/T$.
\end{theorem}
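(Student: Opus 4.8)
\textbf{Proof plan for Theorem~\ref{thm:generalTVW1}.}

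The plan is to reduce everything to a single-step entropy estimate and then iterate it along the chain, exploiting the two velocity refreshments per step. For the one-step bound, I would fix $z,z'\in\R^{2d}$ and note that the law $\hat\pi_1(\delta_z)$ is the pushforward of the product Gaussian $(G_0,G_0')\sim\gamma_{0,1}^{2d}$ under the map $\Psi_z$ defined by the composition \eqref{eq:DVD1}--\eqref{eq:DVD3}. Because the first velocity refreshment \eqref{eq:DVD1} injects a full Gaussian in $v$ and the Hamiltonian propagator $\Phi_T^{\theta_0}$ is a diffeomorphism, the pair $(x_1,v_1)$ admits an explicit density obtained via the change of variables $G_0\mapsto x_1$ (using that $p\mapsto \Phi_T^{\theta_0,1}(x_0,\eta v_0 + \sqrt{1-\eta^2}p)$ is invertible, which is exactly the content of the remark after Assumption~\ref{assum:generalTVW1}, quantitatively guaranteed by \eqref{eq:lem1_na_vPhi1}) together with the Gaussian density of $G_0'$ for the final refreshment. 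Writing $z=(x,v)$, $z'=(x',v')$ and denoting by $K_{u_0,u_1}$ the implicit map from Assumption~\ref{assum:generalTVW1}(2), the log-density-ratio between $\hat\pi_1(\delta_z)$ and $\hat\pi_1(\delta_{z'})$ can be expanded in terms of the displacement $z-z'$, and the estimates \eqref{lem:Hxx'_eq1}--\eqref{lem:Hxx'_eq5} together with \eqref{eq:lem1_na_xPhi1}--\eqref{eq:lem1_Frob_navPhi2} control all the resulting terms; integrating against $\hat\pi_1(\delta_z)$ and using the Gaussian moment bounds yields
\begin{equation*}
\mathrm{Ent}\po \hat\pi_1(\delta_z)\,|\,\hat\pi_1(\delta_{z'})\pf \leq \frac{C'}{(1-\eta)T^2}|z-z'|^2,
\end{equation*}
where the factor $(1-\eta)^{-1}$ arises from the variance $1-\eta^2 \geq 1-\eta$ of the refreshment noise in the $v$-direction and the factor $T^{-2}$ from the Jacobian of $G_0\mapsto x_1$ scaling like $T$. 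These are precisely the computations carried out in \cite[Lemmas~1--2 and Proposition~3]{monmarche2023entropic} and in \cite{MR4497240,camrud2023second}, which I would cite rather than redo.

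For the $n$-step bound with $\eta>0$, the key structural observation is that composing \eqref{eq:DVD3} at step $k$ with \eqref{eq:DVD1} at step $k+1$ produces a single velocity refreshment with parameter $\eta^2$. Thus, up to the first refreshment \eqref{eq:DVD1} at step $0$ and the last refreshment \eqref{eq:DVD3} at step $n-1$, the chain $\hat\pi_n$ is a composition of $n$ maps each built from one Hamiltonian propagator sandwiched between refreshments of parameter $\sqrt{\eta}$ effectively; more efficiently, I would track the chain via the $n$ intermediate position variables $x_1,\dots,x_n$ and perform a chain-of-conditioning argument: condition successively on $(x_k,v_k)$ and apply the one-step density estimate at each stage, using that total variation and $\mathcal W_2$ (hence entropy, via the data-processing / chain rule for relative entropy) are non-increasing under the Markov transitions. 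This converts a single initial displacement $|z-z'|$ at time $0$ into a telescoping bound, where at each step the displacement has been contracted in the velocity component by a factor $\eta$ (from the refreshment) but potentially amplified in the position component by the $O(T)$ Hamiltonian drift. Summing the resulting geometric-type series over $k=1,\dots,n$ with ratio governed by $\eta$ and step-length $T$ gives a prefactor that, after collecting the $(1-\eta)^{-1} = T/\gamma^*$ from the one-step bound, the $1/(\eta s)$ from the $n$ summands (physical time $s=nT$), and the $1/(\eta s^2)$ from the very first step where the $v$-component has not yet been regularized, assembles into
\begin{equation*}
\mathrm{Ent}\po\hat\pi_n(\delta_z)\,|\,\hat\pi_n(\delta_{z'})\pf \leq C'\frac{s}{\gamma^*}\po \frac{1}{\eta s^2} + \frac{\gamma^*}{\eta s} + 1\pf^2 |z-z'|^2.
\end{equation*}
The constants $C',T_0'$ depend only on $C,T_0$ because every estimate invoked from Assumption~\ref{assum:generalTVW1} is uniform in $\theta$ and valid for $T\le T_0$.

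The main obstacle — the part requiring genuine care rather than bookkeeping — is the one-step entropy estimate and, within it, controlling the Jacobian of the change of variables $G_0\mapsto x_1$ and its dependence on $z$ versus $z'$: one must show the implicit function $K_{u_0,u_1}$ is well-defined, $C^1$, and satisfies the quantitative perturbation bounds \eqref{lem:Hxx'_eq1}--\eqref{lem:Hxx'_eq5} uniformly, and then propagate these through the logarithm of a ratio of Gaussian densities composed with nonlinear maps without losing the correct powers of $T$ and $(1-\eta)$. Since this is exactly the computation performed in \cite{monmarche2023entropic} with the Hamiltonian flow replaced by $\Phi_T^\theta$, and since Assumption~\ref{assum:generalTVW1} is precisely the abstraction of the properties of the flow used there, the argument transfers verbatim; accordingly I would state Theorem~\ref{thm:generalTVW1} and refer to those references for the detailed computations, emphasizing only the two structural points above (the density formula via invertibility of the position map, and the merging of consecutive refreshments into one of parameter $\eta^2$). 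Theorem~\ref{prop:TV/W1} then follows by applying Theorem~\ref{thm:generalTVW1} with $\Phi_T^{\bar\theta} = (\bar q_T,\bar p_T)$ from \eqref{hameqs}, checking Assumption~\ref{assum:generalTVW1} for the Verlet and randomized-midpoint integrators (the estimates \eqref{eq:lem1_na_xPhi1}--\eqref{eq:lem1_Frob_navPhi2} being elementary consequences of \eqref{A1a} and the extra Hessian-Lipschitz hypothesis, as in Lemma~\ref{basic}), passing from $\hat\pi_n$ to $\pi_n$ via the remark that $n\ge 2$ steps of the SGgHMC chain contain an internal double refreshment and using Pinsker's inequality $\|\mu-\nu\|_{\mathrm{TV}}^2 \le \tfrac12\mathrm{Ent}(\mu|\nu)$ together with convexity of $z\mapsto\mathrm{Ent}(\hat\pi_n(\delta_z)|\hat\pi_n(\delta_{z'}))^{1/2}$ in law to replace $\delta_z,\delta_{z'}$ by general $\nu_1,\nu_2$ and integrate against an optimal $\mathcal W_1$ coupling.
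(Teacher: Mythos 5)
Your outline for the $n=1$ case is essentially the paper's approach: the law $\hat\pi_1(\delta_z)$ is the pushforward of a standard Gaussian on $\R^{2d}$ by the full transition map $\Psi_z$ (the paper works with the whole pair $(G_0,G_0')$ rather than the single change of variables $G_0\mapsto x_1$, but the invertibility hinges on the position map $p\mapsto\Phi_T^{\theta,1}(x,p)$ exactly as you say), and one controls $\mathrm{Ent}(\hat\pi_1(\delta_z)|\hat\pi_1(\delta_{z'}))$ by estimating $\Psi_{z,z'}:=\Psi_{z'}^{-1}\circ\Psi_z$ as a perturbation of the identity and invoking the quantitative bound of~\cite[Lemma~15]{MR4497240}. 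That part of the proposal is fine.

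The $n>1$ argument, however, does not work, and this is where the genuine content of the proof lies. You propose to condition successively on $(x_k,v_k)$ and telescope one-step entropy estimates, invoking the data-processing / chain rule for relative entropy. But for a Markov chain, the chain rule gives
\begin{equation*}
\mathrm{Ent}\bigl(\mathrm{law}(z_1,\dots,z_n\,|\,z_0=z)\;\big|\;\mathrm{law}(z_1,\dots,z_n\,|\,z_0=z')\bigr)
= \mathrm{Ent}\bigl(\hat\pi_1(\delta_z)\,\big|\,\hat\pi_1(\delta_{z'})\bigr),
\end{equation*}
since the conditional law of $z_{k+1}$ given $z_k$ is the same for both chains and all conditional-entropy terms beyond the first vanish. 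Combined with data processing, this yields only $\mathrm{Ent}(\hat\pi_n(\delta_z)|\hat\pi_n(\delta_{z'}))\leq C'(1-\eta)^{-1}T^{-2}|z-z'|^2$ for every $n$ — the unimproved one-step bound. The stated $n$-step bound is strictly smaller: in the Langevin scaling $\gamma^*$ fixed, $s=nT\asymp 1$, $T\to 0$, the one-step bound blows up like $T^{-2}$ while the $n$-step bound remains bounded. No iteration of marginal entropy estimates can produce this gain, because relative entropy cannot accumulate in your favour across Markov steps; it can only stay the same or decrease. There is therefore no "geometric-type series" to sum.

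What the paper actually does for $n>1$ is construct a coupling of the entire $n$-step trajectories. One fixes a deterministic interpolating bridge $(y_k)_{k=0}^n\subset\R^{2d}$ with $y_0=z'-z$ and $y_n=0$ (chosen with an explicit polynomial-in-$k$ structure so that the discrete curvature is minimal) and then defines $\bW=\Psi_{z,z'}^n(\bG)$ so that the chain started at $z'$ driven by $\bW$ stays exactly at distance $y_k$ from the chain started at $z$ driven by $\bG$ after $k$ steps, for every $k$. This spreads the discrepancy $z-z'$ over all $n$ refreshments: each $\bW_k$ is an $O(1/n)$-type perturbation of $\bG_k$, and the quadratic cost $\sum_k|\bW_k-\bG_k|^2$ plus the Frobenius-norm Jacobian terms produces the improved prefactor. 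This is a discrete Girsanov / trajectory-measure argument, not an iteration of one-step bounds. Your observation that consecutive refreshments merge into one of parameter $\eta^2$ is correct but belongs to the application (passing from $\hat\pi_n$ to $\pi_n$ in Section~\ref{subsec:appliTVW1}), not to the proof of Theorem~\ref{thm:generalTVW1} itself, and it does not rescue the telescoping strategy.
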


\begin{proof}
Throughout the proof, $C'$ denotes a constant which depends only on $C$ and may change from line to line.

First, let us simply consider the case $n=1$ (i.e. a single transition). To alleviate notations, simply write $\Phi = \Phi_T^{\theta_0}$.  For $z=(x,v) \in\R^d$, $\bG=(G,G')\in\R^{2d}$, let
\begin{equation}\label{eq:PsizG}
\Psi_{z}(\bG) = \begin{pmatrix}
1 &\  0 \\ 0 & \eta
\end{pmatrix} \Phi\po x , \eta v + \sqrt{1-\eta^2} G\pf + \sqrt{1-\eta^2} \begin{pmatrix}
0\\G'
\end{pmatrix}
\end{equation}
which is the state of the chain  starting from $z$, after one transition \eqref{eq:DVD} if~$G$ and~$G'$ are the variables used in the  randomization  steps. In particular, if~$ \bG \sim \mathcal N(0,I_{2d})$ then the law of $\Psi_z(\bG)$ is $\hat \pi_1(\delta_z) $.

Let us check that $\Psi_z$ is a diffeomorphism.  The equation $(X,Y) = \Psi_{z}(G,G')$ leads, considering first the velocities in this equality and recalling the notation~$\Phi=(\Phi^1,\Phi^2)$, to 
\[G' = \frac{1}{\sqrt{1-\eta^2}} \po Y -  \eta \Phi^{2}\po x , \eta v + \sqrt{1-\eta^2} G\pf\pf \]
and, considering then the positions, to~$X =   \Phi^1(x,\eta v + \sqrt{1-\eta^2} G)$, in other words,~$G =  (1/\sqrt{1-\eta^2}) \po K(x,X)- \eta v\pf$, 
where $K$ is the function defined by the fact $\Phi^1(x,K(x,X)) =  X$, as introduced in Assumption~\ref{assum:generalTVW1}.
This concludes since we have obtained that
\[\begin{pmatrix}
G \\ G' 
\end{pmatrix} = \Psi_z^{-1}\begin{pmatrix}
X\\ Y
\end{pmatrix} = \frac{1}{\sqrt{1-\eta^2}}  \begin{pmatrix}
  K(x,X)- \eta v \\
  Y -   \eta \Phi^2\po x , K(x,X) \pf 
\end{pmatrix}\,.\]
As a consequence, writing $j_z(y) = |\det(\na \Psi_{z}^{-1}(y))|$ for any~$y$, the law $\hat \pi_1(\delta_z)$ admits a density $f_z$ given by~$f_z(y) =  (2\pi)^{-d/2} j_z(y) e^{-|\Psi_z^{-1}(y)|^2/2}$. 
Denoting by~$\rho$ the density of the standard Gaussian law~$\mathcal N(0,I_{2d})$, 
\[\mathrm{Ent}\po \hat \pi_1(\delta_{z'}) |\hat \pi_1(\delta_z)\pf = \int_{\R^{2d}} \ln \po \frac{f_{z'}}{f_{z}}\pf f_{z'} % = \mathbb E \po \frac{f_{z'}\po \Psi_{z'}(\bG)\pf }{f_{z}\po \Psi_{z'}(\bG)\pf }\pf
 =   \int_{\R^{2d}} \ln \po \frac{\rho}{g_{z,z'}}\pf \rho\]
with
\[g_{z,z'}(y) = \frac{f_{z}\po \Psi_{z'}(y)\pf \rho(y) }{f_{z'}\po \Psi_{z'}(y)\pf  } = |\det(\na  \Psi_{z'}(y))| f_{z}\po  \Psi_{z'}(y)\pf, \]
which is the density of the image of~$f_{z}$ by~$\Psi_{z'}^{-1}$, namely is the image of~$\rho$ by~$\Psi_{z'}^{-1}\circ \Psi_{z}=:\Psi_{z,z'}$. As established in the proof of~\cite[Lemma 15]{MR4497240},  provided
\begin{equation}\label{eq:conditionPsi}
|\na \Psi_{z,z'}(\bG) - I_{2d}|\leq \frac12 \,,
\end{equation}
it holds that
\begin{equation}\label{eq:entropie_kernel}
\int_{\R^{2d}} \ln \po \frac{\rho}{g_{z,z'}}\pf \rho \leq \mathbb E\po \frac12 |\Psi_{z,z'}(\bG)-\bG|^2 + \|\na \Psi_{z,z'}(\bG) - I_{2d}\|_F^2 \pf \,.
\end{equation}
 It remains to bound this expectation and to establish \eqref{eq:conditionPsi}. The computations are exactly the same as in \cite[Proposition 3]{monmarche2023entropic} using the bounds in Assumption~\ref{assum:generalTVW1}, and thus we omit them. They lead to~$|\na \Psi_{z,z'}(\bG) - I_{2d}|\leq  C' T^2$, so that \eqref{eq:conditionPsi} holds for $T \leq T_0' = \min(T_0,(2C')^{-1/2})$, and to
 \[ \mathbb E\po \frac12 |\Psi_{z,z'}(\bG)-\bG|^2 + \|\na \Psi_{z,z'}(\bG) - I_{2d}\|_F^2 \pf \leq \frac{C'}{(1-\eta^2) T^2}|z-z'|^2\,.\]
 This concludes the proof of Theorem~\ref{thm:generalTVW1} in the case $n=1$.
 
 The case $n>1$ with $\eta>0$  follows the same lines, but is computationally more involved as, instead of coupling two trajectories starting from $z$ and $z'$ in a single step, we consider a bridge with $n$ steps.

More specifically, fix $z,z'\in\R^{2d}$. For~$\bG=(\bG_1,\dots,\bG_n)$ i.i.d. standard Gaussian variables on~$\R^{2d}$ (where we decompose~$\bG_k = (G_k,G_k')\in\R^d\times\R^d$), denote by~$\Psi_z^n(\bG)$ the state of a chain starting from~$z$ after~$n$ transitions, using the variables~$(G_k,G_k')$ in the two randomization steps of the~$k^{th}$ transition for $k\in\{1,\dots,n\}$. In other words, defining by induction~$z_0=z$ and then~$z_{k+1}=\Psi_{z_k}(\bG_{k+1})$ (omitting here for simplicity the dependency in~$\theta_k$ in the definition of $\Psi_{z_k}$ for the~$k^{th}$ transition), we have~$\Psi_z^n(\bG)= z_{n}$. Our goal is to define a function~$\Psi_{z,z'}^n:\R^{2dn} \rightarrow \R^{2dn}$ in such  a way that~$\bW = \Psi_{z,z'}^n(\bG)$ satisfies~$\Psi_{z'}^n(\bW)=\Psi_z^n(\bG)$. There could be many ways to enforce this, for instance we could merge the two chains in one step, namely take~$\bW_1=(W_1,W_1')$ as in the first part of the proof and then~$\bW_k = \bG_k$ for all~$k\geqslant 2$. However, for fixed~$z,z'$, this would be a highly unlikely trajectory starting from~$z'$ for small values of~$t$, i.e. the law of~$\bW_1$ would be far from a standard Gaussian law on~$\R^{2d}$.

Let~$y_0,\dots,y_n\in\R^{2d}$ be a fixed deterministic sequence to be determined, with~$y_0  = z'-z$ and~$y_n=0$. We define the function~$\Psi_{z,z'}^n$ by the fact~$\bW = \Psi_{z,z'}^n(\bG)$ satisfies~$\Psi_{z'}^k(\bW_1,\dots,\bW_k)=\Psi_z^k(\bG_1,\dots,\bG_k) + y_k$ for all~$k\in\{1,\dots,n\}$.  In other words,~$\bW=(\bW_1,\dots,\bW_n)$ is such that if two chains start respectively at~$z$ and~$z'$ and use respectively the variables~$\bG_k=(G_k,G_k')$ and~$\bW_k=(W_k,W_k')$ in the randomization steps of the~$k^{th}$ transitions then after~$k$ transitions the difference between the states of the two chains is~$y_k$, for all~$k\in\{ 0,\dots,n\}$.

Since $y_n=0$, this construction implies that $\Psi_{z'}^n(\bW) =\Psi_z^n(\bG)$ which, following the argument of the proof in the case $n=1$, implies that
\begin{equation}\label{eq:BorneGauss_n}
\mathrm{Ent}\po \hat\pi_n(\delta_{z'}) |\hat\pi_n(\delta_{z})\pf  \leq  \mathbb E\po \frac12 |\Psi_{z,z'}^n(\bG)-\bG|^2 + \|\na \Psi_{z,z'}^n(\bG) - I_{2dn}\|_F^2 \pf\,,
\end{equation}
provided
\begin{equation}\label{eq:condition_n}
|\na \Psi_{z,z'}^n(\bG) - I_{2dn}| \leq \frac12\,.
\end{equation}
It remains to bound the right hand side of \eqref{eq:BorneGauss_n} and to establish \eqref{eq:condition_n}.

Again, the computations are similar to the proof of  \cite[Proposition 3]{monmarche2023entropic} using the bounds in Assumption~\ref{assum:generalTVW1}, hence omitted. These computations lead to the design of the following bridge   $y_k=(u_k,w_k)$ for $k\in\{1,\dots,n-1\}$ to get a suitable scaling of the final estimate in the regime $t\rightarrow0$. We set 
\begin{eqnarray*}
w_k  &=&  \po 1 - \frac{k}{n}\pf \po v'-v\pf  - \frac{3k(n-k)}{(n^3 -n)\eta T } \po 2(x'-x)+ \eta T(n+1)(v'-v)\pf \\
&= & \po 1 - \frac{k}{n} - \frac{3k(n-k)}{n^2 -n }\pf \po v'-v\pf  - \frac{6k(n-k)}{(n^3 -n)\eta T } (x'-x)\,,
\end{eqnarray*}
which is designed so that
\[w_0 = v'-v\,,\qquad w_n= 0\,,\qquad \eta T \sum_{k=0}^{n} w_k =  x-x'\,.\]
Hence, setting~$u_k = x'-x + \eta T \sum_{j=0}^{k-1} w_j$, 
we get
\begin{equation*}
u_0 = x'-x\,,\qquad u_n=0 \qquad\text{and}\qquad u_{k+1} = u_k + \eta T w_k
\end{equation*}
for all  $k\in\{ 0,\dots,n-1\}$. This can be interpreted as follows: since there is no noise directly in the position, we simply let the difference between the positions of the two chains evolve deterministically following the difference between the velocities (which are coupled along time so that both positions and velocities merge after $n$ steps).

The computations of~\cite{monmarche2023entropic} show that, as soon as $nT\leq 1$, we have~$|\na \Psi_{z,z'}^n(\bG) - I_{2dn}| \leq C' T$, 
%\begin{equation*}%\label{eq:condition_n2}
%|\na \Psi_{z,z'}^n(\bG) - I_{2dn}| \leq C' T \,,
%\end{equation*}
so that~\eqref{eq:condition_n} holds if~$T \leq 1/(2C') $ and, writing $s=nT$,
\begin{multline*}
\mathbb E\po \frac12 |\Psi_{z,z'}^n(\bG)-\bG|^2 + \|\na \Psi_{z,z'}^n(\bG) - I_{2dn}\|_F^2 \pf \\ 
\leqslant C' \frac{s}{\gamma^*} \po \frac{1}{\eta s^2} + \frac{\gamma^*}{\eta s} + 1\pf^2 |z-z'|^2 \,,    
\end{multline*}
which concludes the proof of Theorem~\ref{thm:generalTVW1}.
 \end{proof}
 
 In the next statement, we consider the case where $(\theta_k)_{k\in\N}$ are possibly random (but independent from the initial condition and the Gaussian variables appearing in \eqref{eq:DVD}). For $n\in\N$ and $\nu\in\mathcal P(\R^{2d})$, write~$\Pi_n(\nu) = \mathbb E \po \hat\pi_n(\nu)\pf$, 
 where the expectation runs over the law of $(\theta_k)_{k=1,\dots,n}$. Thus, $\Pi_n(\nu)$ is the law of the chain with initial distribution $\nu$ and transitions \eqref{eq:DVD} with random $\theta_k$'s after $n$ steps.
 
 \begin{corollary}\label{cor:TVW1}
 Under the settings and with the notations of Theorem~\ref{thm:generalTVW1}, for all $\nu,\nu'\in\mathcal P(\R^{2d})$,
 \[\| \Pi_1(\nu) - \Pi_1(\nu')\|_{TV}^2  \leq   \frac{C'}{(1-\eta)T^2}\mathcal W_1^2(\nu,\nu') \,, \]
and, for any $n\geqslant 1$, assuming furthermore that $\eta>0$ and $s:=nT\leq 1$,
\[\| \Pi_n(\nu) - \Pi_n(\nu')\|_{TV}^2 \leq   C' \frac{s}{\gamma^*} \po \frac{1}{\eta s^2} + \frac{\gamma^*}{\eta s} + 1\pf^2 \mathcal W_1^2(\nu,\nu') \,. \]
 \end{corollary}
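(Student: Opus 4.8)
The plan is to deduce Corollary~\ref{cor:TVW1} from Theorem~\ref{thm:generalTVW1} by a combination of convexity of relative entropy, Pinsker's inequality, and an optimal-coupling argument. The key point is that $\Pi_n(\nu)$ is, by definition, the expectation over the random seeds $(\theta_k)_k$ of $\hat\pi_n(\nu)$, and that Theorem~\ref{thm:generalTVW1} provides a bound on $\mathrm{Ent}(\hat\pi_n(\delta_z)\,|\,\hat\pi_n(\delta_{z'}))$ that is uniform in the fixed sequence $(\theta_k)_k$; the dependence on $z,z'$ is only through $|z-z'|^2$.

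First I would fix $\nu,\nu'\in\mathcal P(\R^{2d})$ and an optimal coupling $\pi$ of $(\nu,\nu')$ for $\mathcal W_1$, i.e. a random pair $(Z,Z')$ with marginals $\nu,\nu'$ and $\mathbb E|Z-Z'| = \mathcal W_1(\nu,\nu')$. Conditioning on a fixed realization of $(\theta_k)_k$, convexity of $(\mu,\mu')\mapsto\mathrm{Ent}(\mu\,|\,\mu')$ (jointly, in the sense of the data-processing/mixture inequality) gives
\begin{equation*}
\mathrm{Ent}\po \hat\pi_n(\nu)\,\big|\,\hat\pi_n(\nu')\pf \leq \int_{\R^{2d}\times\R^{2d}} \mathrm{Ent}\po \hat\pi_n(\delta_z)\,\big|\,\hat\pi_n(\delta_{z'})\pf \,\mathrm{d}\pi(z,z')\,.
\end{equation*}
Combining with Pinsker's inequality $\|\mu-\mu'\|_{TV}^2 \leq \tfrac12 \mathrm{Ent}(\mu\,|\,\mu')$ and Theorem~\ref{thm:generalTVW1}, and bounding $\int|z-z'|^2\,\mathrm{d}\pi \leq (\sup|z-z'|)\cdot\mathcal W_1(\nu,\nu')$ is \emph{not} quite what we want; instead, since the bound in the theorem is $\mathrm{Ent}(\hat\pi_n(\delta_z)\,|\,\hat\pi_n(\delta_{z'})) \leq A_n|z-z'|^2$ with $A_n$ depending only on the parameters, we would rather use the coupling directly in $\mathcal W_1$ by a Kantorovich-duality/one-dimensional reduction, or alternatively invoke the standard fact that $\|\mu\ast K - \mu'\ast K\|_{TV} \leq \mathcal W_1(\mu,\mu')\sup_{z\neq z'}\|K(z,\cdot)-K(z',\cdot)\|_{TV}/|z-z'|$ together with $\|K(z,\cdot)-K(z',\cdot)\|_{TV}\leq \sqrt{A_n/2}\,|z-z'|$ — but this gives a Lipschitz constant in $\mathcal W_1$, hence $\|\Pi_1(\nu)-\Pi_1(\nu')\|_{TV}\leq \sqrt{A_1}\,\mathcal W_1(\nu,\nu')$, which squared is exactly the claimed inequality. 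So the cleanest route is: (i) show the transition kernel $z\mapsto\hat\pi_n(\delta_z)$ is $\sqrt{A_n/2}$-Lipschitz from $(\R^{2d},|\cdot|)$ to $(\mathcal P(\R^{2d}),\|\cdot\|_{TV})$, uniformly in $(\theta_k)_k$, via Pinsker plus Theorem~\ref{thm:generalTVW1}; (ii) deduce by a standard argument (couple the initial conditions optimally, then use the kernel-Lipschitz bound pointwise and take expectations) that $\|\hat\pi_n(\nu)-\hat\pi_n(\nu')\|_{TV}\leq \sqrt{A_n/2}\,\mathcal W_1(\nu,\nu')$ for each fixed seed sequence; (iii) take expectation over $(\theta_k)_k$ using $\|\mathbb E[\mu_\theta]-\mathbb E[\mu'_\theta]\|_{TV}\leq \mathbb E\|\mu_\theta-\mu'_\theta\|_{TV}$ (triangle inequality / convexity of the TV norm), yielding $\|\Pi_n(\nu)-\Pi_n(\nu')\|_{TV}\leq \sqrt{A_n/2}\,\mathcal W_1(\nu,\nu')$; (iv) square, and read off $A_1 = C'/((1-\eta)T^2)$ and $A_n = C'\frac{s}{\gamma^*}(\frac{1}{\eta s^2}+\frac{\gamma^*}{\eta s}+1)^2$ from the two displays in Theorem~\ref{thm:generalTVW1}, absorbing the factor $1/2$ into $C'$.

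I expect the main (mild) obstacle to be the bookkeeping in step (ii)–(iii): making precise that the bound of Theorem~\ref{thm:generalTVW1} holds for \emph{every} deterministic sequence $(\theta_k)_{k}$ with a constant $C'$ not depending on that sequence — which is exactly what the theorem asserts — and then that the averaging over the law of $(\theta_k)_k$, assumed independent of the initial conditions and of the refreshment Gaussians, commutes appropriately with the coupling of the initial conditions, so that $\Pi_n(\nu) = \int \hat\pi_n(\delta_z)\,\mathrm d\nu(z)$ with the same (random) kernel applied to both $\nu$ and $\nu'$. Once that is set up, the rest is the routine chain $\text{TV}\leq\sqrt{\tfrac12\mathrm{Ent}}\leq\sqrt{\tfrac12 A_n}\,|z-z'|$, optimal coupling, and triangle inequality, with no further computation needed since all the analytic work is already contained in Theorem~\ref{thm:generalTVW1}.
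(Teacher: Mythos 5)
Your argument is correct and follows essentially the same route as the paper: apply Pinsker's inequality pointwise to the entropy bound of Theorem~\ref{thm:generalTVW1} to get a Lipschitz-in-$|z-z'|$ total-variation bound on the kernel, integrate this over a $\mathcal W_1$-optimal coupling of $(\nu,\nu')$ (for each fixed seed sequence), and then average over the law of $(\theta_k)_k$ using convexity of the TV norm. The paper presents the last two steps at once by exhibiting an explicit grand coupling $(Z,Z')$ of $\Pi_n(\nu)$ and $\Pi_n(\nu')$ and bounding $2\mathbb{P}(Z\neq Z')$, but this is the same estimate; you were also right to discard the joint-convexity-of-entropy route, which would have given $\mathcal W_2$ rather than $\mathcal W_1$ on the right-hand side.
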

 
 \begin{proof}
Given initial conditions $(Z_0,Z_0')$ sampled according to an arbitrary coupling of $\nu$ and $\nu'$, conditioning on these initial conditions and the $\theta_k$'s and using Theorem~\ref{thm:generalTVW1} and  Pinsker's inequality, we get 
 \[\| \hat\pi_n(\delta_{Z_0}) - \hat\pi_n(\delta_{Z_0'})\|_{TV}^2 \leq   C' \frac{s}{\gamma^*} \po \frac{1}{\eta s^2} + \frac{\gamma^*}{\eta s} + 1\pf^2    |Z_0-Z_0'|^2  \,. \]
 Taking $(Z,Z')$ as an optimal TV coupling  of $\hat\pi_n(\delta_{Z_0})$ and $\hat\pi_n(\delta_{Z_0'})$ conditionally to $(Z_0,Z_0')$ and the $\theta_k$'s, we get that $(Z,Z')$ is a coupling of $\Pi_n(\nu)$ and $\Pi_n(\nu')$ with
 \begin{equation*}
 \| \Pi_n(\nu) - \Pi_n(\nu')\|_{TV}  \leq   2 \mathbb P \po Z\neq Z'\pf \leq   \sqrt{C' \frac{s}{\gamma^*}} \po \frac{1}{\eta s^2} + \frac{\gamma^*}{\eta s} + 1\pf  \mathbb E\po |Z_0-Z_0'|\pf. 
 \end{equation*}
 Taking the infimum over all couplings of $\nu$ and $\nu'$ concludes.
 \end{proof}
 
 \subsection{Application: proof of Theorem~\ref{prop:TV/W1} }\label{subsec:appliTVW1}
 
The proof  that Assumption~\ref{assum:generalTVW1} holds for the Verlet and randomized midpoint integrators under the settings of Theorem~\ref{prop:TV/W1} follows from exactly the same computations as in the continuous-time Hamiltonian flow, see \cite[Section 3]{monmarche2023entropic} (the differential equation satisfied by the flow, used in \cite[Lemma 1]{monmarche2023entropic}, being replaced the discrete-time version of \cite[Lemma 10]{camrud2023second}). In fact all the conditions of  Assumption~\ref{assum:generalTVW1} are established for the Verlet integrator in \cite{MR4497240}. Following these computations, under the settings of Theorem~\ref{prop:TV/W1}, the constants $C,T_0$ in Assumption~\ref{assum:generalTVW1} only depend on $L$ and $L_H$.

As a consequence, Corollary~\ref{cor:TVW1} applies under the assumptions made in Theorem~\ref{prop:TV/W1}, when $\Phi_T^{\theta}$ is given by \eqref{hameqs} (in the parameter $\theta$, we consider the variables $\hat\theta$ involved in the stochastic gradient to be fixed, but in the randomized midpoint case, we consider the uniform variables $\hat u$ to be random, so that their contribution is averaged in the law $\Pi_n(\nu)$).

Denote by $\mathcal D_{\eta}$ the Markov operator associated to the velocity refreshment with parameter $\eta$, namely~$\mathcal D_\eta f(x,v) = \mathbb E( f( x,\eta v + \sqrt{1-\eta^2} G) )$ with~$G\sim\mathcal N(0,I_d)$. Then~$\mathcal D_{\eta}^2 = \mathcal D_\eta$. Fixing $n>1$ and a sequence $(\theta_k)_{k=0,\dots,n-1}$, denote $\mathcal V f(x,v) =   f ( \Phi_T^{\theta_{n-1}}(x,v))$ the operator associated to the Hamiltonian integration in the $n^{th}$ transition. Denote by $\hat \pi_n'(\nu)$ the law $\hat\pi_n(\nu)$ when $\eta$ is replaced by $\sqrt{\eta}$ in the transition~\eqref{eq:DVD}. Starting from an initial distribution $\nu$, performing $n>1$ steps of SGgHMC as defined in Section~\ref{sec:intro} (producing a r.v. distributed according to $\pi_n(\nu)$) is equivalent to perform a transition $\mathcal D_{\sqrt{\eta}}$, then $n-1$ transitions~\eqref{eq:DVD}, again a  transition $\mathcal D_{\sqrt{\eta}}$ and then a transition $\mathcal V$. In other words,
\[\pi_n(\nu) = \mathbb E\po \po \hat\pi_{n-1}'\po \nu \mathcal D_{\sqrt{\eta}} \pf\pf \mathcal D_{\sqrt{\eta}}\mathcal V\pf\,, \]
where the expectation runs over the distribution of the variables $\hat u$ in the case of the randomized midpoint integrator (as in the definition of $\Pi_n$).

Applying any Markov kernel does not increase total variation, and a parallel coupling shows that $\mathcal D_{\sqrt{\eta}}$ does not increase the $\mathcal W_1$ distance. Hence, considering two initial distributions $\nu_1,\nu_2$ and using for both chains the same variables $\hat u$ in the randomized midpoint case, we get
\begin{eqnarray*}
\lefteqn{\|\pi_n(\nu_1)-\pi_n(\nu_2) \|_{TV}}\\
& \leqslant & \mathbb E\po \left\| \po \hat\pi_{n-1}'\po \nu_1 \mathcal D_{\sqrt{\eta}} \pf\pf \mathcal D_{\sqrt{\eta}}\mathcal V - \po \hat\pi_{n-1}'\po \nu_2 \mathcal D_{\sqrt{\eta}} \pf\pf \mathcal D_{\sqrt{\eta}}\mathcal V\right\|_{TV}\pf \\
& \leqslant & \mathbb E\po \left\| \po \hat\pi_{n-1}'\po \nu_1 \mathcal D_{\sqrt{\eta}} \pf\pf  - \po \hat\pi_{n-1}'\po \nu_2 \mathcal D_{\sqrt{\eta}} \pf\pf  \right\|_{TV}\pf \\
& \leqslant &   \sqrt{\frac{C'  s}{\gamma^*}} \po \frac{1}{\eta s^2} + \frac{\gamma^*}{\eta s} + 1\pf \mathcal W_1\po \nu_1 \mathcal D_{\sqrt{\eta}} ,\nu_2 \mathcal D_{\sqrt{\eta}} \pf\\
& \leqslant &  \sqrt{\frac{C'  s}{\gamma^*}} \po \frac{1}{\eta s^2} + \frac{\gamma^*}{\eta s} + 1\pf  \mathcal W_1 \po \nu_1  ,\nu_2  \pf\,,
\end{eqnarray*}
where we applied Theorem~\ref{thm:generalTVW1} with $s=(n-1)T$. However, since $n\geqslant 2$, we can bound $n/2 \leq n-1\leq n$ to get a similar bound but now with $s=nT$, up to replacing $C'$ by $16C'$. This concludes the proof of Theorem~\ref{prop:TV/W1}.

\section{Proofs for convex region}\label{conapp}
\begin{proof}[Proof of Proposition~\ref{lba}]
%We follow the approach of~\cite[Theorem~6.2]{lc23}. 
%In this proof, we denote~$q_t = q_t(x,v')$,~$q_t' = q_t(y,w')$,~$p_t = p_t(x,v')$ and~$p_t' = p_t(y,w')$. 

By the mean value theorem, it holds that
\begin{equation}\label{mvt}
\begin{pmatrix}
\bar{q} - \bar{q}' + \frac{h}{2}(\bar{p} - \bar{p}')  - \frac{h^2}{4}(b(\bar{q},\theta) - b(\bar{q}',\theta)) \\
\bar{p} - \bar{p}' - \frac{h}{2}(b(\bar{q},\theta) - b(\bar{q}',\theta))
\end{pmatrix}  = \begin{pmatrix}
I_d - \frac{h^2}{4}H & \frac{h}{2}I_d\\
-\frac{h}{2}H & I_d
\end{pmatrix}
\begin{pmatrix}
\bar{q} - \bar{q}'\\
\bar{p} - \bar{p}'
\end{pmatrix},
\end{equation}
where%~$H_i$ denotes the constant matrix
\begin{equation}\label{mvth}
H = \int_0^1 \nabla\!_x b(\bar{q}' + t(\bar{q} - \bar{q}'),\theta)dt.
\end{equation}
By~\eqref{A1a} and~\eqref{A1b},~$H$ 
%~$(H+H^{\top})/2$ 
is positive definite with eigenvalues between~$\frac{m}{2}$ and~$L$ for~$\bar{q},\bar{q}'$ satisfying~$\abs{\bar{q} - \bar{q}'}\geq 4R(1+\frac{L}{m})$. To prove~\eqref{lbaeq}, it suffices to show that the matrix
\begin{align}
\mathcal{H}&:=(1-c)
\begin{pmatrix}
I_d & 0\\
0 & \eta_0 I_d
\end{pmatrix}M
\begin{pmatrix}
I_d & 0\\
0 & \eta_0 I_d
\end{pmatrix}\nonumber\\
&\quad-
\begin{pmatrix}
I_d - \frac{h^2}{4}H & \frac{h}{2}I_d\\
-\frac{h}{2}H& I_d
\end{pmatrix}^{\!\!\!\top\!\!\!}
\begin{pmatrix}
I_d & 0\\
0 & \eta_1I_d
\end{pmatrix}
M
\begin{pmatrix}
I_d & 0\\
0 & \eta_1I_d
\end{pmatrix}
\begin{pmatrix}
I_d - \frac{h^2}{4}H & \frac{h}{2}I_d\\
-\frac{h}{2}H & I_d
\end{pmatrix}\label{ch}
\end{align}
is positive definite for~$\abs{\bar{q}-\bar{q}'}\geq 4R(1+\frac{L}{m})$.
By Proposition~4.2 in~\cite{lc23}, the matrix~$\mathcal{H}$ is positive definite if and only if the square matrices~$A,B,C$ given by
\begin{equation}\label{habc}
\begin{pmatrix}
A & B\\
B & C
\end{pmatrix} = \mathcal{H}
\end{equation}
are such that~$A$ and~$AC-B^2$ are positive definite. Denoting\footnote{There is abuse of notation with the function~$b(\cdot,\cdot)$, which should not introduce confusion given context.} the off-diagonal entry of~$M$ as
\begin{equation}\label{bdef}
b = \frac{h}{2(\eta_0-\eta_1)},
\end{equation}
it holds by direct calculation that
\begin{subequations}\label{abcdefs}
\begin{align}
A &= -cI_d + \frac{h}{2}(h + 2b\eta_1) H - \frac{h^2}{4}\bigg(\frac{h^2}{4}+bh\eta_1 + 2b^2\eta_1^2\bigg) H^2\label{Adef}\\
B &= (1-c)b\eta_0I_d - \bigg(\frac{h}{2} + b\eta_1\bigg)\bigg(I_d-\frac{h^2}{4}H\bigg) + \frac{h}{2}\bigg(\frac{bh}{2}\eta_1 + 2b^2\eta_1^2\bigg)H\label{Bdef}\\
C &= \bigg(2b^2(1-c)\eta_0^2 - \frac{h^2}{4} - bh\eta_1 - 2b^2\eta_1^2\bigg)I_d.\label{Cdef}
\end{align}
\end{subequations}
For any eigenvalue~$\frac{m}{2}\leq\lambda\leq L$ of~$H$, the matrix~$A$ admits the same corresponding eigenspace with eigenvalue~$A_{\lambda}$ given by
\begin{equation*}
A_{\lambda} = -c + \frac{h}{2}(h+2b\eta_1)\lambda - \frac{h^2}{4}\bigg( \frac{bh}{2}(\eta_0+\eta_1)+ 2b^2\eta_1^2\bigg)\lambda^2.
\end{equation*}
Using the definition~\eqref{bdef} for~$b$ as well as that for~$c$,~$A_{\lambda}$ can be rewritten as
\begin{align}
A_{\lambda}&= -\frac{bhm\eta_0}{8} + hb\eta_0\lambda -  \frac{\lambda^2bh^3}{8}(\eta_0+\eta_1)  - \frac{\lambda^2b^2h^2}{2}\eta_1^2\nonumber\\ 
&= bh\bigg(-\frac{m\eta_0}{8} + \lambda\eta_0 - \frac{h^2}{8}\lambda^2(\eta_0+\eta_1) - \frac{bh}{2}\lambda^2\eta_1^2\bigg),\label{al}
\end{align}
which, by~$\frac{m}{2}\leq \lambda\leq L$,~$\eta_1 \leq 1$ and~$h^2\leq \frac{(\eta_0-\eta_1)^2}{L}\leq \frac{1}{4L}$ %~$bh=\frac{h^2}{2(\eta_0-\eta_1)}\leq \frac{1}{4L}$, 
implies
\begin{equation}\label{alp0}
A_{\lambda}\geq bh\lambda\eta_0\bigg(1-\frac{1}{4} - \frac{1}{16}\bigg) - \frac{\lambda bh}{4}(\eta_0-\eta_1)\geq \frac{7}{16}\lambda bh\eta_0 + \frac{1}{4}\lambda b h\eta_1
\end{equation}
and therefore that~$A_{\lambda}$ is positive.
\iffalse
Since~$\eta_0^{-1} \leq \sqrt{2}\leq\sqrt{2}\eta_1^{-1}\leq \frac{3}{2}\eta_1^{-1}$ and~$\eta_0^{-1} > \eta_1^{-1}$, inequality~\eqref{al} gives
\begin{equation}\label{Al}
A_{\lambda} \geq \frac{7}{16}bh\lambda\eta_1^{-1} > 0.
\end{equation}
\fi
\iffalse
\begin{align}
\mu &= -\frac{mh^2}{8(\eta_0^{-1}-\eta_1^{-1})} + bh\eta_1^{-1}\lambda + \frac{bh}{8}(1+\eta_1^{-1})h^2\lambda^2 + \frac{h^2}{2}\lambda - \frac{h^4\lambda^2}{16} - \frac{h^2}{4L}\eta_1^{-2}\lambda^2\nonumber\\
&\geq bh\eta_1^{-1}\bigg(-\frac{m}{8} + \lambda + \frac{1}{4}h^2\lambda^2\bigg) + h^2\lambda\bigg(\frac{1}{2}-\frac{h^2\lambda}{16} - \frac{\lambda}{2L}\bigg).\label{lp}
\end{align}
The right-hand side of~\eqref{lp} is always positive given the assumptions!!!\\
\fi
Moreover, the matrix~$AC-B^2$ admits the same eigenspace with eigenvalue~$\mu$ given by
\begin{equation}\label{ml}
\mu = A_{\lambda}C_{\lambda} - B_{\lambda}^2,
\end{equation}
where
\begin{align}
C_{\lambda}&= 2b^2(1-c)\eta_0^2 - \frac{h^2}{4} - bh\eta_1 - 2b^2\eta_1^2,\nonumber\\
B_{\lambda}&=(1-c)b\eta_0 - \bigg(\frac{h}{2} + b\eta_1\bigg)\bigg(1-\frac{h^2}{4}\lambda\bigg) + \frac{h}{4}\bigg(bh\eta_1 + 4 b^2\eta_1^2\bigg)\lambda.\label{Bldef}
\end{align}
By definition %~$c=\frac{mh^2}{16(\eta_0^{-1}-\eta_1^{-1})}$,~$b = \frac{h}{2(\eta_0^{-1}-\eta_1^{-1})}$,
of~$b$ and of~$c$,~$C_{\lambda}$ satisfies
\begin{align}
C_{\lambda} &= 2b^2(\eta_0^2 - \eta_1^2) - bh\bigg(\frac{ b^2m\eta_0^3}{4} + \frac{1}{2}(\eta_0-\eta_1) + \eta_1\bigg)\nonumber\\
&=  2b^2(\eta_0^2 - \eta_1^2) - bh\bigg(\frac{b^2m\eta_0^3}{4} + \frac{1}{2}(\eta_0+\eta_1)\bigg)\label{cl}
\end{align}
\iffalse
so that given~$h^2\leq \frac{(\eta_0^{-1}-\eta_1^{-1})^2}{L}$ and~$\eta_0^{-1}\leq \sqrt{2}\eta_1^{-1}$,~\eqref{cl} implies
\begin{align}
C_{\lambda} &\geq \frac{\eta_0^{-2} - \eta_1^{-2}}{2L}  - \frac{1}{2L}(\eta_0^{-1} - \eta_1^{-1})\bigg( \bigg(\frac{\sqrt{2}}{16} + \frac{1}{2}\bigg)\eta_0^{-1} + \frac{1}{2}\eta_1^{-1}\bigg)\nonumber\\
&\geq \frac{13(\eta_0^{-2} - \eta_1^{-2})}{64L}.\label{Cl}
\end{align}
\fi
and~$B_{\lambda}$ satisfies
\begin{align}
B_{\lambda} &= b(\eta_0 - \eta_1) - \frac{h}{2} - bh \bigg(\frac{mb\eta_0^2}{8} - \frac{h\lambda\eta_1}{2} - \frac{h\lambda}{4}(\eta_0-\eta_1) \bigg) + b^2h\eta_1^2\lambda\nonumber\\
&= b^2h\eta_1^2\lambda -  \frac{mb^2h\eta_0^2}{8} + \frac{bh^2\lambda}{4}(\eta_0+\eta_1).\label{bl}
\end{align}
In the following, the terms of~$\mu$ from~\eqref{ml},~\eqref{al},~\eqref{cl} and~\eqref{bl} are organized in terms of order in~$h$, with~$\eta_0 - \eta_1$ considered as~$O(h)$ and~$b$ considered as~$O(1)$ as~$h\rightarrow 0$. The order~$h^2$ terms in~$\mu$ are given by
\begin{align*}
\mu_2 &:= bh\bigg(-\frac{m\eta_0}{8} + \lambda\eta_0\bigg)\bigg(2 b^2(\eta_0^2 - \eta_1^2) - \frac{mb^3h\eta_0^3}{4} - \frac{bh}{2}(\eta_0 + \eta_1)\bigg) \\
&\quad- b^4\lambda^2h^2\eta_1^4 + \frac{1}{4}\lambda mb^4h^2\eta_0^2\eta_1^2 - \frac{m^2b^4h^2\eta_0^4}{64},
\end{align*}
which can be rewritten as
\begin{align*}
\mu_2 &= bh\bigg( - \frac{ mb^2\eta_0}{4}(\eta_0^2 - \eta_1^2) + \frac{ m^2b^3h\eta_0^4}{32} + \frac{mbh\eta_0}{16}(\eta_0 + \eta_1) \\%\nonumber\\
&\quad+ 2 b^2\lambda\eta_0(\eta_0^2-\eta_1^2) - \frac{\lambda mb^3h\eta_0^4}{4} - \frac{\lambda bh\eta_0}{2}(\eta_0 + \eta_1)\\% \nonumber\\
&\quad -2\lambda^2 b^4\eta_1^4(\eta_0 - \eta_1) + \frac{\lambda m b^3h\eta_0^2\eta_1^2}{4} - \frac{m^2b^3h\eta_0^4}{64}\bigg)%\label{m2}
\end{align*}
and subsequently bounded %, using the definition~\eqref{bdef} for~$b$ and the assumption~$h^2\leq\frac{(\eta_0-\eta_1)^2}{L}$, 
as
\begin{align*}
\mu_2&\geq bh\bigg(\lambda b^2\eta_0(\eta_0^2-\eta_1^2) - 2\lambda^2b^4\eta_1^4(\eta_0 - \eta_1)\\%\nonumber\\
&\quad  -\frac{m}{16}\Big(4b^2\eta_0(\eta_0^2 - \eta_1^2)- bh\eta_0(\eta_0 + \eta_1) + 4\lambda b^3h\eta_0^2(\eta_0^2-\eta_1^2)\Big)\bigg).% \label{m2l0}%+\frac{m^2\lambda bh\eta_0^{-2}}{16^2 L}\bigg).
\end{align*}
Furthermore, by also~$\frac{m}{2}\leq \lambda$,~$b^2\leq \frac{1}{4L}$ and~$1-\eta_0(\eta_0 - \eta_1) >0$, it holds that
\begin{align*}
&\mu_2 - bh\bigg(\lambda b^2\eta_0(\eta_0^2-\eta_1^2) - 2\lambda^2 b^4\eta_1^4(\eta_0 - \eta_1)\bigg)\\%\nonumber\\%+ \frac{m^2bh\eta_0^{-2}}{16^2L}\bigg)\\
&\quad\geq bh\bigg( -\frac{\lambda b^2\eta_0}{2}(\eta_0^2 - \eta_1^2) + \frac{mbh\eta_0}{16}(\eta_0 + \eta_1)(1-\eta_0(\eta_0-\eta_1)) \bigg)\\%\nonumber\\
&\quad> -\frac{\lambda b^3h\eta_0}{2}(\eta_0^2-\eta_1^2),
\end{align*}
so that%, by~$\eta_1<\frac{1}{2}(\eta_0+\eta_1)$,
\begin{equation}\label{m2l}
\mu_2 > \frac{\lambda b^3h\eta_0}{2}(\eta_0^2-\eta_1^2) - 2\lambda^2b^5h\eta_1^4(\eta_0-\eta_1).% > \frac{\lambda bh\eta_0}{16L}(\eta_0^2-\eta_1^2).
\end{equation}
The order~$h^3$ terms in~$\mu$ are given by
\begin{align*}
\mu_3 &:= -\frac{\lambda^2b^2h^2\eta_1^2}{2}\bigg(2b^2(\eta_0^2-\eta_1^2) - \frac{mb^3h\eta_0^3}{4} - \frac{bh}{2}(\eta_0+\eta_1)\bigg)\\
&\quad - \frac{\lambda^2b^3h^3\eta_1^2}{2}(\eta_0
+ \eta_1) + \frac{\lambda m b^3h^3\eta_0^2}{16}(\eta_0 + \eta_1),
\end{align*}
which, by considering terms without a factor of~$m$ and using the inequality~$h^2 \leq \frac{\eta_0-\eta_1}{2L}$, 
satisfies
\begin{equation}\label{m3l}
\mu_3 \geq -2\lambda^2b^5h\eta_1^2(\eta_0-\eta_1)(\eta_0^2- \eta_1^2) - \frac{\lambda b^3h\eta_1^2}{8}(\eta_0^2-\eta_1^2).% \geq -\frac{3\lambda^2bh\eta_1^2}{16L^2}(\eta_0-\eta_1)(\eta_0^2- \eta_1^2).% + \frac{\lambda^2b h^3\eta_1^{-2}}{16L}(\eta_0^{-1}+\eta_1^{-1}).
%&\geq \frac{\lambda^2bh\eta_1^{-2}}{4L}(\eta_0^{-1}-\eta_1^{-1})\frac{\eta_0^{-2}- \eta_1^{-2}}{2L} + 
\end{equation}
Finally, the order~$h^4$ terms in~$\mu$ are given by
\begin{align*}
\mu_4 &:= -\frac{\lambda^2 bh^3(\eta_0+\eta_1)}{8}\bigg(2b^2(\eta_0^2-\eta_1^2) - \frac{mb^3h\eta_0^3}{4} - \frac{bh(\eta_0 + \eta_1)}{2}\bigg) \\
&\quad- \frac{\lambda^2b^2 h^4}{16}(\eta_0+\eta_1)^2,
\end{align*}
which, by~$\eta_0 > \eta_1$ and~$h^2 \leq \frac{1}{4L}$, satisfies
\begin{equation}\label{m4l}
\mu_4 \geq -\frac{\lambda^2 b^3h^3\eta_0}{2}(\eta_0^2-\eta_1^2) \geq -\frac{\lambda b^3h\eta_0}{8}(\eta_0^2-\eta_1^2).
\end{equation}
Gathering~\eqref{m2l},~\eqref{m3l},~\eqref{m4l} and using~$\eta_0-\eta_1\leq \frac{1}{2}$ %, then using~$\eta_1^{-2}\geq 1$,~$(\eta_0^{-1}-\eta_1^{-1})\leq\sqrt{2}-1$ and~$\eta_0^{-1}\leq\sqrt{2}$, 
gives
\begin{align*}
\mu &= \mu_2 + \mu_3 + \mu_4\\
&> \frac{\lambda b^3h\eta_0}{2}(\eta_0^2-\eta_1^2) - 2\lambda^2b^5h\eta_0^2\eta_1^2(\eta_0-\eta_1) - \frac{\lambda b^3h\eta_1^2}{8}(\eta_0^2-\eta_1^2)\\ %+ \frac{\lambda^2bh^3(\eta_0^{-2} - \eta_1^{-2})\eta_0^{-1}}{16L(\sqrt{2} - 1)\sqrt{2}}\\&\quad 
&\quad- \frac{\lambda b^3h\eta_0}{8}(\eta_0^2-\eta_1^2),
\end{align*}
which, using~$\eta_1<\eta_0\leq1$ and~$b^2\leq \frac{1}{4L}$ 
%~$\eta_0^{-1}\eta_1^{-2}\leq \frac{\eta_0^{-1}\eta_1^{-1}(\eta_0^{-1} + \eta_1^{-1})}{\sqrt{2}+1}\leq \frac{2}{\sqrt{2}+1}(\eta_0^{-1} + \eta_1^{-1})$ 
%and again~$\eta_0-\eta_1\leq \frac{1}{2}$, 
implies
\begin{equation}
\mu > \frac{\lambda b^3h\eta_0}{4}(\eta_0^2-\eta_1^2) - \frac{\lambda b^3h\eta_0^2\eta_1}{4}(\eta_0^2-\eta_1^2) >0. \label{ul}%\nonumber\\
%&\quad- \frac{\lambda^2bh\eta_1^2}{16L^2}(\eta_0-\eta_1)- \frac{\lambda^2bh\eta_0^2}{16L^2}(\eta_0-\eta_1)\nonumber\\
\end{equation}
\end{proof}

\begin{proof}[Proof of Proposition~\ref{lab}]
By the mean value theorem, it holds that
\begin{equation*}
\begin{pmatrix}
\bar{q} - \bar{q}' + \frac{h}{2}(\bar{p} - \bar{p}')\\
\bar{p} - \bar{p}' - \frac{h}{2}(b(\bar{q} + \frac{h}{2}\bar{p},\theta) - b(\bar{q}' + \frac{h}{2}\bar{p}'))
\end{pmatrix}
=
\begin{pmatrix}
I_d & \frac{h}{2}I_d\\
-\frac{h}{2}\bar{H} & I_d - \frac{h^2}{4}\bar{H}
\end{pmatrix}
\begin{pmatrix}
\bar{q} - \bar{q}'\\
\bar{p} - \bar{p}'
\end{pmatrix},
\end{equation*}
where
\begin{equation}\label{barH}
\bar{H} = \int_0^1 \nabla\!_xb\bigg(\bar{q}' + \frac{h}{2}\bar{p}' + t\bigg(\bar{q} - \bar{q}' + \frac{h}{2}(\bar{p} - \bar{p}')\bigg),\theta\bigg)dt.
\end{equation}
As before for~$H$ in the proof of Proposition~\ref{lba},~$\bar{H}$ is positive definite with eigenvalues between~$\frac{m}{2}$ and~$L$ for the range of~$\bar{q},\bar{q}',\bar{p},\bar{p}'$ under consideration here. Let~$\bar{\mathcal{H}}$ be defined by
\begin{align}
\bar{\mathcal{H}} &:= \begin{pmatrix}
I_d & 0\\
0 & \eta_0 I_d
\end{pmatrix}M
\begin{pmatrix}
I_d & 0\\
0 & \eta_0 I_d
\end{pmatrix}\nonumber\\
&\quad-
\begin{pmatrix}
I_d  & \frac{h}{2}I_d\\
-\frac{h}{2}\bar{H} & I_d - \frac{h^2}{4}\bar{H}
\end{pmatrix}^{\!\!\!\top\!\!\!}
\begin{pmatrix}
I_d & 0\\
0 & \eta_1I_d
\end{pmatrix}
M
\begin{pmatrix}
I_d & 0\\
0 & \eta_1I_d
\end{pmatrix}
\begin{pmatrix}
I_d & \frac{h}{2}I_d\\
-\frac{h}{2}\bar{H} & I_d - \frac{h^2}{4}\bar{H}
\end{pmatrix} \label{ch2}
\end{align}
and let~$\bar{A},\bar{B},\bar{C}$ be the square matrices given by~\eqref{habc} with~$\bar{A},\bar{B},\bar{C},\bar{\mathcal{H}}$ replacing~$A,B,C,\mathcal{H}$ respectively. Denoting again the expression~\eqref{bdef} with the constant~$b$, a direct calculation gives that
\begin{subequations}\label{ABCb}
\begin{align}
\bar{A} &= bh\eta_1\bar{H} - \frac{b^2h^2\eta_1^2}{2}\bar{H}^2,\\
\bar{B} &= b(\eta_0 - \eta_1)I_d - \frac{h}{2}I_d + \frac{bh^2\eta_1}{2}\bar{H} + b^2h\eta_1^2\bar{H}\bigg(I_d -\frac{h^2}{4}\bar{H}\bigg),\\
\bar{C} &= 2b^2(\eta_0^2 - \eta_1^2)I_d - \frac{h^2}{4}I_d - bh\eta_1I_d + \frac{bh^3\eta_1}{4}\bar{H} + b^2h^2\eta_1^2\bar{H} - \frac{b^2h^4\eta_1^2}{8}\bar{H}^2.
\end{align}
\end{subequations}
For any eigenvalue~$\frac{m}{2}\leq \bar{\lambda}\leq L$ of~$\bar{H}$, the matrix~$\bar{A}$ has the same corresponding eigenspace with eigenvalue~$\bar{A}_{\bar{\lambda}}$ satisfying
\begin{equation}\label{Als}
\bar{A}_{\bar{\lambda}} = \bar{\lambda} b h\eta_1 - \bar{\lambda}^2b^3 h\eta_1^2(\eta_0 - \eta_1) \geq \bar{\lambda} b h\bigg(\eta_1 - \frac{\eta_1}{8}\bigg) > 0,
\end{equation}
where we have used~$\eta_1\leq 1$,~$\eta_0 - \eta_1\leq \frac{1}{2}$ and~$b^2\leq \frac{1}{4L}$. Moreover, the matrix~$\bar{A}\bar{C}-\bar{B}^2$ has the same eigenspace with eigenvalue~$\bar{\mu}$ given by
\begin{align}
\bar{\mu} &= h\bar{\lambda}\bigg(b\eta_1 - \frac{b^2\bar{\lambda} h\eta_1^2}{2}\bigg)\bigg(2b^2(\eta_0^2 - \eta_1^2) - \frac{h^2}{4} - bh\eta_1 + \frac{\bar{\lambda} bh^3\eta_1}{4}\nonumber\\
&\quad + \bar{\lambda} b^2h^2\eta_1^2 - \frac{\bar{\lambda}^2 b^2h^4\eta_1^2}{8}\bigg) - \frac{h^2}{4}\bigg(\bar{\lambda} bh\eta_1 + 2\bar{\lambda}b^2\eta_1^2\bigg(1-\frac{\bar{\lambda} h^2}{4}\bigg)\bigg)^2.\label{mub}
\end{align}
In the following, the terms on the right-hand side of~\eqref{mub} are organized in terms of order in~$h$, with~$\eta_0-\eta_1$ considered as~$O(h)$ and~$b$ considered as~$O(1)$ as~$h\rightarrow 0$. The order~$h^2$ terms on the right-hand side of~\eqref{mub} are given by
\begin{align*}
\bar{\mu}_2&:= 2\bar{\lambda} b^3 h\eta_1 (\eta_0^2 - \eta_1^2) - \bar{\lambda} b^2h^2\eta_1^2 - \bar{\lambda}^2 b^4 h^2\eta_1^4\\
&\quad= \bar{\lambda} b^2 h^2\eta_1(\eta_0 + \eta_1) - \frac{\bar{\lambda} h^4 \eta_1^2}{4(\eta_0 - \eta_1)^2} - \bar{\lambda}^2 b^4 h^2\eta_1^4,
\end{align*}
and the order~$h^3$ terms on the right-hand side of~\eqref{mub} are given by
\begin{align*}
\bar{\mu}_3&:= -\frac{\bar{\lambda} bh^3\eta_1}{4}  + \bar{\lambda}^2 b^3h^3\eta_1^3 - \frac{\bar{\lambda}^2 b^2h^2\eta_1^2}{2}\bigg(2b^2(\eta_0^2 - \eta_1^2) - bh\eta_1\bigg) - \bar{\lambda}^2b^2h^3\eta_1^3.
%&\geq -\frac{\bar{\lambda} bh^3\eta_1^{-1}}{4} - \frac{\bar{\lambda}^2 h^2\eta_1^{-2}}{16L^2}(\eta_0^{-2} - \eta_1^{-2}).
\end{align*}
Therefore, it holds that
\begin{align*}
\bar{\mu}_2 + \bar{\mu}_3 &= \bar{\lambda}b^2h^2\eta_1(\eta_0+\eta_1) - \frac{\bar{\lambda} h^4 \eta_1^2}{4(\eta_0 - \eta_1)^2} -\frac{\bar{\lambda} bh^3\eta_1}{4} - \bar{\lambda}^2 b^4 h^2\eta_1^2\eta_0^2,
\end{align*}
which, by the assumptions~$h^2\leq \frac{(\eta_0-\eta_1)^2}{L}$,~$\eta_1 <\eta_0\leq 1$ and~$\bar{\lambda} \leq L$, gives
\begin{equation}\label{m23}
\bar{\mu}_2 + \bar{\mu}_3 \geq \bar{\lambda} b^2h^2\eta_1(\eta_0 + \eta_1)\bigg(1- \frac{1}{2}\bigg) - \frac{\bar{\lambda} b^2 h^2\eta_1}{2}(\eta_0 - \eta_1) - \frac{\bar{\lambda} b^2 h^2\eta_1^2}{4}= \frac{3\bar{\lambda} b^2 h^2\eta_1^2}{4}.
%&\geq \frac{\bar{\lambda} h^2\eta_1^{-1}}{4L}(\eta_0^{-1} + \eta_1^{-1})\bigg(\frac{1}{2} - \frac{\sqrt{2}}{4}\bigg)
\end{equation}
%Since it holds that~$\eta_0^{-1}\leq \sqrt{2} \leq \sqrt{2}\eta_1^{-1}$, 
%Using again~$\eta_1 < \eta_0$, 
\iffalse % make explicit last step
Inequality~\eqref{m23} implies
\begin{equation}\label{m23s}
\bar{\mu}_2 + \bar{\mu}_3 \geq \frac{3\bar{\lambda} h^2\eta_1^2}{16L}
\end{equation}
\fi
The rest of the terms on the right-hand side of~\eqref{mub} with fourth order or higher in~$h$ are given by
\begin{align}
\bar{\mu}_{4+}&:= \bar{\lambda} b h \eta_1 \bigg[\frac{\bar{\lambda} b h^3 \eta_1}{4} - \frac{\bar{\lambda}^2b^2h^4\eta_1^2}{8}\bigg]\nonumber\\
&\quad  - \frac{\bar{\lambda}^2 b^2h^2\eta_1^2}{2}\bigg[-\frac{h^2}{4} + \frac{\bar{\lambda} bh^3\eta_1}{4} + \bar{\lambda} b^2h^2\eta_1^2 - \frac{\bar{\lambda}^2b^2h^4 \eta_1^2}{8}\bigg]\nonumber\\
&\quad -\frac{h^2}{4}\bigg[ \bar{\lambda}^2 b^2h^2\eta_1^2 - \bar{\lambda}^3 b^3h^3 \eta_1^3 + 4\bar{\lambda}^2 b^4 \eta_1^4\bigg(-\frac{\bar{\lambda} h^2}{2} + \frac{\bar{\lambda}^2 h^4}{16}\bigg) \bigg].\label{m4}
\end{align}
The second term in each of the square brackets on the right-hand side of~\eqref{m4} cancel each other, as do the last two terms in the second square bracket with those in the round bracket. In addition, the first term in the first square bracket cancels with the first term in the last square bracket. Consequently,~$\bar{\mu}_{4+}$ can be written simply as
\begin{equation}\label{m4s}
\bar{\mu}_{4+} = \frac{\bar{\lambda}^2b^2h^4\eta_1^2}{8}.
\end{equation}
Writing~$\bar{\mu} = \bar{\mu}_2 +\bar{\mu}_3 +\bar{\mu}_{4+}$ and gathering~\eqref{m23},~\eqref{m4s} concludes the proof by Proposition~4.2 in~\cite{lc23}.
\end{proof}

\begin{proof}[Proof of Proposition~\ref{lbaR}]
%We follow the approach of~\cite[Theorem~6.2]{lc23}. 
%In this proof, we denote~$q_t = q_t(x,v')$,~$q_t' = q_t(y,w')$,~$p_t = p_t(x,v')$ and~$p_t' = p_t(y,w')$. 
By the mean value theorem, it holds that
\begin{align}\label{mvtR}
&\begin{pmatrix}
\bar{q} - \bar{q}' + \frac{h}{2}(\bar{p} - \bar{p}')  - \frac{h^2}{4}(b(\bar{q}+\bar{u}\bar{p},\theta) - b(\bar{q}'+\bar{u}\bar{p}',\theta)) \\
\bar{p} - \bar{p}' - \frac{h}{2}(b(\bar{q}+\bar{u}\bar{p},\theta) - b(\bar{q}'+\bar{u}\bar{p}',\theta))
\end{pmatrix} \nonumber\\
&\quad= \begin{pmatrix}
I_d - \frac{h^2}{4}H & \frac{h}{2}I_d - \frac{h^2\bar{u}}{4}H\\
-\frac{h}{2}H & I_d - \frac{h\bar{u}}{2}H
\end{pmatrix}
\begin{pmatrix}
\bar{q} - \bar{q}'\\
\bar{p} - \bar{p}'
\end{pmatrix},
\end{align}
where%~$H_i$ denotes the constant matrix
\begin{equation}\label{mvthR}
H = \int_0^1 \nabla\!_x b(\bar{q}'+\bar{u}\bar{p}' + t(\bar{q} +\bar{u}\bar{p} - \bar{q}'+\bar{u}\bar{p}'),\theta)dt.
\end{equation}
By~\eqref{A1a} and~\eqref{A1b},~$H$ is positive definite with eigenvalues between~$\frac{m}{2}$ and~$L$ for~$\bar{q},\bar{q}',\bar{p},\bar{p}'+\bar{u}\bar{p}$ satisfying~$\abs{\bar{q}+\bar{u}\bar{p} - \bar{q}'-\bar{u}\bar{p}'}\geq 4R(1+\frac{L}{m})$. To prove~\eqref{lbaeqR}, it suffices to show that the matrix
\begin{align}
\mathcal{H}&:=(1-c)
\begin{pmatrix}
I_d & 0\\
0 & \eta_0 I_d
\end{pmatrix}M
\begin{pmatrix}
I_d & 0\\
0 & \eta_0 I_d
\end{pmatrix} -
\begin{pmatrix}
I_d - \frac{h^2}{4}H & \frac{h}{2}I_d- \frac{h^2\bar{u}}{4}H\\
-\frac{h}{2}H& I_d - \frac{h\bar{u}}{2}H
\end{pmatrix}^{\!\!\!\top\!\!\!}
\nonumber\\
&\quad
\cdot
\begin{pmatrix}
I_d & 0\\
0 & \eta_1I_d
\end{pmatrix}
M
\begin{pmatrix}
I_d & 0\\
0 & \eta_1I_d
\end{pmatrix}
\begin{pmatrix}
I_d - \frac{h^2}{4}H & \frac{h}{2}I_d - \frac{h^2\bar{u}}{4}H\\
-\frac{h}{2}H & I_d - \frac{h\bar{u}}{2}H
\end{pmatrix}\label{chR}
\end{align}
is positive definite for~$\abs{\bar{q} + \bar{u}\bar{p}-\bar{q}'- \bar{u}\bar{p}'}\geq 4R(1+\frac{L}{m})$.
The matrix~$\mathcal{H}$ is positive definite if and only if the square matrices~$A,B,C$ given by~\eqref{habc} 
are such that~$A$ and~$AC-B^2$ are positive definite. Denoting again~\eqref{bdef},
it holds by direct calculation that~$A$ is given by~\eqref{Adef} and
\begin{subequations}\label{abcdefsR} 
\begin{align}
B &= (1-c)b\eta_0I_d + \bigg(I_d-\frac{h\bar{u}}{2}H\bigg)\bigg[-\bigg(\frac{h}{2} + b\eta_1\bigg)\bigg(I_d-\frac{h^2}{4}H\bigg)\nonumber\\
&\quad+ \frac{h}{2}\bigg(\frac{bh}{2}\eta_1 + 2b^2\eta_1^2\bigg)H\bigg],\label{BdefR}\\
C &= 2b^2(1-c)\eta_0^2I_d +\bigg(I_d-\frac{h\bar{u}}{2}{H}\bigg)^2\bigg(- \frac{h^2}{4} - bh\eta_1 - 2b^2\eta_1^2\bigg).\label{CdefR}
\end{align}
\end{subequations}
By definitions~\eqref{mdef} and~\eqref{bdef}, it holds that~$h/2+b\eta_1 = b\eta_0$,~$(bh/2)\eta_1 + 2b^2\eta_1^2=b^2(\eta_0+\eta_1)\eta_1$ and the expression~\eqref{BdefR} may be rewritten as
\begin{align}
B&=-(m\eta_0^2b^3/4)(\eta_0-\eta_1)I_d + b^3\eta_0 H(\eta_0-\eta_1)^2 + b^3H(\eta_0^2-\eta_1^2) \nonumber\\
&\quad+ (h\bar{u}/2)H (b\eta_0(I_d-b^2H(\eta_0-\eta_1)^2) - b^3H(\eta_0^2-\eta_1^2)) \label{hqo}
\end{align}
For any eigenvalue~$\frac{m}{2}\leq\lambda\leq L$ of~$H$, the matrix~$B$ admits the same corresponding eigenspace with eigenvalue~$B_{\lambda}$ given by~\eqref{hqo} with~$H$ replaced by~$\lambda$. In~$B_{\lambda}$, 
the first and third terms in~\eqref{hqo} correspond to
\begin{equation*}
- (m\eta_0^2b^3/4)(\eta_0-\eta_1) + b^3\lambda (\eta_0^2-\eta_1^2) \geq b^3\lambda (\eta_0-\eta_1)\eta_0 - (\lambda\eta_0b^3/2)(\eta_0-\eta_1) \geq 0.
\end{equation*}
Using~$b= h/(2(\eta_0-\eta_1))\leq 1/(2\sqrt{L})$ and~$\eta_1\leq \eta_0\leq 1$, the term in~$B_{\lambda}$ corresponding to the term with factor~$\bar{u}$ in~\eqref{hqo} is also positive. Therefore~$B_{\lambda}>0$ holds. Similarly, the square brackets in~\eqref{BdefR} with~$H$ replaced by~$\lambda$ is negative, which implies~$B_{\lambda}$ may be bounded above by~\eqref{hqo} with~$H$ replaced by~$\lambda$ and~$\bar{u}$ replaced by~$h$. Therefore it holds that
\begin{equation}\label{bsb}
B_{\lambda}^2 < (c_1b^3)^2 = c_1^2b^6,
\end{equation}
where
\begin{align*}
c_1 &= \lambda (-\eta_0(\eta_0-\eta_1)^2 +\eta_0^2-\eta_1^2 + 2\eta_0(\eta_0-\eta_1)) - m\eta_0^2(\eta_0-\eta_1)/4\\
&\leq \lambda(3\eta_0 + \eta_1)(\eta_0-\eta_1).
\end{align*}
On the other hand, by the same derivations,~$A_{\lambda}$ satisfies~\eqref{alp0}, where~$A_{\lambda}$ is the corresponding eigenvalue of~$A$. Moreover, the corresponding eigenvalue~$C_{\lambda}$ for~$C$ satisfies
\begin{align*}
C_{\lambda} &\geq 2b^2(1-c)\eta_0^2 - (h^2/4 + bh\eta_1 + 2b^2\eta_1^2)\\
&\geq 2b^2(1-c)\eta_0^2 - b^2((\eta_0-\eta_1)^2 + 2\eta_1(\eta_0-\eta_1) + 2\eta_1^2)\\
&= 2b^2(1-c)\eta_0^2 - b^2(\eta_0^2+\eta_1^2)\\
&= b^2(\eta_0^2-\eta_1^2) - mb^4\eta_0^3(\eta_0-\eta_1)/2\\
&\geq (7/8)b^2(\eta_0^2-\eta_1^2).
\end{align*}
Therefore it holds that
\begin{align*}
A_{\lambda}C_{\lambda} &\geq \bigg(\frac{7}{8}\eta_0 + \frac{1}{2}\eta_1\bigg) \lambda b^2(\eta_0-\eta_1)\cdot\frac{7}{8}b^2(\eta_0^2-\eta_1^2)\\
&= \frac{7}{8}\bigg(\frac{7}{8}\eta_0^2 + \frac{11}{8}\eta_0\eta_1 + \frac{1}{2}\eta_1^2\bigg) \lambda b^4 (\eta_0-\eta_1)^2,
\end{align*}
which, together with~\eqref{bsb} and~$b^2\leq 1/(4L)$, gives~$A_{\lambda}C_{\lambda}-B_{\lambda}^2> 0$ and concludes.
\end{proof}

\begin{proof}[Proof of Proposition~\ref{lba2}]
By equation~\eqref{mvt} with~\eqref{mvth}, it holds that%there exists~$l\in\mathbb{R}^d$ such that
\begin{align*}
&\bigg\|\begin{pmatrix} 
I_d & 0\\
0 & \eta_1I_d
\end{pmatrix}
\begin{pmatrix}
\bar{q} - \bar{q}' + \frac{h}{2}(\bar{p} - \bar{p}') - \frac{h^2}{4}(b(\bar{q},\theta) - b(\bar{q}',\theta))\\
\bar{p} - \bar{p}' - \frac{h}{2}(b(\bar{q},\theta) - b(\bar{q}',\theta)) 
\end{pmatrix} \bigg\|_M\\
&\quad = (1-c)\bigg\| \begin{pmatrix}
I_d & 0\\
0 & \eta_0 I_d
\end{pmatrix}
\begin{pmatrix}
\bar{q} - \bar{q}'\\
\bar{p} - \bar{p}'
\end{pmatrix}\bigg\|_M
-
\begin{pmatrix} \bar{q} - \bar{q}' \\ \bar{p} - \bar{p}'\end{pmatrix}^{\!\!\!\top\!\!} 
\mathcal{H}
\begin{pmatrix}\bar{q}-\bar{q}'\\ \bar{p}-\bar{p}'\end{pmatrix},
\end{align*}
%for some~$l\in\mathbb{R}^d$, 
where~$\mathcal{H}$ is defined by~\eqref{ch}. Let the square matrices~$A$,~$B$ and~$C$ be given by~\eqref{habc} and satisfying~\eqref{abcdefs} with~\eqref{bdef}. Again, let the constant~$b$ be given by~\eqref{bdef}. For~$\bar{q},\bar{q}',\bar{p},\bar{p}'$ satisfying~$\abs{\bar{p} - \bar{p}'}^2 \geq (17/4)L\abs{\bar{q} - \bar{q}'}^2$ and~$h$ satisfying~$h^2\leq (\eta_0 - \eta_1)^2/L$, it holds that
\begin{equation}\label{pgq}
2b^2(\eta_0^2 - \eta_1^2)\abs{\bar{p} - \bar{p}'}^2 \geq (17/2)b^2L(\eta_0^2 - \eta_1^2) \abs{\bar{q} - \bar{q}'}^2 = (17/4)Lbh(\eta_0 + \eta_1)\abs{\bar{q} - \bar{q}'}^2,%\nonumber\\
%&\geq 64L^2b^3h(\eta_0 + \eta_1)\abs{\bar{q} - \bar{q}'}^2,
\end{equation}
which, similar to~\eqref{cl}, implies
\begin{align*}
(\bar{p} - \bar{p}')^{\top}C(\bar{p} - \bar{p}') &\geq \bigg(\frac{2}{3}\cdot2b^2(\eta_0^2 - \eta_1^2) - bh\bigg(\frac{mb^2\eta_0^3}{4} + \frac{1}{2}(\eta_0 + \eta_1)\bigg)\bigg)\abs{\bar{p} - \bar{p}'}^2\\
&\quad+ (17/12)Lbh(\eta_0+\eta_1)\abs{\bar{q} - \bar{q}'}^2
\end{align*}
and therefore
\begin{equation}\label{hc}
\begin{pmatrix}
\bar{q} - \bar{q}'\\
\bar{p} - \bar{p}'
\end{pmatrix}^{\!\!\top\!\!}
\mathcal{H}
\begin{pmatrix}
\bar{q} - \bar{q}'\\
\bar{p} - \bar{p}'
\end{pmatrix} \geq
\begin{pmatrix}
\bar{q} - \bar{q}'\\
\bar{p} - \bar{p}'
\end{pmatrix}^{\!\!\top\!\!}
\mathcal{H}'
\begin{pmatrix}
\bar{q} - \bar{q}'\\
\bar{p} - \bar{p}'
\end{pmatrix},
\end{equation}
where~$\mathcal{H}'$ is given by~\eqref{habc} with~$\mathcal{H}',A',C'$ replacing~$\mathcal{H},A,C$ respectively, 
\iffalse
\begin{equation*}
\mathcal{H}' = \begin{pmatrix}
A' & B\\
B & C'
\end{pmatrix}
\end{equation*}
\fi
for square matrices~$A',C'$ given by
\begin{align}
A' &= (17/12)Lbh(\eta_0 + \eta_1)I_d + A,\nonumber\\
C' &= \bigg[\frac{2}{3}\cdot2b^2(\eta_0^2-\eta_1^2) - bh\bigg(\frac{mb^2\eta_0^3}{4} + \frac{1}{2}(\eta_0 + \eta_1)\bigg)\bigg]I_d\label{cl2}
\end{align}
and~$B$ given by~\eqref{abcdefs}. To prove~\eqref{lbaeq} for the~$\bar{q},\bar{q}',\bar{p},\bar{p}',h$ under consideration, it suffices to show that~$\mathcal{H}'$ is positive definite. By Proposition~4.2 in~\cite{lc23},~$\mathcal{H}'$ is positive definite if and only if~$A'$ and~$A'C'-B^2$ are positive definite. For any eigenvalue~$-L\leq \lambda \leq L$ of~$H$, the matrix~$A'$ admits the same corresponding eigenspace with eigenvalue~$A_{\lambda}'$ satisfying
\begin{align}
A_{\lambda}' &= \frac{17}{12}Lbh(\eta_0 + \eta_1) - c + \frac{h}{2}(h+2b\eta_1)\lambda - \frac{h^2}{4}\bigg( \frac{bh}{2}(\eta_0+\eta_1)+2b^2\eta_1^2 \bigg)\lambda^2\nonumber\\
&\geq \frac{17}{12}Lbh(\eta_0 + \eta_1) - c - \frac{h}{2}(h+2b\eta_1)L - \frac{h^2}{4}\bigg( bh\eta_0+\frac{\eta_1^2}{2L}\bigg)L^2\nonumber\\
&\geq \frac{17}{12}Lbh\eta_1 + bh\bigg(-\frac{m\eta_0}{8} + \frac{5}{12}\eta_0L 
- \frac{h^2}{4}L^2\eta_0 - \frac{L\eta_1^2}{4}(\eta_0 - \eta_1)\bigg),\label{alp}
\end{align}
where the definitions for~$b,c$ have been used along with the assumption on~$h$. %Note that the right-hand side of~\eqref{alp} is the same as that of~\eqref{al} with~$\lambda$ replaced with~$L$, so that by~$m\leq L$, the same inequalities~\eqref{alp0} hold with~$A_{\lambda}$ and~$\lambda$ replaced by~$A_{\lambda}'$ and~$L$ respectively.
By a similar estimate to~\eqref{alp0},~$A_{\lambda}'$ is positive. Moreover, the matrix~$A'C'- B^2$ admits the same eigenspace with eigenvalue~$\mu'$ given by~$\mu' = A_{\lambda}'C_{\lambda}'-B_{\lambda}^2$, where~$C_\lambda'$ is given by %the square bracket in~\eqref{cl2}, namely,
%~\eqref{ml} with~$\mu'$,~$A_{\lambda}'$,~$C_{\lambda}'$ replacing~$\mu$,~$A_{\lambda}$,~$C_{\lambda}$, where
\begin{equation}\label{Clp}
C_{\lambda}' = \frac{2}{3}\cdot2b^2(\eta_0^2 - \eta_1^2) - bh\bigg(\frac{mb^2\eta_0^3}{4} + \frac{1}{2}(\eta_0 + \eta_1)\bigg)
\end{equation}
and~$B_{\lambda}$ is given by~\eqref{Bldef}. By~\eqref{bdef} and the bounds~$h^2\leq \frac{(\eta_0-\eta_1)^2}{L}$,~$m\eta_0^3\leq L \eta_0 \leq \frac{2L}{3}(\eta_0+\eta_1)$, it holds that
\begin{equation}\label{clpe}
C_{\lambda}' \geq 2b^2(\eta_0^2-\eta_1^2)\bigg(\frac{2}{3} - \bigg(\frac{1}{24} + \frac{1}{2}\bigg)\bigg)>0.
\end{equation}
%An estimate for~$\mu'$ then follows in a similar way as for~$\mu$ in the proof of Proposition~\ref{lba}. However, we may use~$m\leq L$ instead of~$\frac{m}{2}\leq \lambda$ and~$\lambda$ is to be replaced by~$-L$. Additionally,~$\frac{7}{8}$ in the first term on the right-hand side of~\eqref{Clp} did not appear in the first term of~\eqref{cl}. In particular, there exist~$\mu_2',\mu_3',\mu_4'\in\mathbb{R}$ such that~$\mu' = \mu_2'+\mu_3'+\mu_4'$ and
In the following, the lower bound on~$\mu' = A_{\lambda}'C_{\lambda}' - B_{\lambda}^2$ resulting from~\eqref{alp},~\eqref{Clp} and~\eqref{bl} is organized in terms of order in~$h$ as~$h\rightarrow 0$. The factor~$\eta_0-\eta_1$ is considered~$O(h)$ and~$b$ is considered~$O(1)$. The order~$h^2$ terms of the lower bound are given by
\begin{align*}
\mu_2' &:= bh\bigg(-\frac{m\eta_0}{8} 
+ \frac{5}{12}L\eta_0+\frac{17}{12}L\eta_1\bigg)
\bigg( \frac{4}{3}b^2(\eta_0^2 - \eta_1^2)  \\
&\quad - bh\bigg(\frac{mb^2\eta_0^3}{4} + \frac{1}{2}(\eta_0 + \eta_1)\bigg)\bigg) - b^4h^2\eta_1^4\lambda^2 + \frac{mb^4h^2\eta_0^2\eta_1^2\lambda}{4} - \frac{m^2b^4h^2\eta_0^4}{64}.
\end{align*}
%Similar to~\eqref{m2l0}, 
Using~\eqref{bdef},~$h^2\leq \frac{(\eta_0-\eta_1)^2}{L}$,~$m\leq L$,~$-L\leq \lambda\leq L$ and~\eqref{clpe}, we obtain
\begin{align}
\mu_2' & \geq \bigg(\frac{7}{24}\eta_0+\frac{17}{12}\eta_1\bigg)Lb^3h\bigg(\frac{4}{3} - \frac{1}{12} - 1\bigg)(\eta_0^2-\eta_1^2)\nonumber\\
&\quad - \frac{Lb^3h\eta_1^4}{2}(\eta_0-\eta_1) - \frac{Lb^3h\eta_0^2\eta_1^2(\eta_0-\eta_1)}{8} - \frac{Lb^3h\eta_0^4}{2\cdot 64}(\eta_0-\eta_1).\label{iakw}%\nonumber\\
\end{align}
\iffalse
By~$\eta_0\leq 2\eta_1$, the first term on the right-hand side of~\eqref{iakw} satisfies
\begin{align}\label{iakw2}
\bigg(-\frac{1}{16}\eta_0+\frac{17}{16}\eta_1\bigg)Lb^3h\bigg(\frac{3}{2}-\frac{1}{12}-1\bigg)(\eta_0^2-\eta_1^2)&\geq \frac{15}{16}\eta_1 Lb^3h\cdot\frac{5}{12}(\eta_0^2-\eta_1^2)\nonumber\\
&=\frac{25}{64}\eta_1Lb^3h(\eta_0^2-\eta_1^2)
\end{align}
\fi
By~$(1/2)\eta_0\leq\eta_1<\eta_0\leq1$, the last three terms on the right-hand side of~\eqref{iakw} satisfy
\begin{align}
&- \frac{Lb^3h\eta_1^4}{2}(\eta_0-\eta_1) - \frac{Lb^3h\eta_0^2\eta_1^2(\eta_0-\eta_1)}{8} - \frac{Lb^3h\eta_0^4}{2\cdot 64}(\eta_0-\eta_1)\nonumber\\
&\quad\geq Lb^3h\eta_1(\eta_0^2-\eta_1^2)\bigg(- \frac{1}{4} - \frac{1}{16} 
\bigg)%\nonumber\\
- \frac{1}{128}Lb^3h\eta_0(\eta_0^2-\eta_1^2).\nonumber\\
&\quad= -(5\eta_1/16 + \eta_0/128)Lb^3h(\eta_0^2-\eta_1^2).\label{m2pl}
\end{align}
Inserting~\eqref{m2pl} back into~\eqref{iakw} yields
\begin{equation}\label{m3ll}
\mu_2' \geq (25\eta_0/384 + \eta_1/16)Lb^3h(\eta_0^2-\eta_1^2).
\end{equation}
The order~$h^3$ terms are given by
\begin{align*}
\mu_3' &:= -\frac{Lbh\eta_1^2}{4}(\eta_0 - \eta_1)\bigg(\frac{2}{3}\cdot2b^2(\eta_0^2 - \eta_1^2) - \frac{mb^3h\eta_0^3}{4} - \frac{bh}{2}(\eta_0+\eta_1)\bigg)\\
&\quad -\frac{\lambda bh^3}{2}(\eta_0+\eta_1)\bigg(\lambda b^2\eta_1^2 - \frac{mb^2\eta_0^2}{8}\bigg),
\end{align*}
which, by considering only the terms in the brackets without a factor of~$m$ and using~$bh=2b^2(\eta_0-\eta_1)$, satisfies 
\begin{equation}\label{m3pl}
\mu_3' \geq -\bigg(\frac{1}{24}+\frac{1}{32}\bigg)Lb^3h\eta_1(\eta_0^2-\eta_1^2)=-\frac{7}{96}Lb^3h\eta_1(\eta_0^2-\eta_1^2).
\end{equation}
The order~$h^4$ terms are given by
\begin{equation*}
\mu_4' := -\frac{bh^3L^2\eta_0}{4}\bigg(\frac{2}{3}\cdot2b^2(\eta_0^2 - \eta_1^2) - \frac{mb^2h\eta_0^3}{4} - \frac{bh}{2}(\eta_0 + \eta_1)\bigg) - \frac{b^2h^4\lambda^2}{16}(\eta_0 + \eta_1)^2,
\end{equation*}
which, again considering only terms with a factor of~$m$, using~$\eta_1<\eta_0\leq 1$ and~$Lh^2\leq 1/16^2$, %and~$h^2\leq\frac{(\eta_0-\eta_1)^2}{L}$, 
satisfies
\begin{equation}\label{m4pl}
\mu_4' \geq -\frac{L^2b^3h^3\eta_0}{12}(\eta_0^2-\eta_1^2) - \frac{L^2b^3h^3}{8}(\eta_0+\eta_1)(\eta_0^2-\eta_1^2) > -\frac{1}{3\cdot16^2}Lb^3h\eta_0(\eta_0^2 - \eta_1^2).
\end{equation}
Gathering~\eqref{m3ll},~\eqref{m3pl} and~\eqref{m4pl} gives~$\mu' \geq \mu_2' + \mu_3' + \mu_4' > 0$.
\end{proof}

\begin{proof}[Proof of Proposition~\ref{lab2}]
By the mean value theorem, it holds that
\begin{align*}
&\bigg\|
\begin{pmatrix}
I_d & 0\\
0 & \eta_1I_d
\end{pmatrix}
\begin{pmatrix}
\bar{q} - \bar{q}' + \frac{h}{2}(\bar{p} - \bar{p}') \\
\bar{p} - \bar{p}' - \frac{h}{2}(b(\bar{q} + \frac{h}{2}\bar{p}) - b(\bar{q}' + \frac{h}{2}\bar{p}'))
\end{pmatrix}
\bigg\|_M^2\\
&\quad= 
\bigg\|\begin{pmatrix}
I_d & 0\\
0 & \eta_0I_d
\end{pmatrix}\begin{pmatrix}
\bar{q} - \bar{q}'\\
\bar{p} - \bar{p}'
\end{pmatrix}
\bigg\|_M ^2
-
\begin{pmatrix}
\bar{q} - \bar{q}'\\
\bar{p} - \bar{p}'
\end{pmatrix}^{\!\!\top\!\!}
\bar{\mathcal{H}}
\begin{pmatrix}
\bar{q} - \bar{q}'\\
\bar{p} - \bar{p}'
\end{pmatrix},
\end{align*}
where~$\bar{\mathcal{H}}$ is defined by~\eqref{ch2} with~\eqref{barH}. Let the square matrices~$\bar{A}$,~$\bar{B}$ and~$\bar{C}$ be given by~\eqref{habc} with~$\bar{A},\bar{B},\bar{C},\bar{\mathcal{H}}$ replacing~$A,B,C,\mathcal{H}$. Moreover, let~$b$ be given by~\eqref{bdef}. As before in the proof of Proposition~\ref{lab}, the matrices~$\bar{A}$,~$\bar{B}$ and~$\bar{C}$ satisfy~\eqref{ABCb}. For~$\bar{q},\bar{q}',\bar{p},\bar{p}',h$ satisfying~$\abs{\bar{p} - \bar{p}'}^2\geq (17/4)L\abs{\bar{q} - \bar{q}}^2$ and~$Lh^2\leq (\eta_0 - \eta_1)^2$, inequality~\eqref{pgq} holds. Inequality~\eqref{pgq} implies~\eqref{hc} with~$\bar{\mathcal{H}},\bar{\mathcal{H}}'$ replacing~$\mathcal{H},\mathcal{H}'$, where~$\bar{\mathcal{H}}'$ is given by~\eqref{habc} with~$\bar{A}',\bar{B},\bar{C}',\bar{\mathcal{H}}'$ replacing~$A,B,C,\mathcal{H}$ and~$\bar{A}',\bar{C}'$ are given by
\begin{align*}
\bar{A}' &= \frac{17}{16}bhL(\eta_0 + \eta_1)I_d + bh\eta_1\bar{H} - \frac{b^2h^2\eta_1^2}{2}\bar{H}^2,\\
\bar{C}' &= \frac{3}{4}\cdot2b^2(\eta_0^2 - \eta_1^2)I_d -\frac{h^2}{4} I_d - bh\eta_1 I_d + \frac{bh^3\eta_1}{4}\bar{H} + b^2h^2\eta_1^2\bar{H} - \frac{b^2h^4\eta_1^2}{8}\bar{H}^2.
\end{align*}
\iffalse
\begin{align*}
&(\bar{p} - \bar{p}')^{\!\top\!} \bar{C} (\bar{p} - \bar{p}') \\
&\quad\geq (\bar{p} - \bar{p}')^{\!\top\!} \bigg( \frac{7}{8} \cdot \frac{\eta_0^{-2} - \eta_1^{-2}}{2L}I_d - bh\bigg(\frac{m\eta_0^{-2}}{16L} + \frac{1}{2}(\eta_0^{-1} + \eta_1^{-1})\bigg)I_d - \frac{bh^3\eta_1^{-1}}{4}\bar{H}\\
&\qquad  - \frac{h^2\eta_1^{-2}}{4L} \bar{H} + \frac{h^4\eta_1^{-2}}{32L}\bar{H}^2 \bigg) (\bar{p} - \bar{p}') + 2bh L (\eta_0^{-1} + \eta_1^{-1})\abs{\bar{q} - \bar{q}'}^2
\end{align*}
and therefore
\fi
We proceed with a similar argument as in the proof of Proposition~\ref{lba2} to show that~$\bar{\mathcal{H}}'$ is positive definite. For any eigenvalue~$-L\leq \bar{\lambda} \leq L$ of~$\bar{H}$, the matrices~$\bar{A}',\bar{C}'$ admit the same corresponding eigenspace with respective eigenvalues~$\bar{A}_{\bar{\lambda}}',\bar{C}_{\bar{\lambda}}'$ satisfying
\begin{align}
\bar{A}_{\bar{\lambda}}' &\geq bhL\bigg(\frac{17}{16}\eta_0 + \frac{1}{16}\eta_1 - \frac{\eta_1^2}{4}(\eta_0 - \eta_1)\bigg) \geq \frac{9}{16}bhL(\eta_0 + \eta_1),\label{ablb}\\
\bar{C}_{\bar{\lambda}}' &\geq \frac{3b^2(\eta_0^2 - \eta_1^2)}{2} - \frac{bh}{2}(\eta_0 + \eta_1) - \frac{bh^3L\eta_1}{4} - \frac{h^2\eta_1^2}{4} - \frac{h^4L\eta_1^2}{32}\nonumber\\
&\geq b^2(\eta_0^2 - \eta_1^2)\bigg(\frac{3}{2} - 1 - \frac{1}{4\cdot16^2} - \frac{1}{4} - \frac{1}{2\cdot16^3}\bigg),\label{cblb}
\end{align}
where the definition~\eqref{bdef} for~$b$ and our assumption on~$h$ have been repeatedly used, so that~$\bar{A}_{\bar{\lambda}}',\bar{C}_{\bar{\lambda}}'>0$. 
Moreover, the matrix~$\bar{A}'\bar{C}'-\bar{B}^2$ admits the same eigenspace with eigenvalue~$\bar{\mu}'$, which by~\eqref{ablb} and~\eqref{cblb} satisfies
\begin{equation}\label{mpb}
\bar{\mu}' \geq \frac{9}{16}\cdot\frac{31}{125}Lb^3h(\eta_0 + \eta_1)(\eta_0^2 - \eta_1^2) - \bigg( \bar{\lambda} b^2h\eta_1^2 + \frac{\bar{\lambda} bh^2\eta_1}{2} - \frac{\bar{\lambda}^2 b^2 h^3\eta_1^2}{4}\bigg)^2.
\end{equation}
For the last term on the right-hand side of~\eqref{mpb}, using~\eqref{bdef},~$-L\leq \bar{\lambda}\leq L$ and the assumptions~$\sqrt{L}h\leq \min(\eta_0 - \eta_1,1/16)$,~$0\leq\eta_1<\eta_0\leq 1$, %~$\eta_0-\eta_1\leq1/2$, 
it holds that
\begin{align}
&- \bigg( \bar{\lambda}b^2 h\eta_1^2 + \frac{\bar{\lambda} bh^2\eta_1}{2} - \frac{\bar{\lambda}^2 b^2 h^3\eta_1^2}{4}\bigg)^2\nonumber\\
&\quad= - \bar{\lambda}^2 b^4h^2 \eta_1^4  - \bar{\lambda}^2 b^3h^3\eta_1^3 + \frac{\bar{\lambda}^3b^4h^4\eta_1^4}{2} - \frac{\bar{\lambda}^2 b^2h^4\eta_1^2}{4} + \frac{\bar{\lambda}^3b^3h^5\eta_1^3}{4} - \frac{\bar{\lambda}^4b^4h^6\eta_1^4}{16}\nonumber\\
&\quad\geq -\frac{Lb^3h\eta_1^2}{2}(\eta_0-\eta_1)\bigg(1 + \frac{1}{16} + \frac{1}{4\cdot 16^2} + \frac{1}{2\cdot 16^2} + \frac{1}{4\cdot16^4} + \frac{1}{32\cdot 16^4}\bigg).\label{odh}
\end{align}
Applying the inequality~$\frac{\eta_1^2}{2} \leq \frac{1}{8}(\eta_0 + \eta_1)^2$ to the right-hand side of~\eqref{odh}, then inserting into~\eqref{mpb} concludes the proof.
\end{proof}

\begin{proof}[Proof of Proposition~\ref{lba2R}]
We may follow along the beginning of the proof of Proposition~\ref{lba2}, with initial differences already given in Proposition~\ref{lbaR} and its proof. In particular, in the proof of Proposition~\ref{lba2}, the definition of~$B$ should be replaced by~\eqref{BdefR} (replacing the definition of~$C$ with~\eqref{CdefR} only increases the value of~$C_{\lambda}'$). This takes into account the difference in the integrators considered. In addition, using instead~$\abs{\bar{q}-\bar{q}'}\geq 4\sqrt{L}\abs{\bar{q}-\bar{q}'}$, we have in place of~\eqref{pgq} that
\begin{equation*}
2b^2(\eta_0^2-\eta_1^2)\abs{\bar{p}-\bar{p}'}^2 \geq 16Lbh(\eta_0+\eta_1)\abs{\bar{q}-\bar{q}'}^2,
\end{equation*}
which implies
\begin{align*}
(\bar{p}-\bar{p}')^{\top}C(\bar{p}-\bar{p}')&\geq \bigg(\frac{3}{4}\cdot2b^2(\eta_0^2-\eta_1^2)-bh\bigg(\frac{mb^2\eta_0^3}{4}+\frac{1}{2}(\eta_0+\eta_1)\bigg)\bigg)\abs{\bar{p}-\bar{p}'}^2 \\
&\quad+4Lbh(\eta_0+\eta_1)\abs{\bar{q}-\bar{q}'}^2.
\end{align*}
Continuing with the structure of the proof of Proposition~\ref{lba2}, we have in place of~\eqref{alp} that
\begin{equation*}
A_{\lambda}' = 4b^2L(\eta_0^2-\eta_1^2) - c + \frac{h}{2}(h+2b\eta_1)\lambda - \frac{h^2}{4}\bigg(\frac{bh}{2}(\eta_0+\eta_1)+2b^2\eta_1^2\bigg)\lambda^2.
\end{equation*}
By definitions~\eqref{mdef},~\eqref{bdef},~$h+2b\eta_1 = 2b\eta_0$ and~$bh(\eta_0+\eta_1)=2b^2(\eta_0^2-\eta_1^2)$, this implies
\begin{align*}
A_{\lambda}'&\geq 4b^2L(\eta_0^2-\eta_1^2) - L\eta_0 b^2(\eta_0-\eta_1)/4 - 2Lb^2\eta_0(\eta_0-\eta_1)\\
&\quad- L^2b^4(\eta_0-\eta_1)^2(\eta_0^2+\eta_1^2),
\end{align*}
which, by~$\eta_1<\eta_0\leq1$,~$b\leq 1/(2\sqrt{L})$,~$\sqrt{L}h\leq \sqrt{2}/16$ and also therefore~$L^2b^4(\eta_0-\eta_1)= L^2b^3h/2 \leq Lb^2/32$, implies
\begin{align}
A_{\lambda}'&\geq 4b^2L(\eta_0+\eta_1)(\eta_0-\eta_1) - (1/4)L b^2\eta_0(\eta_0-\eta_1) - 2Lb^2\eta_0(\eta_0-\eta_1)\nonumber\\
&\quad- (1/32)Lb^2(\eta_0-\eta_1)(\eta_0+\eta_1)\nonumber\\
&\geq (17/10)Lb^2\eta_0(\eta_0-\eta_1) + 3.9Lb^2\eta_1(\eta_0-\eta_1).\label{asn}
\end{align}
On the other hand, it holds by~$bh\leq (1/2)(\eta_0-\eta_1)$ and~$\eta_0^3\leq \eta_0+\eta_1$ that
\begin{align}
C_{\lambda}' &= \frac{3}{4}\cdot 2b^2(\eta_0^2-\eta_1^2)-bh\bigg(\frac{mb^2\eta_0^3}{4} + \frac{1}{2}(\eta_0+\eta_1)\bigg)\nonumber\\
&\geq 2b^2(\eta_0^2-\eta_1^2)\bigg(\frac{3}{4} - \frac{1}{16} - \frac{1}{2}\bigg).\label{csn}
\end{align}
Moreover, using~$\lambda\geq-L$,~$\bar{u}\leq h$ and again~\eqref{bdef},~$\sqrt{L}h\leq \sqrt{2}/16$, equation~\eqref{hqo} implies
\begin{align}
\abs{B_{\lambda}} &\leq (1/4)Lb^3\eta_0(\eta_0-\eta_1) + (1/2)Lhb^2\eta_0(\eta_0-\eta_1) + Lb^3(\eta_0^2-\eta_1^2) \nonumber \\
&\quad+ (\sqrt{2}/16)\sqrt{L}b^2\eta_0(\eta_0-\eta_1)(1 + 2/16^2) + (h^2/2)L^2b^3(\eta_0^2-\eta_1^2),\label{nnj}
\end{align}
where the second term on the right-hand side of~\eqref{nnj} may be bounded as
\begin{equation*}
(1/2)Lhb^2\eta_0(\eta_0-\eta_1) \leq (1/16)\sqrt{L}b^2\eta_0(\eta_0-\eta_1)
\end{equation*}
and the last term on the right-hand side of~\eqref{nnj} may be bounded as
\begin{equation*}
(h^2/2)L^2b^3(\eta_0^2-\eta_1^2) \leq h^2L^2b^3\eta_0(\eta_0-\eta_1) \leq (1/16)Lb^3\eta_0(\eta_0-\eta_1).
\end{equation*}
Therefore it holds that
\begin{align*}
\abs{B_{\lambda}}&\leq (1/4+1+1/16)Lb^3\eta_0(\eta_0-\eta_1) + Lb^3\eta_1(\eta_0-\eta_1)\\
&\quad+ (1/16+\sqrt{2}/16+1/8^3)\sqrt{L}b^2\eta_0(\eta_0-\eta_1)\\
&\leq (21/16)Lb^3\eta_0(\eta_0-\eta_1) + (3/16)\sqrt{L}b^2\eta_0(\eta_0-\eta_1)+ Lb^3\eta_1(\eta_0-\eta_1).
\end{align*}
Consequently, it holds that
\begin{align*}
B_{\lambda}^2&\leq Lb^4(\eta_0-\eta_1)^2((21/16)\sqrt{L}b\eta_0 + (3/16)\eta_0 + \eta_1)^2\\
&\leq Lb^4(\eta_0-\eta_1)^2(0.72\eta_0^2 + 1.7\eta_0\eta_1 + \eta_1^2),
\end{align*}
which yields~$A_{\lambda}C_{\lambda}-B_{\lambda}^2>0$ together with~\eqref{asn} and~\eqref{csn}.
\end{proof}

\begin{proof}[Proof of Proposition~\ref{expo}]
Let~$b$ be given by~\eqref{bdef},~$H$ be given by~\eqref{mvth} and let~$\tilde{A},\tilde{B},\tilde{C}$ be given by~\eqref{abcdefs} with~$\tilde{A},\tilde{B},\tilde{C}$ replacing~$A,B,C$ and~$-\bar{c}$ replacing~$c$. It suffices to show that the matrices~$\tilde{A},\tilde{B},\tilde{C}$ are such that~$\tilde{A}$ and~$\tilde{A}\tilde{C}-\tilde{B}^2$ are positive definite. 
In the following, the assumptions on~$h,\eta_0,\eta_1$ and the definition for~$b$ are used without mention. Recall the alternate expressions~\eqref{al},~\eqref{cl} and~\eqref{bl} for the associated eigenvalues. For any eigenvalue~$-L\leq \lambda\leq L$ of~$H$, the matrix~$\tilde{A}$ admits the same corresponding eigenspace with eigenvalue~$\tilde{A}_{\lambda}$ given by
\begin{equation*}
\tilde{A}_{\lambda} = bh\bigg(6L\eta_0 + \lambda\eta_0 - \frac{h^2}{8}\lambda^2(\eta_0+\eta_1) - \lambda^2b^2\eta_1^2(\eta_0-\eta_1)\bigg),
\end{equation*}
which satisfies
\begin{equation}\label{abR}
\tilde{A}_{\lambda} \geq bhL\eta_0\bigg(6 - 1-\frac{1}{16}\bigg) - \frac{Lbh}{4}(\eta_0-\eta_1) \geq \frac{9}{2}Lbh\eta_0 + \frac{1}{4}L bh\eta_1,
\end{equation}
so that~$\tilde{A}_{\lambda}$ is positive. Moreover, the matrix~$\tilde{A}\tilde{C}-\tilde{B}^2$ admits the same eigenspace with eigenvalue~$\tilde{\mu}$ given by~\eqref{ml} with~$\tilde{\mu},\tilde{A}_{\lambda},\tilde{B}_{\lambda},\tilde{C}_{\lambda}$ replacing~$\mu,A_{\lambda},B_{\lambda},C_{\lambda}$ and~$\tilde{B}_{\lambda},\tilde{C}_{\lambda}$ satisfying
\begin{align*}
\tilde{B}_{\lambda} &= 6Lb^2h\eta_0^2 + \frac{bh^2\lambda}{4}(\eta_0+\eta_1) + b^2h\eta_1^2\lambda\\
\tilde{C}_{\lambda} &= 2b^2(\eta_0^2 - \eta_1^2) + bh\bigg(12Lb^2\eta_0^3 - \frac{1}{2}(\eta_0+\eta_1)\bigg)> b^2(\eta_0^2 - \eta_1^2)  >0.
\end{align*}
Let~$\tilde{\mu}_2$ be defined by
%\iffalse % Less precise bounds that fail
\begin{equation*}
\tilde{\mu}_2 := bh\bigg(\frac{9}{2}L\eta_0 + \frac{1}{4}L\eta_1\bigg)b^2(\eta_0^2 - \eta_1^2)  - 36L^2b^4h^2\eta_0^4 - 3b^2h^2\eta_0^2\eta_1^2\lambda - b^4h^2\eta_1^4\lambda^2,
\end{equation*}
which may be bounded as
\begin{align}
\tilde{\mu}_2 &\geq \frac{9Lbh\eta_0}{2}\bigg(\frac{1}{2}bh(\eta_0+\eta_1) \bigg) - Lb^2h^2\Big(8\eta_0^4 + 3\eta_0^2\eta_1^2 \Big) - \frac{1}{4}Lb^2h^2\eta_1^4\nonumber\\
&\geq \frac{9}{4}Lb^2h^2\eta_0 \bigg(\eta_0+\eta_1 \bigg).\label{cwq}
\end{align}
%\fi
\iffalse % more precise bounds that fail for \tilde{\mu}_2 with weaker c,
\begin{align*}
\tilde{\mu}_2 &:= bh(6L\eta_0 + \lambda\eta_0)\bigg(\frac{\eta_0^2 - \eta_1^2}{2L} + 2bh\eta_0^3 - \frac{bh}{2}(\eta_0+\eta_1)\bigg)\\
&\quad- 16L^2b^4h^2\eta_0^4 - b^2h^2\eta_0^2\eta_1^2\lambda - \frac{h^2\eta_1^4\lambda^2}{16L^2},
\end{align*}
which may be rewritten as
\begin{align}
\tilde{\mu}_2 &= bh\bigg[2\eta_0(\eta_0^2-\eta_1^2) + 8Lbh\eta_0^4 - 2Lbh\eta_0(\eta_0+\eta_1) + \frac{\lambda\eta_0(\eta_0^2-\eta_1^2)}{2L} + 2\lambda bh\eta_0^4\nonumber\\
&\quad- \frac{\lambda bh\eta_0(\eta_0+\eta_1)}{2} - 16L^2b^3h\eta_0^4 - bh\eta_0^2\eta_1^2\lambda - \frac{\lambda^2\eta_1^4(\eta_0-\eta_1)}{8L^2}\bigg].\label{cfr}
\end{align}
The terms with exactly fourth order in~$(\eta_0,\eta_1)$ in the square bracket in~\eqref{cfr} may be bounded as
\begin{equation*}
\tilde{\mu}_{2,1}:=8Lbh\eta_0^4 + 2\lambda bh\eta_0^4 - 16L^2b^3h\eta_0^4 - bh\eta_0^2\eta_1^2\lambda \geq (6\eta_0^2 - 16Lb^2\eta_0^2 - \eta_1^2)Lbh\eta_0^2 > 0.
\end{equation*}
The rest of the terms in the square bracket in~\eqref{cfr} may be bounded as
\begin{align*}
\tilde{\mu}_{2,2}&:=2\eta_0(\eta_0^2-\eta_1^2) - 2Lbh\eta_0(\eta_0+\eta_1) + \frac{\lambda\eta_0(\eta_0^2-\eta_1^2)}{2L} - \frac{\lambda bh\eta_0(\eta_0+\eta_1)}{2}\\
&\quad- \frac{\lambda^2\eta_1^4(\eta_0-\eta_1)}{8L^2}\\
&\geq \bigg(2-1 - \frac{1}{2} - \frac{1}{4} - \frac{1}{16}\bigg)\eta_0(\eta_0^2-\eta_1^2).
\end{align*}
Therefore it holds that
\begin{equation*}
\tilde{\mu}_2 = bh(\tilde{\mu}_{2,1} + \tilde{\mu}_{2,2}) > \frac{3bh}{16}\eta_0(\eta_0^2-\eta_1^2).
\end{equation*}
\fi
Let~$\tilde{\mu}_3$ be defined by
\begin{equation*}
\tilde{\mu}_3:= \frac{bh^2\lambda}{2}(\eta_0+\eta_1)(6Lb^2h\eta_0^2 + b^2h\eta_1^2\lambda),
\end{equation*}
which satisfies
\begin{equation}\label{cwq2}
\tilde{\mu}_3 \geq - 3L^2b^3h^3\eta_0^2(\eta_0+\eta_1) \geq -\frac{3}{2}Lb^2h^2\eta_0^2(\eta_0^2 - \eta_1^2)\geq -\frac{3}{2}Lb^2h^2\eta_0^4.
\end{equation}
Lastly, let~$\tilde{\mu}_4$ be defined by
\begin{equation*}
\tilde{\mu}_4:= -\frac{b^2h^4\lambda^2}{16}(\eta_0+\eta_1)^2,
\end{equation*}
which satisfies
\begin{equation}\label{cwq3}
\tilde{\mu}_4 \geq -\frac{b^2h^2L}{16}(\eta_0^2-\eta_1^2)^2 \geq -\frac{b^2h^2L}{16}\eta_0^4.
\end{equation}
Gathering~\eqref{cwq},~\eqref{cwq2} and~\eqref{cwq3} yields~$\tilde{\mu}\geq \tilde{\mu}_2+\tilde{\mu}_3+\tilde{\mu}_4 >0$.
\end{proof}

\begin{proof}[Proof of Proposition~\ref{expo2}]
Let~$b$ be given by~\eqref{bdef},~$\bar{H}$ be given by~\eqref{barH} and let~$\bar{A},\bar{B},\bar{C}$ be given by~\eqref{ABCb}. Moreover, let
%\begin{subequations}
%\begin{align*}
\begin{equation*}
\hat{A} = \bar{c}I_d + \bar{A},\qquad\hat{B} = \bar{c}b\eta_0I_d +  \bar{B},\qquad%+ \frac{bh^2\eta_1}{2}\bar{H} + \frac{h\eta_1^2}{4L}\bar{H}(I_d - \frac{h^2}{4}\bar{H}\bigg)\\
\hat{C} = 2\bar{c}b^2\eta_0^2I_d + \bar{C}.
\end{equation*}
%\end{align*}
%\end{subequations}
It suffices to show that~$\hat{A}$ and~$\hat{A}\hat{C}-\hat{B}^2$ are positive definite. Again, in the following, the assumptions on~$h,\eta_0,\eta_1$ and the definition~$b$ are used without mention. In particular, the equation
\begin{equation*}
\bar{c}=4Lbh\eta_0
\end{equation*}
will be used. For any eigenvalue~$-L\leq \bar{\lambda} \leq L$ of~$\bar{H}$, the matrix~$\hat{A}$ admits the same corresponding eigenspace with eigenvalue~$\hat{A}_{\bar{\lambda}}$ satisfying
\begin{equation*}
\hat{A}_{\bar{\lambda}} = \bar{c} + bh\eta_1\bar{\lambda} - \frac{1}{2}b^2h^2\eta_1^2\bar{\lambda}^2 \geq 3Lbh\eta_0 - Lbh(\eta_0 - \eta_1) - \frac{1}{4}Lbh\eta_1^2(\eta_0-\eta_1) >0.
\end{equation*}
Moreover, the matrix~$\hat{A}\hat{C} - \hat{B}^2$ admits the same eigenspace with eigenvalue~$\hat{\mu}$ given by
\begin{align}
\hat{\mu} &= \bigg(\bar{c} + bh\eta_1\bar{\lambda} - \frac{1}{2}b^2h^2\eta_1^2\bar{\lambda}^2\bigg) \bigg(2\bar{c}b^2 \eta_0^2 + 2b^2(\eta_0^2 - \eta_1^2) - \frac{h^2}{4} -bh\eta_1 + \frac{bh^3\bar{\lambda}\eta_1}{4}  \nonumber\\
&\quad+ b^2h^2\bar{\lambda}\eta_1^2 - \frac{b^2h^4\bar{\lambda}^2\eta_1^2}{8}\bigg) 
-\bigg(\bar{c}b\eta_0 + \frac{bh^2\bar{\lambda}\eta_1}{2} + b^2h\bar{\lambda}\eta_1^2\bigg(1-\frac{h^2\bar{\lambda}}{4}\bigg)\bigg)^2.\label{mx}
%- \frac{b^2h^4\bar{\lambda}\eta_1^2}{4} - \frac{bh^3\bar{\lambda}^2\eta_1^3}{4L} - \frac{h^2\bar{\lambda}^2\eta_1^4}{16L^2} - 
\end{align}
Let~$\hat{\mu}_2$ be defined by
\begin{align*}
\hat{\mu}_2&:= 3Lbh\eta_0\bigg(2\bar{c}b^2\eta_0^2 + 2b^2(\eta_0^2-\eta_1^2) -bh\eta_1\bigg) - \bar{c}^2b^2\eta_0^2 - 2\bar{c}b^3\eta_0h\bar{\lambda}\eta_1^2\\
&\quad- b^4h^2\bar{\lambda}^2 \eta_1^4,
\end{align*}
which satisfies%, since~$x^2(1-x)\leq \frac{4}{27}$ holds for~$x\in[\frac{1}{2},1]$, satisfies
\begin{align}
\hat{\mu}_2 &\geq 3Lbh\eta_0\bigg(8Lb^3h\eta_0^3 + 2b^2(\eta_0^2 - \eta_1^2) - bh\eta_1\bigg) - 16L^2b^4h^2\eta_0^4 - 8Lb^4h^2\eta_0^2\eta_1^2\nonumber\\
&\quad- L^2b^4h^2\eta_1^4\nonumber\\
&\geq 8L^2b^4h^2\eta_0^2(\eta_0^2-\eta_1^2) + 6Lb^3h\eta_0(\eta_0^2-\eta_1^2) - 3Lb^2h^2\eta_0\eta_1 - L^2b^4h^2\eta_1^4\nonumber\\
&= 8L^2b^4h^2\eta_0^2(\eta_0^2-\eta_1^2) 
+ 6Lb^3h(\eta_0(\eta_0^2-\eta_1^2) -  \eta_0\eta_1(\eta_0-\eta_1)) - L^2b^4h^2\eta_1^4.\nonumber\\
&\geq 8L^2b^4h^2\eta_0^2(\eta_0^2-\eta_1^2) 
+ \frac{11}{2}Lb^3h\eta_0^2(\eta_0-\eta_1).\label{mx2}
%+ \frac{11}{16}bh\eta_0(\eta_0^2-\eta_1^2).
\end{align}
Let~$\hat{\mu}_3$ be defined by
\begin{align*}
\hat{\mu}_3 &:= 3Lbh\eta_0\bigg(-\frac{h^2}{4} + b^2h^2\bar{\lambda}\eta_1^2\bigg) - Lbh(\eta_0-\eta_1)\bigg(1+\frac{\eta_1^2}{4}\bigg)(8Lb^3h\eta_0^3 \\
&\quad+ 2b^2(\eta_0^2-\eta_1^2) -bh\eta_1) - bh^2\bar{\lambda}\eta_1(4Lb^2h\eta_0^2 + b^2h\bar{\lambda}\eta_1^2),
\end{align*}
which satisfies
\begin{align}
\hat{\mu}_3 &\geq -\frac{3}{4}Lbh^3\eta_0(1+\eta_1^2) - Lbh(\eta_0-\eta_1)\bigg(1+\frac{\eta_1^2}{4}\bigg) \bigg[8Lb^3h\eta_0^3 + 2b^2(\eta_0^2 - \eta_1^2)\nonumber\\
&\quad - 2b^2\eta_1(\eta_0-\eta_1)\bigg] - \bigg[8L^2b^4h^2\eta_0^2\eta_1(\eta_0-\eta_1) + \frac{bh^3L\eta_1^3}{4}\bigg]\label{ltb}
\end{align}
The first terms in the each of the square brackets on the right-hand side of~\eqref{ltb} may be bounded as
\begin{align}
&- Lbh(\eta_0-\eta_1)\bigg(1+\frac{\eta_1^2}{4}\bigg)\cdot8Lb^3h\eta_0^3 - 8L^2b^4h^2\eta_0^2\eta_1(\eta_0-\eta_1)\nonumber\\
&\quad= -8L^2b^4h^2\eta_0^2(\eta_0^2-\eta_1^2) - 2L^2b^4h^2\eta_0^3\eta_1^2(\eta_0-\eta_1)\nonumber\\
&\quad\geq -8L^2b^4h^2\eta_0^2(\eta_0^2-\eta_1^2) - \frac{1}{2}L^2b^3h\eta_1^2(\eta_0-\eta_1).\label{ltb1}
\end{align}
The first term on the right-hand side of~\eqref{ltb} may be bounded as
\begin{align}
-\frac{3}{4}Lbh^3\eta_0(1+\eta_1^2) &= -3(1+\eta_1^2)Lb^3h\eta_0(\eta_0-\eta_1)^2\nonumber\\
&\geq -3Lb^3h\eta_0^2(\eta_0-\eta_1) + \frac{3}{2}Lb^3h\eta_0\eta_1(\eta_0-\eta_1).\label{ltb2}
\end{align}
The second and third terms in the first square bracket on the right-hand side of~\eqref{ltb} may be bounded as
\begin{align}
&- Lbh(\eta_0-\eta_1)\bigg(1+\frac{\eta_1^2}{4}\bigg)\cdot 2b^2\eta_0(\eta_0-\eta_1) \nonumber\\
&\quad\geq -2Lb^3h\eta_0^2(\eta_0-\eta_1) + Lb^3h\eta_0\eta_1(\eta_0-\eta_1).\label{ltb3}
\end{align}
Lastly, the last term in the second square bracket on the right-hand side of~\eqref{ltb} may be bounded as
\begin{equation}\label{ltb4}
-\frac{bh^3L\eta_1^3}{4} \geq -Lb^3h\eta_1^3(\eta_0-\eta_1)^2 \geq -\frac{1}{2}Lb^3h\eta_1^2(\eta_0-\eta_1).
\end{equation}
Gathering~\eqref{ltb1},~\eqref{ltb2},~\eqref{ltb3},~\eqref{ltb4} and inserting into~\eqref{ltb} yields
\begin{equation}\label{mx3}
\hat{\mu}_3\geq -8L^2b^4h^2\eta_0^2(\eta_0^2-\eta_1^2) - 5Lb^3h\eta_0^2(\eta_0-\eta_1) + \frac{3}{2}Lb^3h\eta_0\eta_1(\eta_0-\eta_1).
\end{equation}
Finally, let~$\hat{\mu}_{4+}$ be given by
\begin{equation*}
\hat{\mu}_{4+} := \frac{\bar{c}bh^3\bar{\lambda}\eta_1}{4} - \frac{\bar{c}b^2h^4\bar{\lambda}^2\eta_1^2}{8} + cb\eta_0\cdot\frac{b^2h^3\bar{\lambda}^2\eta_1^2}{2} + \bar{\mu}_{4+},
\end{equation*}
where~$\bar{\mu}_{4+}$ is given by~\eqref{m4}. By the same observations as those directly following~\eqref{m4}, equation~\eqref{m4s} holds. Therefore, it holds that
\begin{align}
\hat{\mu}_{4+} &\geq  
-\frac{\bar{c}bh^3L\eta_1}{4} 
%-\frac{1}{8}L^3bh^5\eta_0\eta_1^2 
- \frac{\bar{c}b^3h^3\bar{\lambda}^2\eta_1^2}{4}(\eta_0-\eta_1) + \frac{\bar{c}b^3h^3\bar{\lambda}^2\eta_1^2}{2}\eta_0 + \frac{\bar{\lambda}^2h^4\eta_1^2}{32L}\nonumber\\
&\geq -L^2b^2h^4\eta_0\eta_1 \nonumber\\
&\geq -2L^2b^3h^3\eta_0\eta_1(\eta_0-\eta_1)\nonumber\\
&\geq -\frac{1}{2}Lb^3h\eta_0\eta_1(\eta_0-\eta_1).\label{mx4}
\end{align}
Gathering~\eqref{mx2},~\eqref{mx3},~\eqref{mx4} and inserting into equation~\eqref{mx} yields~$\hat{\mu}\geq \hat{\mu}_2+\hat{\mu}_3+\hat{\mu}_{4+} >0$.
\iffalse % more explicit sum
\begin{equation*}
\hat{\mu}= \hat{\mu}_2+\hat{\mu}_3+\hat{\mu}_{4+} \geq 
\bigg(\frac{3}{2} - \frac{1}{16} -\frac{39}{32} - \frac{1}{8}\bigg)bh\eta_0^2(\eta_0-\eta_1) = \frac{3}{32}bh\eta_0^2(\eta_0-\eta_1)>0.
\end{equation*}
\fi
\end{proof}

\begin{proof}[Proof of Proposition~\ref{RvRth}]
Let~$\mathcal{H}$ be given by~\eqref{chR} with~$c$ replaced by~$-\bar{c}$. It suffices to show that~$\mathcal{H}$ is positive definite for any~$\bar{q},\bar{q}',\bar{p},\bar{p}'$. The matrix~$\mathcal{H}$ is positive definite if and only if the square matrices~$A,B,C$ given by~\eqref{habc} 
are such that~$A$ and~$AC-B^2$ are positive definite. Denoting again~\eqref{bdef},
it holds by direct calculation that~$A,B,C$ are given by~\eqref{Adef} and~\eqref{abcdefsR} all with~$c$ replaced by~$-\bar{c}$. For any eigenvalue~$-L\leq \lambda\leq L$ of~$H$, the matrices~$A,B,C$ admit the same corresponding eigenspace with eigenvalues~$A_{\lambda},B_{\lambda},C_{\lambda}$. By the same derivations,~$A_{\lambda}$ satisfies~\eqref{abR} with~$\bar{A}_{\lambda}$ replaced by~$A_{\lambda}$, which implies
\begin{equation}\label{aiR}
A_{\lambda} \geq ((9/2)\eta_0 +(1/4)\eta_1))Lbh \geq 9Lb^2\eta_0 (\eta_0-\eta_1).
\end{equation}
From the definition~\eqref{abcdefsR} of~$B$ (with~$c$ replaced by~$-\bar{c}$ and similar to~\eqref{hqo}) and definition~\eqref{cbdef} of~$\bar{c}$, it holds by~$\bar{u}\leq h$ that
\begin{align*}
B_{\lambda} &\leq 12Lb^3\eta_0^2(\eta_0-\eta_1) + Lb^3 \eta_0 (\eta_0-\eta_1)^2 + Lb^3(\eta_0^2-\eta_1^2) \\
&\quad+ 2Lb^3\eta_0(\eta_0-\eta_1)^2(1+Lb^2(\eta_0-\eta_1)^2) + 2L^2b^5(\eta_0-\eta_1)^2(\eta_0^2-\eta_1^2).
\end{align*}
Using~$b=h/(2(\eta_0-\eta_1))\leq 1/(2\sqrt{L})$ and~$Lh^2\leq 1/16^2$, this implies
\begin{align*}
B_{\lambda} &\leq 12Lb^3\eta_0^2(\eta_0-\eta_1) + (1/32)\sqrt{L}b^2 \eta_0 (\eta_0-\eta_1) + (1/2)\sqrt{L}b^2(\eta_0^2-\eta_1^2) \\
&\quad+ (1/16)\sqrt{L}b^2\eta_0(\eta_0-\eta_1)(1+1/64) + (1/(4\cdot16^2))\sqrt{L}b^2(\eta_0^2-\eta_1^2).
\end{align*}
Consequently, it holds that
\begin{equation*}
B_{\lambda} \leq 12Lb^3\eta_0^2(\eta_0-\eta_1) + \sqrt{L}b^2(\eta_0^2-\eta_1^2),
\end{equation*}
which implies by Young's inequality that
\begin{align}\label{blR}
B_{\lambda}^2 &\leq (3/2)12^2L^2b^6\eta_0^4(\eta_0-\eta_1)^2 + 3Lb^4(\eta_0^2-\eta_1^2)^2\nonumber\\
&\leq (3/2)12^2L^2b^6\eta_0^4(\eta_0-\eta_1)^2 + 6 Lb^4\eta_0(\eta_0+\eta_1)(\eta_0-\eta_1)^2
\end{align}
On the other hand, from the definition~\eqref{abcdefsR} of~$C$ (with~$c$ replaced by~$-\bar{c}$), it holds that
\begin{equation*}
C_{\lambda} \geq 2b^2\eta_0^2 + 6Lb^2h^2\eta_0^3/(\eta_0-\eta_1) - b^2(\eta_0^2+\eta_1^2) = b^2(\eta_0^2 - \eta_1^2) + 24Lb^4\eta_0^3(\eta_0-\eta_1).
\end{equation*}
Therefore, together with~\eqref{aiR}, implies
\begin{equation*}
A_{\lambda}C_{\lambda} \geq 9Lb^4\eta_0(\eta_0+\eta_1)(\eta_0-\eta_1)^2 + 216L^2b^6\eta_0^4(\eta_0-\eta_1)^2,
\end{equation*}
which shows~$A_{\lambda}C_{\lambda}-B_{\lambda}^2$ together with~\eqref{blR}.
\end{proof}

\section{Proofs for biases}\label{spw}
Firstly, similar to Lemma~30 in~\cite{MR4309974} and its proof, an estimate on the second moment of~$e^{-U(x)}dx$ is presented without assuming convexity.
\begin{lemma}\label{eal}
Under Assumption~\ref{A1} with~$b(\cdot,\theta)=\nabla U$ for some~$U:\mathbb{R}^d\rightarrow\mathbb{R}$ and~$\theta\in\Theta$, it holds for~$\beta\in\{2,4\}$ that
\begin{equation*}
%\frac{1}{\int_{\mathbb{R}^d}e^{-U}}\int \abs{x}^{\beta}e^{-U(x)}dx \leq \frac{3L(R')^2+2d}{m},
\int_{\mathbb{R}^d} \abs{x}^{\beta}\mu_1(dx) \leq \bigg(\frac{3L(R')^2+2d + 2\beta - 4}{m}\bigg)^{\frac{\beta}{2}},
\end{equation*}
where~$\mu_1(dx)$ is given by~$e^{-U(x)}dx/\int_{\mathbb{R}^d}e^{-U(x)}dx$ and~$R' = 4R(1+\frac{L}{m})$.
\end{lemma}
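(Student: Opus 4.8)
The plan is to exploit the stationarity of $\mu_1\propto e^{-U}$ for the overdamped Langevin generator, combined with the convexity-at-infinity bound recorded in Remark~\ref{Rcom}. Recall that $\mu_1$ satisfies the integration-by-parts identity $\int_{\mathbb{R}^d}(\Delta\varphi-\nabla U\cdot\nabla\varphi)\,d\mu_1=0$ for smooth $\varphi$ of polynomial growth. Before using this, I would first secure the admissibility of such $\varphi$, equivalently the a priori finiteness of $\int|x|^\beta\,d\mu_1$: integrating $\nabla^2U\geq (m/2)I$ along rays leaving $B_R$ and controlling $U$ on the bounded region via \eqref{A1a} produces a quadratic lower bound $U(x)\geq c_1|x|^2-c_2$ with $c_1,c_2>0$, so $e^{-U}$ has Gaussian tails, all moments are finite, and the identity follows by dominated convergence on a cutoff.

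Next I would apply the identity with $\varphi(x)=|x|^2$, so that $\Delta\varphi=2d$ and $\nabla\varphi=2x$, giving $\int x\cdot\nabla U\,d\mu_1=d$. Writing $\nabla U(x)=\nabla U(x)-\nabla U(0)=\int_0^1\nabla^2U(tx)\,dt\,x$ (using $\nabla U(0)=0$, which holds for the $U$ fixed in Section~\ref{bisec}), the estimate \eqref{A1b} applied to the segment joining $0$ and $x$ gives $x\cdot\nabla U(x)\geq (m/2)|x|^2$ whenever $|x|\geq R'=4R(1+L/m)$, while on $B_{R'}$ the Lipschitz bound \eqref{A1a} yields $|x\cdot\nabla U(x)|\leq L|x|^2\leq L(R')^2$. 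Splitting $d=\int_{B_{R'}}+\int_{B_{R'}^c}$ and using $\mu_1(B_{R'})\leq 1$ gives $(m/2)\int_{B_{R'}^c}|x|^2\,d\mu_1\leq d+L(R')^2$; combining with $\int_{B_{R'}}|x|^2\,d\mu_1\leq (R')^2$ and $m\leq L$ (a consequence of Assumption~\ref{A1}), so that $(R')^2\leq L(R')^2/m$, produces $\int|x|^2\,d\mu_1\leq (3L(R')^2+2d)/m$, which is exactly the claim for $\beta=2$ since $2\beta-4=0$.

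For $\beta=4$ I would repeat the argument with $\varphi(x)=|x|^4$, for which $\Delta\varphi=4(d+2)|x|^2$ and $\nabla\varphi=4|x|^2x$, so the identity reads $\int|x|^2\,x\cdot\nabla U\,d\mu_1=(d+2)\int|x|^2\,d\mu_1$. The same splitting — $|x|^2x\cdot\nabla U\geq (m/2)|x|^4$ on $B_{R'}^c$ and $|x|^2x\cdot\nabla U\geq -L(R')^2|x|^2$ on $B_{R'}$ — together with $\int_{B_{R'}}|x|^4\,d\mu_1\leq (R')^2\int|x|^2\,d\mu_1$ gives $\int|x|^4\,d\mu_1\leq\big((R')^2+(2d+4+2L(R')^2)/m\big)\int|x|^2\,d\mu_1$, which by $m\leq L$ is at most $\big((3L(R')^2+2d+4)/m\big)\int|x|^2\,d\mu_1$. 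Inserting the $\beta=2$ bound and noting $3L(R')^2+2d\leq 3L(R')^2+2d+4$ yields $\int|x|^4\,d\mu_1\leq\big((3L(R')^2+2d+4)/m\big)^2$, matching the stated bound with $2\beta-4=4$.

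The only genuinely non-routine point — and hence the main obstacle — is the justification of the integration-by-parts identity, i.e. the a priori finiteness of the moments; this is precisely where the convexity-at-infinity hypothesis is used, to produce the quadratic lower bound on $U$ and thus the Gaussian decay of $e^{-U}$. Once that is in hand, the remainder is the elementary two-region split above, and the only bookkeeping is to verify that the constants collapse to $3L(R')^2+2d+2\beta-4$ in the two cases $\beta\in\{2,4\}$.
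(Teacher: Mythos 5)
Your proof is correct and uses essentially the same idea as the paper: apply the stationarity of $\mu_1$ for the overdamped generator to the test functions $|x|^2$ and $|x|^4$, and convert $x\cdot\nabla U$ into a two-region estimate via the convexity-at-infinity bound from Remark~\ref{Rcom}. The only cosmetic difference is in how the recursion is closed: the paper treats general $\zeta=\beta/2$ at once and applies Jensen's inequality $\int|x|^{2\zeta-2}\,d\mu_1\leq(\int|x|^{2\zeta}\,d\mu_1)^{(\zeta-1)/\zeta}$ to obtain a self-referential bound, whereas you bootstrap by inserting the $\beta=2$ estimate into the $\beta=4$ case; both yield the stated constant $3L(R')^2+2d+2\beta-4$. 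Your explicit discussion of the a priori finiteness of moments (Gaussian tails of $e^{-U}$ via the quadratic lower bound on $U$) is in fact more careful than the paper's terse proof, which leaves the integration-by-parts admissibility implicit.
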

\begin{proof}%[Proof of Lemma~\ref{eal}]
Let~$\mathcal{L}$ denote the differential operator~$-\nabla U\cdot\nabla\!_x + \Delta_x$. For any~$\bar{x}\in\mathbb{R}^d$, inequality~\eqref{A1b} with~$x=0$,~$y=\bar{x}$ and~$u=\bar{x}$ implies for any~$\zeta\geq 1$ that
\begin{align*}
\frac{1}{2}\mathcal{L}\abs{\bar{x}}^{2\zeta} &= -2\zeta\abs{\bar{x}}^{2(\zeta-1)}\nabla U(\bar{x})\cdot \bar{x} + \abs{\bar{x}}^{2(\zeta-1)}(2\zeta d + 4\zeta(\zeta-1))\\
&= - 2\zeta\abs{\bar{x}}^{2(\zeta-1)}\int_0^1 \bar{x}^{\top} \Delta U(\lambda \bar{x}) \bar{x} d\lambda + \abs{\bar{x}}^{2(\zeta-1)}(2\zeta d + 4\zeta(\zeta-1))\\
&\leq 2\zeta\abs{\bar{x}}^{2(\zeta-1)}\bigg(-\frac{m}{2}\abs{\bar{x}}^2 + \frac{3}{2}L(R')^2\bigg) + \abs{\bar{x}}^{2(\zeta-1)}(2\zeta d + 4\zeta(\zeta-1)).
\end{align*}
Integrating against~$\mu_1$ and using Jensen's inequality  %~$e^{-U}/\int_{\mathbb{R}^d}e^{-U}$ 
yields
\begin{equation*}
\int_{\mathbb{R}^d}\abs{\bar{x}}^{2\zeta}\mu_1(d\bar{x}) %&\leq \frac{2}{m}\bigg(\frac{3}{2}L(R')^2 + 2d + 4(\zeta-1)\bigg)\int_{\mathbb{R}^d} \abs{\bar{x}}^{2(\zeta-1)} \mu_1(d\bar{x})\\
\leq \frac{2}{m}\bigg(\frac{3}{2}L(R')^2 + d + 2(\zeta-1)\bigg)\bigg(\int_{\mathbb{R}^d} \abs{\bar{x}}^{2\zeta} \mu_1(d\bar{x})\bigg)^{\frac{\zeta-1}{\zeta}},
\end{equation*}
which implies the assertion.
\iffalse % more explicit
\begin{equation*}
0\leq -\frac{m}{2\int_{\mathbb{R}^d}e^{-U}}\int_{\mathbb{R}^d}\abs{\bar{x}}^2 e^{-U(\bar{x})} d\bar{x} + \frac{3}{2}L(R')^2 + d,
\end{equation*}
which concludes by Jensen's inequality.
\fi
\end{proof}

\begin{proof}[Proof of Corollary~\ref{invd}]
The proof follows along the same lines as the proof of Proposition~16 in~\cite{gouraud2023hmc}, except we deal here with the twisted Euclidean metric~$\hat{d}$ given by~\eqref{tmet} and we consider our velocity Verlet integrator to possibly be replaced by a randomized midpoint version. We first consider the case where the velocity Verlet integrator~\eqref{vvi} is used, then at the end of the proof, we point out where the argument should be altered for the randomized midpoint version. 
%with exceptions stated in the following using notation from the reference; note in particular that 
We assume~$L=1$ w.l.o.g., then parameters are rescaled at the end so that~$L$ reappears in the assertion. 
\iffalse
Let~$[0,T]\times\mathbb{R}^d\times\mathbb{R}^d\ni(t,x,v)\mapsto\varphi_t(x,v)$ denote the exact Hamiltonian flow as given at the beginning of the proof of Proposition~16 in~\cite{gouraud2023hmc}. 
By Lemma~29 in~\cite{MR4309974}, it holds that~$\abs{(\bar{q}_h(x,v,\theta),\bar{p}_h(x,v,\theta)) - \varphi_h(z)}\leq \sqrt{2}\delta^2(\abs{x}+\abs{v})$ for all~$z=(x,v)\in\mathbb{R}^{2d}$. If additionally~$\nabla^2U$ is Lipschitz, then moreover~$\abs{(\bar{q}_h(x,v,\theta),\bar{p}_h(x,v,\theta)) - \varphi_h(z)}\leq 2\delta^3(1+L_2)(1+\abs{z}^2)$ holds.
For the Markov transitions~$Q_h,P_h$ given by~$Q_hf(z) = f(\varphi_h(z))$ and~$P_h f(z) = f(\bar{q}_h(x,v,\theta),\bar{p}_h(x,v,\theta))$ for all~$z=(x,v)$ and bounded measurable functions~$f$, it holds that
\begin{equation}\label{wod}
\mathcal{W}_1(\mu P_h,\mu ) = \mathcal{W}_1(\mu P_h,\mu Q_h) \leq \int_{\mathbb{R}^{2d}}\abs{(\bar{q}_h(x,v,\theta),\bar{p}_h(x,v,\theta)) - \varphi_h(z)}_{\hat{d}}\ d\mu(z).
\end{equation}
\fi
For~$t\in[0,T]$, let~$P_t,P_R$ be the transition operators given by~$P_t f(x,v) = f(\bar{q}_t(x,v,\bar{\theta}),\bar{p}_t(x,v,\bar{\theta}))$ and~$P_R f(x,v) = \mathbb{E}f(x,\eta v + \sqrt{1-\eta^2}G)$
for all~$x,v\in\mathbb{R}^d$ and bounded measurable~$f$, but with~$b(\cdot,\theta)$ replaced by~$\nabla U$ for any~$\theta\in\Theta$ in the definitions of~$\bar{q}_{\cdot},\bar{p}_{\cdot}$.
\iffalse % P_R as well
Let~$P_R,P_h$ be the transition operators given by
\begin{equation*}
P_Rf(x,v) = \mathbb{E}f(x,\eta v + \sqrt{1-\eta^2}G),\qquad P_h f(x,y) = f(\bar{q}_h(x,v,\theta),\bar{p}_h(x,v,\theta))
\end{equation*}
for all~$x,v\in\mathbb{R}^d$, bounded measurable~$f$ and~$G\sim\mathcal{N}(0,I_d)$. 
\fi
By Lemma~\ref{eal} and Jensen's inequality, it holds that
\begin{align*}
\int_{\mathbb{R}^{2d}} (\abs{x}^2 + \abs{v}^2) d\mu(x,v) &\leq \frac{3(R')^2+2d}{m} + d,\\
\int_{\mathbb{R}^{2d}} (\abs{x}^2 + \abs{v}^2)^2 d\mu(x,v) &\leq \bigg(\frac{3(R')^2+2d + 4}{m} \bigg)^2 + 2d\bigg(\frac{3(R')^2+2d + 4}{m}\bigg)\\
&\quad+ 2d + d^2,
\end{align*}
\iffalse % twisted versions
\begin{align*}
\int_{\mathbb{R}^{2d}} (\abs{x+\gamma^{-1}v} + \alpha\abs{x})d\mu(x,v) &\leq (1+\alpha)\bigg(\frac{(3L(R')^2+2d)}{m} \bigg)^{\frac{1}{2}} + \gamma^{-1}d^{\frac{1}{2}},\\
\int_{\mathbb{R}^{2d}} (\abs{x+\gamma^{-1}v} + \alpha\abs{x})^2d\mu(x,v) &\leq 2(1+\alpha)^2\cdot\frac{3L(R')^2+2d}{m} + 2\gamma^{-2}d,
\end{align*}
\fi
which imply for~$u=0$ (velocity Verlet), in the same way as for~(39) 
and~(40) both 
in~\cite{gouraud2023hmc}, that
\begin{align}
\mathcal{W}_{2,e}(\mu P_h,\mu) &\leq 2h^2\bigg(\int_{\mathbb{R}^{2d}}(\abs{x}^2 + \abs{v}^2) d\mu(x,v)\bigg)^{\frac{1}{2}}\nonumber\\
&\leq 2h^2 \bigg(\frac{3(R')^2+2d}{m}  + d\bigg)^{\frac{1}{2}}\nonumber\\
&\leq 2\sqrt{6}h^2\max\bigg(d,\frac{d}{m},\frac{(R')^2}{m}\bigg)^{\frac{1}{2}}\label{wtu}
\end{align}
%\iffalse % if there is an additional L_2 hessian assumption
and if additionally~$\nabla^2 U$ is~$L_2$-Lipschitz, that
\begin{align}
&\mathcal{W}_{2,e}(\mu P_h,\mu)\nonumber\\
&\quad\leq  2h^3(1+L_2) \bigg(\int_{\mathbb{R}^{2d}} (1 + 2\abs{x}^2 + 2\abs{v}^2 + (\abs{x}^2 + \abs{v}^2)^2) d\mu(x,v)\bigg)^{\frac{1}{2}}\nonumber\\
&\quad\leq 2h^3(1+L_2)\bigg(1 + 4d + \frac{6(R')^2+4d}{m} + \bigg(d+ \frac{3(R')^2+2d + 4}{m}\bigg)^2 \bigg)^{\frac{1}{2}}\nonumber\\
&\quad\leq 22h^3(1+L_2)\max\bigg(d,\frac{d}{m},\frac{(R')^2}{m}\bigg).\label{wtu2}
\end{align}
%\fi
Following the same bounds as in the corresponding inequality in the proof of Proposition~16 in~\cite{gouraud2023hmc}, it holds that
\begin{equation}\label{qws}
\mathcal{W}_{2,e}(\pi_n(\mu),\mu) \leq \frac{(1+2h)^{nK}}{2h}\mathcal{W}_{2,e}(\mu P_h ,\mu).
%&= \mathcal{W}_{2,e}(\pi_{n-1}(\mu)P_R P_h^K,\mu) 
%&\leq \mathcal{W}_{2,e}(\pi_{n-1}(\mu)P_R P_h^K,\mu P_h) + \mathcal{W}_{2,e}(\mu P_h,\mu).\label{rep}
\end{equation}
If instead~$u=1$ (randomized midpoint), then instead of~\eqref{wtu} and~\eqref{wtu2}, we may apply Lemma~7 in~\cite{bourabee2022unadjusted} for the same arguments as above. Note that the referenced lemma statement concerns only the position variables, but the proofs bound both the position and velocity variables at the same time, as required here. Consequently, without assuming that~$\nabla^2 U$ is Lipschitz and considering instead the full Hamiltonian trajectory, it holds for~$u=1$ that
\begin{equation}\label{wtu3}
\mathcal{W}_{2,e}(\mu P_T,\mu)\leq 71\sqrt{6}L^{1/4}h^{3/2}\max\bigg(d,\frac{d}{m},\frac{(R')^2}{m}\bigg).
\end{equation}
Moreover, using the estimates in the proof of Lemma~\ref{basic} to control distances after steps of the randomized midpoint integrator, it holds for~$u=1$  that
\begin{align*}
\mathcal{W}_{2,e}(\pi_n(\mu),\mu) &= \mathcal{W}_{2,e}(\pi_{n-1}(\mu)P_RP_T,\mu)\\
&\leq \mathcal{W}_{2,e}(\pi_{n-1}(\mu)P_RP_T,\mu P_T) + \mathcal{W}_{2,e}(\mu P_T,\mu)\\
&\leq (1+2T)\mathcal{W}_{2,e}(\pi_{n-1}(\mu)P_R,\mu ) + \mathcal{W}_{2,e}(\mu P_T,\mu)\\
&= (1+2T)\mathcal{W}_{2,e}(\pi_{n-1}(\mu),\mu ) + \mathcal{W}_{2,e}(\mu P_T,\mu).
\end{align*}
Repeating~$n-1$ more times and applying~\eqref{wtu3} yields
\begin{align}
\mathcal{W}_{2,e}(\pi_n(\mu),\mu) &\leq \sum_{k=0}^{n-1}(1+2T)^k \mathcal{W}_{2,e}(\mu P_T,\mu)\nonumber\\
&\leq \frac{(1+2T)^{n-1}}{T}36\sqrt{6}L^{1/4} h^{3/2}\max\bigg(\frac{d}{m},\frac{(R')^2}{m}\bigg).\label{fhw}
\end{align}
In both cases~\eqref{qws} and~\eqref{fhw}, by Corollary~\ref{maincore} and setting
\begin{equation}\label{itu}
n=\left\lceil\frac{\frac{1}{2}\log (2\log(2)\epsilon^*) - \frac{1}{2}\log g - \log(2)}{\log(1-c)}\right\rceil,
\end{equation}
where~$\ceil{k}=\min\{l\in\mathbb{N}:k\leq l\}$ here, it holds that
\begin{align*}
\mathcal{W}_1(\mu,\tilde{\mu}) &\leq \mathcal{W}_1(\pi_n(\mu),\tilde{\mu}) + \mathcal{W}_1(\pi_n(\mu),\mu) \\
&\leq \frac{1}{2}\mathcal{W}_1(\mu,\tilde{\mu}) + \sqrt{2}\max(1+1.09\alpha,\gamma^{-1})\mathcal{W}_{2,e}(\pi_n(\mu),\mu),
\end{align*}
%so that, by~\eqref{qws} and
which, by~$4LT^2\leq(1-\eta)^2$, implies
\begin{equation}\label{we1}
\mathcal{W}_1(\mu,\tilde{\mu}) \leq 
%2e^{2hnK}h^{-1}\mathcal{W}_{2,e}(\mu P_h,\mu).
4\mathcal{W}_{2,e}(\pi_n(\mu),\mu).
\end{equation}
In case~$u=0$, by~\eqref{qws},~$\log(1-c)\leq-c$ and~\eqref{itu}, inequality~\eqref{we1} implies
\begin{align*}
\mathcal{W}_1(\mu,\tilde{\mu})&\leq 2e^{2nT}h^{-1}\mathcal{W}_{2,e}(\mu P_h,\mu)\\
&\leq 2\exp\bigg(2T\bigg(1-\frac{1}{2c}\log\bigg(\frac{\log(2)\epsilon^*}{2g}\bigg)\bigg)\bigg)h^{-1}\mathcal{W}_{2,e}(\mu P_h,\mu)\\
&= 2e^{2T}(2g/(\log(2)\epsilon^*))^{T/c}h^{-1}\mathcal{W}_{2,e}(\mu P_h,\mu)
\end{align*}
which shows the claim by~\eqref{carot} and either~\eqref{wtu} or~\eqref{wtu2}. 
In case~$u=1$, inserting~\eqref{itu} into~\eqref{fhw} then~\eqref{we1} and using again~$\log(1-c)\leq -c$ completes the proof.
\end{proof}

\begin{proof}[Proof of Proposition~\ref{invd2}]
%Throughout the proof, denote~$h_2=h/2$. 
Throughout the proof, assume~$L=1$ w.l.o.g. We only deal with the~$u=0$ case. The~$u=1$ case is similar and therefore omitted. 
Let~$x,v\sim\tilde{\mu}$ be 
%be~$\mathbb{R}^{d}$-valued r.v.'s 
independent of~$(G_i)_i,(\hat{G}_i)_i,(\bar{\theta}_i)_i,((u_{kh}^{(i)})_k)_i$ and 
for any~$i\in\mathbb{N}$,~$j\in\mathbb{N}\cap[0,T/h)$, 
let~$\theta_{i,jh},\theta_{i,jh}'\in\Theta$ 
satisfy~$\hat{\theta}_i = (\theta_{i,jh})_j$ and~$\hat{\theta}_i' = (\theta_{i,jh}')_j$. 
%~$G\sim\mathcal{N}(0,I_d)$ independent of~$x,y$ and~$\bar{\theta} = (\hat{\theta},\hat{\theta}')$ with~$\hat{\theta} = (\theta_{ih})_{i\in\mathbb{N}\cap[0,T/h)}, \hat{\theta}' = (\theta_{ih}')_{i\in\mathbb{N}\cap[0,T/h)}$ and~$\theta_{ih},\theta_{ih}'$ are~$\Theta$-valued r.v.'s independent of~$x,y,G$ for all~$i$. 
Moreover, let~$v'$ be given by~\eqref{ous},~$x_0=y_0=x$,~$v_0=w_0=v'$ and for any~$j\in\mathbb{N}\cap[0,T/h)$, let
\begin{align*}
v_{j+\frac{1}{2}} &= v_j - (h/2)b(x_j,\theta_{0,jh}), & w_{j+\frac{1}{2}} &= w_j - (h/2)\nabla U(y_j),\\
x_{j+1} &= x_j + hv_{j+\frac{1}{2}}, & y_{j+1} &= y_j + hw_{j+\frac{1}{2}},\\
v_{j+1} &= v_{j+\frac{1}{2}} - (h/2)b(x_{j+1},\theta_{0,jh}'), & 
w_{j+1} &= w_{j+\frac{1}{2}} - (h/2)\nabla U(y_{j+1}).
\end{align*}
By the tower property, for any~$j\in\mathbb{N}\cap[0,T/h)$ it holds that
\begin{align}
\mathbb{E}\abs{v_{j+\frac{1}{2}}-w_{j+\frac{1}{2}}}^2 &= \mathbb{E}[\abs{v_j-w_j - (h/2)(\nabla U(x_j)-\nabla U(y_j))}^2] \nonumber\\
&\quad+ (h/2)^2\mathbb{E}[\abs{\nabla U(x_j)-b(x_j,\theta_{0,jh})}^2].\label{kwb0}
\end{align}
The first term on the right-hand side of~\eqref{kwb0} can be bounded as
\begin{align*}
&\mathbb{E}[\abs{v_j-w_j - (h/2)(\nabla U(x_j)-\nabla U(y_j))}^2]\\
&\quad\leq (1+h/2)\mathbb{E}[\abs{v_j-w_j}^2] + (h/2)(1+h/2)\mathbb{E}[\abs{\nabla U(x_j)-\nabla U(y_j)}^2] \\
&\quad\leq (1+h/2)\mathbb{E}[\abs{v_j-w_j}^2] + (h/2)(1+h/2)\mathbb{E}[\abs{x_j-y_j}^2] 
\end{align*}
and the second term on the right-hand side of~\eqref{kwb0} can be bounded, by the tower property, the unbiasedness assumption and independence between~$\hat{\theta}_0$ and~$x,v,G_0$, as
\begin{align*}
&(h/2)^2\mathbb{E}[\abs{\nabla U(x_j)-b(x_j,\theta_{0,jh})}^2]\\
&\quad= \bigg(\frac{h}{2}\bigg)^2 \mathbb{E}\bigg[\bigg|\frac{1}{p}\sum_{i=1}^p (\nabla U(x_j)-\nabla U_{\vartheta_i(\theta_{0,jh})}(x_j))\bigg|^2\bigg]\\
&\quad= \frac{h^2}{4p^2}\sum_{i=1}^p\mathbb{E}[| \nabla U(x_j)-\nabla U_{\vartheta_i(\theta_{0,jh})}(x_j))|^2]\\
&\quad\leq C_U h^2/4p.
\end{align*}
%which, by (the proof of) Lemma~27 in~\cite{gouraud2023hmc}, the tower property, the bound~$\abs{\cdot}\leq \abs{\cdot}_1$ between the respective Euclidean~$2$- and~$1$-norms and Lipschitz assumption, implies
Therefore,~\eqref{kwb0} implies
\begin{equation}\label{dkw}
\mathbb{E}[\abs{v_{j+\frac{1}{2}}-w_{j+\frac{1}{2}}}^2] \leq (1+h/2)(\mathbb{E}[\abs{v_j-w_j}^2] + (h/2)\mathbb{E}[\abs{x_j-y_j}^2]) + C_Uh^2/(4p).
\end{equation}
Consequently, the step in position has the bound
\begin{align}
\mathbb{E}[\abs{x_{j+1} - y_{j+1}}^2] &\leq (1+h)\mathbb{E}[\abs{x_j - y_j}]^2 + h(1+h)\mathbb{E}[\abs{v_{j+\frac{1}{2}} - w_{j+\frac{1}{2}}}^2]\nonumber\\
&\leq (1+3h/2)\mathbb{E}[\abs{x_j - y_j}^2] + (3h/2)\mathbb{E}[\abs{v_j-w_j}^2] \nonumber\\
&\quad + C_U h^3(1+h)/(4p).\label{dkw2}
%&\quad+ (\sqrt{6}h^2/(2\sqrt{p}))\big(\mathbb{E}\abs{x_j} + \|\nabla U_{\vartheta_1(\theta_{0,jh})}(0)\|_{L^2(\mathbb{P})}\big).
\end{align}
Moreover, by the same steps as for~\eqref{dkw}, then applying~\eqref{dkw},~\eqref{dkw2} and using~\eqref{carot}, it holds for any~$j\in\mathbb{N}\cap[0,T/h)$ that
\begin{align}
&\mathbb{E}[\abs{v_{j+1}-w_{j+1}}^2] \nonumber\\
&\quad\leq (1+h/2)\mathbb{E}[\abs{v_{j+\frac{1}{2}}-w_{j+\frac{1}{2}}}^2] + (h/2)\mathbb{E}[\abs{x_{j+1}-y_{j+1}}^2]) + C_Uh^2/(4p)\nonumber\\
%&\qquad+ (\sqrt{6}h/(2\sqrt{p}))\big(\mathbb{E}\abs{x_{j+1}} + \|\nabla U_{\vartheta_1(\theta_{0,jh}')}(0)\|_{L^2(\mathbb{P})}\big)\nonumber\\
&\quad\leq (1+3h/2) \mathbb{E}[\abs{v_j - w_j}^2] + (3h/2)\mathbb{E}[\abs{x_j - y_j}^2] + 3C_Uh^2/(2p).\label{dkw3}%\nonumber\\
%&\qquad + (\sqrt{6}h/(2\sqrt{p}))\big(\mathbb{E}\abs{x_{j+1}} + \|\nabla U_{\vartheta_1(\theta_{0,jh}')}(0)\|_{L^2(\mathbb{P})}\big)\nonumber\\
%&\qquad + (\sqrt{6}h/(2\sqrt{p}))(1+h^2/2) \big(\mathbb{E}\abs{x_j} + \|\nabla U_{\vartheta_1(\theta_{0,jh}')}(0)\|_{L^2(\mathbb{P})}\big).
\end{align}
%\iffalse % intermediate step if needed
Inequalities~\eqref{dkw2} and~\eqref{dkw3} together with~\eqref{carot} 
%and the assumption on the distributions of~$(\theta_{i,jh}),(\theta_{i,jh}')$ 
imply
\begin{align}
&\mathbb{E}[\abs{x_{j+1} - y_{j+1}}^2 + \abs{v_{j+1}-w_{j+1}}^2] \nonumber\\
&\quad\leq (1+3h)\mathbb{E}[\abs{x_j - y_j}^2 + \abs{v_j-w_j}^2]
+ 2C_Uh^2/p.\label{dkws}
%+ (\sqrt{6}h/(2\sqrt{p}))\big((1+2h)\mathbb{E}\abs{x_j} \nonumber\\
%&\qquad+ \mathbb{E}\abs{x_{j+1}} +  2(1+h)\|\nabla U_{\vartheta_1(\theta_{0,0})}(0)\|_{L^2(\mathbb{P})}\big).\label{dkw4}
\end{align}
%\fi
\iffalse % bounds for |x_j| if needed, these appear if we take the weaker assumption on the stochastic gradient
By the Lipschitz property of~$b$ and~\eqref{carot}, for any~$j\in\mathbb{N}\cap[0,T/h)$ it holds that
\begin{align}
\abs{x_{j+1}} &\leq \abs{x_j} + h\abs{v_{j+\frac{1}{2}}}\nonumber\\
& \leq \abs{x_j} + h(\abs{v_j} + (h/2)(\abs{x_j} + \abs{b(0,\theta_{0,jh})}))\nonumber\\
& \leq (1+h^2)\abs{x_j} + h\abs{v_j}  + (h^2/2)\abs{b(0,\theta_{0,jh})}\label{kwp1}
\end{align}
and similarly, by~\eqref{kwp1} and again~$\abs{b(x_j,\theta_{0,jh})}\leq \abs{x_j} + \abs{b(0,\theta_{0,jh})}$,
\begin{align}
\abs{v_{j+1}} &\leq \abs{v_{j+\frac{1}{2}}} + (h/2)\abs{b(x_{j+1},\theta_{0,jh}')}\nonumber\\
& \leq \abs{v_j} + (h/2)\abs{x_j}+(h/2)\abs{b(x_j,\theta_{0,jh})} + (h/2)(\abs{x_{j+1}}+ \abs{b(0,\theta_{0,jh}')})\nonumber\\
& \leq (7h/4)\abs{x_j} + (1+h^2/2)\abs{v_j} + (h/2) \abs{b(0,\theta_{0,jh})}\nonumber\\
&\quad+ (h/2)((h^2/2)\abs{b(0,\theta_{0,jh})}+ \abs{b(0,\theta_{0,jh}')}).\label{kwp2}
\end{align}
Applying~$K$ times the computations in~\eqref{kwp1} and~\eqref{kwp2} and using the assumption on~$(\theta_{i,jh}),(\theta_{i,jh}')$ yields for~$j\in\mathbb{N}\cap[0,T/h]$ that
\begin{align}
\mathbb{E}\abs{x_j} &\leq \mathbb{E}[\abs{x_j}+\abs{v_j}]\nonumber\\
&\leq (1+2h)^j\mathbb{E}[\abs{x}+\abs{v'}] + h\sum_{k=1}^j (1+2h)^{k-1} \mathbb{E}\abs{\nabla U_{\vartheta_1(\theta_{0,0})}(0)}\nonumber\\
&\leq (1+2h)^j(\mathbb{E}[\abs{x}+\abs{v}] + \sqrt{(1-\eta^2)d} + \mathbb{E}\abs{\nabla U_{\vartheta_1(\theta_{0,0})}(0)}).\label{dkw5}
\end{align}
\fi
In addition, observe that by a synchronous coupling, the transition associated with the OU step~$v\mapsto v'$, denoted~$P_R$, reduces~$\mathcal{W}_{2,e}$ distance, in the sense that~$\mathcal{W}_{2,e}(\bar{\nu} P_R,\bar{\nu}' P_R)\leq \mathcal{W}_{2,e}(\bar{\nu},\bar{\nu}')$ for all probability measures~$\bar{\nu},\bar{\nu}'$. 
Therefore, using that~$\tilde{\mu}$ is invariant for the gHMC chain with nonstochastic gradient 
%combining inequalities~\eqref{dkw4},~\eqref{dkw5} with~\eqref{carot} 
and repeating the computations leading to~\eqref{dkws}~$n$ times yields %for~$j\in\mathbb{N}\cap[0,T/h)$ that
\begin{equation*}
\mathcal{W}_{2,e}(\pi_n(\tilde{\mu}),\tilde{\mu})^2
%&\leq \frac{3h^2}{4p}\sum_{j=0}^{nK-1}(1+3h)^j \big(2(1+3h)\mathbb{E}[\abs{x_j}^2]\nonumber\\
%&\quad+ 2\mathbb{E}[\abs{x_{j+1}}^2] + (1+2h) \mathbb{E}[\abs{\nabla U_{\vartheta_1(\theta_{0,jh})}(0)}^2]\big)\nonumber\\
\leq \frac{2C_Uh^2}{p}\sum_{j=0}^{nK-1} (1+3h)^j  
\leq \frac{2C_Uh(1+3h)^{nK}}{3p}.
\end{equation*}
\iffalse
\begin{align}
&\mathcal{W}_{1,e}(\pi_n(\tilde{\mu}),\tilde{\mu})\nonumber\\
%&\leq \frac{3h^2}{4p}\sum_{j=0}^{nK-1}(1+3h)^j \big(2(1+3h)\mathbb{E}[\abs{x_j}^2]\nonumber\\
%&\quad+ 2\mathbb{E}[\abs{x_{j+1}}^2] + (1+2h) \mathbb{E}[\abs{\nabla U_{\vartheta_1(\theta_{0,jh})}(0)}^2]\big)\nonumber\\
&\quad\leq \frac{\sqrt{6}h}{\sqrt{p}}\sum_{j=0}^{nK-1} (1+2h)^{j+1} \big((1+2h)^j(\mathbb{E}[\abs{x}+\abs{v}] +\sqrt{(1-\eta^2)d} \nonumber\\
&\qquad + \|\nabla U_{\vartheta_1(\theta_{0,0})}(0)\|_{L^1(\mathbb{P})}) + \| \nabla U_{\vartheta_1(\theta_{0,0})}(0) \|_{L^2(\mathbb{P})} \big)\nonumber\\
&\quad\leq \frac{\sqrt{6}h}{\sqrt{p}}\bigg(\frac{(1+2h)^{2nK}}{2h} 
\big(\mathbb{E}[\abs{x}+\abs{v}]+\sqrt{(1-\eta^2)d} +\|\nabla U_{\vartheta_1(\theta_{0,0})}(0)\|_{L^1(\mathbb{P})} \big)\nonumber\\
&\qquad + \frac{(1+2h)^{nK+1}}{2h} \|\nabla U_{\vartheta_1(\theta_{0,0})}(0)\|_{L^2(\mathbb{P})}\bigg).
\end{align}
By Lemma~\ref{eal}, it holds that
\begin{align}
\mathbb{E}[\abs{x}+\abs{v}] &\leq \int_{\mathbb{R}^d} (\abs{\bar{x}}+\abs{\bar{v}}) \mu(d\bar{x}d\bar{v}) + \mathcal{W}_{2,e}(\tilde{\mu},\mu)^2\nonumber\\
&\leq \frac{10}{m}\max((R')^2,d) 
\end{align}
\fi
\end{proof}

\begin{proof}[Proof of Corollary~\ref{bicor_old}]
Let~$(y,w)\sim\tilde{\mu}$ be independent of~$(G_i)_i,(\hat{G}_i)_i,(\bar{\theta}_i)_i,((u_{kh}^{(i)})_k)_i$. 
By Kantorovich-Rubenstein duality, 
%for any~$N_0,N,j\in\mathbb{N}$ and~$f:\mathbb{R}^d\rightarrow\mathbb{R}$ such that~$r_j<1$ for~$r_j$ given by~\eqref{rjdef} and~$\|f\|_{\textrm{Lip}(\hat{d})}<\infty$, 
it holds that
\begin{align}
&\bigg|\mathbb{E}\bigg[\frac{1}{N}\sum_{i=0}^{N-1}\bigg(f(X^{(N_0+i)}) - \int fd\mu\bigg)\bigg]\bigg|\nonumber\\
&\quad\leq %\bigg|\mathbb{E}\bigg[\frac{1}{N}\sum_{i=0}^{N-1}\bigg(f(X^{(N_0+i)}) - f(Y^{(N_0+i)}) \bigg)\bigg]\bigg| \nonumber\\
\frac{1}{N}\sum_{i=0}^{N-1} \|f\|_{\textrm{Lip}(\hat{d})} \mathcal{W}_1(\pi_{N_0+i}(\delta_{(x,v)}),\pi_{N_0+i}(\tilde{\mu})) \nonumber\\
&\qquad+ \frac{1}{N}\sum_{i=N_0}^{N_0+N-1}\|f\|_{\textrm{Lip}(\hat{d})}\mathcal{W}_1(\pi_i(\tilde{\mu}),\tilde{\mu}) + \|f\|_{\textrm{Lip}(\hat{d})}\mathcal{W}_1(\tilde{\mu},\mu).\label{skw}
\end{align}
By taking the infinimum of all couplings between~$X^{(i)}$ and~$Y^{(i)}$ for each~$i$, the first term on the right-hand side of~\eqref{skw} may be bounded, by Corollary~\ref{maincore}, as
\begin{align}
&\frac{\|f\|_{\textrm{Lip}(\hat{d})}}{N}\sum_{i=0}^{N-1} \mathcal{W}_1(\pi_{N_0+i}(\delta_{(x,v)}),\pi_{N_0+i}(\tilde{\mu}))\nonumber\\
&\quad\leq \frac{\|f\|_{\textrm{Lip}(\hat{d})}}{N}\sum_{i=0}^{N-1} \bigg(\frac{g}{2\log(2)\epsilon^*}\bigg)^{\frac{1}{2}}(1-c)^{N_0+i}\mathcal{W}_1(\delta_{(x,v)},\tilde{\mu})\nonumber\\
&\quad\leq \frac{g^{\frac{1}{2}}\|f\|_{\textrm{Lip}(\hat{d})}(1-c)^{N_0}}{(2\log(2))^{\frac{1}{2}}Nc(\epsilon^*)^{\frac{1}{2}}}[
\mathcal{W}_1(\delta_{(x,v)},\delta_{(0,0)}) + \mathcal{W}_1(\delta_{(0,0)},\mu) 
%\mathcal{W}_1(\delta_{(x,v)},\mu) 
+ \mathcal{W}_1(\mu,\tilde{\mu})],\label{skw2}
\end{align}
where~$\delta_{(x,v)}$ denotes the Dirac delta at~$(x,v)$. 
%The last term on the right-hand side of~\eqref{skw} may be bounded as
For the second term in the square bracket on the right-hand side~\eqref{skw2}, it holds by Lemma~\ref{eal} and~$\alpha=4LT^2/(1-\eta)\leq 1$ that
\begin{equation}\label{skw3}
\mathcal{W}_1(\delta_{(0,0)},\mu) \leq \sqrt{2}(1+1.09\alpha)\mathcal{W}_{2,e}(\delta_{(0,0)},\mu) \leq 2((3L(R')^2 + 2d)/m + d/L)^{\frac{1}{2}}.
\end{equation}
Moreover, for the last sum on the right-hand side of~\eqref{skw}, it holds by Proposition~\ref{invd2} that
\begin{align}
\frac{1}{N}\sum_{i=N_0}^{N_0+N-1} \mathcal{W}_1(\pi_i(\tilde{\mu}),\tilde{\mu}) &\leq 2\bigg(\frac{C_U\sqrt{L} h}{p}\bigg)^{\frac{1}{2}}\cdot\frac{1}{N}\sum_{i=N_0}^{N_0+N-1} e^{\frac{3}{2}\sqrt{L} hK i}\nonumber\\
&\leq 2\bigg(\frac{C_U\sqrt{L} h}{p}\bigg)^{\frac{1}{2}}\cdot\frac{\exp(\frac{3}{2}\sqrt{L} T(N_0+N))}{\frac{3}{2}N\sqrt{L}T}.\label{skw4}
\end{align}
By gathering~\eqref{skw},~\eqref{skw2},~\eqref{skw3} and~\eqref{skw4}, then using Corollary~\ref{invd}, the proof concludes.
\iffalse % another step before end
Therefore~\eqref{skw} and~\eqref{skw2} imply
\begin{align*}
&\bigg|\mathbb{E}\bigg[\frac{1}{N}\sum_{i=0}^{N-1}\bigg(f(X^{(N_0+i)}) - \int fd\mu\bigg)\bigg]\bigg|\nonumber\\
&\quad\leq 
\end{align*}
\fi
\end{proof}

\end{appendix}

%\nocite{*}
\bibliography{document}

\providecommand{\MR}[1]{}
\begin{thebibliography}{10}

\bibitem{debortoli2020convergence}
V.~D. Bortoli and A.~Durmus.
\newblock Convergence of diffusions and their discretizations: from continuous to discrete processes and back, 2020.
\newblock arXiv:1904.09808.

\bibitem{MR4497240}
N.~Bou-Rabee and A.~Eberle.
\newblock Mixing time guarantees for unadjusted {H}amiltonian {M}onte {C}arlo.
\newblock {\em Bernoulli}, 29(1):75--104, 2023.

\bibitem{MR4133372}
N.~Bou-Rabee, A.~Eberle, and R.~Zimmer.
\newblock Coupling and convergence for {H}amiltonian {M}onte {C}arlo.
\newblock {\em Ann. Appl. Probab.}, 30(3):1209--1250, 2020.

\bibitem{bourabee2022unadjusted}
N.~Bou-Rabee and M.~Marsden.
\newblock Unadjusted {H}amiltonian {MCMC} with {S}tratified {M}onte {C}arlo {T}ime {I}ntegration, 2022.
\newblock arXiv:2211.11003v2.

\bibitem{bourabee2023mixing}
N.~Bou-Rabee and S.~Oberdörster.
\newblock Mixing of metropolis-adjusted markov chains via couplings: The high acceptance regime, 2023.
\newblock arXiv:2308.04634.

\bibitem{10.1214/23-EJP970}
N.~Bou-Rabee and K.~Schuh.
\newblock {Convergence of unadjusted Hamiltonian Monte Carlo for mean-field models}.
\newblock {\em Electronic Journal of Probability}, 28(none):1 -- 40, 2023.

\bibitem{MR2583309}
N.~Bou-Rabee and E.~Vanden-Eijnden.
\newblock Pathwise accuracy and ergodicity of metropolized integrators for {SDE}s.
\newblock {\em Comm. Pure Appl. Math.}, 63(5):655--696, 2010.

\bibitem{MR1163370}
L.~Breiman.
\newblock {\em Probability}, volume~7 of {\em Classics in Applied Mathematics}.
\newblock Society for Industrial and Applied Mathematics (SIAM), Philadelphia, PA, 1992.
\newblock Corrected reprint of the 1968 original.

\bibitem{camrud2023second}
E.~Camrud, A.~O. Durmus, P.~Monmarché, and G.~Stoltz.
\newblock Second order quantitative bounds for unadjusted generalized {H}amiltonian {M}onte {C}arlo, 2023.
\newblock arXiv:2306.09513.

\bibitem{Cao_2023}
Y.~Cao, J.~Lu, and L.~Wang.
\newblock On {E}xplicit {$L^2$}-{C}onvergence {R}ate {E}stimate for {U}nderdamped {L}angevin {D}ynamics.
\newblock {\em Arch. Ration. Mech. Anal.}, 247(5):90, 2023.

\bibitem{refId0}
M.~Chak, N.~Kantas, T.~Leli\`evre, and G.~A. Pavliotis.
\newblock Optimal friction matrix for underdamped {L}angevin sampling.
\newblock {\em ESAIM Math. Model. Numer. Anal.}, 57(6):3335--3371, 2023.

\bibitem{pmlr-v32-cheni14}
T.~Chen, E.~Fox, and C.~Guestrin.
\newblock Stochastic gradient hamiltonian monte carlo.
\newblock In E.~P. Xing and T.~Jebara, editors, {\em Proceedings of the 31st International Conference on Machine Learning}, volume~32 of {\em Proceedings of Machine Learning Research}, pages 1683--1691, Bejing, China, 22--24 Jun 2014. PMLR.

\bibitem{10.5555/3455716.3455808}
Y.~Chen, R.~Dwivedi, M.~J. Wainwright, and B.~Yu.
\newblock Fast mixing of metropolized hamiltonian monte carlo: Benefits of multi-step gradients.
\newblock {\em J. Mach. Learn. Res.}, 21(1), jan 2020.

\bibitem{cheng2020sharp}
X.~Cheng, N.~S. Chatterji, Y.~Abbasi-Yadkori, P.~L. Bartlett, and M.~I. Jordan.
\newblock {S}harp convergence rates for {L}angevin dynamics in the nonconvex setting, 2020.
\newblock arXiv: 1805.01648.

\bibitem{pmlr-v75-cheng18a}
X.~Cheng, N.~S. Chatterji, P.~L. Bartlett, and M.~I. Jordan.
\newblock {U}nderdamped {L}angevin {MCMC}: {A} non-asymptotic analysis.
\newblock In S.~Bubeck, V.~Perchet, and P.~Rigollet, editors, {\em Proceedings of the 31st Conference On Learning Theory}, volume~75 of {\em Proceedings of Machine Learning Research}, pages 300--323. PMLR, 06--09 Jul 2018.

\bibitem{water}
J.~Comer, C.~Chipot, and F.~D. González-Nilo.
\newblock Calculating {P}osition-{D}ependent {D}iffusivity in {B}iased {M}olecular {D}ynamics {S}imulations.
\newblock {\em Journal of Chemical Theory and Computation}, 9(2):876--882, 2013.

\bibitem{MR2078555}
H.~Djellout, A.~Guillin, and L.~Wu.
\newblock Transportation cost-information inequalities and applications to random dynamical systems and diffusions.
\newblock {\em Ann. Probab.}, 32(3B):2702--2732, 2004.

\bibitem{durmus2023uniform}
A.~Durmus, A.~Enfroy, Éric Moulines, and G.~Stoltz.
\newblock Uniform minorization condition and convergence bounds for discretizations of kinetic langevin dynamics, 2023.
\newblock arXiv:2107.14542v3.

\bibitem{DurmusGuillinMonmarche}
A.~{Durmus}, A.~{Guillin}, and P.~{Monmarch{\'e}}.
\newblock Geometric ergodicity of the bouncy particle sampler.
\newblock {\em Ann. Appl. Probab.}, 30(5):2069--2098, 10 2020.

\bibitem{durmus2018highdimensional}
A.~Durmus and E.~Moulines.
\newblock High-dimensional {B}ayesian inference via the unadjusted {L}angevin algorithm.
\newblock {\em Bernoulli}, 25(4A):2854--2882, 2019.

\bibitem{DurmusMoulinesSaksman}
A.~Durmus, {\'E}.~Moulines, and E.~Saksman.
\newblock {Irreducibility and geometric ergodicity of Hamiltonian Monte Carlo}.
\newblock {\em The Annals of Statistics}, 48(6):3545 -- 3564, 2020.

\bibitem{durmus2023asymptotic}
A.~O. Durmus and A.~Eberle.
\newblock Asymptotic bias of inexact markov chain monte carlo methods in high dimension, 2023.
\newblock arXiv:2108.00682.

\bibitem{MR3568041}
A.~Eberle.
\newblock Reflection couplings and contraction rates for diffusions.
\newblock {\em Probab. Theory Related Fields}, 166(3-4):851--886, 2016.

\bibitem{MR3980913}
A.~Eberle, A.~Guillin, and R.~Zimmer.
\newblock Couplings and quantitative contraction rates for {L}angevin dynamics.
\newblock {\em Ann. Probab.}, 47(4):1982--2010, 2019.

\bibitem{MR3939573}
A.~Eberle, A.~Guillin, and R.~Zimmer.
\newblock Quantitative {H}arris-type theorems for diffusions and {M}c{K}ean-{V}lasov processes.
\newblock {\em Trans. Amer. Math. Soc.}, 371(10):7135--7173, 2019.

\bibitem{MR3933205}
A.~Eberle and M.~B. Majka.
\newblock Quantitative contraction rates for {M}arkov chains on general state spaces.
\newblock {\em Electron. J. Probab.}, 24:Paper No. 26, 36, 2019.

\bibitem{ge2020simulated}
R.~Ge, H.~Lee, and A.~Risteski.
\newblock Simulated {T}empering {L}angevin monte carlo {II}: {A}n {I}mproved {P}roof using {S}oft {M}arkov {C}hain {D}ecomposition, 2020.
\newblock arXiv:1812.00793.

\bibitem{gouraud2023hmc}
N.~Gouraud, P.~L. Bris, A.~Majka, and P.~Monmarché.
\newblock {HMC} and underdamped {L}angevin united in the unadjusted convex smooth case, 2023.
\newblock arXiv:2202.00977v4.

\bibitem{MR3949304}
J.~Heng and P.~E. Jacob.
\newblock Unbiased {H}amiltonian {M}onte {C}arlo with couplings.
\newblock {\em Biometrika}, 106(2):287--302, 2019.

\bibitem{henin23}
J.~Hénin, T.~Lelièvre, M.~R. Shirts, O.~Valsson, and L.~Delemotte.
\newblock Enhanced sampling methods for molecular dynamics simulations [article v1.0].
\newblock {\em Living Journal of Computational Molecular Science}, 4(1):1583, Dec. 2022.

\bibitem{Iacobucci}
A.~Iacobucci, S.~Olla, and G.~Stoltz.
\newblock Convergence rates for nonequilibrium {Langevin} dynamics.
\newblock {\em Ann. Math. Qu{\'e}.}, 43(1):73--98, 2019.

\bibitem{MR4226142}
O.~Kallenberg.
\newblock {\em Foundations of modern probability}, volume~99 of {\em Probability Theory and Stochastic Modelling}.
\newblock Springer, Cham, third edition, [2021] \copyright 2021.

\bibitem{MR1849347}
M.~Ledoux.
\newblock {\em The concentration of measure phenomenon}, volume~89 of {\em Mathematical Surveys and Monographs}.
\newblock American Mathematical Society, Providence, RI, 2001.

\bibitem{MR3040887}
B.~Leimkuhler and C.~Matthews.
\newblock Rational construction of stochastic numerical methods for molecular sampling.
\newblock {\em Appl. Math. Res. Express. AMRX}, (1):34--56, 2013.

\bibitem{MR3362507}
B.~Leimkuhler and C.~Matthews.
\newblock {\em Molecular dynamics}, volume~39 of {\em Interdisciplinary Applied Mathematics}.
\newblock Springer, Cham, 2015.
\newblock With deterministic and stochastic numerical methods.

\bibitem{MR3463433}
B.~Leimkuhler, C.~Matthews, and G.~Stoltz.
\newblock The computation of averages from equilibrium and nonequilibrium {L}angevin molecular dynamics.
\newblock {\em IMA J. Numer. Anal.}, 36(1):13--79, 2016.

\bibitem{leimkuhler2023contraction}
B.~Leimkuhler, D.~Paulin, and P.~A. Whalley.
\newblock Contraction rate estimates of stochastic gradient kinetic langevin integrators, 2023.
\newblock arXiv:2306.08592, to appear in ESAIM Math. Model. Numer. Anal.

\bibitem{lc23}
B.~J. Leimkuhler, D.~Paulin, and P.~A. Whalley.
\newblock Contraction and convergence rates for discretized kinetic {L}angevin dynamics.
\newblock {\em SIAM J. Numer. Anal.}, 62(3):1226--1258, 2024.

\bibitem{MR2681239}
T.~Leli\`evre, M.~Rousset, and G.~Stoltz.
\newblock {\em Free energy computations}.
\newblock Imperial College Press, London, 2010.
\newblock A mathematical perspective.

\bibitem{MR0841588}
T.~Lindvall and L.~C.~G. Rogers.
\newblock Coupling of multidimensional diffusions by reflection.
\newblock {\em Ann. Probab.}, 14(3):860--872, 1986.

\bibitem{Livingstone}
S.~Livingstone, M.~Betancourt, S.~Byrne, and M.~Girolami.
\newblock {On the geometric ergodicity of Hamiltonian Monte Carlo}.
\newblock {\em Bernoulli}, 25(4A):3109 -- 3138, 2019.

\bibitem{MR4278799}
Y.-A. Ma, N.~S. Chatterji, X.~Cheng, N.~Flammarion, P.~L. Bartlett, and M.~I. Jordan.
\newblock Is there an analog of {N}esterov acceleration for gradient-based {MCMC}?
\newblock {\em Bernoulli}, 27(3):1942--1992, 2021.

\bibitem{MR4025861}
Y.-A. Ma, Y.~Chen, C.~Jin, N.~Flammarion, and M.~I. Jordan.
\newblock Sampling can be faster than optimization.
\newblock {\em Proc. Natl. Acad. Sci. USA}, 116(42):20881--20885, 2019.

\bibitem{MR4132634}
M.~B. Majka, A.~Mijatovi\'{c}, and L.~Szpruch.
\newblock Nonasymptotic bounds for sampling algorithms without log-concavity.
\newblock {\em Ann. Appl. Probab.}, 30(4):1534--1581, 2020.

\bibitem{mcquarrie76a}
D.~A. McQuarrie.
\newblock {\em Statistical Mechanics}.
\newblock Harper's chemistry series. Harper Collins, 1976.

\bibitem{MR4309974}
P.~Monmarch\'{e}.
\newblock High-dimensional {MCMC} with a standard splitting scheme for the underdamped {L}angevin diffusion.
\newblock {\em Electron. J. Stat.}, 15(2):4117--4166, 2021.

\bibitem{monmarche2021sure}
P.~Monmarch\'{e}.
\newblock Almost sure contraction for diffusions on {$\mathbb{R}^d$}. {A}pplication to generalized {L}angevin diffusions.
\newblock {\em Stochastic Process. Appl.}, 161:316--349, 2023.

\bibitem{monmarche2023entropic}
P.~Monmarch\'{e}.
\newblock An entropic approach for {H}amiltonian {M}onte {C}arlo: {T}he idealized case.
\newblock {\em Ann. Appl. Probab.}, 34(2):2243--2293, 2024.

\bibitem{Ramil}
P.~Monmarch{\'e} and M.~Ramil.
\newblock Overdamped limit at stationarity for non-equilibrium {Langevin} diffusions.
\newblock {\em Electron. Commun. Probab.}, 27:8, 2022.
\newblock Id/No 3.

\bibitem{MR4687848}
L.~Negro.
\newblock Sample distribution theory using {C}oarea {F}ormula.
\newblock {\em Comm. Statist. Theory Methods}, 53(5):1864--1889, 2024.

\bibitem{MR4704569}
A.~Oliviero-Durmus and E.~Moulines.
\newblock On geometric convergence for the {M}etropolis-adjusted {L}angevin algorithm under simple conditions.
\newblock {\em Biometrika}, 111(1):273--289, 2024.

\bibitem{MR4152629}
E.~Pompe, C.~Holmes, and K.~\L~atuszy\'{n}ski.
\newblock A framework for adaptive {MCMC} targeting multimodal distributions.
\newblock {\em Ann. Statist.}, 48(5):2930--2952, 2020.

\bibitem{MR4673688}
H.~Ruzayqat, N.~K. Chada, and A.~Jasra.
\newblock Unbiased estimation using underdamped {L}angevin dynamics.
\newblock {\em SIAM J. Sci. Comput.}, 45(6):A3047--A3070, 2023.

\bibitem{NEURIPS2022_21c86d5b}
S.~Samsonov, E.~Lagutin, M.~Gabri\'{e}, A.~Durmus, A.~Naumov, and E.~Moulines.
\newblock Local-global mcmc kernels: the best of both worlds.
\newblock In S.~Koyejo, S.~Mohamed, A.~Agarwal, D.~Belgrave, K.~Cho, and A.~Oh, editors, {\em Advances in Neural Information Processing Systems}, volume~35, pages 5178--5193. Curran Associates, Inc., 2022.

\bibitem{schuh2022global}
K.~Schuh.
\newblock Global contractivity for langevin dynamics with distribution-dependent forces and uniform in time propagation of chaos, 2022.
\newblock arXiv:2206.03082.

\bibitem{schuh2024c}
K.~Schuh and P.~A. Whalley.
\newblock Convergence of kinetic {L}angevin samplers for non-convex potentials, 2024.
\newblock arXiv:2405.09992.

\bibitem{nature}
T.~N. Starr et~al.
\newblock Sars-cov-2 rbd antibodies that maximize breadth and resistance to escape.
\newblock {\em Nature}, 597(7874):97--102, 2021.

\bibitem{10.5555/3104482.3104568}
M.~Welling and Y.~W. Teh.
\newblock Bayesian {L}earning via {S}tochastic {G}radient {L}angevin {D}ynamics.
\newblock In {\em Proceedings of the 28th International Conference on International Conference on Machine Learning}, ICML'11, page 681–688, Madison, WI, USA, 2011. Omnipress.

\bibitem{pmlr-v195-zhang23a}
S.~Zhang, S.~Chewi, M.~Li, K.~Balasubramanian, and M.~A. Erdogdu.
\newblock Improved {D}iscretization {A}nalysis for {U}nderdamped {L}angevin {M}onte {C}arlo.
\newblock In G.~Neu and L.~Rosasco, editors, {\em Proceedings of Thirty Sixth Conference on Learning Theory}, volume 195 of {\em Proceedings of Machine Learning Research}, pages 36--71. PMLR, 12--15 Jul 2023.

\end{thebibliography}

\end{document}